\documentclass[11pt]{amsart}

\usepackage{amssymb}
\usepackage{amsmath}
\usepackage[all,cmtip]{xy}
\usepackage{amsfonts}
\usepackage{mathrsfs}
\usepackage{latexsym}
\usepackage{graphicx}
\usepackage{amscd,amssymb,amsmath,amsbsy,amsthm}
\usepackage[colorlinks,plainpages,backref,urlcolor=blue]{hyperref}
\usepackage{verbatim}
\usepackage[normalem]{ulem}

\usepackage{tikz}
\usetikzlibrary{arrows,calc}
\tikzset{
>=stealth',
help lines/.style={dashed, thick},
axis/.style={<->},
important line/.style={thick},
connection/.style={thick, dotted},
}


\topmargin=0.0in
\textwidth=6.5in
\textheight=8.5in
\oddsidemargin=0.0in
\evensidemargin=0.0in


\newcommand{\nc}{\newcommand}
\nc{\rnc}{\renewcommand}
\nc{\bb}[1]{{\mathbb #1}}
\nc{\bbA}{\bb{A}}\nc{\bbB}{\bb{B}}\nc{\bbC}{\bb{C}}\nc{\bbD}{\bb{D}}
\nc{\bbE}{\bb{E}}\nc{\bbF}{\bb{F}}\nc{\bbG}{\bb{G}}\nc{\bbH}{\bb{H}}
\nc{\bbI}{\bb{I}}\nc{\bbJ}{\bb{J}}\nc{\bbK}{\bb{K}}\nc{\bbL}{\bb{L}}
\nc{\bbM}{\bb{M}}\nc{\bbN}{\bb{N}}\nc{\bbO}{\bb{O}}\nc{\bbP}{\bb{P}}
\nc{\bbQ}{\bb{Q}}\nc{\bbR}{\bb{R}}\nc{\bbS}{\bb{S}}\nc{\bbT}{\bb{T}}
\nc{\bbU}{\bb{U}}\nc{\bbV}{\bb{V}}\nc{\bbW}{\bb{W}}\nc{\bbX}{\bb{X}}
\nc{\bbY}{\bb{Y}}\nc{\bbZ}{\bb{Z}}
\nc{\mbf}[1]{{\mathbf #1}}
\nc{\bfA}{\mbf{A}}\nc{\bfB}{\mbf{B}}\nc{\bfC}{\mbf{C}}\nc{\bfD}{\mbf{D}}
\nc{\bfE}{\mbf{E}}\nc{\bfF}{\mbf{F}}\nc{\bfG}{\mbf{G}}\nc{\bfH}{\mbf{H}}
\nc{\bfI}{\mbf{I}}\nc{\bfJ}{\mbf{J}}\nc{\bfK}{\mbf{K}}\nc{\bfL}{\mbf{L}}
\nc{\bfM}{\mbf{M}}\nc{\bfN}{\mbf{N}}\nc{\bfO}{\mbf{O}}\nc{\bfP}{\mbf{P}}
\nc{\bfQ}{\mbf{Q}}\nc{\bfR}{\mbf{R}}\nc{\bfS}{\mbf{S}}\nc{\bfT}{\mbf{T}}
\nc{\bfU}{\mbf{U}}\nc{\bfV}{\mbf{V}}\nc{\bfW}{\mbf{W}}\nc{\bfX}{\mbf{X}}
\nc{\bfY}{\mbf{Y}}\nc{\bfZ}{\mbf{Z}}
\nc{\bfa}{\mbf{a}}\nc{\bfb}{\mbf{b}}\nc{\bfc}{\mbf{c}}\nc{\bfd}{\mbf{d}}
\nc{\bfe}{\mbf{e}}\nc{\bff}{\mbf{f}}\nc{\bfg}{\mbf{g}}\nc{\bfh}{\mbf{h}}
\nc{\bfi}{\mbf{i}}\nc{\bfj}{\mbf{j}}\nc{\bfk}{\mbf{k}}\nc{\bfl}{\mbf{l}}
\nc{\bfm}{\mbf{m}}\nc{\bfn}{\mbf{n}}\nc{\bfo}{\mbf{o}}\nc{\bfp}{\mbf{p}}
\nc{\bfq}{\mbf{q}}\nc{\bfr}{\mbf{r}}\nc{\bfs}{\mbf{s}}\nc{\bft}{\mbf{t}}
\nc{\bfu}{\mbf{u}}\nc{\bfv}{\mbf{v}}\nc{\bfw}{\mbf{w}}\nc{\bfx}{\mbf{x}}
\nc{\bfy}{\mbf{y}}\nc{\bfz}{\mbf{z}}

\nc{\mcal}[1]{{\mathcal #1}}
\nc{\calA}{\mcal{A}}\nc{\calB}{\mcal{B}}\nc{\calC}{\mcal{C}}\nc{\calD}{\mcal{D}}
\nc{\calE}{\mcal{E}} \nc{\calF}{\mcal{F}}\nc{\calG}{\mcal{G}}\nc{\calH}{\mcal{H}}
\nc{\calI}{\mcal{I}}\nc{\calJ}{\mcal{J}}\nc{\calK}{\mcal{K}}\nc{\calL}{\mcal{L}}
\nc{\calM}{\mcal{M}}\nc{\calN}{\mcal{N}}\nc{\calO}{\mcal{O}}\nc{\calP}{\mcal{P}}
\nc{\calQ}{\mcal{Q}}\nc{\calR}{\mcal{R}}\nc{\calS}{\mcal{S}}\nc{\calT}{\mcal{T}}
\nc{\calU}{\mcal{U}}\nc{\calV}{\mcal{V}}\nc{\calW}{\mcal{W}}\nc{\calX}{\mcal{X}}
\nc{\calY}{\mcal{Y}}\nc{\calZ}{\mcal{Z}}
\nc{\fA}{\frak{A}}\nc{\fB}{\frak{B}}\nc{\fC}{\frak{C}} \nc{\fD}{\frak{D}}
\nc{\fE}{\frak{E}}\nc{\fF}{\frak{F}}\nc{\fG}{\frak{G}}\nc{\fH}{\frak{H}}
\nc{\fI}{\frak{I}}\nc{\fJ}{\frak{J}}\nc{\fK}{\frak{K}}\nc{\fL}{\frak{L}}
\nc{\fM}{\frak{M}}\nc{\fN}{\frak{N}}\nc{\fO}{\frak{O}}\nc{\fP}{\frak{P}}
\nc{\fQ}{\frak{Q}}\nc{\fR}{\frak{R}}\nc{\fS}{\frak{S}}\nc{\fT}{\frak{T}}
\nc{\fU}{\frak{U}}\nc{\fV}{\frak{V}}\nc{\fW}{\frak{W}}\nc{\fX}{\frak{X}}
\nc{\fY}{\frak{Y}}\nc{\fZ}{\frak{Z}}
\nc{\fa}{\frak{a}}\nc{\fb}{\frak{b}}\nc{\fc}{\frak{c}} \nc{\fd}{\frak{d}}
\nc{\fe}{\frak{e}}\nc{\fFf}{\frak{f}}\nc{\fg}{\frak{g}}\nc{\fh}{\frak{h}}
\nc{\fri}{\frak{i}}\nc{\fj}{\frak{j}}\nc{\fk}{\frak{k}}\nc{\fl}{\frak{l}}
\nc{\fm}{\frak{m}}\nc{\fn}{\frak{n}}\nc{\fo}{\frak{o}}\nc{\fp}{\frak{p}}
\nc{\fq}{\frak{q}}\nc{\fr}{\frak{r}}\nc{\fs}{\frak{s}}\nc{\ft}{\frak{t}}
\nc{\fu}{\frak{u}}\nc{\fv}{\frak{v}}\nc{\fw}{\frak{w}}\nc{\fx}{\frak{x}}
\nc{\fy}{\frak{y}}\nc{\fz}{\frak{z}}

\newtheorem{theorem}{Theorem}[section]
\newtheorem{lemma}[theorem]{Lemma}
\newtheorem{corollary}[theorem]{Corollary}
\newtheorem{prop}[theorem]{Proposition}

\theoremstyle{definition}
\newtheorem{definition}[theorem]{Definition}
\newtheorem{example}[theorem]{Example}
\newtheorem{remark}[theorem]{Remark}
\newtheorem{question}[theorem]{Question}
\newtheorem{problem}[theorem]{Problem}

\newtheorem{assumption}[theorem]{Assumption}

\newtheorem{thm}{Theorem}

\DeclareMathOperator{\rank}{rank} \DeclareMathOperator{\gr}{gr}
 
 \DeclareMathOperator{\id}{id}
 
\DeclareMathOperator{\ch}{ch} 
 \DeclareMathOperator{\GL}{GL}
\DeclareMathOperator{\Hom}{{Hom}} 
\DeclareMathOperator{\Ext}{{Ext}}
\DeclareMathOperator{\sHom}{{\mathscr{H}om}}

\DeclareMathOperator{\proj}{pr} 
\DeclareMathOperator{\pro}{pro} 
\DeclareMathOperator{\alg}{alg} 

\DeclareMathOperator{\Lie}{Lie}
\DeclareMathOperator{\Spec}{{Spec}} 
\DeclareMathOperator{\Aut}{Aut}
 \DeclareMathOperator{\End}{End}
\DeclareMathOperator{\sEnd}{{\mathscr{E}nd}}

\DeclareMathOperator{\Coh}{Coh}
\DeclareMathOperator{\Mod}{Mod\hbox{-}}
\DeclareMathOperator{\Gm}{\mathbb{G}_m}

\DeclareMathOperator{\Ell}{\mathcal{E}}
\DeclareMathOperator{\Pic}{Pic}

\DeclareMathOperator{\ad}{ad}

\DeclareMathOperator{\SL}{SL}
\DeclareMathOperator{\lcm}{lcm}
\DeclareMathOperator{\Proj}{Proj}

\newcommand{\catA}{\mathfrak{A}}
\DeclareMathOperator{\Rep}{Rep}
\DeclareMathOperator{\Res}{Res}
\DeclareMathOperator{\Ind}{Ind}

\newcommand{\surj}{\twoheadrightarrow}
\newcommand{\inj}{\hookrightarrow}

\def\angl#1{{\langle #1\rangle}}

\newcommand{\pt}{\text{pt}}
\newcommand{\Aff}{\bbA}

\newcommand{\PP}{\bbP}
\newcommand{\ZZ}{\bbZ}

\newcommand{\Z}{\bbZ}

\DeclareMathOperator{\sn}{sn}


\newcommand{\al}{\alpha}
\newcommand{\la}{\lambda}
\newcommand{\ga}{\gamma}

\newcommand{\UP}{\catA/W}
\newcommand{\La}{\Lambda}
\newcommand{\Dem}{\Delta}   
\newcommand{\de}{\delta}

\newcommand{\varth}{\vartheta}
\DeclareMathOperator{\Sing}{Sing }
\newcommand{\wt}{\widetilde}

\DeclareMathOperator{\dyn}{{dyn}}

\newcount\cols
{\catcode`,=\active\catcode`|=\active
 \gdef\Young(#1){\hbox{$\vcenter
 {\mathcode`,="8000\mathcode`|="8000
  \def,{\global\advance\cols by 1 &}%
  \def|{\cr
        \multispan{\the\cols}\hrulefill\cr
        &\global\cols=2 }%
  \offinterlineskip\everycr{}\tabskip=0pt
  \dimen0=\ht\strutbox \advance\dimen0 by \dp\strutbox
  \halign
   {\vrule height \ht\strutbox depth \dp\strutbox##
    &&\hbox to \dimen0{\hss$##$\hss}\vrule\cr
    \noalign{\hrule}&\global\cols=2 #1\crcr
    \multispan{\the\cols}\hrulefill\cr%
   }
 }$}}
}

\setcounter{section}{-1}

\title[Elliptic affine Hecke algebras]
{Representations of the elliptic affine Hecke algebras}
\date{\today}

\author[G.~Zhao]{Gufang~Zhao}
\address{Institut de Math\'ematiques de Jussieu, UMR 7586 du
CNRS, B\^atiment Sophie Germain, 75205 Paris Cedex 13, France}
\curraddr{School of Mathematics and Statistics,
University of Melbourne, Parkville VIC 3010, Australia}
\email{gufangz@unimelb.edu.au}

\author[C.~Zhong]{Changlong~Zhong}
\address{State University of New York at Albany, 1400 Washington Ave, ES 110, Albany, NY 12222}
\email{czhong@albany.edu}

\subjclass[2010]{
Primary 20C08; 
Secondary 14M15, 
14F43,   
55N34  
}
\keywords{ equivariant elliptic cohomology, Steinberg variety, elliptic affine Hecke algebra, Higgs bundle}

\setcounter{tocdepth}{1}

\begin{document}

\begin{abstract}
We prove that  irreducible representations of the elliptic affine Hecke algebras  of Ginzburg, Kapranov, and Vasserot are in one-to-one correspondence with certain nilpotent Higgs bundles on the elliptic curve. 
The main tool we use is the equivariant elliptic cohomology  of the Steinberg variety of the Springer resolution.
As a by-product, we study representations at roots of unity in type-$A$. As another by-product, we define a version of elliptic Demazure-Lusztig operators with dynamical parameters that satisfy the braid relations. We discuss speculative indications of this correspondence in 4d $\calN=2$ gauge theory.
\end{abstract}
\maketitle
\tableofcontents

\section{Introduction}
The elliptic affine Hecke algebra  is defined  by Ginzburg, Kapranov, and Vasserot in \cite{GKV97}. Historically the study of this algebra and related objects (e.g., elliptic quantum groups) motivated the definition of equivariant elliptic cohomology of Grojnowski \cite{Gr},  Ginzburg, Kapranov, and Vasserot \cite{GKV95}, which in turn served as inspirations to fruitful endeavours in topology. To give a, far from being  complete, list of references that we are following, see \cite{And00,And03,Chen10,Gep,Gan}. This construction has been studied by \cite{Lur} in the framework of derived algebraic geometry.

In recent years the study of elliptic quantum groups has regained increasing attention from algebraic, geometric, and mathematico-physical aspects, see, e.g., \cite{AO, Konno, FRV}. In view of these recent developments, it is desirable to further investigate representations of  elliptic affine Hecke algebras, which is the main goal of the present paper.

In \cite{Gr} a construction of equivariant elliptic cohomology is given, and in \cite{GKV95}, an axiomatic definition of equivariant elliptic cohomology is given, together with a list of properties, and an example of a classical elliptic Schur algebra. In \cite{GKV97}, an elliptic affine Hecke algebra is defined algebraically. 

The goal of the present paper is not to give a new construction of an equivariant elliptic cohomology, nor another construction of an elliptic algebra. Instead, the paper contains 5 parts listed below.
\begin{enumerate}
\item Since Lurie's construction is not well-known in the representation theory community, we briefly review the account of Lurie on equivariant elliptic cohomology and along the way verify that it satisfies the properties on stalks expected in \cite{Gr} and some other properties axiomatized in \cite{GKV95}. No originality is claimed here.
\item We verify that the geometrically defined convolution algebra using equivariant elliptic cohomology of \cite{GKV95} is isomorphic to the algebraically defined algebra in \cite{GKV97}.
This isomorphism may have been expected by experts, and may have served as the motivation of the definition in \cite{GKV97}.
As no statements or proofs can be found in literature, even as a conjecture, this is a necessary step  for further works.
\item  Using (1) and (2) above, we prove that the irreducible representations of this elliptic algebra are parametrized by a set of nilpotent Higgs bundles. We note that this is the analogue of the Deligne-Langlands correspondence in the elliptic setting. The proof goes in two steps, the first
step follows from a standard method of  \cite{KL}. This standard method involves proving an analogue of  Kazhdan-Lusztig non-vanishing theorem, which is an intricate calculation.
The second step is to identify what the first step produces with the geometry of Hitchin systems. 
To the best of the authors' knowledge,  this fact has not been known  or expected  before the work of the present paper is done.
\item We discuss possible implications of the above correspondence. We raise a duality question  about a certain representation category of the elliptic affine Hecke algebra and a category of coherent sheaves of a Hitchin moduli space.
\item We study representations of the elliptic algebra at torsion parameters (roots of unity) in type-$A$, which is combinatorial in nature.
\end{enumerate}

From a topological and geometrical point of view, it is well-known that for a complex elliptic curve the principal bundles on it are parametrized by conjugacy classes of the loop group \cite{BG}, while semi-stable bundles are related to the elliptic cohomology of a point. The points (2) and (3) above can be considered as representation theoretical and elliptic cohomological parametrizations of semi-stable Higgs bundles on an elliptic curve, previously unknown.

Algebraically, the residue construction \cite{GKV97} gives a sheaf of algebras which is only locally free and does not contain known global sections satisfying the braid relations. In this paper
we define a version of elliptic Hecke algebra with dynamical parameters that is globally free as a sheaf of algebras, and prove that the Demazure-Lusztig operators satisfy the braid relations based on some algebraic properties of the recent construction of elliptic classes \cite{RW}.

Now we briefly summarize the main results of the present paper.
\subsection{Elliptic cohomology and convolution construction of the elliptic affine Hecke algebra} Part of the elliptic cohomological construction  of the elliptic affine Hecke algebra 
is done under the assumption of existence of integral equivariant elliptic cohomology \ref{assum:G-equiv}. However,
studying the representation theory of the elliptic affine Hecke algebra only involves equivariant elliptic cohomology for an compact abelian Lie group where the existence is proven by Lurie. We discuss in depth of the existence of the cohomological construction in \S~\ref{subsec:existence}.

Let $G$ be a compact Lie group and $E$ be an elliptic curve over a commutative ring $R$. Let $\catA_G$ be the moduli space of semi-simple, semi-stable, degree-0 principal $G^{\alg}$ bundles on the dual curve $E^\vee$, where $G^{\alg}$ is the corresponding split algebraic group of $G$ over $R$. If $G$ is connected with maximal torus $T$ and Weyl group $W$, then $\catA_G\cong \catA_T/W$. For any  $G$-space $X$, the $G$-equivariant elliptic cohomology of $X$, denoted by $\calE^*_G(X)$, is a  sheaf of $\bbZ$-graded algebras on $\catA_G$.

Let $Z$ be the Steinberg variety in the Springer resolution \S~\ref{subsec:DL_Conv}. In \cite{GKV95}, there is a convolution product constructed  on the sheaf $\Xi_{G\times S^1}(Z)$, which is certain twisted version of $\calE^0_{G\times S^1}(Z)$ on $\catA_{G\times S^1}$. 

On the other hand,  there is an elliptic affine Hecke algebra $\calH$ introduced in \cite{GKV97} associated to the root system of $G$ is also a sheaf of algebras on $\catA_{G\times S^1}$ with local sections subject to certain  residue vanishing conditions, which we recall in  Section~\ref{sec:ell}.  In \S~\ref{subsec:dyn} we define a new version of elliptic Hecke algebra with dynamical parameters. It satisfies the same residue-vanishing conditions. However, this version has global sections which satisfy braid relations.
\begin{thm}[Theorem~\ref{thm:main}]\label{thm:Intro_Main}
Assume $G$ is simple and simply-connected. There is  an isomorphism $\Upsilon:\calH\cong \Xi_{G\times S^1}(Z)$ of sheaves of algebras. 
\end{thm}
The proof we give is similar to that in \cite{CG} in the $K$-theory case, but there are new phenomena in the elliptic cohomology. Moreover,   via this isomorphism, we also determine explicit cohomology classes in $\Xi_{G\times S^1}(Z)$ that correspond to the elliptic Demazure-Lusztig operators in \cite{GKV97},

Note that there is  another construction, different from but somehow parallel to the one given by \cite{GKV97}, which provides a Hecke-type algebra associated to a root datum and  a one-dimensional  formal group law (see \cite{HMSZ, Zho, ZZ14}). In \cite{ZZ14}, it is shown that these formal affine Hecke algebras are isomorphic to the Borel equivariant cohomology of Steinberg varieties, and when the formal group law comes from an elliptic curve, they are  completions of the stalk of the algebra $\calH$ at the zero point (See Remark~\ref{Rmk:Formal}).

\subsection{Representations of the elliptic affine Hecke algebra over  complex numbers}
As the first application of Theorem~\ref{thm:Intro_Main}, we give a Deligne-Langlands type classification of irreducible representations of $\calH$. 
With minimal amount of notations introduced, we summarize the classification as follows. 

Suppose $E$ is a complex elliptic curve. 
For any $t\in E$, let $\calL_t$ be the line bundle $\calO(\{t\}-\{0\})$. A $\calL_t$-values Higgs bundle is a pair $(P,x)$ where $P$ is a semi-simple semi-stable degree-0 principle $G$-bundle, and $x$ is a section of $\ad(P)\otimes\calL_t$. We say $(P,x)$ is nilpotent if the image of $x$ lands in the nilpotent cone at each fiber. Let $\Aut(P,x)$ be the group of automorphisms of $(P,x)$, and $C(P,x):=\Aut(P,x)/\Aut^0(P,x)$ the component group. Let $\bfB_{P,x}$ be the variety parametrizing all the Borel structures on $(P,x)$. Clearly, $C(P,x)$ acts on $H^*(\bfB_{P,x};\bbC)$. 
\begin{thm}[Corollary~\ref{cor:classify} and  Theorem~\ref{thm:Higgs}]\label{thm:intro_irreps}
Assume $t\in E$ is a non-torsion point. Let $\calH_t$ be the restriction  of $\calH$ to the subvariety $(\catA_T/W)\times\{t\}\subseteq (\catA_T/W)\times E$. 
\begin{enumerate}
\item For any nilpotent $\calL_t^{-1}$-valued Higgs bundle on $E^\vee$, and any irreducible $C(P,x)$-module $\chi$ with non-zero multiplicity in $H^*(\bfB_{P,x};\bbC)$, the multiplicity space $H^*(\bfB_{P,x};\bbC)_\chi$ is a module over $\calH_t$, which, moreover, has a simple top.
\item The set of irreducible $\calH_t$-modules is in one-to-one correspondence with $Higgs^{nil}_{t}(E^\vee)$, the set of triples $(P,x,\chi)$ as in (1). 
\end{enumerate}
\end{thm}
In the proof of Theorem~\ref{thm:intro_irreps}, we identify the variety $\bfB_{P,x}$ with the fixed locus in certain Springer fiber under a torus. Hence, the set $Higgs^{nil}_{t}(E^\vee)$ also has a description in terms of Springer theory, which can be found in details in \S~\ref{subsec:irrep}. 
Similar as in \cite{KL}, the technical part in the proof of Theorem~\ref{thm:intro_irreps}(2) is a non-vanishing statement, which we prove in \S~\ref{subsec:nonvan}.
We also give a Deligne-Langlands-Lusztig type character formula in \S~\ref{subsec:irrep}.

The set $Higgs^{nil}_{t}(E^\vee)$ also shows up in the study of double affine Hecke algebra \cite{BEG}, and its relation  with the present paper is briefly discussed in \S~\ref{sec:discussion} and is to be further investigated in future publications. 
When $G$ is of type-$A$, this set is the global nilpotent cone in the moduli space of  meromorphic Higgs bundles, which also plays a role in 
Seiberg-Witten integrable system \cite{DW} and 
geometric Langlands correspondence in the ramified case \cite{KW}.
We discuss how Theorem~\ref{thm:intro_irreps} provides a piece of evidence of a duality (Question~\ref{conj:KoszulDual}), and discuss various indications of this duality in $4d$ $\calN=2$ gauge theory.

\subsection{Representations in type-A}
Representations of the affine Hecke algebra of type-A at roots of unity have been studied by Ariki \cite{Arik} and  Grojnowski \cite{Gr94}.
In particular, they prove that the Grothendieck group of affine Hecke algebra of type-A at an $l$-th root of unity is isomorphic to the negative half of the affine quantum group of type $A_{l}$ \cite[Proposition~4.3]{Arik}.
They also prove that under this isomorphism the Lusztig canonical basis corresponds to the dual of the classes of the simple objects in the category of modules over  affine Hecke algebras at roots of unity.
This isomorphism has a categorical version, studied in \cite{BK, KhoLau, Rouq08}.
More precisely, it is well-known that, based on the work of Khovanov, Lauda and Rouqier, a certain category of modules over the quiver Hecke algebra categorifies the negative half of the affine quantum group of type-A. On the other hand, it is shown in \cite[Proposition~3.18]{Rouq08} that the affine Hecke algebra at a root of unity is Morita equivalent to a suitable quiver Hecke algebra. Hence, one obtains a categorification of the Ariki-isomorphism.

Using Khovanov, Lauda and Rouqier's work on the quiver Hecke algebra, we easily obtain the elliptic counterpart of Ariki's theorem and its categorified version. This is carried out in \S~\ref{sec:torsion_points}. We summarize the results here.

Assume $q_1$, $q_2\in S^1$ are two torsion points of order $n_1$ and $n_2$ respectively, $d=\lcm\{n_1,n_2\}$ and $l=n_1\cdot n_2/d$. Fix an isomorphism $E\cong S^1\times S^1$ as Lie groups. Let $S_t\subset E$ be the subset consisting of $z\in E$  such that $z$ is of the form $(q_1^u,q_2^v)$ for $u,v\in \bbZ$, and let $S_t^n\subseteq E^n$ be its product.
Let $\calH_n$ be the elliptic affine Hecke algebra of $\GL_n$ (or equivalently of $U_n$).
Let $\Mod_{t}\calH_n$ be the subcategory of finite dimensional $\calH_n-$modules that are set theoretically supported on $S_t^n$ when considered as coherent sheaves on $E^n$. 

Let $\Gamma_{d.l}$ be disjoint union of $l$-copies of the cyclic quiver with $d$ vertices. Let $H_n(\Gamma_{d,l})$ be the quiver Hecke algebra of $\Gamma_{d.l}$, and $\Mod_0H_n(\Gamma_{d,l})$  be certain  category of modules spelled out in details in \S~\ref{subsec:quiverHecke}.

\begin{thm}[Theorem~\ref{thm:ellHecke_quiverHecke} and Corollary~\ref{cor:ellHeck_Quant}]
\begin{enumerate}
\item There is an equivalence of abelian categories
$\Mod_0H_n(\Gamma_{d,l})\cong\Mod_{t}\calH_n$.
\item This isomorphism induces an isomorphism $\big(U^-(\widehat{\fs\fl_d})\big)^{\otimes l}\cong \bigoplus_nK(\Mod_{t}\calH_n)^*$.
\end{enumerate}
\end{thm}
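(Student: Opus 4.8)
The plan is to reduce the statement to the known Ariki-type theorem (and its Khovanov–Lauda–Rouquier categorification) for the classical affine Hecke algebra of $\GL_n$ at a root of unity, via Theorem~\ref{thm:Intro_Main}. The key observation is that the support condition on the $\calS$-action forces the representation theory of $\calH_n$ near the finite set $S_t^n\subseteq E^n$ to be ``local'': completing $\calH_n$ along the $W$-orbits of points of $S_t^n$ produces an algebra whose module category is $\Mod_t\calH_n$. So the first step is to make this localization precise. Using the sheaf description of $\calH$ on $\catA_T/W \times E$ from Section~\ref{sec:ell}, together with the convolution model $\Xi_{G\times S^1}(Z)$ of Theorem~\ref{thm:Intro_Main}, I would identify the completion of $\calH_n$ at a point of $S_t^n$ with a completion of the convolution algebra of the Steinberg variety at the corresponding semisimple parameter $(a,t)$; the elliptic localization theorem (Atiyah–Bott–Berline–Vergne, as recalled in \S~\ref{sec:Lurie}) identifies this with the convolution algebra on the fixed-point Steinberg variety $Z^{T(a,t)}$, which is the Steinberg variety of the reductive group $G(a) = Z_G(T(a))$.

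The second step is to recognize this local picture. For $q_1,q_2$ torsion of orders $n_1,n_2$, a point $z=(q_1^u,q_2^v)\in S_t$ generates a cyclic subgroup of $E$ of order $d=\lcm(n_1,n_2)$, and the centralizer data is exactly that of a product of $l = n_1 n_2/d$ copies of $\GL$ at a primitive $d$-th root of unity. Thus the completed convolution algebra is (Morita equivalent to) the completion of $\bigl(\text{affine Hecke algebra of }\GL \text{ at a primitive } d\text{-th root of unity}\bigr)^{\otimes l}$ at the relevant central character. By \cite[Proposition~3.18]{Rouq08}, each such local factor is Morita equivalent to the completion of the quiver Hecke algebra of the cyclic quiver with $d$ vertices; taking the $l$-fold product gives the quiver $\Gamma_{d,l}$, and assembling over all $n$ and all relevant central characters yields the equivalence $\Mod_0 H_n(\Gamma_{d,l}) \cong \Mod_t\calH_n$ in part (1). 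For part (2), I would feed this equivalence into the Khovanov–Lauda–Rouquier categorification: $\bigoplus_n K(\Mod_0 H_n(\Gamma_{d,l}))$ is, by definition of $\Mod_0$ and the product structure of the quiver, the $l$-fold tensor power of $\bigoplus_n K$ of the cyclic-quiver Hecke algebra category, and the latter categorifies $U^-(\widehat{\fs\fl_d})$ (the dualization on $K$-groups is the standard one, matching simple objects to the dual canonical basis).

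Two technical points need care. First, the passage from the formal/completed statement to an honest equivalence of abelian categories of \emph{finite-dimensional} modules: I need that a finite-dimensional $\calH_n$-module supported on $S_t^n$ is the same as a module over the (finite) product of completions at the finitely many $W$-orbits in $S_t^n$ of a fixed total degree — this is a standard ``support decomposition'' argument but uses that $\calH_n$ is coherent over $\calO_{\catA_{T}/W\times E}$ (stated in the introduction and \S~\ref{sec:ell}) and that $S_t^n$ is finite. Second, matching the \emph{elliptic} Demazure–Lusztig operators with the trigonometric ones after completion: here the explicit cohomology classes produced in the proof of Theorem~\ref{thm:Intro_Main} are essential, since one must check that under completion at a torsion point the elliptic Bott–Samelson/convolution generators degenerate to the classical affine Hecke generators with the correct (root-of-unity) parameter; this is where the ``phenomena in elliptic cohomology that do not occur in $K$-theory'' alluded to earlier could cause trouble, e.g.\ the appearance of the theta-divisor and the choice of twist $\Xi$.

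I expect the main obstacle to be precisely this last point — controlling the twist $\Xi_{G\times S^1}(Z)$ under localization and confirming that its restriction to the formal neighborhood of a torsion point is (canonically, compatibly with convolution) trivialized so as to recover the untwisted root-of-unity affine Hecke algebra. Everything else is either bookkeeping (the quiver $\Gamma_{d,l}$ and the lcm/gcd combinatorics) or a direct appeal to \cite{Rouq08}, \cite{Arik}, \cite{Gr94}, and the Khovanov–Lauda–Rouquier categorification theorem.
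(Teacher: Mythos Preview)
Your approach is viable, and in fact the paper explicitly acknowledges it as a possible route in the Remark immediately preceding the final lemma of \S~\ref{sec:torsion_points}: one can use Theorem~\ref{thm:main} together with the localization and completion machinery of \S~\ref{sec:Lurie} and \S~\ref{subsec:ell_coh_conv} to identify $(\calH_n)^\wedge_{S_t^n/\fS_n}$ with $\bigoplus_{\mu\in S_t^n/\fS_n} H^*_{G(\mu,t)}(Z^{T(\mu,t)})$, and then recognize the latter as a quiver Hecke algebra for $\Gamma_{d,l}$. Your worry about the twist $\Xi$ is less serious than you fear --- Corollary~\ref{cor:Xi_loc} and Proposition~\ref{prop:rho_loc} show precisely that $\Xi$ behaves well under change of groups and under the localization map $\rho_a$, compatibly with convolution --- but the intermediate step of identifying the equivariant Borel--Moore homology of the fixed-point Steinberg with the (degenerate) affine Hecke algebra, and then invoking \cite[Proposition~3.18]{Rouq08}, does add a layer.

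The paper takes a shorter and purely algebraic path that bypasses Theorem~\ref{thm:main} entirely. Using only the residue description of $\calH$ from \S~\ref{sec:ell}, Lemma~\ref{lem:loc_complete_hecke} writes down the completion $(\calH_n)^\wedge_{\fS_n\cdot\nu}$ explicitly as a subalgebra of endomorphisms of $\bigoplus_{\mu\in\fS_n\cdot\nu}\pi_*\calO_\mu^\wedge$, with generators $T_i$ given by closed formulas in a local coordinate $\fl$. One then writes down by hand an isomorphism $\phi$ from the completed quiver Hecke algebra $\calO^\wedge H_n(\Gamma_{d,l})$ (using Rouquier's polynomial representation, Proposition~\ref{prop:KLR_poly_rep}) to this completion, matching $x_i 1_\nu \mapsto \fl(z_i-\nu_i)1_\nu$ and $\tau_i$ to an explicit expression in the $T_i$ depending on whether $\nu_{i+1}=\nu_i$, $\nu_{i+1}=\nu_i+t$, or neither. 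This is exactly the elliptic analogue of the Brundan--Kleshchev/Rouquier isomorphism, and it yields the equivalence of categories directly. Part~(2) then follows immediately from the Khovanov--Lauda--Rouquier categorification theorem (Theorem~\ref{thm:quiverHecke}).

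In short: your geometric route works and the paper says so, but the paper's algebraic route avoids the Steinberg variety altogether and replaces the delicate localization argument you anticipate by a one-line generator comparison. What your approach buys is a conceptual explanation for why $\Gamma_{d,l}$ appears (via the centralizer combinatorics); what the paper's approach buys is that no equivariant elliptic cohomology is actually used in this section.
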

Here $\widehat{\fs\fl_d}$ is the affine Lie algebra of type $A_{d-1}$, and $U^-(\widehat{\fs\fl_d})$ is the negative part of its enveloping algebra. Under the isomorphism in (2), we identify explicit auto-functors on $\bigoplus_nK(\Mod_{t}\calH_n)$ that correspond to the Chevalley basis in $U^-(\widehat{\fs\fl_d})^{\otimes l}$. 

There is also a graded version of this theorem, in which  the enveloping algebra is replaced by half of the quantum group. 
Under  this isomorphism, the Lusztig canonical basis corresponds to the dual basis of the classes of simple objects.

\subsection*{Notations}
We summarize the conventions of push-forwards and pull-backs we use, for the convenience of the readers.

For any map of schemes $f:X\to Y$ and quasi-coherent sheaf $\calG$ on $Y$, we use $f^*\calG$ to denote the inverse-image of $\calG$, and $f^{-1}g\in H^0(X,f^*\calG)$, the pull-back section  of $g\in H^0(Y,\calG)$. 

Let $p:X\to Y$ be a morphism between two topological  $G$-spaces. 
Vector bundles are always complex vector bundles, unless otherwise specified.
For any $G$-vector bundle $V$ on $Y$, its pullback on $X$ is denoted by $p^*V$. 
Taking equivariant (elliptic)	 cohomology, we get a map between two sheaves of algebras on the moduli scheme $\catA_G$, denoted by $p^{\sharp}:\Ell^*_G(Y)\to \Ell_G^*(X)$. 
The projection of the relative spectrum $\underline\Spec_{\catA_G}\Ell^0_G(X)\to \catA_G$ is denoted by $\pi_X^G$, or simply $\pi_X$ if $G$ is clear from the context.
The induced map on spectra by $p^{\sharp}$ is denoted by $p_\catA:\catA_G^X\to \catA_G^Y$. According to our convention above, the direct-image and inverse-image of quasi-coherent sheaves are denoted by $p_{\catA*}$ and $p_{\catA}^*$, respectively. 
If in addition, $p$ is proper, then there is a push-forward (Gysin map) in equivariant elliptic cohomology, $p_{\sharp}: \Theta(Tp)\to p_{\catA}^*\calO_{\catA_G^Y}$ of quasi-coherent sheaves on $\catA_G^X$, where $Tp$ is the normal bundle of $p$. By adjunction, we also have $p_{\catA*}\Theta(Tp)\to \calO_{\catA_G^Y}$, which is also  denoted by $p_\sharp$.

For any compact Lie group $G$ with maximal torus $T$, the natural projection $\catA_T\to \catA_G$ is denoted by $\pi$. For any map between compact Lie groups $\phi:H\to G$, the map induced on the moduli spaces is denoted by $\catA_\phi:\catA_H\to \catA_G$.

\subsection*{Acknowledgements}
Most of the ideas in this paper are rooted in \cite{GKV95}. Theorem~\ref{thm:Intro_Main}, which is the main theorem in our \S~\ref{sec:ellHecke} and \ref{sec:iso}, could in principle have appeared in {\it loc. cit.}. We would like to express our gratitude to  Eric Vasserot for numerous times of discussions and for sharing his insights into Section~\ref{sec:torsion_points}. We thank Marc Levine, Ivan Mirkovi\'c, Valerio Toledano Laredo, and Yaping Yang for helpful discussions and correspondences. We thank Eyal Markman for helps with  the proof of Theorem~\ref{thm:Higgs}. We are grateful to an anonymous referee for carefully reading the manuscript and making very helpful suggestions. During the preparation of this paper, the first named author was financially supported by Fondation Sciences Math\'ematiques de Paris and  Centre National de la Recherche Scientifique; The second named author was supported by Vladimir Chernousov and Stefan Gille, and also by PIMS. This project was conceived when both authors were visiting the Max-Planck-Institut f\"ur Mathematik in 2014. During the revision of the paper, the first named author was supported  by the Australian Research Council
via the award DE190101222.

\section{The elliptic group algebra}\label{sec:grp_alg}
In this section, we fix terminology and notations about line bundles on abelian varieties and theta-functions. From now until Section~\ref{sec:iso}, unless otherwise mentioned,  we always work over a commutative Noetherian ring $R$. 

\subsection{The theta functions}\label{subsec:theta}
Let $E$ be an elliptic curve over  a ring $R$. Let $0:\Spec R\to E$ be the zero section of the elliptic curve. 
Then the image of $0$ is a codimension one subvariety, denoted by $\{0\}$. Let $\calO(-\{0\})$ be the sheaf of ideals of this subvariety, which is a line bundle. Its dual $\calO(\{0\})=\calO(-\{0\})^{-1}$, as the line bundle associated to an effective Weil divisor, has a natural section $\vartheta$ dual to the embedding $\calO(-\{0\})\to  \calO_E	$. 
Let $inv:E\to E$ be the map sending any point to its additive inverse. Then $inv^*\calO(-\{0\})\cong\calO(-\{0\})$, and  $\vartheta$ is mapped to $-\vartheta$ via this  isomorphism. In this sense, we say that $\vartheta$ is an odd function.
Recall also that the natural section of an effective Weil divisor $\calO_E\to  \calO(\{0\})$ has the property that its derivative, which is a section of  $ \calO(\{0\})\otimes\Omega_E$, when restricted to the divisor itself $\{0\}$ becomes the identity under the natural isomorphism $\calO(\{0\})|_{\{0\}}\cong \Omega_E^\vee|_{\{0\}}$.

\begin{example}\label{ex:theta}
Let $E$ be the Tate elliptic curve over $R=\bbC(\!(q)\!)$, whose equation is given by \[y^2 +xy=x^3 +a_4(q)x+a_6(q)\] with 
$a_4(q) = -5s_3(q)$ and 
$a_6(q) = -5s_3(q) + 7s_5(q)$, such that $s_k(q) =  \sum_{n\geq1}\frac{n^kq^n}{1-q^n}$.
Then there is  the Jacobi-theta function
\[\vartheta(u)=u^{\frac{1}{2}}\prod_{s>0}(1-q^su)\prod_{s\geq0}(1-q^su^{-1})\frac{1}{2\pi i}\prod_{s>0}(1-q^s)^{-2}.\]
When $E$ is a complex elliptic curve, i.e., $E=\bbC/(\bbZ+\bbZ\tau)$, $\tau\in \bbC$,  then our section $\vartheta$ above can be chosen to be the Jacobi-theta function, evaluated at $u=e^{2\pi iz}$ and $q=e^{2\pi i\tau}$. Here $z$ is the coordinate of the universal cover $\bbC$.
Indeed, in both cases, the formula of theta function are written on a cover of the elliptic curve where the covering space is the projection $\bbC^*\to \bbC^*/q^\bbZ$. For the complex elliptic curve this can be made precise complex analytically, and for Tate curve this can be made sense in either rigid analytic geometry or toric geometry \cite[\S4.3]{Lur}. 
The line bundle  $\calO(\{0\})$ becomes trivial when lifted to the cover $\bbC^*$. Denote the lifting by $\tilde{L}\to \bbC^*$, which is endowed with a lift of the section $\tilde{s}:\bbC^*\to \tilde{L}$.
The trivialization 
\[\xymatrix{
\tilde{L}\ar@/^/[dr]\ar[rr]^{\cong}_{\phi}&&\bbC^*\times \bbC\ar[dl]\\
&\bbC^*\ar@/^/[ul]^{\tilde{s}}&
}\]
 can be uniquely fixed by the property that $\phi$ commutes with multiplication by $q^\bbZ$ and that the derivative of $\phi\circ\tilde{s}$ is 1 at $1\in \bbC^*$ \cite[p.38]{Sie}.
The above formula for the theta function are written so that it is a function on $\bbC^*$, that vanishes of order 1 at $q^\bbZ$, non-zero everywhere else, and the derivative at $1\in \bbC^*$ is equal to 1. 
\end{example}

Throughout this section, $T$ is a compact connected abelian (real) Lie group of rank $n$, i.e., $T\cong (S^1)^n$.  Let $\bbX^*(T)$ be the group of characters of $T$, and $\bbX_*(T)$ be its dual. Let $T^{\alg}\cong (\Gm)^n$ be the split algebraic torus over $R$ which classifies maps of abelian groups from $\bbX^*(T)$ to $\Gm$.  Let $\catA_T(E)$ be the $R$-scheme that classifies maps of abelian groups from $\bbX^*(T)$ to $E$, which is an abelian $R$-variety. When understood from the context, we simply write $\catA_T$. 
Equivalently, $\catA_T$ can be described as the moduli scheme of semi-simple, semi-stable, degree-0 principal $T^{\alg}$-bundles on the dual elliptic curve $E^\vee$.

We have a canonical isomorphism $\catA_T\cong E\otimes \bbX_*(T)$. 
Any character $\la\in \bbX^*(T)$ induces a group homomorphism $\chi_\lambda: \catA_T\to E$. 
The subvariety $\ker\chi_\lambda$ is a divisor of $\catA_T$, whose ideal sheaf is the line bundle $\calL_\la:=\calO(-\ker\chi_\lambda)\cong \chi_\lambda^*\calO(-\{0\})$ on $\catA_T$.
The natural section of the line bundle $\calO(-\ker\chi_\lambda)^{-1}$ is denoted by $\vartheta(\chi_\la)$, which   is equal to $\chi_\lambda^{-1}\vartheta$. 

The coordinates also induce an isomorphism  $\catA_T\cong E^n$.  
For any point $x\in \catA_T \cong E^n$, we can write $x_i\in E$ for its $i$-th coordinate, i.e., the image of $x$ via the $i$-th projection $E^n\to E$, $i=1,\dots,n$.
For any $\lambda=\sum_{i=1}^nn_i\la_i$, the morphism $\chi_\lambda:\catA_T \cong E^n\to E$ is given by  $x=(x_1,\dots,x_n)\mapsto \sum_{i=1}^nn_ix_i\in E$.
Then $\vartheta(\chi_\la)=\vartheta(\sum _{i=1}^nn_ix_i)$.

The map $\bbX^*(T)\to \Pic(\catA_T)$, $\lambda\mapsto\calL_\la$,  is not a group homomorphism. Instead, we have the following.
 
\begin{lemma}\label{lem:tensor_c1}
\begin{enumerate}
\item For any characters $\lambda_1$ and $\la_2$ of $T$, the following diagram commutes
 \[\xymatrix{
 \catA_T \ar[rr]^-{\chi_{\lambda_1}\times \chi_{\la_2}}\ar[drr]_-{\chi_{\lambda_1\otimes\la_2}}& &E\times E\ar[d]^{\mu}\\
 & &E
 }\]where $\mu:E\times E\to E$ is the addition of $E$. Let $AD$ be the subvariety of $E\times E$ consisting of $(x,-x)$ with $x\in E$, then $\calO(-\ker\chi_{\lambda_1\otimes \lambda_2})\cong (\chi_{\lambda_1}\times\chi_{\la_2})^*\calO_{E\times E}(-AD)$.
\item The line bundle $\calO(-\ker \chi_{\la^\vee})$ is canonically isomorphic to $\calO(-\ker \chi_\la)$. Via this isomorphism, $\vartheta(\chi_{\la^\vee})=-\vartheta(\chi_\la)$.
\end{enumerate}
\end{lemma}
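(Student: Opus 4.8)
The plan is to reduce both statements to the functoriality of the assignment $\lambda\mapsto\chi_\lambda$ together with the two facts about $\vartheta$ recorded in \S\ref{subsec:theta}: the identification $\calO(-\ker\chi_\lambda)\cong\chi_\lambda^*\calO(-\{0\})$, and the oddness statement that the canonical isomorphism $inv^*\calO(-\{0\})\cong\calO(-\{0\})$ sends $\vartheta$ to $-\vartheta$.

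For part (1), I would first check commutativity of the diagram. Regarding a point of $\catA_T$ as a homomorphism $\phi\colon\bbX^*(T)\to E$, one has $\chi_\lambda(\phi)=\phi(\lambda)$, so
\[
\chi_{\lambda_1\otimes\lambda_2}(\phi)=\phi(\lambda_1+\lambda_2)=\phi(\lambda_1)+\phi(\lambda_2)=\mu\bigl(\chi_{\lambda_1}(\phi),\chi_{\lambda_2}(\phi)\bigr);
\]
equivalently, in the coordinates $\catA_T\cong E^n$ this is just the identity $\sum_i(n_i+m_i)x_i=\sum_i n_ix_i+\sum_i m_ix_i$ recorded before the lemma. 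Next I observe that $AD=\mu^{-1}(\{0\})$ scheme-theoretically, and that $\mu$ is smooth: the shear $(x,y)\mapsto(x+y,y)$ is an automorphism of $E\times E$ intertwining $\mu$ with the first projection, so $\mu$ is a composite of an isomorphism with a base change of $E\to\Spec R$. Being smooth, $\mu$ is flat, whence $\calO_{E\times E}(-AD)=\mu^*\calO(-\{0\})$. Plugging the commutative diagram and $\calO(-\ker\chi_{\lambda_1\otimes\lambda_2})=\chi_{\lambda_1\otimes\lambda_2}^*\calO(-\{0\})$ into this gives
\[
\calO(-\ker\chi_{\lambda_1\otimes\lambda_2})=(\chi_{\lambda_1}\times\chi_{\lambda_2})^*\mu^*\calO(-\{0\})=(\chi_{\lambda_1}\times\chi_{\lambda_2})^*\calO_{E\times E}(-AD).
\]

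For part (2), I would note that the dual one-dimensional representation corresponds to $-\lambda$, so by the same functoriality $\chi_{\lambda^\vee}(\phi)=\phi(-\lambda)=-\phi(\lambda)$, that is, $\chi_{\lambda^\vee}=inv\circ\chi_\lambda$. Since $inv$ fixes $0\in E$, the subschemes $\ker\chi_{\lambda^\vee}$ and $\ker\chi_\lambda$ of $\catA_T$ coincide, and
\[
\calO(-\ker\chi_{\lambda^\vee})=\chi_\lambda^*\,inv^*\calO(-\{0\})\cong\chi_\lambda^*\calO(-\{0\})=\calO(-\ker\chi_\lambda),
\]
the middle isomorphism being $\chi_\lambda^*$ applied to the canonical one of \S\ref{subsec:theta}. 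Transporting the section $\vartheta(\chi_{\lambda^\vee})=\chi_{\lambda^\vee}^{-1}\vartheta=\chi_\lambda^{-1}(inv^{-1}\vartheta)$ along this chain and using $inv^{-1}\vartheta=-\vartheta$ yields $\vartheta(\chi_{\lambda^\vee})=-\chi_\lambda^{-1}\vartheta=-\vartheta(\chi_\lambda)$, as claimed.

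The argument is essentially formal once the two inputs from \S\ref{subsec:theta} are granted; the only point requiring genuine (if entirely standard) justification is the smoothness of the group law $\mu$ on $E$, which is exactly what makes pulling back the ideal sheaf of $\{0\}$ along $\mu$ produce the ideal sheaf of $AD$ rather than merely a map of sheaves into $\calO_{E\times E}$. I therefore expect this flatness check to be the only real obstacle, and it is routine for elliptic curves over $R$.
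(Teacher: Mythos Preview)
Your argument is correct. The paper states Lemma~\ref{lem:tensor_c1} without proof, evidently regarding it as an elementary verification from the definitions in \S\ref{subsec:theta}; your write-up supplies exactly that verification and is the natural one the authors must have in mind. The only nontrivial input you isolate---flatness of $\mu$ so that $\mu^*\calO(-\{0\})$ is the ideal sheaf of $AD=\mu^{-1}(\{0\})$---is handled cleanly by the shear automorphism.
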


Let $\Rep(T)$ be the free abelian group of virtual $T$-characters, i.e., $\Rep(T)\cong \ZZ[\bbX^*(T)]$.
The set map $\chi:\bbX^*(T)\to \Pic(\catA_T)$ induces a homomorphism of abelian groups $\chi:\ZZ[\bbX^*(T)]\to \Pic(\catA_T)$, where $\ZZ[\bbX^*(T)]$ is considered as a free abelian group with the additive structure. The image is still denoted by $\calL_\la$ for $\la\in \bbZ[\bbX^*(T)]$.


For any compact Lie group $G$ with a choice of maximal torus $T$, the Weyl group $W$ acts on $\bbX^*(T)$ and hence acts on $\catA_T$. That is, any group element $w\in W$ defines a map $w:\catA_T\to \catA_T$. Hence for any $\lambda\in \bbX^*(T)$ and any section $\sigma$ of $\calL_{\lambda}$ we have the pullback of line bundle  $w^*\calL_{\lambda}=\calL_{w^{-1}\lambda}$ and the pullback section $w^{-1}\sigma$. A choice of the set of simple roots determines a set of generators of $W$. For each simple root $\alpha$, the corresponding simple reflection is denoted by $s_\alpha$. A brief review of root systems will be given in \S~\ref{sec:ell}.

\subsection{Looijenga's ring of theta-functions}\label{subsec:Loo}
For any integer $n$, let $E^{(n)}$ be the $n$-th symmetric product $E^n/\fS_n$. 
The following lemma is clear.
\begin{lemma}\label{lem:divisors equal on E(n)}
Let $E^{(n-1)}\times E\to E^{(n)}\times E$ be the map given by the product of the symmetrization $E^{(n-1)}\times E\to E^{(n)}$ and the projection to the second factor $E^{(n-1)}\times E\to E$. The image of this map  is closed, and the ideal sheaf for the reduced subscheme structure is a divisor in $E^{(n)}\times E$, denoted by $D$. Moreover, letting $\iota:E^{(n)}\to E^{(n)}\times E$ be the embedding into the first factor, then 
\[\iota^*\calO(-D)\cong \calO(-C),\] where $C\subseteq E^{(n)}$ is the  divisor 
given by $\{\{x_1,\cdots,x_n\}\in E^{(n)}\mid x_i=0\hbox{ for some }i\}$.
\end{lemma}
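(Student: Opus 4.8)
The plan is to compare the two line bundles by pulling back along the quotient map $q\colon E^n\to E^{(n)}$, where they become easy to identify, and then to descend the identification. First I would unwind the definitions: on $E^{(n)}\times E$ the divisor $D$ is the image of the map $s\times \id_E$ where $s\colon E^{(n-1)}\times E\to E^{(n)}$ is the symmetrization sending $(\{y_1,\dots,y_{n-1}\},y_n)$ to $\{y_1,\dots,y_{n-1},y_n\}$; set-theoretically a point $(\{x_1,\dots,x_n\},t)$ lies on $D$ exactly when $t=x_i$ for some $i$. The embedding $\iota\colon E^{(n)}\to E^{(n)}\times E$ is $\{x_1,\dots,x_n\}\mapsto (\{x_1,\dots,x_n\},0)$, so set-theoretically $\iota^{-1}(D)=\{\{x_1,\dots,x_n\}: x_i=0 \text{ for some } i\}=C$. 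The content of the lemma is that this equality holds scheme-theoretically, i.e. with the correct multiplicities, so that the line bundles (not just their supports) agree.

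The key step is to check this upstairs on $E^n$. Let $q\colon E^n\to E^{(n)}$ and $\tilde q\colon E^n\times E\to E^{(n)}\times E$ be the quotient maps. Then $\tilde q^*\mathcal O(-D)=\mathcal O(-\sum_{i=1}^n \Gamma_i)$, where $\Gamma_i\subset E^n\times E$ is the graph $\{(x_1,\dots,x_n,x_i)\}$ of the $i$-th projection; this is because $\tilde q^{-1}(D)=\bigcup_i\Gamma_i$ and each $\Gamma_i$ appears with multiplicity one in the preimage (the map $s$ is generically $(n-1)!$-to-one onto its image exactly as $q$ is $n!$-to-one, so the preimage divisor is reduced along the generic point of each $\Gamma_i$). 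Pulling back by $\iota$ upstairs means restricting to the locus $\{t=0\}\cong E^n$, under which $\Gamma_i$ restricts to the divisor $\{x_i=0\}$. Hence $q^*\iota^*\mathcal O(-D)\cong\mathcal O(-\sum_{i=1}^n\{x_i=0\})$, and this is exactly $q^*\mathcal O(-C)$ since $q^{-1}(C)=\bigcup_i\{x_i=0\}$, again with multiplicity one along each component. Thus $q^*\iota^*\mathcal O(-D)\cong q^*\mathcal O(-C)$ compatibly with the $\fS_n$-equivariant structure, and since $q$ is a finite surjection of integral schemes and both sheaves carry canonical $\fS_n$-linearizations, descent (or simply the fact that $C$ and $\iota^{-1}(D)$ are effective Cartier divisors on $E^{(n)}$ with the same pullback, hence equal) gives $\iota^*\mathcal O(-D)\cong\mathcal O(-C)$.

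The main obstacle is the multiplicity bookkeeping: one must be sure that $D$, $C$, and their various preimages are \emph{reduced} Cartier divisors, so that the combinatorial description of supports actually computes the line bundles. Concretely the point to verify is that $s\colon E^{(n-1)}\times E\to E^{(n)}\times E$ is generically an immersion onto $D$ (equivalently, that a generic point of $D$ has a unique preimage under $s$), which holds because for a generic unordered tuple $\{x_1,\dots,x_n\}$ with $t=x_1$ distinct from the others, the presentation $(\{x_2,\dots,x_n\},x_1)$ is the unique way to write it in the source; together with the fact that $E^{(n)}$ is smooth (being a quotient of the smooth curve power by a finite group acting on a curve, its symmetric product $E^{(n)}$ is a smooth projective variety), this shows $D$ is a reduced Cartier divisor and likewise for $C$. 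Once reducedness is in hand, everything else is the straightforward preimage computation sketched above.
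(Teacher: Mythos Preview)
Your argument is correct. The paper offers no proof of this lemma at all; it simply declares ``The following lemma is clear'' immediately before the statement. Your proposal supplies exactly the verification one would do to justify that claim: you identify $\iota^{-1}(D)$ and $C$ set-theoretically, then check multiplicities by pulling back along the \'etale-over-the-generic-point quotient $q\colon E^n\to E^{(n)}$, where both divisors become the reduced union $\sum_i\{x_i=0\}$, and conclude by descent (or, as you note, simply by observing that two effective Cartier divisors on an integral scheme with the same pullback along a finite surjection must coincide). The smoothness of $E^{(n)}$ for a curve $E$ and the generic one-to-one-ness of the symmetrization onto $D$ are the right points to invoke to ensure the divisors are reduced Cartier, and you handle them correctly.
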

\begin{proof}By definition, the map $E^{(n-1)}\times E\to E^{(n)}\times E$ is given by 
\[
(\{x_i\}, y)\mapsto (\{x_1,...,x_{n-1}, y\}, y).
\]
So the image coincides with the subset 
\[\{(\{x_i\}, y)\in E^{(n)}\times E~|~x_i=y \text{ for some }i\},\]
 which is closed of codimension 1. 

Now consider the Cartesian square
\[
\xymatrix{
Z\ar[d] \ar[r]&E^{(n-1)}\times E\ar[d]\\
E^{(n)}\ar[r]^\iota & E^{(n)}\times E.}
\]
Then by definition, the scheme $Z$ is given by 
\[
\{(\{x_i\}, \{y_j\}, z)\in E^{(n)}\times E^{(n-1)}\times E ~|~ \{x_i\}=\{y_j, z\} \text{ and }z=0\}. 
\]
So $Z\cong C$. 
\end{proof}

Recall that the general construction of symmetric powers of Deligne defines an isomorphism between $E^{(n)}$ and the Hilbert scheme of $n$-points on $E$ \cite[Theorem~6.3.9]{De}. In particular, the map $E^{(n)}\to S$ is smooth. This follows from a well-known fact that the Hilbert scheme of points on a smooth curve is smooth (see e.g., \cite[Proof of Proposition~6.3.9 b)]{De}), which in turn follows from the vanishing of obstruction \cite[XI 1]{SGA3}. 
Moreover, the line bundle $\calO(-\{0\})$ on $E$ defines a line bundle on  $E^{(n)}$. Explicitly, 
let $p_i:E^n\to E$ be the projection onto the $i$-th factor, and  $\pi:E^n\to E^{(n)}$ be the symmetrization map. The $\fS_n$-equivariance structure  on $\otimes_{i=1}^np_i^*\calO(-\{0\})$ defines the sheaf $\big(\pi_*(\otimes_{i=1}^np_i^*\calO(-\{0\}))\big)^{\fS_n}$ on $E^{(n)}$. This sheaf has been studied by Deligne  and is denoted as $TS_{ext}^n\calO(-\{0\})$ in \cite[(5.5.7)]{De}. A local calculation shows that $TS_{ext}^n$ transforms locally free sheaves to locally free sheaves \cite[(5.5.2.4)]{De}, the rank of which can be calculated from a fiber \cite[(5.5.8.1)]{De}. In particular,  $\big(\pi_*(\otimes_{i=1}^np_i^*\calO(-\{0\}))\big)^{\fS_n}$ is a line bundle. By definition, the section $\vartheta(x)$ of $\calO(-\{0\})^{-1}$ induces the section $\prod_{i=1}^n\vartheta(x_i)$ of $\pi_*\otimes_{i=1}^np_i^*\calO(-\{0\})^{-1}$, the vanishing divisor of which is $C$. As an effective divisor,  $\calO(-C)^{-1}$ has a natural section denoted by $\vartheta^{ U_n}$. Hence, there is an isomorphism $\calO(-C)^{-1}\cong \big(\pi_*\otimes_{i=1}^np_i^*\calO(-\{0\})^{-1}\big)^{\fS_n}$  under which the section $\vartheta^{ U_n}$ is identified with the section $\prod_{i=1}^n\vartheta(x_i)$.

Let $\rho:T\to  U_r$ be a representation of $T$. Up to conjugation the map lands in the maximal torus, and hence determines $r$ characters of $T$, denoted by $\lambda_1,\dots,\lambda_r$.  Let $\chi_\rho:E^n\to E^r$ be the product of the $\chi_{\lambda_i}$, and the composition with $E^r\to E^{(r)}$   is still  denoted by $\chi_\rho$. Let $\calL_\rho$  be the pull-back of the line bundle $\calO(-C)$ on $E^{(r)}$ via the map $\chi_\rho$, and  $\vartheta(\chi_\rho)$  be the section of $\calL_\rho^{-1}$ on $E^n$ which is the pull-back via $\chi_\rho$ of the section $\vartheta^{ U_r}$ of $\calO(-C)^{-1}$ on $E^{(r)}$.

The following lemma is an easy consequence of Lemma~\ref{lem:tensor_c1}. See also \cite[\S~1.8]{GKV95}. 
\begin{lemma}\label{lem: theta_add_tensor}
Let $\rho_1:T\to  U_{r_1}$ and $\rho_2:T\to  U_{r_2}$ be  two representations of $T$.
Let $\oplus:E^{(r_1)}\times E^{(r_2)}\to E^{(r_1+r_2)}$ be the symmetrization map, and let $\otimes:E^{(r_1)}\times E^{(r_2)}\to E^{(r_1r_2)}$ be the map given by $(\{x_1,\dots,x_{r_1}\},\{y_1,\dots,y_{r_2}\})\mapsto \{x_i+y_j\}$.
Then the following diagrams commute
\[\xymatrix{
E^n\ar[rr]^-{\chi_{\rho_1}\times\chi_{\rho_2}}\ar[drr]_-{\chi_{\rho_1\oplus\rho_2}}& & E^{(r_1)}\times E^{(r_2)}\ar[d]^{\oplus}\\
& & E^{(r_1+r_2)}
}\]
and
\[\xymatrix{
E^n\ar[rr]^-{\chi_{\rho_1}\times\chi_{\rho_2}}\ar[drr]_-{\chi_{\rho_1\otimes\rho_2}} &  &E^{(r_1)}\times E^{(r_2)}\ar[d]^{\otimes}\\
 &  &E^{(r_1r_2)}.
}
\]
Moreover, $\vartheta(\chi_{\rho_1\oplus\rho_2})=\vartheta(\chi_{\rho_1})\otimes\vartheta(\chi_{\rho_2})$ as sections of $\calL_{\rho_1\oplus\rho_2}^{-1}\cong \calL_{\rho_1}^{-1}\otimes\calL_{\rho_2}^{-1}$.
\end{lemma}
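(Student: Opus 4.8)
The plan is to reduce everything to the rank-one computations already packaged in Lemma~\ref{lem:tensor_c1} and Lemma~\ref{lem:divisors equal on E(n)}, and then to bootstrap from tori to arbitrary representations. First I would treat the two diagrams. For the first diagram: the map $\chi_{\rho_1\oplus\rho_2}:E^n\to E^{(r_1+r_2)}$ is, by definition, the composite of the product map $\chi_{\lambda_1}\times\cdots\times\chi_{\lambda_{r_1}}\times\chi_{\mu_1}\times\cdots\times\chi_{\mu_{r_2}}:E^n\to E^{r_1+r_2}$ with the symmetrization $E^{r_1+r_2}\to E^{(r_1+r_2)}$, where $\lambda_i$ (resp.\ $\mu_j$) are the characters of $\rho_1$ (resp.\ $\rho_2$). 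On the other hand $\chi_{\rho_1}\times\chi_{\rho_2}$ is the product of $E^n\to E^{(r_1)}$ and $E^n\to E^{(r_2)}$, each obtained by symmetrizing the corresponding product of $\chi$'s, and $\oplus$ is exactly the map $E^{(r_1)}\times E^{(r_2)}\to E^{(r_1+r_2)}$ induced by concatenation on $E^{r_1+r_2}$. So both composites equal the symmetrization of the same map $E^n\to E^{r_1+r_2}$; the diagram commutes by the universal/functorial properties of quotients by finite groups. For the second diagram the argument is identical, using instead that $\chi_{\rho_1\otimes\rho_2}$ has character set $\{\lambda_i+\mu_j\}$, which matches the composite of $\chi_{\rho_1}\times\chi_{\rho_2}$ with the map $\otimes$ that sends $(\{x_i\},\{y_j\})\mapsto\{x_i+y_j\}$; here one additionally invokes Lemma~\ref{lem:tensor_c1}.(1) to identify $\chi_{\lambda_i+\mu_j}$ with $\mu\circ(\chi_{\lambda_i}\times\chi_{\mu_j})$ coordinatewise.

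Next I would deduce the isomorphism $\calL_{\rho_1\oplus\rho_2}^\vee\cong\calL_{\rho_1}^\vee\otimes\calL_{\rho_2}^\vee$. By definition $\calL_{\rho_1\oplus\rho_2}=\chi_{\rho_1\oplus\rho_2}^*\calO(-C_{r_1+r_2})$ where $C_m\subseteq E^{(m)}$ is the divisor ``some coordinate is $0$.'' Using the first diagram this is $(\chi_{\rho_1}\times\chi_{\rho_2})^*\oplus^*\calO(-C_{r_1+r_2})$, so it suffices to show $\oplus^*\calO(-C_{r_1+r_2})\cong p_1^*\calO(-C_{r_1})\otimes p_2^*\calO(-C_{r_2})$ on $E^{(r_1)}\times E^{(r_2)}$, where $p_1,p_2$ are the two projections. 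This is a statement purely about symmetric products: the reduced divisor $\oplus^{-1}(C_{r_1+r_2})$ decomposes as $(C_{r_1}\times E^{(r_2)})\cup(E^{(r_1)}\times C_{r_2})$, and these two components meet properly, so the associated line bundles add — this is exactly the kind of fact recorded in Lemma~\ref{lem:divisors equal on E(n)} and the Looijenga normalization cited in \S~\ref{subsec:theta}. Pulling this isomorphism back along $\chi_{\rho_1}\times\chi_{\rho_2}$ and dualizing gives the claimed identification of $\calL^\vee$'s.

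Finally, for the section statement $\vartheta(\chi_{\rho_1\oplus\rho_2})=\vartheta(\chi_{\rho_1})\otimes\vartheta(\chi_{\rho_2})$, I would trace through the definitions. The section $\vartheta^{U_m}$ of $\calO(-C_m)^\vee$ on $E^{(m)}$ is, by construction in \S~\ref{subsec:theta}, the $\fS_m$-invariant descent of $\prod_{i=1}^m\vartheta(x_i)$ on $E^m$; hence under the isomorphism $\oplus^*\calO(-C_{r_1+r_2})^\vee\cong p_1^*\calO(-C_{r_1})^\vee\otimes p_2^*\calO(-C_{r_2})^\vee$ the section $\oplus^{-1}\vartheta^{U_{r_1+r_2}}$ corresponds to $p_1^{-1}\vartheta^{U_{r_1}}\otimes p_2^{-1}\vartheta^{U_{r_2}}$ — this is literally the statement that $\prod_{i=1}^{r_1+r_2}\vartheta$ factors as $\big(\prod_{i=1}^{r_1}\vartheta\big)\cdot\big(\prod_{j=1}^{r_2}\vartheta\big)$ after reindexing. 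Pulling back along $\chi_{\rho_1}\times\chi_{\rho_2}$ and using that $\vartheta(\chi_\rho)$ is by definition $\chi_\rho^{-1}\vartheta^{U_r}$ gives the result. The one genuine subtlety — the place I expect to spend the most care — is checking that all these identifications of line bundles are the \emph{canonical} ones (i.e.\ compatible with the Looijenga normalization used to define $\vartheta^{U_n}$ in the first place) rather than merely abstract isomorphisms, so that the equality of sections is literally an equality and not just an equality up to a nonzero scalar; this amounts to verifying that the Looijenga isomorphism is multiplicative with respect to the concatenation map $E^{(r_1)}\times E^{(r_2)}\to E^{(r_1+r_2)}$, which one can check on the level of $\fS$-equivariant bundles on the honest products $E^{r_1}\times E^{r_2}\to E^{r_1+r_2}$ where everything is manifestly a tensor product of pullbacks of $\calO(-\{0\})$. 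The $\otimes$-part of the lemma is not needed for the section statement and the bundle/section compatibility there is not asserted, so no further work is required for it beyond the commuting diagram.
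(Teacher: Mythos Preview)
Your proposal is correct and takes essentially the same approach as the paper: the paper simply states that this lemma is ``an easy consequence of Lemma~\ref{lem:tensor_c1}'' and refers to \cite[\S~1.8]{GKV95}, and your argument is precisely the detailed unpacking of that claim---reducing the diagrams to the character decomposition of $\rho_1\oplus\rho_2$ and $\rho_1\otimes\rho_2$, invoking Lemma~\ref{lem:tensor_c1}(1) for the tensor case, and tracing the section identity through the Looijenga descent of $\prod_i\vartheta(x_i)$. Your attention to the canonicity of the line-bundle isomorphisms (so that the equality of sections is literal and not up to scalar) is a point the paper leaves implicit.
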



\section{Lurie's approach to $T$-equivariant cohomology}\label{sec:Lurie}
This section is a very brief reminder of Lurie's survey \cite{Lur}, adjusted to our purpose. We recall Lurie's existence theorem. We show that the Atiyah-Bott-Berlin-Vergne localization theorem and the Atiyah-Segal completion theorem are formal consequences of Lurie's theorem. These are folklore facts which are not in the literature. We also spell out the definition of Chern character in Lurie's theory, a version of which will be used  later in this paper.

\subsection{Derived schemes and equivariant cohomology theory}
\label{subsec:dersch}
Recall that an $E_\infty$-ring spectrum $A$ is a refinement of
a multiplicative cohomology. That is, it is a spectrum endowed with certain multiplicative structure. For any topological space $X$, we consider the infinite suspension of $X$ (disjoint union with a base point) as a spectrum. 
In the homotopy category of spectra, there is a suspension functor denoted by $\Sigma$, and the set of morphisms are denoted by $[-,-]$, i.e., the set of homotopy classes of maps of spectra.   
The $E_\infty$-ring spectrum $A$ then defines a well-behaved contravariant functor from the category of topological
spaces to the category of graded commutative rings, sending any $X$ to $A^*(X):=\bigoplus_{i\in\bbZ}[X,\Sigma^iA]$.  
Such a multiplicative cohomology also has relative and reduced versions $A^*(X,Y)$ and $A^*(X,x_0)$ for  pairs $(X,Y)$ and pointed spaces $(X,x_0)$. For simplicity, we denote $A^*(X,x_0)$ by $\tilde{A}(X)$. 
Recall that every commutative
differential graded algebra defines via the Eilenberg-MacLane construction an $E_\infty$-
ring spectrum,  and when working rationally, every $E_\infty$-ring spectrum arises essentially
in this manner. We refer to  \cite[\S~2.1]{Lur} for an introduction and to \cite[Section 7.1]{HighAlg} for a full-blown treatment.

\begin{definition}
We call an $E_\infty$-ring spectrum $A$   even if  $A^{2i-1}(\pt)=0$ for any integer $i$, and  periodic if it is endowed with an invertible element $p\in \pi_2A$.
\end{definition}

If $A$ is periodic, then for any topological space $X$, multiplication by $p\in \pi_2A$ induces isomorphisms $A^i(X)\cong A^{i+2}(X)$ for all $i$.

Let $A$ be an $E_\infty$-ring spectrum. Then $\pi_0A$ is a commutative ring and $\pi_n(A)$ is a $\pi_0A$-module. The  Zariski spectrum  $\Spec A$ of $A$ is a pair $(\Spec(\pi_0A), \calO_{\Spec A})$, where $\Spec(\pi_0 A)$ is the classical scheme, and $\calO_{\Spec A}$ is a certain structure sheaf of $E_\infty$-ring spectra. As commented in \cite[\S~2.2]{Lur}, there is an $\infty$-category of $E_\infty$-ring spectra, hence there is a well-defined notion of sheaves of $E_\infty$-ring spectra. A  derived scheme $X^{der}$ is a topological space endowed with a sheaf of $E_\infty$-ring spectra $\calO_{X^{der}}$, which is locally isomorphic to the Zariski spectrum $\Spec A$ of some $E_\infty$-ring spectrum $A$ (see \cite[\S~2.2]{Lur}). The underlying classical scheme of $X^{der}$ is denoted by $X$. A morphism of derived schemes $f:X^{der}\to Y^{der}$ is a morphism of schemes $X\to Y$ together with a morphism of sheaves of $E_\infty$-ring spectra $\calO_{Y^{der}}\to f_*\calO_{X^{der}}$. 
Most of the notions from scheme theory are defined in the derived setting as well, such as  fiber product, flatness  of  morphisms, quasi-coherent sheaves and coherent sheaves. For example, a flat morphism locally is given by a map $A\to B$ of $E_\infty$-ring spectra such that $\pi_0B$ is a flat $\pi_0A$-module in the sense of commutative algebra, and that for each $n$ the induced map $\pi_n A\otimes_{\pi_0A}\pi_0M\to\pi_nM$ is an isomorphism. 
There is also a notion of derived Deligne-Mumford stack \cite[\S~1.4.4]{SAG}, of which an example is the classifying stack of oriented derived elliptic curves, to be briefly reviewed in \S~\ref{sec:G_equ_ell}.

If there is a given morphism of derived schemes $f:X^{der}\to Y^{der}$, we say that  $X^{der}$ is over $Y^{der}$, or $X^{der}$ is a $Y^{der}$-scheme. When $f$ is an affine morphism,  $f_*\calO_{X^{der}}$ as a quasi-coherent sheaf on $Y^{der}$ recovers $X^{der}$, hence in this case we simply write $\calO_{X^{der}}$ instead of $f_*\calO_{X^{der}}$. A map between two $Y^{der}$-schemes is a commutative diagram
 \[\begin{xymatrix}{
X^{der}\ar[dr]\ar[rr]&&X^{'der}\ar[dl]\\
&Y^{der}.&
}\end{xymatrix}\]
The collection of maps $\Hom_{Y^{der}}(X^{der},X^{'der})$ form a simplicial set (rather than a set) and hence has the homotopy type of a topological space. 
For an $E_\infty$-ring spectrum $A$, a commutative $A$-group is a flat $A$-scheme $\bbG^{der}$ endowed with the structure of an abelian group-object in the category of $A$-schemes. In particular, its topological functor of points, which a priori lands in the category of topological spaces, factors through the category of topological abelian groups.  For simplicity, we skip the superscript $der$ for $\bbG$ when understood from the context. Examples includes the additive group over certain $E_\infty$-rings, the multiplicative group, and a derived elliptic curve. 
For any $A$-scheme $S$, denote $\bbG^{der}(S)=\Hom_A(S,\bbG^{der})$.

\begin{definition}\cite[Definition 7.2.1]{Ell2}
For an $E_\infty$-ring spectrum $A$ and a commutative $A$-group $\bbG^{der}$, a  pre-orientation of  $\bbG^{der}$ is a pointed map of spaces  $S^2\to \bbG^{der}(A)$, where the base point of the target is the zero section.
\end{definition}
The homotopy classes of  pre-orientations are classified by $\pi_2(\bbG^{der}(A))$.
From now on, we always assume $\bbG^{der}$ is pre-oriented. 

 In this section we consider the equivariant cohomology with respect a compact abelian Lie group $T$. Let $\catA^{der}_T$ be the derived $A$-scheme that classifies maps of abelian groups from $\bbX^*(T)$ to $\bbG$. We denote its underlying classical scheme  by $\catA_T$.  For example, when $T=(S^1)^n$, then $\catA^{der}_T\cong\bbG^n$. 
For any homomorphism $f:T'\to T$ between compact abelian Lie groups, we have a map $f^*:\bbX^*(T)\to\bbX^*(T')$ between the weight groups and hence a map $\catA_f:\catA^{der}_{T'}\to \catA^{der}_{T}$. If $f:T'\to T$ is a closed embedding, then $\catA_f$ is an embedding.  Following the convention of affine morphisms to simplify notations,  when there is no confusion, we write $\calO_{\catA^{der}_{T'}}$ for the sheaf  $\catA_{f*}\calO_{\catA_{T'}^{der}}$ over $\catA_{T}^{der}$, and think of $\catA_{T'}^{der}$ as a closed subscheme of $\catA_T^{der}$.

Let  $T$ be a compact abelian Lie group.
We say  $X$ is a finite $T$-space if the following property is satisfied: there is a stratification \[\emptyset=X_0\subseteq X_1\subseteq\cdots\subseteq X_n=X\] with $X_{i+1}=X_i\coprod_{(T/T_i)\times S^{k-1}}((T/T_i)\times D^k)$, where $T_i$ are some closed subgroups of $T$, and $D^k$ are $k$-disks with boundary $S^{k-1}$.
Here the attaching map  $(T/T_i)\times S^{k-1}\to X_i$ is $T$-equivariant. 
\begin{example}\label{ex:attaching_fixed_loci}
An example of such a map is that we have a subset $ T/T'\times \pt\subseteq X_i$ and when attaching the cell $T/T_i\times D$ to $X_i$ for some disk $D$ and $T_i\subsetneq T'$,  the map $(T/T_i)\times S^{k-1}\to X_i$ sends $S^{k-1}$ to $\pt$, and  the factor $T/T_i$ to $T/T'$ via the natural quotient map. Note that the $T'$-fixed  loci on $ T/T_i\times D$ is empty although the attaching map sends $(T/T_i)\times S^{k-1}$ to a $T'$-fixed locus in $X_i$.
\end{example}

\begin{remark}
We need to apply equivariant cohomology theory to the case when $X$ is a vector bundle on a compact algebraic variety. In this case, $X$ itself does not have a finite $T$-cell decomposition as above. We therefore consider $D(X)$ the associated disk bundle, that is, replacing each fiber of the vector bundle by the unit disk in the fiber. So  $D(X)$ and $X$ are homotopy equivalent. We make the convention that $\calF_T(X)$ is understood to be $\calF_T(D(X))$. 
\end{remark}
\begin{theorem}\cite[Theorem~3.2]{Lur}\label{thm:lur_able}
Let $A$ be an $E_\infty$-ring spectrum. Let $\bbG$ be a pre-oriented commutative $A$-group. Then, 
for each compact abelian Lie group $T$, there exists a contravariant functor $\calF_T$  from the category of finite $T$-spaces to the {$\infty$}-category of quasi-coherent sheaves on $\catA^{der}_T$, satisfying the following properties:
\begin{enumerate}
\item It maps $T$-equivariant homotopy equivalences to equivalences of quasi-coherent sheaves;
\item For any fixed $T$, the functor $\calF_T$ maps finite homotopy colimits of $T$-spaces to homotopy limits of quasi-coherent sheaves;
\item $\calF_T(\pt)=\calO_{\catA^{der}_T}$;
\item For any closed embedding of subgroups $\phi:T'\subseteq T$, and finite $T'$-space $X'$, define $X=(X'\times T)/T'$ with the induced $T$-action. Then $\calF_{T}(X)=\catA_{\phi*}\calF_{T'}(X')$ where $\catA_\phi$ is the natural embedding $\catA^{der}_{T'}\to \catA^{der}_{T}$ induced by $\phi$. 
\end{enumerate}
\end{theorem}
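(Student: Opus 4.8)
The plan is to reconstruct Lurie's construction: produce $\calF_T$ first on the orbit category of $T$, where axioms (3) and (4) leave no freedom, and then extend to all finite $T$-spaces by the universal property that makes (2) automatic. Concretely, write $\mathrm{Orb}_T$ for the $\infty$-categorical orbit category, with objects the homogeneous spaces $T/T'$ ($T'\subseteq T$ closed) and morphism spaces the $T$-maps. Because $T$ is abelian, a $T$-map $T/T'\to T/T''$ can exist only if $T'\subseteq T''$, and Pontryagin duality turns such an inclusion into a surjection $\bbX^*(T'')\twoheadrightarrow\bbX^*(T')$, hence into a closed immersion $\catA^{der}_{T'}\hookrightarrow\catA^{der}_{T''}$ of derived schemes: $\catA^{der}_{T'}$ is cut out inside $\catA^{der}_{T''}$ by the conditions forcing the image of $\ker(\bbX^*(T'')\to\bbX^*(T'))$ to vanish; for $T'$ a subtorus this immersion is regular of the expected codimension, and for $T'$ finite it is the inclusion of a finite-flat torsion subscheme. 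Composing with $\catA^{der}_{T''}\hookrightarrow\catA^{der}_T$ gives $\phi_{T'}\colon\catA^{der}_{T'}\hookrightarrow\catA^{der}_T$, and I would set $\calF_T(T/T'):=\phi_{T'\,*}\calO_{\catA^{der}_{T'}}$, with the evident unit maps $\calF_T(T/T'')\to\calF_T(T/T')$ for $T'\subseteq T''$; functoriality of pushforward makes this a contravariant functor $\mathrm{Orb}_T\to\Qcoh(\catA^{der}_T)$. In particular $\calF_T(T/T)=\calO_{\catA^{der}_T}$, which is (3), and $\calF_T(T/\{1\})=z_*\calO$ is the structure sheaf of the identity section.

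Next I would extend. By Elmendorf's theorem in its $\infty$-categorical form, the $\infty$-category of $T$-spaces is the free cocompletion of $\mathrm{Orb}_T$ (equivalently, presheaves of spaces on $\mathrm{Orb}_T$). Since $\Qcoh(\catA^{der}_T)$ is presentable, in particular complete, its opposite is cocomplete, so the functor of the previous paragraph extends essentially uniquely to a contravariant functor $\calF_T$ out of all $T$-spaces that carries homotopy colimits to homotopy limits. Restricting to finite $T$-spaces — exactly those assembled from finitely many orbit cells $(T/T_i)\times D^k$ as in the stated stratification — each $\calF_T(X)$ is a finite homotopy limit of the sheaves $\phi_{T'\,*}\calO_{\catA^{der}_{T'}}$, which are perfect because the $\phi_{T'}$ have finite Tor-dimension; so $\calF_T$ lands in (perfect) quasi-coherent sheaves. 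Property (2) holds by construction, and (1) follows because $T$-homotopy equivalences are already equivalences in the $\infty$-category of $T$-spaces.

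For compatibility with induction, fix a closed inclusion $\phi\colon T\subseteq T'$ with induced closed immersion $\phi_\catA\colon\catA^{der}_T\hookrightarrow\catA^{der}_{T'}$. The induction functor $X\mapsto X'=(X\times T')/T$ preserves colimits and sends $T/T''\mapsto T'/T''$ for $T''\subseteq T$. Both $X\mapsto\calF_{T'}(X')$ and $X\mapsto\phi_{\catA*}\calF_T(X)$ are contravariant functors out of $T$-spaces taking colimits to limits ($\phi_{\catA*}$, a right adjoint and moreover affine, preserves limits), so by the universal property of the extension it suffices to compare them on $\mathrm{Orb}_T$, where the assertion reduces to $\phi_{\catA*}\psi_* = (\phi_\catA\circ\psi)_*$ for the closed immersions $\psi\colon\catA^{der}_{T''}\hookrightarrow\catA^{der}_T$ — immediate from functoriality of pushforward. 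This is (4).

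The genuinely hard part — and the reason this is a theorem of Lurie rather than a diagram chase — is carrying out the extension rigorously: one needs the $\infty$-categorical Elmendorf theorem for compact Lie groups, the derived scheme $\catA^{der}_T$ together with a functorial structure sheaf, and coherent functoriality of the pushforwards $\phi_{\catA*}$ and their base changes, all at once. Done naively at the level of homotopy categories — attaching cells and forming pullback squares by hand — one only obtains a functor well-defined up to incoherent equivalence, which is not enough to guarantee (2). The orientation of $\bbG$ enters by ensuring that $\catA^{der}_T$ is a smooth proper derived scheme of the expected relative dimension (so that the $\phi_{T'}$ behave as described) and, ultimately, that $\calF_T$ restricted to formal neighborhoods recovers Borel-equivariant $A$-cohomology; but that last comparison goes beyond properties (1)--(4).
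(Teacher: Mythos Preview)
The paper does not prove this theorem; it is quoted from Lurie's survey and used as a black box, with \S\ref{sec:Lurie} devoted to extracting formal consequences (localization, completion, the Chern character). So there is no proof in the paper to compare against.

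That said, your reconstruction of Lurie's argument has a real gap at the first step. You set $\calF_T(T/T'):=\phi_{T'*}\calO_{\catA^{der}_{T'}}$ together with ``the evident unit maps for $T'\subseteq T''$,'' and claim that functoriality of pushforward makes this a contravariant functor $\mathrm{Orb}_T\to\Qcoh(\catA^{der}_T)$. But $\mathrm{Orb}_T$ is not a poset: for $T$ a torus and $T'\subseteq T''$ the mapping space $\mathrm{Map}_T(T/T',T/T'')$ is the torus $T/T''$, so in particular every orbit carries a positive-dimensional torus of automorphisms, and an $\infty$-functor out of $\mathrm{Orb}_T$ must record coherently how these act. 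Pushforward functoriality produces only one map per inclusion $T'\subseteq T''$; this pins down a functor on the homotopy category $h\,\mathrm{Orb}_T$, but not an $\infty$-functor, and without the latter the Elmendorf extension that is supposed to yield (2) is simply not available.

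The missing ingredient is precisely the preorientation, which is part of Lurie's hypotheses. A preorientation is an element of $\pi_2(\bbG(A))$, and it is this datum --- amounting to a map $BS^1\to\bbG$ and hence, for each quotient, a coherent action of $B(T/T')$ --- that promotes your poset-indexed assignment to a genuine functor on $\mathrm{Orb}_T$. So the orientation is not, as your last paragraph suggests, a late-arriving hypothesis governing only the smoothness of $\catA^{der}_T$ and the Borel comparison: its underlying preorientation is already needed to define the functor on orbits at all. (The stronger \emph{orientation} condition is what later yields the Thom isomorphism of Proposition~\ref{prop:equivThom}.)
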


We refer the readers to 
\cite[Proposition~3.1, Remark~3.15]{Lur} for the relation between a pre-orientation on $\bbG^{der}$ and the identification $\calF_T(T/T')= \calO^{der}_{\catA_{T'}}$ as sheaves on $\catA_{T}^{der}$. The latter will be used frequently.

One observes that from the proof of \cite[Theorem~3.2]{Lur},  $\calF_T(X)$ is automatically a sheaf of $E_\infty$-ring spectra  on $\catA^{der}_T$. We  define $\calF_T^*(X)=\bigoplus_{i\in \bbZ}\calF_T^i(X)$, with $\calF^i_T(X)=\pi_{-i}(\calF_T(X))$, then $\calF_T^*(X)$ is  a sheaf of graded commutative algebras on the (classical) scheme $\catA_T$. 


For a pointed finite $T$-space $X$, we also consider the reduced version $\widetilde{\calF}_T(X)$ which is defined to be the homotopy fiber of the map $\calF_T(X)\to \calF_T(\pt)$ induced by the inclusion of the base point $\pt\to X$. 

Let $\Omega$ be the sheaf of relative K\"ahler differentials on the underlying scheme $\bbG$ of $\bbG^{der}$ along the structure map $p:\bbG \to \Spec(\pi_0A)$, and let $\omega$ be  the pull-back $0^*\Omega$ along  the zero section $0:\Spec(\pi_0A)\to \bbG$. Following \cite[\S~3]{Lur}, we will identify $\omega$ with the $\pi_0A$-module of global sections of $\omega$.  
A pre-orientation of $\bbG^{der}$, considered as a base-point preserving map $S^2\to \bbG^{der}(A)$, gives rise to a  morphism of $\pi_0A$-modules $\beta:\omega\to\pi_2A$ \cite[p19]{Lur}. We say a pre-orientation is an {\it orientation} if $\bbG$ is smooth of relative dimension 1 over $\Spec\pi_0A$, and $\beta$ induces isomorphisms $\pi_nA\otimes_{\pi_0A}\omega\to \pi_{n+2}A$ for any $n$.
The flatness of $\bbG^{der}$ over $A$ then gives an isomorphism \[p^*\omega\otimes_{\pi_0A}\pi_0\calO_{\bbG^{der}}\cong \pi_2A\otimes_{\pi_0A}\pi_0\calO_{\bbG^{der}}\cong\pi_2\calO_{\bbG^{der}}.\]


Let $X$ be a finite $T$-space, and let $V$ be a $T$-vector bundle on $X$.  Let $Th(V)$ be the Thom space, i.e., the quotient of the disk bundle $D(V)$ by the sphere bundle $S(V)$. The following is the Thom isomorphism theorem.
\begin{prop}\cite[Proposition~3.2]{Lur} \label{prop:equivThom}
\label{thm:Thom} Assume $\bbG$ is oriented.
Let $V$ be a finite dimensional unitary representation of $T$. Then 
\begin{enumerate}
\item the quasi-coherent sheaf $\widetilde{\calF}_T(Th(V))$ is a line bundle on $\catA_T^{der}$;
\item for any finite $T$-space $X$, the natural map $\widetilde{\calF}_T(Th(V))\otimes\calF_T(X)\to \widetilde{\calF}_T( Th(V)\wedge X_+)$ is an equivalence.
\end{enumerate}
\end{prop}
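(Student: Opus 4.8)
The plan is to deduce both statements from the already-established structural properties of the functor $\calF_T$ in Theorem~\ref{thm:lur_able}, together with the fact that $\calF_T(X)$ carries an $E_\infty$-ring structure. First I would recall the standard topological input: for a unitary representation $V$ of $T$, the Thom space $Th(V)$ is $T$-equivariantly homotopy equivalent to the mapping cone of the inclusion $S(V)\hookrightarrow D(V)$, and $D(V)$ is $T$-equivariantly contractible, so $Th(V)\simeq \Sigma_{S^1}^{\,?}$ ... more precisely $Th(V)$ sits in a homotopy cofiber sequence $S(V)_+ \to D(V)_+ \to Th(V)$ of pointed finite $T$-spaces, and $D(V)_+\simeq \pt_+$. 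One then uses property (2) of Theorem~\ref{thm:lur_able} — that $\calF_T$ sends finite homotopy colimits to homotopy limits — to convert this cofiber sequence into a fiber sequence of quasi-coherent sheaves on $\catA_T^{der}$, identifying $\calF_T(Th(V))$ with the fiber of $\calF_T(\pt)=\calO_{\catA_T^{der}}\to \calF_T(S(V))$.

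For part (1), the key point is that $S(V)$ is built by attaching equivariant cells of the form $(T/T_i)\times S^{k-1}$, and one proceeds by induction on the number of cells (i.e. on the stratification defining finiteness). The base case reduces, via property (4) of Theorem~\ref{thm:lur_able}, to a rank-one situation over a sub-torus, where $\calF_{T'}$ of a sphere bundle of a one-dimensional representation is computed directly from the orientation data; the orientation hypothesis (built into the existence theorem) is exactly what guarantees the relevant invertibility, so that the fiber is locally free of rank one. The inductive step glues these using the Mayer--Vietoris / cofiber sequences and the fact that extensions of line bundles along such attachments remain line bundles after passing to the total complex — here one uses that the connecting maps are compatible with the $\calO_{\catA_T^{der}}$-module structure. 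I would package this as: $\calF_T(Th(V))$ is a perfect complex which becomes invertible after restriction to every geometric point (by reduction to the torus case), hence is a line bundle.

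For part (2), I would argue that the natural map $\calF_T(Th(V))\otimes_{\calO}\calF_T(X)\to \calF_T(Th(V)\times X)$ is a map of $\calF_T(X)$-modules, and both sides are (by part (1) applied after base change, or by the projection formula built into property (2)) invertible $\calF_T(X)$-modules; a map of invertible modules that is an equivalence on one fiber is an equivalence, and on the point $\pt\subseteq X$ it reduces to $\calF_T(Th(V))\otimes \calO \xrightarrow{\sim}\calF_T(Th(V))$. Alternatively, and perhaps more cleanly, one observes $Th(V)\times X \cong Th(p_X^*V)$ for the trivial-type bundle $p_X^*V$ on $X$ and runs the same cofiber-sequence argument relative to $X$, using that $\calF_T$ takes the homotopy pushout defining $Th(p_X^*V)$ to a homotopy pullback and that $\calF_T(S(V)\times X)\simeq \calF_T(S(V))\otimes_{\calO}\calF_T(X)$ by the inductive cellular computation combined with the Künneth-type consequence of property (2).

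The main obstacle I anticipate is the inductive identification in part~(1): showing that attaching an equivariant cell $(T/T_i)\times D^k$ does not destroy the rank-one property. This requires knowing that $\calF_T((T/T_i)\times S^{k-1})$ and $\calF_T((T/T_i)\times D^k)\simeq \calF_T(T/T_i)$ are controlled — i.e. that $\calF_T(T/T_i) = \phi_{i,\catA*}\calO_{\catA_{T_i}^{der}}$ by property~(4), a pushforward of a structure sheaf along a closed embedding of abelian schemes — and then checking that the resulting connecting homomorphism in the long exact (fiber) sequence is, fiberwise, either zero or an isomorphism onto a direct summand, so the total object stays invertible. This is where the smoothness of $\bbG_0$ and the orientation axiom genuinely enter, and it is essentially the globalized, derived form of the classical computation that the oriented cohomology of a sphere bundle of a line is "$\calO$ in two degrees with a regular boundary map." Once that cellular computation is in hand, everything else is formal from Theorem~\ref{thm:lur_able}.
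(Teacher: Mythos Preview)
The paper does not supply its own proof of this proposition; it is cited directly from \cite[Proposition~3.2]{Lur}. What the paper does provide is Example~\ref{ex:Thom_circ} immediately afterward, which carries out the rank-one case ($T=S^1$, $V$ the scaling representation) via exactly the cofiber-sequence identification you describe: $\calF_T(Th(V))$ is the homotopy fiber of $\calO_{\catA_T^{der}}\to\calF_T(SV)$, and since $\calF_T(SV)$ is the structure sheaf of the identity section, that fiber is the ideal sheaf of the zero section---a line bundle.

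Your overall strategy (cofiber sequence plus the axioms of Theorem~\ref{thm:lur_able}) is the right one and matches Lurie's. However, the inductive scheme you propose for part~(1)---cellular induction on the equivariant CW structure of $S(V)$---is more delicate than necessary, and the step you yourself flag as the ``main obstacle'' is a genuine gap: there is no clean reason why attaching an equivariant cell to $S(V)$ should preserve the property that the fiber of $\calO\to\calF_T(S(V))$ is invertible. Fibers of maps between perfect complexes do not stay rank-one under such attachments in general, and your appeal to ``extensions of line bundles along such attachments remain line bundles'' is not a valid principle.

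The standard route, implicit in Lurie's treatment and in the paper's Example~\ref{ex:Thom_circ}, is to exploit that $T$ is a torus: any unitary $T$-representation splits as $V=\bigoplus_i V_i$ with each $V_i$ one-dimensional, whence $Th(V)\simeq Th(V_1)\wedge\cdots\wedge Th(V_n)$ as pointed $T$-spaces. One then proves (1) and (2) simultaneously by induction on $\dim V$. The base case $\dim V=1$ is exactly Example~\ref{ex:Thom_circ}; part~(2) in that case follows by crossing the same cofiber sequence with $X$ and using property~(4) of Theorem~\ref{thm:lur_able} to compute $\calF_T(S(V)\times X)$. The inductive step is then the already-established instance of (2) applied to $Th(V')\wedge Th(V_{n})$, which gives $\calF_T(Th(V))\cong\calF_T(Th(V'))\otimes\calF_T(Th(V_n))$---a tensor product of line bundles, hence manifestly a line bundle. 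This replaces your cellular gluing argument entirely.
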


\begin{example}\cite[\S~3.4]{Lur}\label{ex:Thom_circ}
For simplicity, assume $A$ is even and periodic.
Let $T=S^1$, and let $V$ be the 1-dimensional representation on which $T$ acts by rotation. In this case, we have $\catA_T^{der}\cong \bbG$. Also $\widetilde\calF_T(Th(V))$ is the homotopy fiber of the map $\calF_T(DV)\to \calF_T(SV).$ Since $DV$ is contractible, $\widetilde\calF_T(Th(V))$ is the homotopy fiber of the following map of quasi-coherent sheaves  on $\catA_T^{der}$:
\[\calO_{\catA_T^{der}}\to \calF_T(SV),\]
with $\calF_T(SV)$  homotopy equivalent to the structure sheaf of the zero section of $\catA_T^{der}$. Therefore, we can identify  $\widetilde\calF_T(Th(V))$ with the ideal sheaf of the zero section of $\bbG$. Via this identification, the map $\widetilde\calF_T(Th(V))\to\calO_{\catA_T^{der}}$ corresponds to the natural section of the line bundle $\widetilde\calF_T(Th(V))^{-1}$ that vanishes of order one at the zero section of $\catA_T^{der}$.  

\end{example}

In the rest of \S~\ref{sec:Lurie}, we deduce some formal consequences of Theorem~\ref{thm:lur_able}.

\begin{lemma}\label{lem:change_of_group} Assume there is a pre-orientation on $\bbG$. 
Let $f:T'\to T$ be a surjective map of  compact abelian Lie groups. Let $X$ be a finite $T$-space, considered as a finite $T'$-space via the map $f$. Then, $\catA_f^*\wt\calF_{T}(X)\simeq\wt\calF_{T'}(X)$.
\end{lemma}
Note that in general the pullback of coherent sheaves $\catA_f^*$ is understood in the derived sense. In particular, it preserves finite homotopy limits. Although we will not explore this, in this case the map $\catA_f$ can be shown to be flat, hence,  the homotopy groups of  $\catA_f^*$ are equal to the classical (i.e., non-derived) pullbacks of homotopy groups. 

For a topological space $X$, denote by $X_+$ the disjoint union of $X$ and a point, making it a pointed topological space. If $X$ is a finite $T$-space, with the trivial action at the base point, $X_+$ is also a finite $T$-space.
\begin{proof}
Assume first that  $X=T/\Gamma\times D$ for a disc $D$ and some closed subgroup $\Gamma<T$. Let $\Gamma'=f^{-1}(\Gamma)$ be the  closed subgroup of $T'$. Then by Theorem \ref{thm:lur_able}.(4),  $\calF_{T}(X)\simeq \calO_{\catA_{\Gamma}^{der}}$ and $\calF_{T'}(X)\simeq\calO_{\catA_{\Gamma'}^{der}}$. By definition  of $\catA_f$, we also have $\catA_f^*\calO_{\catA^{der}_{\Gamma}}\simeq \calO_{\catA^{der}_{\Gamma'}}$. So the conclusion holds in this case. Now assume the conclusion holds for $X'$, and suppose $X$ is obtained from $X'$ by gluing a cell, i.e., for some $\Gamma<T$, we have
\[
X/X'\cong (T/\Gamma\times D^n)/(T/\Gamma\times S^{n-1})\cong T/\Gamma_+\wedge S^n. 
\]
Again, denote $\Gamma'=f^{-1}(\Gamma)$, then $T/\Gamma\cong T'/\Gamma'$. 
There is a cofiber sequence
\[
\xymatrix{\wt\calF_{T}(T/\Gamma_+\wedge S^n) \ar[r] & \wt\calF_{T}(X)\ar[r] & \wt\calF_{T}(X').}
\]
Apply  $\catA_f^*$ on this sequence. Since $\catA_f^*$ preserves finite homotopy limits, so we have a cofiber sequence
\[
\xymatrix{\catA_f^*\wt\calF_{T}(T/\Gamma_+\wedge S^n) \ar[r] & \catA_f^*\wt\calF_{T}(X)\ar[r] & \catA_f^*\wt\calF_{T}(X').}
\]
Comparing with the cofiber sequence
\[
\xymatrix{\wt\calF_{T'}(T'/\Gamma'_+\wedge S^n) \ar[r] & \wt\calF_{T'}(X)\ar[r] & \wt\calF_{T'}(X'),}
\]
we see that the induction hypothesis together with $\catA_f^*\wt\calF_T(T/\Gamma_+\wedge S^n)\cong \wt\calF_{T'}(T'/\Gamma'_+\wedge S^n)$ implies that $\catA_F^*\wt\calF_T(X)\cong \wt\calF_{T'}(X)$. The proof is finished. 

\end{proof}

\subsection{Localization}
First, we show that it follows  from Lurie's results that the $T$-equivariant cohomology theory satisfies the Atiyah-Bott-Berline-Vergne localization theorem, and consequently  the stalks of the sheaf $\calF_T$ can be identified.  The proof presented here is kindly suggested to us by an anonymous  referee.

For any non-trivial character $\chi: T\to S^1$, let $T_\chi$ be the kernel of $\chi$. We call the subscheme  $\catA_{T_\chi}^{der}$ in $\catA^{der}_T$ the weight hyperplane corresponding to $\chi$. Note that such hyperplanes  could be disconnected. Moreover, for any closed subgroup $\Gamma<T$,  lifting any non-trivial character $\chi$ of $T/\Gamma$ to $T$ gives a non-trivial character of $T$, whose corresponding  weight hyperplane contains     $\catA^{der}_{\Gamma}$. Therefore, each $\catA_{\Gamma}^{der}$ is contained in some weight hyperplane. In particular,  a (more commonly used) weaker version of the following lemma can be phrased as $i_\Gamma^\sharp$ is an equivalence outside a finite union of weight hyperplanes.

\begin{lemma}\label{lem:localization}
Let $X$ be a finite $T$-space and $\Gamma< T$ be a closed subgroup. Denote the embedding $i_\Gamma:X^\Gamma\to X$.  Then there exist  finitely many closed subgroups $\Gamma_i<T, i\in I$ such that   $\Gamma\not<\Gamma_i$, and  the induced pull-back $i_\Gamma^\sharp:\widetilde\calF_T(X)\to \widetilde\calF_T(X^\Gamma)$ is an equivalence  on the open subset $(\cup_{i\in I}\catA_{\Gamma_i}^{der})^c$. 
\end{lemma}
From the proof below, the subgroups $\Gamma_i$ can be explicitly determined as those showing up in a cell-decomposition of $X$. 

\begin{proof}Let $\{\Gamma_i\}$ be the collection of isotropy groups of cells not containing $\Gamma$. By induction over the number of cells, with the empty space as base case, we can assume that the statement holds for a sub-$\Gamma$-complex $X'$ of $X$. Now suppose $X$ is obtained from $X'$ by gluing a cell, i.e., 
\[
X/X'\cong (T/\Delta\times D^n)/(T/\Delta\times S^{n-1})\cong T/\Delta_+\wedge S^n, 
\]
where $\Delta<T$. Consider the diagram of cofiber sequences:
\[
\xymatrix{\wt\calF_T(T/\Delta_+\wedge S^n)\ar[r] \ar[d]^g & \wt \calF_T(X)\ar[d]\ar[r] & \wt \calF_T(X')\ar[d]\\
\wt\calF_T((T/\Delta_+\wedge S^n)^\Gamma)\ar[r] & \wt \calF_T(X^\Gamma) \ar[r] & \wt \calF_T((X')^\Gamma). 
}
\]
The five lemma implies that it suffices to show the morphism  $g$ is an equivalence.   Note that $(T/\Delta_+\wedge S^n)^\Gamma$ equals $T/\Delta_+ \wedge S^n$ if $\Gamma\subset \Delta$  and equals a point otherwise. So it suffices to show that $g$ is an equivalence when $\Gamma\subset \Delta$. By Theorem \ref{thm:lur_able}.(4), the inclusion $j:\Delta\to T$ induces $\wt\calF_T(T/\Delta_+\wedge S^n)\cong \catA_{j*}\wt\calF_\Delta(S^n)$. So the morphism $g$ coincides with the morphism $\catA_{j*}\wt\calF_\Delta(S^n)\to \wt\calF_T(\pt)\cong 0$, which is clearly an equivalence away from the subscheme $\catA_\Delta^{der}$ of $\catA_T^{der}$. 
\end{proof}

For any $a\in \catA_T$, let 
\begin{equation}\label{eq:fixeda}T(a):=\bigcap_{a\in \catA_{T'}}T', \quad i_a: X^{T(a)}\to X.
\end{equation}
\begin{theorem} \label{thm:stalk}
Taking the stalks at the point $a\in \catA_T$, the map $i^\sharp_a$ induces an equivalence $\widetilde\calF_T(X)_a\to \widetilde\calF_T(X^{T(a)})_a$.
\end{theorem}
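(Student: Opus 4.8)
The plan is to deduce this local statement from Lemma~\ref{lem:localization} by a standard ``removing a finite set of hyperplanes does not disconnect the relevant neighbourhood'' argument, applied to the closed subgroup $\Gamma = T(a)$. First I would note that $T(a)$ is by construction a closed subgroup of $T$ containing every $T'$ with $a\in\catA_{T'}$, so $a\in\catA_{T(a)}$ itself, and $T(a)$ is the \emph{smallest} such subgroup. By Lemma~\ref{lem:localization} applied with $\Gamma = T(a)$, the pull-back $i_a^\sharp:\calF_T(X)\to\calF_T(X^{T(a)})$ is an isomorphism of quasi-coherent sheaves over the complement of a finite union $H = \bigcup_j H_{\chi_j}$ of weight hyperplanes in $\catA_T$. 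So the whole content is to show that each of these hyperplanes $H_{\chi_j}$ occurring in the support of $\coker(i_a^\sharp)$ and $\ker(i_a^\sharp)$ actually avoids the point $a$ — equivalently, that $a\notin H_{\chi_j}$ — after which localizing at the local ring $\calO_{\catA_T,a}$ kills the kernel and cokernel and gives the claimed stalk isomorphism.

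The key step, then, is: if $\chi:T\to S^1$ is a non-trivial character with $a\in\catA_{T_\chi}$ (where $T_\chi=\ker\chi$), then $\chi$ does not appear among the hyperplanes we need to delete. The reason is that in this case $T(a)\subseteq T_\chi$ by minimality of $T(a)$, so $\chi$ restricts trivially to $T(a)$; I would trace through the proof of Lemma~\ref{lem:localization} to see that the only hyperplanes actually needed are the $H_\chi$ arising from characters $\chi$ of quotients $T/\Gamma_i$ where $\Gamma_i$ is a subgroup \emph{not} containing $\Gamma=T(a)$, lifted to $T$ — so those $\chi$ are non-trivial on $T(a)$, hence $T(a)\not\subseteq T_\chi$, hence (again by minimality) $a\notin\catA_{T_\chi}=H_\chi$. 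Thus every hyperplane in the exceptional locus avoids $a$, and passing to the stalk at $a$ is exact and inverts $i_a^\sharp$.

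Concretely I would reorganize the argument so that the proof of Lemma~\ref{lem:localization} is stated in the sharper form: the restriction $\calF_T(X)\to\calF_T(X^\Gamma)$ is an isomorphism over $\catA_T\setminus\bigcup_{\chi|_\Gamma\neq 1}H_\chi$, i.e.\ only hyperplanes coming from characters non-trivial on $\Gamma$ are removed. This is visible from the proof: in the case $X^\Gamma=\emptyset$ one uses $T$-maps $X_i\to T/\Gamma_i$ with $\Gamma\not\subseteq\Gamma_i$, and the support of $\calF_T(X_i)$ lies in $\catA_{\Gamma_i}$, which sits inside $H_\chi$ for any character $\chi$ of $T$ that is non-trivial on $T/\Gamma_i$ — and such a $\chi$ is automatically non-trivial on $\Gamma$ since $\Gamma\not\subseteq\Gamma_i$; the general case (shrinking to a tubular neighbourhood of $X^\Gamma$, with complement $Z$ having no $\Gamma$-fixed points) reduces to this. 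Applying the sharpened lemma with $\Gamma=T(a)$ and using that $a\in\catA_{T_\chi}$ forces $T(a)\subseteq T_\chi$, so $\chi|_{T(a)}=1$, we conclude $a$ lies in none of the deleted hyperplanes, and taking stalks at $a$ finishes the proof.

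The main obstacle I anticipate is purely bookkeeping: making sure the proof of Lemma~\ref{lem:localization} genuinely yields the \emph{precise} hyperplanes $H_\chi$ with $\chi|_\Gamma\neq 1$ and nothing worse. In particular one must check that lifting a non-trivial character of $T/\Gamma_i$ to $T$ can always be arranged to be non-trivial on $\Gamma$ — which is immediate because $\Gamma\not\subseteq\Gamma_i$ means $\Gamma$ surjects onto a non-trivial subgroup of $T/\Gamma_i$, so some character of $T/\Gamma_i$ is non-trivial on the image of $\Gamma$ — and that the tubular-neighbourhood step in the general case does not introduce new hyperplanes beyond those from $Z$, which has no $\Gamma$-fixed points and so falls under the first case. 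Once that is pinned down, everything else is a one-line localization argument.
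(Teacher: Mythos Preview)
Your proposal is correct and follows essentially the same approach as the paper: apply the localization lemma with $\Gamma=T(a)$, then argue that $a$ avoids every weight hyperplane appearing in the exceptional locus because those hyperplanes arise from subgroups $\Gamma_i$ not containing $T(a)$, while $a\in\catA_{\Gamma_i}$ would force $T(a)\subseteq\Gamma_i$ by minimality. The paper's proof is terser—it argues directly with the supports $\catA_{\Gamma_i}$ rather than the enclosing weight hyperplanes $H_\chi$—but your more careful bookkeeping (in particular, verifying one can lift to a character non-trivial on $T(a)$) is a harmless elaboration of the same idea.
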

\begin{proof}From   Lemma \ref{lem:localization}, there exist closed subgroups $\Gamma_i,i\in I$ such that $\Gamma_i\not> T(a)$, and $i^\sharp_{a}$ is an equivalence on $U:=(\cup_{i\in I}\catA_{\Gamma_i})^c$. By definition of $T(a)$, for any closed subgroup $\Gamma<T$,  $a\in\catA_{\Gamma}\subseteq\catA_T$ if and only if  $T(a)< \Gamma$. Hence, for any $i\in I$,  $a\not\in \catA_{\Gamma_i}$. Therefore,  $a\in U$ and hence taking the stalks at $a$, $i^\sharp_{T(a)}$ is an equivalence.
\end{proof}
\begin{remark} As mentioned in the proof,  for $a$ to be contained in the hyperplane $\catA_{\Gamma_i}\subseteq\catA_T$, the subgroup $\Gamma_i$ has to contain $T(a)$. Therefore, if $a\in \catA_T$ is the zero section,  it belongs to $\catA_{\Gamma}$ for any closed subgroup $\Gamma<T$, then $T(a)=0$, which implies that $X=X^{T(a)}$. In the other extreme case of Theorem \ref{thm:stalk}, if for any $\Gamma_i$ appearing in a cell decomposition of $X$, $a\not\in \catA_{\Gamma_i}$,  then $a$  belongs to the open subset in  Lemma \ref{lem:localization} (with $\Gamma$ replaced by $T$). Therefore, the stalks of $\wt\calF_T(X)$ and $\wt\calF_T(X^{T})$ at $a$ are equivalent.
\end{remark}

As in \cite[\S~2.2]{Gan}, we can calculate the stalks at every point. Let  $a\in \catA_T$ be an $A$-point, i.e., it defines a map $a:\bbX^*(T)\to \bbG(A)$. For any derived  $A$-scheme $S$, the map $a$ extends to a map $a:\bbX^*(T)\to \bbG(S)$. Since $\bbG$ is an  $A$-group,  $\Hom(\bbX^*(T),\bbG(S))$ has the structure of a (topological) abelian group. In particular, translation by $a$ induces an automorphism $t_a$ of $\Hom(\bbX^*(T),\bbG(S))$, which in turn amounts to an automorphism $t_a$ of  $\catA^{der}_T$, still called the translation by $a$. For any finite $T$-space $X$, the translation $t_a$ induces a map  on stalks $\wt\calF_T(X)_a\to \wt\calF_T(X)_0$. 

\begin{corollary}\label{cor: stalks}
For any $a\in \catA_T$, we have $\wt\calF_T(X)_a\simeq \wt\calF_T(X^{T(a)})_0$.
\end{corollary}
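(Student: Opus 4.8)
The plan is to combine the preceding theorem with a translation argument on $\catA^{der}_T$. By that theorem, taking stalks at $a$ the pull-back $i_a^\sharp$ is an isomorphism $\calF_T(X)_a\cong\calF_T(X^{T(a)})_a$; since $X^{T(a)}$ is again a finite $T$-space, it therefore suffices to produce an isomorphism $\calF_T(X^{T(a)})_a\cong\calF_T(X^{T(a)})_0$. Write $\Gamma=T(a)$ and $Y=X^{T(a)}$. Two features will be used: $\Gamma$ acts trivially on $Y$, and $a$ lies in the subgroup $\catA_\Gamma\subseteq\catA_T$ (by definition of $T(a)$). Recall the translation automorphism $t_a$ of $\catA^{der}_T$ with $t_a(0)=a$; for any quasi-coherent sheaf $\calG$ on $\catA^{der}_T$ one has $(t_a^\ast\calG)_0\cong\calG_a$. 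Hence the corollary follows as soon as we exhibit an isomorphism of quasi-coherent sheaves $t_a^\ast\calF_T(Y)\cong\calF_T(Y)$, since then $\calF_T(Y)_a\cong(t_a^\ast\calF_T(Y))_0\cong\calF_T(Y)_0$.

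First I would settle the orbit types. Because $\Gamma$ acts trivially on $Y$, the space $Y$ is a finite $T/\Gamma$-CW complex, and pulling this cell structure back along $T\to T/\Gamma$ presents $Y$ as a finite $T$-space all of whose cells have the form $(T/T_i)\times D^k$ with $\Gamma\subseteq T_i$. For such an orbit, Theorem~\ref{thm:lur_able}.(3)--(4), applied to $\pt$ and the inclusion $\phi\colon T_i<T$, gives $\calF_T(T/T_i)\cong\phi_{\catA\ast}\calO_{\catA^{der}_{T_i}}$, the structure sheaf of the sub-group-scheme $\catA^{der}_{T_i}\subseteq\catA^{der}_T$. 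Since $a\in\catA_\Gamma\subseteq\catA_{T_i}$ and $\catA^{der}_{T_i}$ is a subgroup, the translation $t_a$ restricts to an automorphism of $\catA^{der}_{T_i}$; consequently $t_a^\ast\calF_T(T/T_i)\cong\calF_T(T/T_i)$, and one checks this identification is compatible with the maps induced by the orbit maps $T/T_i\to T/T_j$ (for $T_i\subseteq T_j$ the closed immersion $\catA_{T_i}\hookrightarrow\catA_{T_j}$ again passes through $a$).

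Next I would glue these over the cell filtration $\emptyset=Y_0\subseteq\cdots\subseteq Y_m=Y$. Each step is a homotopy pushout $Y_{j+1}\simeq Y_j\amalg_{(T/T_i)\times S^{k-1}}(T/T_i)\times D^k$, which Theorem~\ref{thm:lur_able}.(2) turns into a homotopy pull-back square of sheaves of $E_\infty$-ring spectra; as $t_a^\ast$ is pull-back along an isomorphism, hence exact, it carries this square to the analogous square for $t_a^\ast\circ\calF_T$, and feeding in the orbit identifications of the previous step together with the inductive hypothesis $t_a^\ast\calF_T(Y_j)\cong\calF_T(Y_j)$ yields $t_a^\ast\calF_T(Y_{j+1})\cong\calF_T(Y_{j+1})$. (More conceptually: restricted to finite $T$-spaces on which $\Gamma$ acts trivially, both $\calF_T$ and $t_a^\ast\circ\calF_T$ satisfy the axioms of Theorem~\ref{thm:lur_able}---for $t_a^\ast\circ\calF_T$ note $t_a^\ast\calO=\calO$ and that only orbits $T/T_i$ with $T_i\supseteq\Gamma$ occur---and they agree, compatibly with orbit maps, on the orbits that generate this subcategory under finite homotopy colimits; hence they are naturally isomorphic, as in \cite[\S~2.2]{Gan} and \cite[\S~3]{Lur}.) Taking $j=m$ gives $t_a^\ast\calF_T(Y)\cong\calF_T(Y)$, and passing to stalks at $0$ finishes the argument.

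The delicate point is precisely this gluing: one must know that the translation isomorphisms on the orbit pieces assemble into a genuine homotopy-coherent natural isomorphism of functors, not merely a family of pointwise ones, so that it survives the homotopy pull-back diagrams of sheaves of $E_\infty$-ring spectra. I expect the cleanest route to be the ``functors determined by their values on orbits'' formulation in the parenthetical above; the bookkeeping with spheres in the cell filtration, while routine, is where one must be most careful if one argues by hand.
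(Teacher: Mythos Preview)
Your argument is correct, and the underlying idea---use the translation $t_a$ together with the fact that every isotropy group occurring in $Y=X^{T(a)}$ contains $T(a)$---is exactly what the paper exploits. The difference is packaging. The paper bypasses your cell-by-cell coherence bookkeeping with a single change-of-groups identity: since $T(a)$ acts trivially on $Y$, one has
\[
\calF_T(Y)\;\cong\;\phi_\catA^*\,\calF_{T/T(a)}(Y),
\]
where $\phi_\catA:\catA^{der}_T\to\catA^{der}_{T/T(a)}$ is induced by the quotient $\phi:T\to T/T(a)$. Because $a\in\catA_{T(a)}=\ker\phi_\catA$, one has $\phi_\catA\circ t_a=\phi_\catA$, hence $t_a^*\phi_\catA^*=\phi_\catA^*$, and the desired isomorphism $\calF_T(Y)_a\cong\calF_T(Y)_0$ drops out in one line.

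What you gain from your route is self-containment: you work strictly from the axioms of Theorem~\ref{thm:lur_able} as written, and in effect you \emph{prove} the restriction-along-a-quotient identity above by cellular induction. What the paper gains is that the coherence issue you correctly flag as delicate disappears entirely---it is absorbed once and for all into the functorial statement $\calF_T\cong\phi_\catA^*\calF_{T/T(a)}$, rather than being rebuilt cell by cell. If you want to tighten your write-up, simply replace the inductive gluing with this change-of-groups observation.
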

\begin{proof}
It suffices to show that the map  \[\wt\calF_T(X^{T(a)})_a\to\wt \calF_T(X^{T(a)})_0\] induced by $t_a$ is an equivalence. Let $\phi:T\to T/T(a)$. 
We have $\wt\calF_T(X^{T(a)})\simeq \catA_{\phi}^*\wt\calF_{T/T(a)}(X^{T(a)})$ by Lemma~\ref{lem:change_of_group}. Naturally $\catA_{\phi}=\catA_{\phi}\circ t_a$, hence the statement follows.
\end{proof}

\subsection{Completion} In this subsection we assume $T$ is a torus. 
Let $\{E_T^n\mid n\in\bbN\}$ be the Borel construction of classifying spaces, that is, a system of finite $T$-spaces such that the $T$-actions are free, and each $E^n_T$ is contractible in $E^N_T$ for $N$ large enough. $E^n_{T+}$ is the space obtained by adding a base point to $E^n_T$.  The following theorem is a generalization of the Atiyah-Segal completion theorem. It is true for any periodic ring spectrum  $A$. For simplicity, we assume further that $A$ is even.

\begin{prop}\label{prop:completion}
Let $A$ be even and periodic, and $I$ be the sheaf of ideals on $\catA^{der}_T$ corresponding to the zero section $0\in \catA^{der}_T$. If $\wt\calF_T(X)$ is  coherent, then the natural map $\wt\calF^*_T(X)\to \wt\calF^*_T( X\wedge E_{T+}^n)$ induces an isomorphism \[\wt\calF^*_T(X)_I^\wedge\cong \underset{\leftarrow}\lim \wt\calF_T^*(X\wedge E_{T+}^n),\] where  the left hand side is the completion with respect to the $I$-adic topology.
\end{prop}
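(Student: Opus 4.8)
The plan is to transpose the classical Atiyah--Segal argument into Lurie's framework. Consider the two contravariant functors on finite $T$-spaces $X\mapsto\calF^*_T(X)^\wedge_I$ and $X\mapsto\varprojlim_n\calF^*(X\times_T E^n_T)$ (with $\calF^*=\calF^*_{\{1\}}$ the underlying non-equivariant theory), together with the natural transformation between them induced by the $T$-equivariant projections $X\times E^n_T\to X$ and the identification $\calF_T(W)\cong\calF(W/T)$ for free finite $T$-CW complexes $W$, which follows from Theorem~\ref{thm:lur_able} by induction over free cells. The first step is to check that both functors send cofibre sequences of finite $T$-spaces to long exact sequences: for the first functor, by Theorem~\ref{thm:lur_able}.(2) together with exactness of $I$-adic completion on coherent $\calO_{\catA_T}$-modules, noting that coherence spreads from $X$ to all of its $T$-subcomplexes through those same long exact sequences (the subquotients are suspensions of $(T/\Gamma_i)_+$, and $\calF_T(T/\Gamma_i)=\phi_{\catA*}\calO_{\catA^{der}_{\Gamma_i}}$ is coherent since $\phi_\catA$ is a closed immersion); for the second, because the towers $\{\calF^*(X\times_T E^n_T)\}_n$ are Mittag--Leffler, so that $\varprojlim$ is exact and $\varprojlim^1$ vanishes on them. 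Granting this, the standard five-lemma induction over the $T$-cells $T/\Gamma_i\times D^k$ of $X$ --- each of which $T$-deformation-retracts onto the orbit $T/\Gamma_i$, with cofibres suspensions of $(T/\Gamma_i)_+$ --- reduces the theorem to the case $X=T/\Gamma$ of a single orbit, $\Gamma<T$ closed.

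For a single orbit I would invoke change of groups. By Theorem~\ref{thm:lur_able}.(4), $\calF_T(T/\Gamma)=\phi_{\catA*}\calO_{\catA^{der}_\Gamma}$ for $\phi:\Gamma\hookrightarrow T$, and since $\phi_\catA$ carries the identity of $\catA_\Gamma$ to that of $\catA_T$, completing at $I$ yields $\phi_{\catA*}$ of the completion of $\calO_{\catA_\Gamma}$ at its identity section. On the other side $(T/\Gamma)\times_T E^n_T=E^n_T/\Gamma$, and because $T$ (hence $\Gamma$) acts freely on $E^n_T$ while the $E^n_T$ become arbitrarily highly connected, the complexes $E^n_T/\Gamma$ form a Borel-type system for $\Gamma$; by cofinality $\varprojlim_n\calF^*(E^n_T/\Gamma)$ is independent of the chosen system, so it equals the value of the right-hand side of the theorem for the group $\Gamma$ and $X=\pt$. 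Thus everything reduces to the point case: for any compact abelian Lie group $\Gamma$, the completion of $\calO_{\catA^{der}_\Gamma}$ at its identity is computed by the tower $\{\calF^*(E^n_\Gamma/\Gamma)\}_n$.

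To settle the point case I would use $\calF^*_\Gamma(\pt)=\calO_{\catA^{der}_\Gamma}$ (Theorem~\ref{thm:lur_able}.(3)) and analyze the Borel construction cell by cell. Splitting $\Gamma\cong(S^1)^r\times F$ with $F$ finite abelian, one may take $E^n_{S^1}=S^{2n+1}$ for each circle factor, so $E^n_{S^1}/S^1=\bbP^n$, and the cofibre sequence $\bbP^{n-1}\hookrightarrow\bbP^n\to\bbP^n/\bbP^{n-1}$ (with sphere cofibre), together with Example~\ref{ex:Thom_circ} identifying the relevant Thom sheaf with the ideal of the zero section of $\bbG$, shows inductively that $\calF^*(\bbP^n)$ is the ring of the $n$-th infinitesimal neighbourhood of $1\in\bbG_0$; for each finite cyclic factor $\mu_m$ the lens spaces $E^n_{\mu_m}/\mu_m$ compute, via the evident $\mu_m$-analogue of Example~\ref{ex:Thom_circ}, the thickenings at the origin of $\bbG_0[m]=\catA_{\mu_m}$; and the general $\Gamma$ follows from its factors, since $\calF$ is compatible with products and Theorem~\ref{thm:lur_able} realizes $\catA^{der}_\Gamma$ as the corresponding derived product, which also absorbs any Tor terms in the Künneth formula. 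Passing to the limit $n\to\infty$, the infinitesimal neighbourhoods exhaust, so $\varprojlim_n\calF^*(E^n_\Gamma/\Gamma)$ is precisely the completion of $\calO_{\catA^{der}_\Gamma}$ at its identity, matching the left-hand side.

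The step I expect to require the most care is the inverse-limit bookkeeping. Because $A$ is periodic, none of the transition maps $\calF^*(X\times_T E^{n+1}_T)\to\calF^*(X\times_T E^n_T)$ is an isomorphism in a fixed range of degrees, so neither the exactness of $\varprojlim$ nor the vanishing of $\varprojlim^1$ --- both used in the cellular induction of the first paragraph and in the passage to the limit in the third --- can be extracted from connectivity alone. Instead, exactly as in Atiyah--Segal, one shows that the images of the transition maps stabilize for $n\gg 0$; this is where the coherence hypothesis on $\calF_T(X)$, propagated to the subcomplexes, supplies the needed finiteness. Once this Mittag--Leffler input is in place, the remainder is formal from Theorem~\ref{thm:lur_able}, the Thom isomorphism (Proposition~\ref{prop:equivThom}), and Example~\ref{ex:Thom_circ}.
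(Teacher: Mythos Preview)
Your approach and the paper's diverge in organization. The paper follows Atiyah--Segal literally: it first treats $T=S^1$ for \emph{arbitrary} $X$ by taking $E^n_T=SV^n$ (the unit sphere in the $n$-dimensional scaling representation) and analyzing the cofibre $SV^n\to DV^n\to Th(V^n)$. The Thom isomorphism and Example~\ref{ex:Thom_circ} turn the resulting long exact sequence into a short exact sequence
\[
0\to \calF^i_T(X)/I^n\to \calF^i_T(X\times_T SV^n)\to \calF^{i+1}_n\to 0,
\]
with $\calF^{i+1}_n=\ker\big(\vartheta^{-n}:\calF^{i+1}_T(X)\otimes\calL^n\to\calF^{i+1}_T(X)\big)$. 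Coherence of $\calF^*_T(X)$ forces the ascending chain of kernels $\calF^{i}_n$ to stabilize, and a diagram chase then produces the splitting maps $\beta_k$; this simultaneously yields the completion isomorphism and the pro-isomorphism of towers (hence Mittag--Leffler). The general torus is handled by induction on rank, as in \cite[\S3, Step~2]{AS}. There is no induction on the cells of $X$.

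Your route---cellular induction on $X$ down to orbits, change of groups to the point, then a direct computation of $\calF^*(B\Gamma)$---is more structural, but it frontloads exactly the step you flag as delicate: the five-lemma induction requires $\varprojlim^1=0$ for the towers $\{\calF^*(Y\times_T E^n_T)\}$ attached to every $T$-subcomplex $Y\subseteq X$ \emph{before} any reduction has happened. The sentence ``coherence supplies the needed finiteness'' does not discharge this. Mittag--Leffler is not in general preserved under extensions of towers, so it does not propagate automatically along a cell attachment; and once $X$ is more complicated than a single orbit, the transition maps in these towers of finitely generated $\pi_0A$-modules need not be surjective. The cleanest mechanism for ML here is precisely the paper's Thom-sequence identification of $\{\calF^*_T(X\times_T SV^n)\}$ with $\{\calF^*_T(X)/I^n\}$ modulo a stabilizing error term---which is a statement about general $X$, not about the point. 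So to make your outline rigorous you would likely have to import that calculation, at which point the paper's organization, which invokes it once for $S^1$ and then inducts on the rank of $T$, is more economical.
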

\begin{proof}
We follow the original proof in \cite[\S~3]{AS}. First we prove it in the special case when $T=S^1$.  Let $E^n_T=SV^n$ where $V^n$ is an complex vector space of dimension $n$ with $S^1$-action by rotation, and $SV^n$ is the unit sphere in $V^n$. Let $DV^n$ be the unit disk. Then there is a long exact sequence of coherent sheaves on $\catA_T:$ 
\[\cdots\to \wt\calF^i_T(X\wedge Th(V^n)_+)\to \wt\calF^i_T(X\wedge DV^n_+)\to \wt\calF^i_T(X\wedge SV^n_+)\to \wt\calF^{i+1}_T(X\wedge Th(V^n)_+)\to \cdots .\]
By  Proposition~\ref{thm:Thom} and Example~\ref{ex:Thom_circ}, we have $\wt\calF_T(X\wedge Th(V^n)_+)=\wt\calF_T(X)\otimes \calL^n$, and the second map in the sequence above can be identified with $\wt\calF^i_T(X)\otimes\calL^n\to \wt\calF^i_T(X)$ given by the natural section $\vartheta^{-n}$ of the line bundle $\calL^{-n}$ on $\catA_T^{der}$. Note that  the image of $\vartheta^{-1}:\calL\to \calO_{\catA_T^{der}}$ defines an ideal sheaf $I$, so the cokernel of $\vartheta^{-n}:\wt\calF^i_T(X)\otimes\calL^n\to \wt\calF^i_T(X)$ is isomorphic to  $\wt\calF^i_T(X)/(I^n)$. 

For any $k>0$, denote the kernel of $\vartheta^{-k}:\wt\calF^{i+1}_T(X)\otimes \calL^n\to \wt\calF^{i+1}\otimes \calL^{n-k}$ by $\wt\calF^{i+1}_{n,k}$. 
Then we have a short exact sequence
\[
0\to \wt\calF^i_T(X)/(I^n)\to \wt\calF^i_T(X\wedge SV^n_+)\to \wt\calF^{i+1}_{n,n}\to 0.
\] 
To get the conclusion, we need to show that, there exists a positive number $K$ and a map $\beta$ in the following commutative diagram
\begin{equation}
\label{eq:comp1}
\xymatrix{ 0\ar[r] & \wt\calF^i_T(X)/(I^{n+K})\ar[r]\ar[d] & \wt\calF^i_T(X\wedge SV^{n+K}_+)\ar[r]\ar[d] \ar@{-->}[dl]^{\beta} & \wt\calF^{i+1}_{n+K,n+K}\ar[d]^{\vartheta^{-K}}\ar[r] & 0\\
0\ar[r] & \wt\calF^i_T(X)/(I^{n})\ar[r] & \wt\calF^i_T(X\wedge SV^{n}_+)\ar[r] & \wt\calF^{i+1}_{n,n} \ar[r] & 0.}
\end{equation}

Note that by definition, there is a chain of subsheaves in $\wt\calF^{i+1}_T(X)\otimes \calL^n$:
\begin{equation}\label{eq:comp3}\cdots\subset\wt\calF^{i+1}_{n,k}\subset \wt\calF^{i+1}_{n,k+1}\subset \cdots.
\end{equation}

{\it Claim}: There exists an integer $K$ such that for  any $k\ge K$, we have $\wt\calF_{m,k}^{i+1}=\wt\calF_{m,K}^{i+1}$ for any $m\ge n$. 

Indeed, as  $\wt\calF^{i+1}_T(X)\otimes \calL^n$ is a coherent sheaf, the chain \eqref{eq:comp3} stabilizes by the Noetherian property.  That is,  there exists  $K$ so that for any $k\ge K$, we have  $\wt\calF^{i+1}_{n,k}= \wt\calF^{i+1}_{n,K}$.
Note that  tensoring with $\calL$ is an exact functor, hence $\wt\calF^{i+1}_{n+1,k}\subseteq \wt\calF^{i+1}_T(X)\otimes \calL^{n+1}$ is obtained by applying $\_\otimes\calL$ to $\wt\calF^{i+1}_{n,k}\subseteq \wt\calF^{i+1}_T(X)\otimes \calL^{n}$. In particular,  $\wt\calF^{i+1}_{n,k}=\wt\calF^{i+1}_{n,K}$  implies that  $\wt\calF^{i+1}_{n+1,k}=\wt\calF^{i+1}_{n+1,K}$. Inductively, we see that  for any $m\ge n$, we have  $\wt\calF_{m,k}^{i+1}=\wt\calF_{m,K}^{i+1}$ for $k\ge K$. This proves the claim.  

Consider the following commutative diagram:
\[
\xymatrix{\wt\calF^{i+1}_T(X)\otimes \calL^{n+K}\ar[rr]^-{\vartheta^{-n-K}}\ar[d]^{\vartheta^{-K}}& & \wt\calF_T^{i+1}(X)\\
\wt\calF_T^{i+1}(X)\otimes\calL^{n}\ar[urr]^{\vartheta^{-n}}.}
\]
We then have an exact sequence 
\begin{equation}\label{eq:comp2}
0\to \wt\calF^{i+1}_{n+K,K}\overset{\psi}\longrightarrow \wt\calF^{i+1}_{n+K,n+K}\overset{\vartheta^{-K}}\longrightarrow \wt\calF^{i+1}_{n,n}.
\end{equation}
By the claim above, the map $\psi$ in \eqref{eq:comp2} is an equality, which  implies that the map $\vartheta^{-K}$ in \eqref{eq:comp2} (which is also the map $\vartheta^{-K}$ in \eqref{eq:comp1}) is 0.  Therefore, the map $\beta$ exists.

By the induction argument in \cite[\S~3, Step~2]{AS}, the  completion theorem for general torus $T$ reduces to the case when $T=S^1$. This finishes the proof.
\end{proof}

\subsection{The Chern character}
Recall that an orientation on the $A$-group $\bbG$ makes the cohomology theory  defined by the spectrum $A$ into an oriented cohomology theory in the classical sense.  Equivalently, it implies that  for any proper map $f:X\to Y$ between smooth complex manifolds of relative dimension $d$, there is a push-forward
\[f_A:A^{*+d}(X)\to A^{*}(Y)\]
which is a morphism of $A^*(Y)$-modules. Moreover, there is an associated formal group law $F(u,v)\in A^*(\pt)[\![u,v]\!]$ determined by 
\[
c_1^A(\calL_1\otimes \calL_2)=F(c_1^A(\calL_1), c_1^A(\calL_2)), ~\text{where }\calL_1, \calL_2 \text{ are line bundles over } X,
\]
and  $c_1^A$ is the first Chern class in the cohomology theory  $A$. 

In the remaining part of this section, assume $A^*(\pt)$ is a $\bbQ$-algebra. Then the formal group law associated to $A$ has an exponential $\fl_A(t)\in A^*(\pt)[\![t]\!]$ that is characterized by $F(u,v)=\fl_A(\fl_A^{-1}(u)+\fl_A^{-1}(v))$.
This exponential induces a natural isomorphism between $A^*$ and $H^*$ as functors to the category of graded commutative rings, denoted by $\ch^{A}$, called the non-equivariant Chern character. It is determined by the following property: for any compact smooth manifold $X$, the ring homomorphism $\ch^{A}:A^*(X)\to H^*(X;A^*(\pt))$ sends $c_1^A(\calL)$ to $\fl_A(c_1^H(\calL))$ for any line bundle $\calL$ over $X$.

Moreover, there is a Riemann-Roch type theorem. For any line bundle $\calL$ on a smooth manifold $X$, define $Td_A(\calL)=\frac{c_1^H(\calL)}{\fl_A(c_1^H(\calL))}\in H^*(X;A^*(\pt))$, or more generally, for any rank-$n$ vector bundle $V$ on $X$ with Chern roots $\{x_1,\dots,x_n\}$, define $Td_A(V)=\prod_i\frac{x_i}{\fl_A(x_i)}$.
\begin{theorem} \label{thm:Riemann-Roch}
Assume that $A^*(pt)$ is a $\bbQ$-algebra.
For any proper map $f:X\to Y$ between smooth complex manifolds, let $Tf$ be the relative tangent bundle. We have, for any  $\al\in A^*(X)$,
\[\ch^A (f_A(\alpha))=f_H(\alpha\cdot Td_A(Tf)).\]
\end{theorem}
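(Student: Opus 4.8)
The plan is to reduce the Riemann--Roch statement to the case of a closed embedding and a projection, using the standard factorization of a proper map, and then in each case reduce further to line bundles via the splitting principle. First I would recall that the non-equivariant Chern character $\ch^A$ is, by construction, the ring isomorphism determined by sending $c_1^A(\calL)$ to $\fl_A(c_1^H(\calL))$; in particular it is natural with respect to pull-backs, so the content of the theorem is entirely about its compatibility with push-forwards. Since both sides of the asserted identity are additive in $\alpha$ and $\ch^A$ is a ring map, and since any proper map of smooth manifolds factors (after embedding $X$ into $Y\times\bbR^N\subset Y\times S^N$) as a closed embedding followed by a projection, it suffices to treat these two cases separately; the Todd classes multiply correctly under composition because $Tf$ for a composite sits in a short exact sequence with the two relative tangent bundles, and $Td_A$ is multiplicative on short exact sequences (being defined via Chern roots).

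For the projection $p:Y\times S^N\to Y$, the push-forward $p_A$ is integration along the fiber, and the identity $\ch^A(p_A(\alpha))=p_H(\alpha\cdot Td_A(Tp))$ follows from the projective bundle / suspension computation: one checks it on $Y=\pt$ with $\alpha$ a power of the Euler class of the tautological bundle on $\bbP^n$ (or directly on $S^N$ using the suspension isomorphism), where it becomes the classical statement that the Chern character of the $A$-theory Euler class equals $\fl_A$ of the ordinary Euler class times the appropriate Todd correction. For the closed embedding $i:X\hookrightarrow Y$ with normal bundle $\nu$, the push-forward is defined via the Thom isomorphism for $\nu$, so the claim reduces to identifying $\ch^A$ of the $A$-theory Thom class of $\nu$ with $e^H(\nu)\cdot Td_A(\nu)^{-1}$ times the ordinary Thom class. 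By the splitting principle we may assume $\nu$ is a sum of line bundles, and by multiplicativity of Thom classes under direct sums we reduce to $\nu$ a single line bundle $\calL$, where the Thom class is $c_1^A(\calL)$ pushed into the Thom space; the identity then becomes exactly the defining property $\ch^A(c_1^A(\calL))=\fl_A(c_1^H(\calL))$ together with $Td_A(\calL)=c_1^H(\calL)/\fl_A(c_1^H(\calL))$.

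The main obstacle I anticipate is bookkeeping the Thom classes and Euler classes carefully enough that the splitting-principle reduction is legitimate: one must know that the $A$-theory Thom class restricts to $c_1^A$ of the bundle along the zero section (this is essentially Example~\ref{ex:Thom_circ} in the oriented classical setting), that Thom classes are multiplicative for direct sums, and that all of this is compatible with the corresponding ordinary-cohomology statements under $\ch^A$. None of these steps is deep once the orientation is in hand, but organizing the compatibilities so that the composition-of-maps step and the direct-sum step genuinely commute with $\ch^A$ requires care. An alternative, perhaps cleaner, route is to invoke the universal property: both $\ch^A$ and the naive transformation $\alpha\mapsto \ch^A(\alpha)\cdot$(Todd correction) are natural transformations of oriented cohomology theories that agree on $c_1$, hence agree everywhere by the standard uniqueness of multiplicative transformations of oriented theories over a $\bbQ$-algebra; I would mention this as the conceptual reason and use the embedding/projection argument as the concrete proof.
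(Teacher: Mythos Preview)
The paper does not give a proof of this theorem; it is stated without proof as a classical Riemann--Roch type result for oriented cohomology theories (over a $\bbQ$-algebra), and is used only to set up the localized Chern character that follows. So there is no paper proof to compare against.

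Your sketch is essentially the standard argument (factor a proper map as closed embedding followed by projection from a sphere, handle each case via the Thom isomorphism and splitting principle), and it is correct in outline. One small wording issue: in the embedding case you write that $\ch^A$ of the $A$-Thom class should equal ``$e^H(\nu)\cdot Td_A(\nu)^{-1}$ times the ordinary Thom class''. The Euler class $e^H(\nu)$ should not appear as a separate factor here; the correct statement is simply $\ch^A(u_A(\nu)) = Td_A(\nu)^{-1}\cdot u_H(\nu)$, since restricting both sides to the zero section already gives $\ch^A(c_1^A(\calL)) = \fl_A(c_1^H(\calL)) = Td_A(\calL)^{-1}\cdot c_1^H(\calL)$ for a line bundle. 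With that correction, your reduction to the line-bundle identity is exactly right. The alternative you mention at the end (uniqueness of multiplicative natural transformations of oriented theories over $\bbQ$ that agree on $c_1$) is indeed the cleanest conceptual justification and is how this result is usually packaged in the literature.
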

Here $Tf=f^*TY-TX$ which is a virtual vector bundle on $X$. Note that the Todd class of a virtual vector bundle is well-defined. In the present setting this theorem can be found, for example, in \cite{Dy}. We also note that an algebraic version of theorem is  \cite[Theorem 2.5.4]{P04}, whose proof largely carries in the topological case as well. 

For any $a\in \catA_T$, we define the localized Chern character at $a$, denoted by $\ch^A_a$, to be the following composition 
\[\xymatrix@C=1em{
\calF_T^*(X)_a\ar[r]_-{i^\sharp}^-{\cong}&\calF^*_T(X^{T(a)})_a\ar[r]^-{\cong}_-{t_a}&\calF^*_T(X^{T(a)})_0\ar[r]&\underset{\leftarrow}\lim \calF^*(X^{T(a)}\times_TE^n_T)\ar[r]^-{\cong}_-{\ch^A} &\underset{\leftarrow}\lim H^*(X^{T(a)}\times_TE^n_T;A^*(\pt))
}\]
where the third map induces an isomorphism on completion by Proposition \ref{prop:completion}.

Summarizing the discussions above, we have the following 
\begin{corollary} Assume that $A^*(\pt)$ is a $\bbQ$-algebra, and let $\ch^A_a:\calF^*_T(X)_a\to H^*_T(X^{T(a)};A^*(\pt))$ be the localized Chern character as above. 
Then it induces an isomorphism on the completions $\calF^*_T(X)^\wedge_a\cong H^*_{T}(X^{T(a)};A^*(\pt))^\wedge$ at the augmentation ideal of $H^*_T(\pt; A^*(\pt))$.
\end{corollary}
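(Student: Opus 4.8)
The plan is to chain together the three ingredients already assembled: the localization isomorphism on stalks, the completion theorem, and the non-equivariant Chern character isomorphism. The final corollary is essentially a restatement of the composition defining $\ch^A_a$, so the proof is a matter of verifying that each arrow in that composition either is an isomorphism outright or becomes one after $I$-adic completion, and then observing that the source completes to the target.

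First I would recall the four maps in the definition of $\ch^A_a$. The first, $i^*:\calF_T^*(X)_a\to \calF_T^*(X^{T(a)})_a$, is an isomorphism of stalks by the localization theorem (the theorem preceding Corollary~\ref{cor: stalks}), so in particular it is an isomorphism after completing at the maximal ideal corresponding to $a$. The second, $t_a:\calF_T^*(X^{T(a)})_a\to \calF_T^*(X^{T(a)})_0$, is an isomorphism by Corollary~\ref{cor: stalks} (indeed this is exactly the content of its proof, using $\calF_T(X^{T(a)})\cong \phi_\catA^*\calF_{T/T(a)}(X^{T(a)})$ and $\phi_\catA=\phi_\catA\circ t_a$); under $t_a$ the ideal of $a$ goes to the ideal $I$ of the identity, so completing at $a$ upstairs matches completing at $I$ downstairs. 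The third map $\calF_T^*(X^{T(a)})_0\to \varprojlim \calF^*(X^{T(a)}\times_T E_T^n)_0$ induces an isomorphism on $I$-adic completion by Proposition~\ref{prop:completion} — here I would note that $\varprojlim \calF^*(X^{T(a)}\times_T E_T^n)_0$ is already $I$-adically complete as a limit of quotients (this is implicit in the statement of Proposition~\ref{prop:completion}). Finally the fourth map is the non-equivariant Chern character $\ch^A$, which is a ring isomorphism $\calF^*_{\{1\}}\cong H^*(-;A^*(\pt))$ of cohomology theories by the discussion in the Chern character subsection (it exists because $A^*(\pt)$ is a $\bbQ$-algebra, so the formal group law has an exponential), and it is compatible with inverse limits, hence identifies $\varprojlim \calF^*(X^{T(a)}\times_T E_T^n)_0$ with $\varprojlim H^*(X^{T(a)}\times_T E_T^n;A^*(\pt))_0 = H^*_T(X^{T(a)};A^*(\pt))^\wedge_0$, the last equality being the definition of Borel-equivariant cohomology together with the fact that $H^*_T$ of a finite-type space is already the relevant completion.

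Assembling: $i^*$ and $t_a$ are honest isomorphisms on stalks at $a$; applying $I$-adic completion (equivalently completion at $a$, transported by $t_a$) to the remaining composition turns the third map into an isomorphism by Proposition~\ref{prop:completion} and leaves the fourth map an isomorphism. Therefore $\ch^A_a$ induces $\calF^*_T(X)^\wedge_a \cong H^*_T(X^{T(a)};A^*(\pt))^\wedge_0$, and since the target is already complete the hat on the right is harmless.

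**The main obstacle** I anticipate is purely bookkeeping rather than conceptual: one must be careful that the coherence hypothesis needed for Proposition~\ref{prop:completion} is available for $\calF_T(X^{T(a)})$ (it is, since $X^{T(a)}$ is again a finite $T$-space, or one works $T/T(a)$-equivariantly where the action is through a torus on which the relevant sheaf is coherent), and that the three different completions in play — the stalk completion of $\calF^*_T(X)$ at $a$, the $I$-adic completion on $\catA_{T/T(a)}$, and the Borel completion — are genuinely identified under $i^*$ and $t_a$. Spelling out that $t_a$ carries the maximal ideal at $a$ precisely to $I$, so that ``complete at $a$'' and ``complete at $I$'' agree along this isomorphism, is the one place where a careless argument could go wrong; everything else is a diagram chase through maps whose isomorphism properties have already been established.
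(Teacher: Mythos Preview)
Your proposal is correct and takes essentially the same approach as the paper, which simply says ``Summarizing the discussions above'' and states the corollary without further argument. You have in fact been more careful than the paper about the bookkeeping points (coherence for Proposition~\ref{prop:completion}, matching the completions under $t_a$), which the paper leaves entirely implicit.
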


\section{Equivariant elliptic cohomology theory}
\label{sec:G_equ_ell}
In this section, we collect some basic notions, constructions and properties of equivariant elliptic cohomology from \cite{Lur} and \cite{GKV95}. We do not claim originality in this section. Nevertheless, in \S~\ref{subsec:Quillen}, we show a push-forward formula in equivariant elliptic cohomology which previously was not known in the present generality, although a form of it can be found in \cite{Gan}.

\subsection{Construction of oriented derived elliptic curves}\label{subsec:abelian}
Recall the following theorem of Lurie.

\begin{theorem}\label{prop:derived_Ell_Curv}\cite[Theorem~4.1]{Lur}
There is a derived Deligne-Mumford stack $\calM_{1,1}^{der}$, whose underlying classical Deligne-Mumford stack is $\calM_{1,1}$, such that for every $E_\infty$-ring spectrum $A$, there is a natural homotopy equivalence between $\Hom(\Spec A,\calM_{1,1}^{der})$ and the classifying space for the topological category of oriented elliptic curves over $A$.
\end{theorem}
That is to say, any map of derived stacks $S\to \calM_{1,1}^{der}$ gives rise to an oriented derived elliptic curve on $S$.
Although we do not use this, the uniqueness of the oriented derived elliptic curve on $S$ obtained this way can be made precise by saying that, in the case when $S$ is affine, the connected components of $\Hom(S,\calM_{1,1}^{der})$ are naturally in bijection to the isomorphism classes of oriented elliptic curves on $S$.

A more precise statement of the theorem as well as the details can be found in \cite[Proposition~7.2.10, Remark~7.3.2]{Ell2}, which are in turn partially based on \cite{AV}. The derived Deligne-Mumford stack  $\calM_{1,1}^{der}$ is denoted by $\calM_{Ell}^{or}$ in \cite{Ell2}.
In practice, this theorem gives many examples of derived elliptic curves over $E_\infty$-ring spectra or in general derived Deligne-Mumford stacks. We give two examples below which are relevant for our purposes. 

The first example is a reformulation of the theorem. Roughly speaking, this is the description of the universal family of oriented derived elliptic curves on $\calM_{1,1}^{der}$.
\begin{example}\label{ex:etale_family}
	Let $S=\Spec A$ be a classical Noetherian affine scheme, and $E\to S$ be a family of elliptic curves so that the map $\phi:S\to \calM_{1,1}$ is \'etale. Then, by a theorem due to Goerss, Hopkins, and Miller, reformulated as \cite[Theorem~1.1]{Lur}, 
the ring	$A$ naturally refines to an even
$E_\infty$-ring spectrum $A^{der}$ with $\pi_0A^{der} = A$. Let  $S^{der} = \Spec A^{der}$, then 
	 the map $\phi:S\to \calM_{1,1}$ comes from a map of derived stacks $S^{der}\to \calM_{1,1}^{der}$ by taking $\pi_0$. Therefore, the theorem above endows $S^{der}$ with an oriented derived elliptic curve $E^{der}\to S^{der}$. The underlying map of classical stacks $\phi$ is given by the classical elliptic curve $E\to S$, hence the underlying classical scheme of  $E^{der}$ is $E$. 
\end{example}
However, it is worth mentioning that the proof of Theorem~\ref{prop:derived_Ell_Curv} given by Lurie \cite{Lur} does not explicitly use the theorem of Goerss, Hopkins, and Miller.

The following example is related to the``dynamical parameters" in \S~\ref{subsec:dyn}. See also \cite[Example~7.1]{YZ}.
\begin{example}
	Let $\calM_{1,2}$ be the (open) Deligne-Mumford (classical) moduli stack of genus 1 curves with two marked points. Then, there is a universal family of elliptic curves on $\calM_{1,2}$ endowed with a non-trivial line bundle coming from the second marked point.
	Let $\bbE^{der}\to \calM_{1,1}^{der}$ be the universal oriented derived elliptic curve. Recall that the underlying classical stack is the universal classical elliptic curve $\bbE\to \calM_{1,1}$. The complement of the zero-section $e: \calM_{1,1}\to \bbE$, denoted by $\bbE_o$,  is the moduil stack of genus 1 curves with two marked points. Therefore, we refer to the derived stack $\bbE^{der}_o$, i.e., the underlying stack  $\bbE_o$ endowed with the sheaf of $E_\infty$-ring spectra obtained from $\bbE^{der}$,  simply as $\calM_{1,2}^{der}$. The natural map $\bbE^{der}_o\to  \calM_{1,1}^{der}$ is then a map of derived stacks. Therefore, the same argument as Example~\ref{ex:etale_family} then implies that there is an oriented derived  elliptic curve on $\calM_{1,2}^{der}$ whose underlying classical elliptic curve is the universal elliptic curve with a non-trivial line bundle. 
\end{example}

The following remark is kindly pointed out to us by an anonymous referee. 
\begin{remark}
Let $S$ be a derived scheme,  $E$ be a strict elliptic curve on $S$. This induces
 a map  $S\to \calM_{1,1}^s$ to the classifying stack of strict elliptic curves. By \cite[Proposition~7.2.5 and Proposition~ 7.2.10]{Ell2}, the natural map $\calM_{1,1}^{der}\to \calM_{1,1}^s$ is an affine morphism. Hence, fiber product gives a derived scheme $S^{der}$ with a natural affine morphism $S^{der}\to S$, together with an oriented elliptic curve $E^{der}$ on $S^{der}$.
Note however,  if $S\to \calM_{1,1}^s$ is not flat, $\pi_0$ of the map $S^{der}\to S$  does  necessarily induces an isomorphism between the underlying classical schemes.
For example, if $S$ is a
classical Noetherian affine scheme, by \cite[Proposition~1.5.6]{AV}, $E$ is isomorphic to the underlying abelian variety of a strict abelian variety. 
However, the fiber product $S^{der}$ can be shown to be empty. Indeed, a non-vacuous  example has to be obtained from a derived scheme $S$ with a non-zero map $\omega\otimes\pi_0(S)\to \pi_2(S)$.
\end{remark}

\subsection{Equivariant elliptic cohomology theory}
For any connected compact Lie group $G$, let $G^{\alg}$ be the corresponding split  reductive algebraic  group with maximal algebraic torus $T^{\alg}\supset T$. There is a derived scheme $\catA_G^{der}$ functorial in $G$ \cite[\S~5.1]{Lur}, so that  $\catA_T^{der}$ for an abelian $T$ agrees with the one from \S~\ref{subsec:abelian}, and for the product of groups $G\times H$ we have $\catA_{G\times H}^{der}\simeq \catA_G^{der}\times\catA_H^{der}$. For example, if $G=T$ is a torus of rank $n$, then $\catA_T^{der}=\bbE^n$. 
For connected $G$ the underlying  classical scheme $\catA_G$ is related to the moduli space of semi-simple, semi-stable,  degree 0 principal $G^{\alg}$-bundles on $E^\vee$. The precise relation is not clear from {\it loc. cit.}, nevertheless, 
for the constructions below, we require the following properties, which we discuss here. 
 \begin{enumerate}
 \item 
 In \S~\ref{subsec:Higgs}, we assume that for a complex elliptic curve $E$, $\catA_G$ is equal to the aforementioned moduli space. It is well-known that the latter is in turn isomorphic to $\catA_T/W$  \cite[pg.4]{FMW} and \cite[Theorem~4.16]{Laz}.
 \item For the Thom isomorphism theorem \S~\ref{subsec:Thom}, we assume that $\catA_{U_n}$ is isomorphic to $E^{(n)}$. In Example~\ref{ex:Un-bundle} below we give a detailed discussion about how the latter is related to vector bundles. 
\end{enumerate}
Here for a split reductive algebraic group $G$ with a maximal torus $T$,  a principal $G$-bundle is semi-simple if the structure group has a reduction to $T$. 
A principal $\GL_n$-bundle, or equivalently, a vector bundle, is semi-stable if it is slope-semi-stable, where the  slope is $\frac{\deg}{\rank}$. That is, there is no subbundle with greater slope. Recall that a vector bundle on $E^\vee$ is flat as a coherent sheaf over $R$, and hence both $\deg$ and $\rank$ can be defined fiber-wise and are independent of the choice of fiber \cite[Theorem~9.9]{Hart}. 
For a closed subgroup $G\subseteq \GL_n$, a principal $G$-bundle is semi-stable if the induced  vector bundle is slope-semi-stable.

More generally than (1) above, let $S$ be   a finite-type integral complex scheme, and let $E$ be a projective family of elliptic curves. Let $M$ be the  moduli functor sending any scheme $B\to S$ to the groupoid of $G$-bundles on $E\times_S B$ satisfying the semi-simplicity, degree-0, and semi-stable conditions. Then, there is a natural transformation from $M$ to $\catA_T/W$ sending the family to a reduction to $T$.   Then,  this natural transformation induces a bijection on complex points \cite[pg.4]{FMW} and \cite[Theorem~4.16]{Laz}. Hence  $\catA_T/W$ is the coarse moduli space in this sense. 

Note that in both cases above ($S$ is a finite-type integral complex scheme, or $S$ is a general Noetherian scheme and $G=U_n$), for a closed subgroup of equal rank $H\subseteq G$, the map $\catA_H\to \catA_G$ is flat.   Indeed, let $T\subseteq H\subseteq G$ be a maximal torus, and let $W_H$ and $W_G$ be the respective Weyl groups. They are both finite groups generated by reflections, and $W_H$ is a subgroup of $W_G$. The map $\catA_H\to \catA_G$ in both cases above is the projection $\catA_T/W_H\to \catA_T/W_G$. In the case of complex numbers, this is a finite surjective projective morphism, and hence to show that it is flat it suffices to show that for any field $k$ with $\Spec k\to \catA_T/W_G$, the fiber is a  length-$|W_G|/|W_H|$ finite $k$-scheme \cite[Theorem~II9.9]{Hart}.  For this,  it again suffices to show that for each $\Spec k\to S=\Spec R$ the base-change $(\catA_T/W_H)_k\to (\catA_T/W_G)_k$ is a flat map of this degree. 
It follows from the Chevalley-Shephard-Todd theorem that both quotients $(\catA_T/W_H)_k$ and  $(\catA_T/W_G)_k$ are smooth. In the case of $U_n$,  $W_G$ and $W_H$ are both products of symmetric groups, hence $(\catA_T/W_H)$ and  $(\catA_T/W_G)$ are both products of Hilbert schemes of points on a smooth curve and hence are smooth, similar as in the discussion after  Lemma~\ref{lem:divisors equal on E(n)}.
Now in both cases the flatness follows from the miracle flatness, and the degree can be calculated off diagonally.

\begin{example}\label{ex:Un-bundle}
As an illustration we show that for any elliptic curve $E$ over an Noetherian ring $R$,  for any $S\to \Spec R$, let 
 $\catA_{U_n}(S)$ be the set of isomorphism classes  of  semi-simple semi-stable degree-0 principal $G=\GL_n$-bundles. 
First note that the natural map $\catA_{T}(S)=E^n(S)\to \catA_{U_n}(S)$  given by induction from a $T$-bundle to a $G$-bundle factors through the quotient $E^{(n)}(S)$. 
This is because the action of $W=S_n$ is given by conjugation by $N_T(G)\subseteq G$. By definition of semi-simplicity, the map is 
a surjective map. 
We claim this map is also   injective. Without loss of generality, we may assume $S$ is irreducible.
We note that any $T$-bundle on $E_S$, the base-change of $E$ to $S$, is of the form $\calL_1\oplus\cdots\oplus\calL_n$, with semi-stability implies that each $\calL_i$ is of degree-0. Let $\calL$ be any degree-0 line bundle, a non-zero map $\calL\to \calL_1\oplus\cdots\oplus\calL_n$, when composed with a projection to $\calL_i$ is non-zero for some $i$. There is an  open dense subscheme of $S$ on which the cokernel is flat. By degree consideration, this cokernel is trivial and hence we have an isomorphism $\calL_i\cong \calL$. 
If $\calL_1'\oplus\cdots\oplus\calL_n'\to \calL_1\oplus\cdots\oplus\calL_n$ is an isomorphism of vector bundles, by induction on $n$ we obtain that on some open dense subscheme of $S$ each $\calL_i'$ is isomorphic to some $\calL_j$ and hence the two $T$-bundles are in the same $S_n$-orbit. By definition of an elliptic curve, the morphism $E\to \Spec R$ is separated and hence the diagonal $E^{(n)}\subseteq E^{(n)}\times E^{(n)}$ is closed. Therefore, the two map $S\to  E^{(n)}$ agreeing on an open dense subscheme have to agree on $S$.
\end{example}

We consider $\bbE$ and $A$ satisfying the following assumption.
\begin{assumption}\label{assum:G-equiv}
For each connected compact Lie group  $G$, there is a  contravariant functor $\Ell_G^*$ from the category of finite $G$-spaces to the category of quasi-coherent sheaves of graded  algebras on $\catA_G$, satisfying the following properties:
\begin{enumerate}
\item  $\Ell_G^*$ maps homotopy equivalences to isomorphisms of quasi-coherent sheaves.
\item $\Ell_G^*$ extends to a functor from the category of pairs of $G$-finite spaces to  the category of graded quasi-coherent sheaves.
\item $\Ell_G^*$ sends disjoint unions into products, and associated to  the mapping cone of any map, there is the usual long exact sequence of sheaves.
\item When $G=T$ is a compact abelian Lie group, then  $\Ell^*_T$ coincides with homotopy groups of $\Ell_T$ from Theorem~\ref{thm:lur_able}.
\item\label{prop:(3)} Let $\psi:H\to G$ be a group homomorphism. Let $\catA_\psi:\catA_H\to \catA_G$ be the induced morphism.
Then we have a natural transformation  $\catA_\psi^*(\calE_G^*(-))\longrightarrow \calE_H^*(-)$, which yields an isomorphism for a finite $G$-space $X$ if $\calE_G^*(X)$ is flat. 
\item\label{prop:ell_induction} For an embedding of compact Lie groups $\phi: H\inj G$, and  finite $H$-space $X$, the following composition 
\[\Ind_H^G:\calE_G^*((X\times G)/H)\to \catA_{\phi*}\calE_H^*((X\times G)/H)\to \catA_{\phi*}\calE_H^*(X)\]
is an isomorphism. Here the second map is induced by the embedding of  $H$-spaces $X\to (X\times G)/H, x\mapsto (x,e_G)$. 
\end{enumerate}
\end{assumption}

\begin{remark}\label{rmk:assump_ell_satisf}
\begin{enumerate} 
\item The existence of the functors $\Ell_G$, for  compact (non-abelian) Lie group $G$, from the category of finite $G$-spaces to the category of quasi-coherent sheaves on $\catA_G^{der}$ is announced in \cite[\S~5.1]{Lur}.  On the level of global sections, the functor preserves homotopy equivalences and maps  homotopy colimits of $G$-spaces to homotopy limits of quasi-coherent sheaves \cite[Proposition~3.3]{Lur}. It also satisfies property \eqref{prop:ell_induction}. 

\item It follows from Assumption~\ref{assum:G-equiv}\eqref{prop:(3)} that $\Ell_G^0(\pt)=\calO_{\catA_G}$. Indeed, one can consider the map $\psi:G\to \{e\}$, then $\Ell_G^0(\pt)=\catA_{\psi}^*(\Ell^0(\pt))=\catA_{\psi}^*(\calO_{S})=\calO_{\catA_G}$.

\item If $\pi_0A$ is a finitely generated $\bbC$-algebra, for any connected compact Lie group $G$ with torsion free fundamental group we have.  For any finite  $G$-space $X$, we  define $\Ell_G^*(X)$ to be the $W$-invariants in $\pi_*\Ell_T^*(X)$ where $\pi:\catA_T\to \catA_G=\catA_T/W$ is the quotient map. 
Recall that in this case any embedding of compact Lie groups of equal rank induces a flat map $\catA_\psi:\catA_H\to \catA_G$.
Then, the assignment sending a finite $G$-space $X$ to $\Ell_G^*(X)$ satisfies Assumption~\ref{assum:G-equiv}(1)-(5). When $X=\pt$, Assumption~\ref{assum:G-equiv}\eqref{prop:ell_induction} is shown in \cite[Theorem~4.6]{Gan}. If furthermore K\"unneth property is satisfied, then Assumption~\ref{assum:G-equiv}\eqref{prop:ell_induction} holds for any $X$ \cite[Proposition~5.1]{Gan}.
\end{enumerate}
\end{remark}

It is reasonable to expect that Assumption~\ref{assum:G-equiv} is satisfied for a large class of oriented derived elliptic curve, but  this statement is not in the literature. Proving this is beyond the scope of this paper.  However, in the setting where we apply equivariant elliptic cohomology outside of the abelian case \S~\ref{sec:repn},  $\pi_0A$ is $\bbC$-algebra and $G$ is either a product of simply-connected groups or a product of the unitary groups, so that $\pi_1$ is torsion free. Therefore, following Remark~\ref{rmk:assump_ell_satisf}(3),  Assumption~\ref{assum:G-equiv} is always satisfied. See \S~\ref{subsec:existence} below for a detailed remark about the existence of the integral form as well as the implications in representations studied in this paper.

As  shown in \cite{GKV95},  Assumption~\ref{assum:G-equiv}.\eqref{prop:ell_induction}, which is a property about induction, implies the following property of change of groups. (See also the proof of \cite[Proposition~5.1]{Gan}.)
\begin{corollary}\label{cor: ell_change_group}
Under Assumption~\ref{assum:G-equiv}, let $X$ be a finite $G$-space, and let $K$ be a normal subgroup of $G$ such that  $K$ acts on $X$  freely. Denote the quotient  $G\to G/K$ by $\phi$, then we have an isomorphism
\[{\catA}_{\phi*}\Ell_G^*(X)\cong \Ell_{G/K}^*(X/K).
\] 
\end{corollary}
\begin{proof}
By cellular induction, it suffices to assume that $X=G/H$ for some closed subgroup $H<G$. The assumption that $K$ acts freely on $X$ implies that the composition $\psi:H\to G\to G/K$ is an embedding. In particular, $H\cap K=\{1\}$. We have $(G/H)/K\cong (G/K)/H$. Hence,   by Assumption~\ref{assum:G-equiv}(\ref{prop:ell_induction}), we obtain \[\Ell_{G/K}^*((G/H)/K)\cong \Ell_{G/K}^*((G/K)/H)\cong {\catA}_{\psi*}\Ell_H^*(\pt).\] Applying Assumption~\ref{assum:G-equiv}(\ref{prop:ell_induction}) again to the embedding $H<G$ and the $H$-space $\pt$, we conclude the proof.
\end{proof}

Define \[\catA_G^X=\Spec_{\catA_G} \calE^0_G(X),\]
which is a scheme over $\catA_G$. We have  the structure morphism  
\[\pi_X^G:\catA_G^X\to\catA_G,
\]
which we   simply denote by $\pi_X$ if $G$ is understood from the context. By Assumption \ref{assum:G-equiv}.\eqref{prop:(3)}, if $\psi:H\to G$ is a group homomorphism and $X$ is a finite $G$-space, then it induces a morphism of schemes $\catA^X_\psi:\catA_H^X\to \catA_G^X$.

\subsection{The GKV-classifying maps}\label{subsec:GKV_classifying}
We recall the GKV-classifying map defined in \cite[(1.6)]{GKV95}. Let $X$ be a finite $G$-space. For any $G$-vector bundle $V$ on $X$ of rank $n$, let $Fr\to X$ be the associated frame bundle. The group $G\times U_n$ acts on $Fr$, so  $\catA^{Fr}_{G\times U_n}$ is a scheme over $\catA_{G\times U_n}$, which in turn is a scheme over $\catA_{U_n}\cong E^{(n)}$ (see Example~\ref{ex:Un-bundle}). By Corollary~\ref{cor: ell_change_group}, there is an isomorphism $\catA^{Fr}_{G\times U_n}\cong \catA_G^X$. These maps fit into the following commutative diagram
\[\xymatrix{
\catA^{Fr}_{G\times U_n}\ar[r]\ar[d]^{\cong}&\catA_{G\times U_n}\ar[r]\ar[d]&\catA_{U_n}\\
\catA^X_G\ar[r]
&\catA_G
}.\]
The induced map $c_V^G: \catA_G^X\to \catA_{U_n}$ is called the {\it GKV-classifying map}, which we simply denote by $c_V$ if $G$ is understood. 

As in \cite[(1.9)]{GKV95}, one can define the characteristic classes in equivariant elliptic cohomology as follows. Let $P<U_n$ be the parabolic subgroup such that $U_n/P=\PP^{n-1}$. Then the projectivization $\PP(V)$ is isomorphic to $Fr/P$.  By Corollary~\ref{cor: ell_change_group}, there is an isomorphism $\catA_{G\times P}^{Fr}\cong \catA_{G}^{\PP(V)}$, which is a scheme over $\catA_P\cong\catA_{U_{n-1}\times S^1}\cong E^{(n-1)}\times E$. 

\begin{lemma}\label{lem:GKV_classify}
We have a commutative diagram
\[\xymatrix{
\catA_{G}^{\PP(V)}\ar[r]^-{\cong}\ar[d]&\catA^{Fr}_{G\times P}\ar[r]\ar[d]&\catA_P\ar[d]\ar[r]^-{\cong}&E^{(n-1)}\times E\ar[d]\\
\catA_G^X\ar[r]^-{\cong}&\catA^{Fr}_{G\times U_n}\ar[r]^{c_V}&\catA_{U_n}\ar[r]^-{\cong}&E^{(n)},
}\]
where the right vertical map is the symmetrization defined in Section~\ref{sec:grp_alg}. Moreover, 
 the middle square is  Cartesian.
\end{lemma}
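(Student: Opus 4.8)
The plan is to unwind all the identifications in the diagram and reduce everything to the torus case, where the relevant morphisms are explicit. First I would set up the group-theoretic bookkeeping. Fix a maximal torus and Weyl group of $U_n$, and recall that the parabolic $P < U_n$ with $U_n/P \cong \PP^{n-1}$ has Levi factor $U_{n-1}\times S^1$ (the $S^1$ being the scaling on the tautological line $\calO(-1)$). The inclusion $P \hookrightarrow U_n$ (or rather the inclusion of Levi quotients $U_{n-1}\times S^1 \hookrightarrow U_n$ up to the usual identification $\catA_P \cong \catA_{U_{n-1}\times S^1}$, since $\catA$ only sees the maximal torus mod Weyl group) induces the map $\catA_P \to \catA_{U_n}$, and under the identifications $\catA_{U_{n-1}\times S^1}\cong E^{(n-1)}\times E$ and $\catA_{U_n}\cong E^{(n)}$ of Section~\ref{sec:grp_alg} this is exactly the symmetrization map $E^{(n-1)}\times E \to E^{(n)}$, $(\{x_1,\dots,x_{n-1}\}, x_n)\mapsto \{x_1,\dots,x_n\}$. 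That establishes the right-hand square of the diagram; commutativity of the whole diagram is then just functoriality of $X\mapsto \catA_G^X$ together with Corollary~\ref{cor: ell_change_group} applied to the free $P$-action on $Fr$ (giving $\catA_{G\times P}^{Fr}\cong \catA_G^{Fr/P}=\catA_G^{\PP(V)}$) and to the free $U_n$-action on $Fr$ (giving $\catA_{G\times U_n}^{Fr}\cong \catA_G^X$), exactly as in the construction of the GKV-classifying map recalled just above the lemma.

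For part~(1), I would argue that the middle square
\[
\xymatrix{
\catA^{Fr}_{G\times P}\ar[r]\ar[d] & \catA_P\ar[d]\\
\catA^{Fr}_{G\times U_n}\ar[r] & \catA_{U_n}
}
\]
is Cartesian. The point is that $Fr$ is a free $U_n$-space, so $\Ell_{G\times U_n}(Fr) \cong \Ell_{G\times P}(Fr)$ as a module via the change-of-groups isomorphism, but more precisely, pulling back along $\catA_P \to \catA_{U_n}$: by Assumption~\ref{assum:G-equiv}(4), for the homomorphism $P\to U_n$ and the $U_n$-space $Fr$ we get $\phi_\catA^*\calE_{G\times U_n}(Fr)\cong \calE_{G\times P}(Fr)$ as sheaves on $\catA_{G\times P}^{der}$. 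Taking relative $\underline\Spec$ over $\catA_{G\times P}$ and comparing with the base change of $\catA_{G\times U_n}^{Fr}\to\catA_{G\times U_n}$ along $\catA_{G\times P}\to\catA_{G\times U_n}$, one sees the middle square is Cartesian. (One has to be a little careful that $\catA$ only depends on the Levi of $P$, but this is harmless: the map $\catA_P\to\catA_{U_n}$ factors through $\catA_{U_{n-1}\times S^1}$ and the free-action statement is insensitive to this.) So part~(1) is essentially a base-change manipulation.

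For part~(2), I would first pull back the divisor. By Lemma~\ref{lem:divisors equal on E(n)} (applied with the roles as stated there), the image of the symmetrization-times-projection map $E^{(n-1)}\times E \to E^{(n)}\times E$ is a divisor $D$, and $\iota^*\calO(-D)\cong\calO(-C)$ where $\iota:E^{(n)}\hookrightarrow E^{(n)}\times E$ is the first-factor inclusion and $C\subset E^{(n)}$ is the vanishing locus of one coordinate. Now I want to identify $\catA_G^{\PP(V)}$, sitting inside $\catA_G^X\times E$ via the classifying map $c_V$ on the first factor and the GKV-classifying map for the $S^1$-line bundle on the second, with the preimage $(c_V\times\id)^{-1}(D)$; equivalently, after restricting to $\catA_G^X = \catA_G^X\times\{0\}$-type slices, with the preimage of $C$ under $c_V$. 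The combinatorial content is that $\PP(V)\to X$ has fiber $\PP^{n-1}$ and the Levi-torus data for $P$ differs from that for $U_n$ by splitting off one coordinate; on $\catA$'s this is precisely the fiber product description $\catA_G^{\PP(V)}\cong\catA_G^X\times_{E^{(n)}}(E^{(n-1)}\times E)$ coming from part~(1) and the identification of $\catA_P\to\catA_{U_n}$ with symmetrization. Since $E^{(n-1)}\times E\to E^{(n)}$ factors through the divisor $D\subset E^{(n)}\times E$ in the evident way, the fiber product is cut out inside $\catA_G^X\times E$ by the pullback of the equation of $D$, i.e.\ by the section $c_V^{-1}\vartheta^{U_n}$ suitably interpreted; this is a nonzero section (it does not vanish identically since $D$ is a proper divisor and $c_V\times\id$ is not constant on any component), so its zero locus is an effective Cartier divisor. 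That it is a divisor and not all of $\catA_G^X\times E$ is the one substantive point: it amounts to checking the pulled-back theta section is a nonzerodivisor, which follows because $\calA_G^X$ is flat (indeed finite-type over the abelian variety $\catA_G$) and $D$ meets the relevant fibers properly.

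The main obstacle I anticipate is part~(2), specifically making precise the identification of the embedding $\catA_G^{\PP(V)}\hookrightarrow\catA_G^X\times E$ with $(c_V\times\mathrm{id})$ composed with the inclusion of the fiber product over $E^{(n)}$, and then verifying the resulting ideal sheaf is an honest line bundle (not just a quotient ideal that could fail to be invertible or could be the unit ideal). Parts of the diagram and part~(1) are formal consequences of functoriality and Assumption~\ref{assum:G-equiv}, and Lemma~\ref{lem:divisors equal on E(n)} does the divisor computation on the symmetric-product side; but transporting ``$D$ is a divisor'' across the classifying map requires knowing that the pulled-back theta function $c_V^{-1}\vartheta^{U_n}$ is a regular section, for which I would invoke flatness of $\catA_G^X\to\catA_G$ and the fact (from Section~\ref{sec:grp_alg}) that $\vartheta^{U_n}$ vanishes to order exactly one along $C$.
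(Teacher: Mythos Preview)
Your approach is essentially the same as the paper's. The paper's proof of (1) is just ``clear'' (your invocation of Assumption~\ref{assum:G-equiv}(4) spells this out), and for (2) the paper observes that the embedding in question is the pullback along $c_V\times\id$ of the divisor embedding $\oplus\times\id:E^{(n-1)}\times E\hookrightarrow E^{(n)}\times E$, hence itself a divisor embedding; your argument is the same, with the added care of checking that the pulled-back section is a nonzerodivisor, a point the paper leaves implicit.
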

We denote the map $\catA_{G}^{\PP(V)}\to E^{(n-1)}\times E$  by $c_{\PP(V)}$, and  refer to it as the {\it GKV projective classifying map}.
\begin{proof}
This follows from Assumption~\ref{assum:G-equiv}\eqref{prop:(3)} and the fact that the map $\catA_{P}\to \catA_{U_n}$ is flat. 
\end{proof}
\begin{example}
For simplicity, we assume $R=k$ is an algebraically closed field. 
Let $G=\{1\}$, and  $X=\PP^1$ with the tautological line bundle $L=\calO(-0)$ on it, so that the total space, with the zero section removed, is isomorphic to $\Aff^2-\{0\}$. 
Below we find the GKV-classifying map $c_{L}:\catA_1^{\PP^1}\to \catA_{S^1}\cong E$, for different $S^1$-actions on $L=\calO(-0)$. 

Let $S^1$ act on $\Aff^2-\{0\}$ by scaling, so that $L$ is an $S^1$-line bundle with respect to the trivial $S^1$-action on $\PP^1$. 
By construction, $Fr=\Aff^2-\{0\}$ with the above mentioned $U_1=S^1$-action. 
We have $\catA_1^{\PP^1}\cong \catA_{S^1}^{\Aff^2-\{0\}}=\Spec_E(\Ell^0_{S^1}(\Aff^2-\{0\}))$, with the map $c_{L}$ being the structure map to $E$. By Corollary~\ref{cor: stalks}, the stalk of $\Ell^0_{S^1}(\Aff^2-\{0\})$ is zero away from the origin, and is 2-dimensional at the origin. Thank to the algebraic-closedness of $k$, as a coherent sheaf, it is either $k_0^{\oplus 2}$ or $\calO_{E,0}/m_0^2$ where $m_0$ is the maximal ideal of the local ring $\calO_{E,0}$. To determine which one it is, we consider the inclusion $\Aff^2-\{0\}\subseteq \Aff^2$ which is $S^1$-equivariant and hence induces a map $\calO_E\cong\Ell^0_{S^1}(\Aff^2) \to\Ell^0_{S^1}(\Aff^2-\{0\})$ which is surjective. Therefore, $\Ell^0_{S^1}(\Aff^2-\{0\})\cong\calO_{E,0}/m_0^2$. Taking $\Spec_E$, we obtain that the map $c_{L}$ identifies $\catA_1^{\PP^1}$ as the first order nilpotent thickening of $\catA_{S^1}\cong E$ at the origin.
\end{example}

\subsection{The Thom isomorphism theorem and Chern classes}\label{subsec:Thom}

Recall that the natural map $\oplus\times \id:E^{(n-1)}\times E\to E^{(n)}\times E$ is a divisor. We have the map $\catA_{U_n}\to\catA_{U_n}\times E, x\mapsto (x,0)$.
Let $\calO(-0)$ be the line bundle on $\catA_{U_n}$ which is the pullback of the ideal sheaf of the above divisor. 
\begin{definition} \cite[\S~2.1]{GKV95} Let $X$ be a finite $G$-space, and $V$ be a $G$-vector bundle. 
Define $\Theta_G(V)$ to be the line bundle on $\catA_G^X$ that  is the   the pull-back of $\calO(-0)$  along the map $c_V$. If $G$ is understood, we simply abbreviate $\Theta_G(V)$ as  $\Theta(V)$. 
\end{definition}

\begin{example}
When $G=U_n$, $X=\pt$, and the vector bundle $V=\xi_n$ is the standard $n$-dimensional representation of $U_n$, then by Lemma~\ref{lem:divisors equal on E(n)}, $\Theta(\xi_n)=\calO(-C)$. Moreover, $\vartheta^{U_n}=\prod_{i=1}^n\vartheta(x_i)$ is the natural section of $\Theta(\xi_n)^{-1}$ as in \S~\ref{subsec:Loo}.
\end{example}

\begin{prop}\label{prop:GKVClass_general}
Let $X$ be a finite  $G$-space.
\begin{enumerate}
\item For any rank-$n$ $G$-vector bundle $V$ on $X$,  we have $\Theta_G(V)\cong c_V^*\Theta_{U_n}(\xi_n)$, and under this isomorphism,   $c_V^{-1}\vartheta^{U_n}$ is identified with the natural section of $\Theta_G(V)^{-1}$.

\item For any two $G$-vector bundles $V_1$ and $V_2$ on $X$, we have a natural isomorphism \[\Theta(V_1\oplus V_2)\cong \Theta(V_1)\otimes \Theta(V_2).\]

\item The assignment sending a $G$-vector bundle $V$ on $X$ to the line bundle $\Theta(V)$ on $\catA^X_G$ extends to a group homomorphism $\Theta: K_G(X)\to \Pic(\catA^X_G)$.
\end{enumerate}

\end{prop}
\begin{proof}
Part (1) follows from the definition. Part (2) follows directly from part (1) and Lemma~\ref{lem: theta_add_tensor}. Part (3) follows from part (2). 
\end{proof}

Recall that similar to Proposition~\ref{prop:equivThom}, for any $G$-vector bundle $V$ on a finite $G$-space $X$, the Thom space $Th(V)=D(V)/S(V)$ of the vector bundle is again a finite $G$-space over $X$. The following is the Thom isomorphism in  equivariant elliptic cohomology.
\begin{theorem}\cite[(2.1.3)]{GKV95}\label{thm:thom}
Let  $V$ be a $G$-vector bundle on $X$. There is a canonical isomorphism $\pi_{X*}\Theta(V)\cong \wt\Ell^0_G(Th(V))$ making the following diagram commutative
\[\xymatrix{
\pi_{X*}\Theta(V)\ar[r]\ar[d]^{\cong}&\pi_{X*}\calO_{\catA_G^X}\ar@{=}[d]\\
\wt{\Ell}^0_G(Th(V))\ar[r]&\calE^0_G(X).
}\]
Here  $\wt{\Ell}^0_G(Th(V))\to \calE^0_G(X)$  is the pull-back via the embedding $X\to Th(V)$.
\end{theorem}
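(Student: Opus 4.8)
The strategy is to reduce to the universal case $G = U_n$, $X = \pt$, where the statement becomes a concrete computation with theta-functions, and then transport the result along the GKV-classifying map. First I would recall, from Example~\ref{ex:Thom_circ} and Proposition~\ref{prop:equivThom}, that for a unitary $T$-representation $V$ the sheaf $\calF_T(Th(V)) = \Ell^0_T(Th(V))$ is a line bundle on $\catA_T$, and by the computation already recorded in the excerpt it is $\calO(-\ker\chi_{\lambda_V})$, which is precisely $\Theta_T(V)$ by the Example following the definition of $\Theta_G(V)$. The map $\Ell^0_T(Th(V)) \to \Ell^0_T(\pt) = \calO_{\catA_T}$ induced by $\pt \to Th(V)$ is the one given by the theta-section $\vartheta(\chi_{\lambda_V})$ vanishing to order one along the zero section — again, this is in Example~\ref{ex:Thom_circ} for $T = S^1$ and follows for general $T$ by the multiplicativity in Proposition~\ref{prop:equivThom}.(2) together with Lemma~\ref{lem: theta_add_tensor}. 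This settles the case $G = T$, $X = \pt$, and taking $W$-invariants (or rather, restricting to $\catA_{U_n}$ via $\catA_T \to \catA_{U_n} = E^{(n)}$) handles $G = U_n$, $X = \pt$, with $\Theta(\xi_n) = \calO(-C)$ and natural section $\vartheta^{U_n} = \prod_i \vartheta(x_i)$, exactly as in the second Example preceding the theorem.

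Next I would globalize over $X$. Given a rank-$n$ $G$-vector bundle $V$ on $X$, form the frame bundle $Fr \to X$ with its $G\times U_n$-action. The Thom space of $V$ is identified $G$-equivariantly with $(Fr \times_{U_n} Th(\xi_n))$, i.e. $Th(V) \cong (Fr \times Th(\xi_n))/U_n$ as a $G$-space, where $U_n$ acts freely on $Fr$. By Corollary~\ref{cor: ell_change_group} (the free-quotient change-of-groups statement), $\Ell_{G}(Th(V)) \cong (p_V)_{\catA}^* \Ell_{U_n}(Th(\xi_n))$ where $p_V = c_V$ is the GKV-classifying map $\catA_G^X \to \catA_{U_n}$ — here one uses that $\catA_{G\times U_n}^{Fr} \cong \catA_G^X$ and that the relevant classifying diagram commutes, as set up in the definition of $c_V$. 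Pulling back the computation from the universal case, $\Ell^0_G(Th(V)) \cong c_V^* \Theta_{U_n}(\xi_n)$, and by Proposition~\ref{rmk: GKVClass_general}.(1) the right-hand side is canonically $\Theta_G(V)$, with $c_V^{-1}\vartheta^{U_n}$ corresponding to the natural section of $\Theta_G(V)^\vee$. Identifying $\Ell^0_G(Th(V))$ as a sheaf on $\catA_G^X$ with $\pi_{X*}$ of a line bundle on $\catA_G^X$ is then just unwinding the definition $\catA_G^X = \underline\Spec_{\catA_G}\Ell^0_G(X)$ together with the $\Ell^0_G(X)$-module structure on $\Ell^0_G(Th(V))$ coming from the ring map $\Ell^0_G(X) \to \Ell^0_G(Th(V))$ (the pull-back along $Th(V) \to X$ collapsing the zero section, or rather via the $X$-algebra structure); concretely, $\pi_{X*}\Theta(V)$ is a rank-one locally free $\Ell^0_G(X)$-module and the claimed isomorphism is an isomorphism of such.

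The final point is the commutativity of the square, i.e. that under the identification $\Ell^0_G(Th(V)) \cong \pi_{X*}\Theta(V)$ the map induced by the inclusion $X \hookrightarrow Th(V)$ (the zero section) corresponds to the map $\pi_{X*}\Theta(V) \to \pi_{X*}\calO_{\catA_G^X}$ given by the natural section of $\Theta(V)^\vee$. In the universal case $G = U_n$, $X = \pt$ this is the statement that $Th(\xi_n) \to \pt$... rather $\pt \to Th(\xi_n)$ induces multiplication by $\vartheta^{U_n}$, which is in the Examples above; for general $(G, V, X)$ it follows by naturality of $c_V$ applied to the map of pairs $(\pt \to Th(\xi_n))$, since the inclusion of the zero section of $V$ is the pull-back along $Fr \to \pt$ of the inclusion of the zero section of $\xi_n$, and $\Ell_G$ is a functor compatible with the free-quotient isomorphisms of Corollary~\ref{cor: ell_change_group}. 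I expect the main obstacle to be purely bookkeeping: verifying that the several identifications — $Th(V) \simeq (Fr \times Th(\xi_n))/U_n$, the compatibility of $c_V$ with the zero-section inclusions, and the match between the sheaf-of-algebras structure used to form $\catA_G^X$ and the module structure on $\Ell^0_G(Th(V))$ — are all mutually compatible, so that the two maps in the square genuinely agree and not merely up to an automorphism of the line bundle. Since all the essential geometric input (the theta-function computation, the Thom isomorphism of Proposition~\ref{prop:equivThom}, and the change-of-groups Corollary~\ref{cor: ell_change_group}) is already available, no new idea is needed beyond careful diagram-chasing.
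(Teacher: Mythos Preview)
The paper does not supply its own proof of this theorem: it is stated with the citation \cite[(2.1.3)]{GKV95} and used as a black box. There is therefore nothing in the present paper to compare your argument against line by line.

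That said, your sketch is sound and is essentially the argument one finds in \cite{GKV95}. The reduction to the universal case $G=U_n$, $X=\pt$ via the frame bundle and Corollary~\ref{cor: ell_change_group}, followed by the identification $\Theta_G(V)\cong c_V^*\Theta_{U_n}(\xi_n)$ from Proposition~\ref{rmk: GKVClass_general}(1), is exactly the intended mechanism; the base case is the content of Example~\ref{ex:Thom_circ} and Proposition~\ref{prop:equivThom}, extended to $U_n$ by multiplicativity and descent to $E^{(n)}$. One small point of care: the $\calE^0_G(X)$-module structure on $\calE^0_G(Th(V))$ does not come from a map $Th(V)\to X$ (there is no such collapse map to $X$ in the pointed category), but from the diagonal $Th(V)\to Th(V)\wedge X_+$, equivalently from the ring-spectrum action of $\calE_G(X)$ on the Thom spectrum. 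This is what makes $\calE^0_G(Th(V))$ a sheaf on $\catA_G^X$ rather than merely on $\catA_G$, and is needed to interpret $\pi_{X*}\Theta(V)$ correctly. With that clarified, your bookkeeping plan is complete.
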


We have the following property about change of groups.
\begin{corollary}\label{cor:Thom_Restrict}
Let $X$ be a finite $G$-space, with a $G$-vector bundle $V$ on $X$. Let $\psi:H\to G$ be a group homomorphism, then there is an isomorphism $\Theta_H(V)\cong ({\catA}_\psi^X)^*\Theta_G(V)$ of line bundles on $\catA_H^X$. 
\end{corollary}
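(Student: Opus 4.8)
The plan is to reduce the statement to the naturality of the GKV-classifying map with respect to change of groups, combined with Proposition~\ref{rmk: GKVClass_general}.(1). First I would recall that by that proposition, for any rank-$n$ $G$-vector bundle $V$ on $X$ we have a canonical isomorphism $\Theta_G(V)\cong c_V^*\Theta_{U_n}(\xi_n)$, where $c_V:\catA_G^X\to\catA_{U_n}$ is the GKV-classifying map, and likewise $\Theta_H(V)\cong c_V'^*\Theta_{U_n}(\xi_n)$ where $c_V':\catA_H^X\to\catA_{U_n}$ is the GKV-classifying map for the $H$-vector bundle $\phi^*V$ (the underlying frame bundle is the same $Fr\to X$, only the group action changes). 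So the whole claim follows once I check the compatibility
\[
c_V\circ\phi_{\catA^X}=c_V',
\]
as maps $\catA_H^X\to\catA_{U_n}$.

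To prove this compatibility I would unwind the construction of $c_V$ from \S~\ref{subsec:Thom} (the paragraph preceding Lemma~\ref{lem:GKV_classify}). The frame bundle $Fr\to X$ carries an action of $G\times U_n$ (resp.\ $H\times U_n$ via $\phi\times\id$), and $c_V$ is obtained from the tower $\catA^{Fr}_{G\times U_n}\to\catA_{G\times U_n}\to\catA_{U_n}$ together with the identification $\catA^{Fr}_{G\times U_n}\cong\catA_G^X$ coming from Corollary~\ref{cor: ell_change_group} (applied to the normal subgroup $U_n\lhd G\times U_n$ acting freely on $Fr$). The map $\phi\times\id:H\times U_n\to G\times U_n$ induces, via Assumption~\ref{assum:G-equiv}.(4), a morphism of the corresponding moduli schemes making the two towers compatible; moreover this morphism is compatible with the identification in Corollary~\ref{cor: ell_change_group}, since that isomorphism is itself built from the functorial data in Assumption~\ref{assum:G-equiv}. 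Chasing the resulting commutative diagram
\[
\xymatrix{
\catA_H^X\ar[r]^-{\cong}\ar[d]_{\phi_{\catA^X}}&\catA^{Fr}_{H\times U_n}\ar[r]\ar[d]&\catA_{H\times U_n}\ar[r]\ar[d]&\catA_{U_n}\ar@{=}[d]\\
\catA_G^X\ar[r]^-{\cong}&\catA^{Fr}_{G\times U_n}\ar[r]&\catA_{G\times U_n}\ar[r]&\catA_{U_n}
}
\]
gives exactly $c_V\circ\phi_{\catA^X}=c_V'$. Pulling back the canonical section $\vartheta^{U_n}$ of $\Theta_{U_n}(\xi_n)^\vee$ along both routes shows the isomorphism $\Theta_H(V)\cong\phi_{\catA^X}^*\Theta_G(V)$ carries the natural section of $\Theta_H(V)^\vee$ to the pullback of the natural section of $\Theta_G(V)^\vee$, so the identification is canonical as claimed.

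The main obstacle I expect is purely bookkeeping: verifying that the identification $\catA^{Fr}_{G\times U_n}\cong\catA_G^X$ of Corollary~\ref{cor: ell_change_group} is natural in $G$, i.e.\ commutes with the vertical maps induced by $\phi\times\id$. This is not deep — it follows by tracing through the proof of that corollary, which only uses the functoriality built into Assumption~\ref{assum:G-equiv}.(3) and (4) — but it requires writing the cellular-induction argument carefully enough to see that each step is $\phi$-equivariant. Everything else (the two invocations of Proposition~\ref{rmk: GKVClass_general}.(1), the section compatibility) is formal once this naturality is in hand. Alternatively, one can bypass the classifying-map route entirely and argue directly: $\catA_H^{\PP(V)}\subseteq\catA_H^X\times E$ is the pullback of the divisor $\catA_G^{\PP(V)}\subseteq\catA_G^X\times E$ under $\phi_{\catA^X}\times\id_E$ (using Assumption~\ref{assum:G-equiv}.(4) applied to the $G$-spaces $\PP(V)$ and $X$), hence their ideal sheaves correspond under pullback, and restricting along the zero section $x\mapsto(x,0)$ yields $\Theta_H(V)\cong\phi_{\catA^X}^*\Theta_G(V)$ directly from the definition; I would likely present this shorter argument as the main line and relegate the classifying-map version to a remark.
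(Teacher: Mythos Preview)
Your proposal is correct. The paper does not actually supply a proof: it simply declares the statement to be a corollary of the Thom isomorphism theorem (the preceding result, quoted from \cite[(2.1.3)]{GKV95}), presumably meaning that $\pi_{X*}\Theta_G(V)\cong\Ell^0_G(Th(V))$ combined with the change-of-groups isomorphism $\phi_\catA^*\Ell^0_G(Th(V))\cong\Ell^0_H(Th(V))$ from Assumption~\ref{assum:G-equiv}(4) yields the claim after unwinding the $\Ell^0_G(X)$-module structures.

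Your two routes---via naturality of the GKV-classifying map, or directly via pullback of the divisor $\catA_G^{\PP(V)}\subseteq\catA_G^X\times E$---bypass the Thom isomorphism and go back to the definition of $\Theta$. Both are valid and arguably cleaner, since they avoid the bookkeeping of comparing $\pi_{X*}$ with $\phi_\catA^*$; the divisor argument in particular is the shortest, as it only needs Assumption~\ref{assum:G-equiv}(4) applied to $\PP(V)$ and $X$ plus the fact that pullback of a Cartier divisor gives pullback of its ideal sheaf. All three approaches rest on the same functoriality input, so the difference is purely one of packaging.
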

\begin{proof}
By Assumption \ref{assum:G-equiv}.\eqref{prop:(3)}, we have the commutative diagram 
\[\xymatrix{
\catA_H^X\ar[r]\ar[d]_{\cong}&\catA_G^X\ar[d]_{\cong}\\
\catA_{H\times U_n}^{Fr}\ar[r]\ar[d]&\catA_{G\times U_n}^{Fr}\ar[d]\\
\catA_{H\times U_n}\ar@{=}[r]&\catA_{H\times U_n}
}\]
By definition of $c_V$ and the above diagram, the composition of $\catA_\psi^X:\catA_H^X\to\catA_G^X$ with $c^G_V:\catA_G^X\to \catA_{U_n}$ is equal to $c^H_V$. Now the statement follows from Proposition~\ref{prop:GKVClass_general}(1).
\end{proof}

\begin{example}
When $G=T$ is a torus and $X=\pt$,  then a vector bundle $V$ of rank $n$ over $X$ is a representation of $T$, which defines a map $\rho:T\to U_n$ with virtual character $\la_V\in\bbZ[\bbX^*(T)]$. Then the map $c_V:\catA_T\to \catA_{U_n}$ coincides with $\catA_\rho:\catA_T\to \catA_{U_n}\cong E^{(n)}$.  So $\Theta(V)\cong \wt\Ell^0_T(Th(V))$ is equal to 	 $\calO({-\ker \chi_{\la_V}})$ defined in Section~\ref{sec:grp_alg}. 
\end{example}

By Proposition~\ref{prop:GKVClass_general}(3), $\Theta(V)$ is well-defined for any virtual vector bundle. For any    proper 
morphism $q:X\to Y$ between two smooth complex	 $G$-manifolds, we define $Tq$ to be the virtual vector bundle $q^*TY-TX$ on $X$, so that $\Theta(Tq)$ is well-defined, which for simplicity we also denote by  $\Theta(q)=\Theta(Tq)$, called the Thom bundle of $q$.
Following \cite[\S~2.3]{GKV95}), there exists an (elliptic cohomology) push-forward   morphism $q_{\sharp}: \Theta(q)\to q_{\catA}^{-1}\Ell^0_G(Y)$ of sheaves. Equivalently, it is determined by the morphism $q_{\catA*}\Theta(q)\to \Ell^0_G(Y)$	 of sheaves of $\Ell^0_G(Y)$-modules, still denoted by $q_{\sharp}$. 
The functor sending an equivariant proper map to the pushforward morphism is determined by the following two properties. 
\begin{enumerate}
\item For any $G$-equivariant regular embedding $q:X\to Y$. We denote the homotopy cofiber of the inclusion map $Y\setminus X \inj Y$ by $Th_Y(X)$.  We have $N_XY=Tq$, and $Th(Tq)$ is homotopy equivalent to $Th_Y(X)$. Then the map  $\pi_{X*}(q_\sharp): \pi_{X*}\Theta(q)\to \Ell_G^0(Y)$, as a map of sheaves over $\catA_T$,  is the composition  of the Thom isomorphism 
$\pi_{X*}\Theta(q)\cong \Ell^0_G(Th(Tq))$ and  the pull-back 
$\Ell^0_G(Th(Tq))\to \Ell^0_G(Y)$. 
\item Similar to the abelian case Proposition~\ref{prop:equivThom}, we have  the natural map $X\times V\to Th(V)\wedge X_+$, with   $\Ell_G^*(Th(V)\wedge X_+)\cong 
\Theta(V)\otimes \Ell^*_G(X)$. 
\end{enumerate}
As claimed above, for any proper map $q:X\to Y$, the pushforward $\Theta(q)\to \Ell^0_G(Y)$ is determined by the above two properties via 
 the usual Pontryagin-Thom construction. Indeed,  we can find a representation $V$ so that we have an equivariant embedding $\tilde{q}:X\to V\times Y$. Now notice that we have the isomorphism $Th_Y(Y\times V)\cong Th(V)\wedge Y_+$, where $Y_+$ is  $Y$ with an added base-point so that the $G$-action is trivial on the base-point (see. e.g., \cite[\S~23.5]{May} for the non-equivariant case). Notice also that $\tilde{q}$ induces a map $X\to Th_Y(Y\times V)=Sph(V\times Y)/Y_\infty$ where $Sph(V\times Y)$ is a compactification of each fiber into a sphere and $Y_\infty$ is the section at infinity, and that we have the map $Y\times V\setminus X\to  Th_Y(Y\times V)\setminus X$ induced by the inclusion $Y\times V\to Th_Y(Y\times V)$, and hence $Th_X(Y\times V)\cong Th_X(Th_Y(Y\times V))$. Composing the isomorphism  $\Ell_G(Th_X(V\times Y))\cong \Theta(V\oplus Tq)\cong \Theta(V)\otimes\Theta(Tq)$ with  the isomorphism $\Ell_G(Th_X(V\times Y))\cong \Ell_G(Th_X(Th_Y(Y\times V)))$, the pullback $\Ell_G(Th_X(Th_Y(Y\times V)))\to \Ell_G(Th_Y(Y\times V))$,  and the isomorphism $\Ell_G(Th_Y(Y\times V))\cong \Ell_G(Th(V)\wedge Y_+)\cong \Ell(Y)\otimes \Theta(V)$, then applying $-\otimes\Theta(V)^{-1}$, we finally obtain $\Theta(q)\to \Ell^0_G(Y)$.

Now we list a few useful properties of  $\Theta(q)$ as well as the pushforward map $q_{\sharp}: q_{\catA*}\Theta(q)\to \Ell^0_G(Y)$. 
\begin{prop}\label{prop:push}
\begin{enumerate}
\item Let $V$ be a $G$-equivariant vector bundle on $Y$, then, $q_{\catA}^*\Theta(V)\cong \Theta(q^*V)$ \cite[Remark~2.4.4]{GKV95}.
\item The map $\Theta: K_G(X)\to \Pic(\catA_G^X)$ is functorial in $X$. In particular, let $q:X\to Y$ and $p:Y\to Z$ be equivariant maps, we have $q_\catA^*\Theta(p)\cong \Theta(q^*Tp)$, and that  $\Theta(p\circ q)=\Theta(q)\otimes q_\catA^*\Theta(p)$ as coherent sheaves on $\catA_G^X$.
\item The map $q_{\sharp}:  q_{\catA*}\Theta(q)\to \Ell^0_G(Y)$	is a morphism of sheaves of $\Ell^0_G(Y)$-modules \cite[\S~2.3]{GKV95}.
\item The correspondence sending a $G$-equivariant proper map $q:X\to Y$ to $\pi_{X*}\Theta(q)$ is functorial, sending a commutative diagram 
\[\begin{xymatrix}{
X\ar[rr]|-{\phi}\ar[dr]_q&&X'\ar[dl]^{q'}\\
&Y
}\end{xymatrix}\]
to the obviously defined $\phi_\sharp:\pi_{X*}\Theta(q)\to \pi_{X'*}\Theta(q')$ \cite[(2.4)]{GKV97}. 
\item For a $G$-equivariant embedding of a complex variety into a smooth complex variety $i:S\to M$ which is a  $G$-subcomplex, as an abuse of notations we write $\Ell_T^0(M,M\setminus S)$ as $\Theta(i)$. It is a sheaf of modules on $\catA_G^S$. Then, we have a well-defined map $\pi_{S*}\Theta(i)\to \Ell^0_G(M)$,  which as an abuse of notations is denoted by $i_\sharp$ \cite[(2.2)]{GKV95}.
\item Given  the following Cartesian diagram of $G$-varieties with  $X$ and $X'$ smooth:
\begin{equation*}\xymatrix@R=1.5em{
Y' \ar@{^{(}->}[r]^{i_{Y'}} \ar[d]_{f} & X' \ar[d]^{g}\\
Y\ar@{^{(}->}[r]^{i_Y} & X, 
}
\end{equation*}
 there is a well-defined pullback construction $g^\sharp: \pi_{Y*}\Theta(i_Y) \to \pi_{Y'*}\Theta(i_{Y'})$ \cite[\S~7.3]{YZ}.   Moreover, the following diagram commutes \cite[(24)]{YZ} \begin{equation*}
\xymatrix@R=1.3em{
\pi_{Y*}\Theta(i_Y) \ar[r]^{i_{Y\sharp}}  \ar[d]_{f^{\sharp}} & \Ell_G(X) \ar[d]^{g^\sharp}\\
\pi_{Y'*}\Theta(i_{Y'})\ar[r]^{i_{Y'\sharp}} & \Ell_G(X').
}\end{equation*}
\item Assume furthermore from part (5) that $f$ is proper, define 
\[\Theta(f):= \Theta(i_{Y'})\otimes \mathcal{H}om(f^* \Theta(i_Y), i_{Y'}^* \Theta(g)).\] Then $ \Theta(f)$ is independent of the smooth embeddings, and the properties (1)-(5) above extend to this case \cite[2.5.2]{GKV95}. 
\end{enumerate}
\end{prop}
\begin{remark}\label{rmk:ellBorelMoore}
\begin{enumerate}
\item Properties (1) and (4) above are stated for complex manifolds in \cite{GKV95}. Nevertheless, using the convention from (7), they also hold in the singular case \cite[2.5.2]{GKV95}. Hence in the above we state the general case. 
\item In what follows, for a singular $G$-variety $S$ we avoid considering $\catA_G^S$. Instead, we fix an embedding $i:S\to M$ into a  complex manifold and consider  $\Theta(i)$ instead. This should be thought of as the elliptic 
  Borel-Moore homology  \cite[\S~2.2]{GKV95}.
\item 
The property (3) is the analogue of the projection formula, and the property (6) is the analogue of base-change property. We also note that the analogue of the projective bundle formula is given by Lemma~\ref{lem:GKV_classify}. Together with the homotopy invariance property, these are all the main ingredients in defining a convolution algebra as in \cite{ZZ14}.  
\item 
Let $q:X\to Y$ be a proper smooth morphism of complex $G$-varieties, then the relative normal bundle $Tq$ is a well-defined vector bundle on $X$ without smoothness assumption on either $X$ or $Y$. In this case the definition of $\Theta(q)$ from (7)  agrees with $\Theta(Tq)$.
\end{enumerate}
\end{remark}

As the push-forward is defined up to a twist by an explicit line bundle, we have two different notions of Chern classes in equivariant elliptic cohomology, coming from the orientation of the elliptic curve and the Thom isomorphism, respectively. In the terminology of \cite{GKV95}, they are called the Chern classes and the Euler class, respectively.

Let $p:E\to S$ be the
structure map of the elliptic curve. 
\begin{definition}\cite[(1.8.1)]{GKV95}\label{def:loc_coord}
A local coordinate on $E$ is a rational section $\fl$ of the line bundle $p^*\omega^{-1}$ that is regular in a neighbourhood of the zero-section $0:S\to E$ and vanishes on the image of $0$. Moreover, the  following condition should be satisfied:   the differential $d(\fl)$,  which is a rational section of $p^*\omega^{-1}\otimes\Omega$, goes to the identity under the isomorphism  $(p^*\omega^{-1}\otimes\Omega)|_{0}\cong\sHom_S(\omega^{-1},\omega^{-1})$.\end{definition}
We refer the interested readers to \cite[(1.8.3.2)]{GKV95} for a discussion of how a choice of local coordinate is related to the formal group law. Nevertheless, we use the following property. By the discussion after Theorem~\ref{thm:lur_able} (which is a brief review of \cite[p19]{Lur}), 
we have an  isomorphism coming from the orientation
$\Ell_G^0(\pt)\otimes \omega^{-i}\cong \Ell^{2i}_G(\pt)$.

Let $f$ be any rational section of $p^*\omega^{-1}$  on $E$  (not necessarily  a local coordinate of $E$). For any $i\geq 0$, let  $e_i$ be the $i$-th elementary symmetric function, and  $e_i(f)(x_1,\dots,x_n):=e_i(f(x_1),\dots,f(x_n))$ is a section of $p_n^*(\omega)^{-i}$ on $E^{(n)}$ with the structure map $p_n:E^{(n)}\to S$.
For any $G$-vector bundle $V$ on a finite $G$-space $X$ of rank $n$, define the $i$-th  $f$-Chern class $c^f_i(V)$ of $V$ to be $c_V^{-1}(e_i(f))$ as a rational  section of $\Ell_G^0(X)\otimes \omega^{-i}\cong \Ell^{2i}_G(X)$. A choice of a local coordinate provides a trivialisation of the line bunle $\omega$ and $\omega^i$ for any $i$, and hence of $p^*\omega^i$.  In what follows, we frequently treat a section of $\omega^i$ as a function.

On the other hand, define the  Euler class
\[e(V)=c_V^{-1}(\vartheta^{U_n})\in H^0(\catA_G^X,\Theta(V)^{-1})\cong H^0(\catA_G^X,c_V^{*}\Theta_{U_n}(\xi_n)^{-1}).\] 
It follows from the Thom isomorphism theorem that $e(V)$ coincides to  $z_{\sharp}:\Theta(V) \to z_{\catA}^{-1}\Ell^0_G(V)\cong \Ell^0_G(X) $ induced by the zero section $z:X\to V$, since one has  $V\cong Tz$.


\begin{lemma}[\cite{GKV95}, (2.9.2)]\label{lem:loc_Eul}
Let $T$ be a torus, and let $a\in \catA_T$ be an $R$-point. Let $T(a)<T$ be as in  \eqref{eq:fixeda}, and let 
\[
i_{a\sharp}:\pi_{X^{T(a)}*}\Theta(T(X^{T(a)}))^{{-1}}\to \pi_{X*}\Theta(TX)^{{-1}}
\]
be the push-forward induced by the inclusion $i_a:X^{T(a)}\to X$.
Then, $i_{a\sharp}$ is given by multiplication by $e(T_{X^{T(a)}}X)$. Moreover, $i_{a\sharp}$ is compatible with the push-forward induced by the  inclusion  $T(a)\to T(a')$ for $a, a'\in \catA_T$, and is invertible on a Zariski open subset of $\catA_T$ containing $a$.
\end{lemma}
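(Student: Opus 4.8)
```latex
\textbf{Plan.} The statement is the elliptic analogue of the ``localization/self-intersection'' formula for the Gysin map attached to a fixed-point inclusion. I would prove it by reducing to two observations: (i) the push-forward $i_{a*}$ can be computed on an \'etale (or Zariski) neighbourhood of $a$ where $X^{T(a)}\hookrightarrow X$ behaves like a regular embedding with normal bundle $T_{X^{T(a)}}X$; and (ii) along such a neighbourhood the Thom class computation from \S\ref{subsec:Thom} identifies the composite of the Thom isomorphism and the restriction $\Ell^0_T(Th(i_a))\to\Ell^0_T(X^{T(a)})$ with multiplication by the Euler class $e(T_{X^{T(a)}}X)$. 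The rest is bookkeeping about which divisor $e(T_{X^{T(a)}}X)$ vanishes on, and a naturality check for nested subgroups.

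\textbf{Step 1: local structure near $a$.} By the localization theorem (Lemma~\ref{lem:localization} and its corollary), $\calF_T(X)_a\cong\calF_T(X^{T(a)})_a$, i.e.\ away from the finite union of weight hyperplanes through $a$ the inclusion $i_a$ is a ``cohomology equivalence on complement''. Choose a $T$-invariant tubular neighbourhood $U$ of $X^{T(a)}$ diffeomorphic to the total space of the normal bundle $T_{X^{T(a)}}X$, with zero section $z$. Then $i_a$ factors as $X^{T(a)}\xrightarrow{z}U\hookrightarrow X$, the second map is an open inclusion, and over the relevant open subvariety of $\catA_T$ the open inclusion induces an isomorphism on $\calE^0$ (the complement $X\setminus U$ has $\calF_T$ supported on weight hyperplanes not containing $a$). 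Hence, up to that identification, $i_{a*}$ equals $z_\sharp$ for the zero section of the $T$-vector bundle $T_{X^{T(a)}}X$ over $X^{T(a)}$.

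\textbf{Step 2: the zero-section push-forward is the Euler class.} By the characterizing property (1) of the Gysin map $q_\sharp$ recalled just before this lemma, for the regular embedding $z:X^{T(a)}\to T_{X^{T(a)}}X$ the push-forward is the Thom isomorphism $\Ell^0_T(X^{T(a)})\otimes\Theta(T_{X^{T(a)}}X)\cong\Ell^0_T(Th(z))$ followed by the pull-back $\Ell^0_T(Th(z))\to\Ell^0_T(X^{T(a)})$ (here the total space $T_{X^{T(a)}}X$ deformation retracts onto $X^{T(a)}$). By the discussion after Proposition~\ref{rmk: GKVClass_general}, precisely this composite is multiplication by $e(T_{X^{T(a)}}X)=c_V^{-1}(\vartheta^{U_n})$, viewing $e(T_{X^{T(a)}}X)$ as the natural section of $\Theta(T_{X^{T(a)}}X)^\vee$. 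Combining with Step~1 gives that $i_{a*}$ on $\pi_{X^{T(a)}*}\Theta(T(X^{T(a)}))^\vee\to\pi_{X*}\Theta(TX)^\vee$ is multiplication by $e(T_{X^{T(a)}}X)$, using $TX|_{X^{T(a)}}=T(X^{T(a)})\oplus T_{X^{T(a)}}X$ and the additivity $\Theta(V_1\oplus V_2)\cong\Theta(V_1)\otimes\Theta(V_2)$ of Proposition~\ref{rmk: GKVClass_general}.(2) to make the twists match.

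\textbf{Step 3: non-vanishing near $a$ and compatibility.} The normal bundle $T_{X^{T(a)}}X$ has no $T(a)$-fixed sub-line-bundle; decomposing it $T(a)$-equivariantly into weight spaces, the GKV-classifying map sends $e(T_{X^{T(a)}}X)$ to a product of theta-factors $\vartheta(\chi_\mu)$ over weights $\mu$ that are nontrivial on $T(a)$. Each such $\vartheta(\chi_\mu)$ vanishes exactly on the weight hyperplane $\ker\chi_\mu$, which does not contain $a$; hence $e(T_{X^{T(a)}}X)$ is invertible on the complement of finitely many weight hyperplanes, a Zariski open set containing $a$. Finally, for $a,a'\in\catA_T$ with $T(a)<T(a')$ one has a further decomposition $T_{X^{T(a)}}X = T_{X^{T(a)}}X^{T(a')}\oplus (T_{X^{T(a')}}X)|_{X^{T(a)}}$, and multiplicativity of $e$ (from Proposition~\ref{rmk: GKVClass_general}.(2) and functoriality of the GKV-classifying map) gives $e(T_{X^{T(a)}}X)=e(T_{X^{T(a)}}X^{T(a')})\cdot i_{a'}^\sharp e(T_{X^{T(a')}}X)$, which is exactly the asserted compatibility of the push-forwards $i_a = i_{a'}\circ i_a^{a'}$.

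\textbf{Main obstacle.} The delicate point is Step~1: making precise that ``$i_{a*}$ localizes to a regular-embedding push-forward,'' i.e.\ that the open inclusion $U\hookrightarrow X$ is invertible on $\calE^0$ exactly over the locus where we want the formula, and that this is compatible with the Thom/Gysin formalism stated only for proper maps of smooth manifolds. One must be careful that the deformation-to-the-normal-cone argument used to reduce $i_{a*}$ to $z_\sharp$ is $T$-equivariant and does not introduce extra poles; equivalently, that the two push-forwards agree after restriction to the appropriate Zariski open of $\catA_T$. This is where the localization theorem (Lemma~\ref{lem:localization}) does the real work, and everything else is the characteristic-class bookkeeping of \S\ref{subsec:Thom}.
```
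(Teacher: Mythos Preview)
The paper does not actually give a proof of this lemma: it is stated with a citation to \cite{GKV95}, (2.9.2), and no argument follows. So there is no ``paper's own proof'' to compare against beyond what is implicit in the surrounding material of \S\ref{subsec:Thom}.

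That said, your argument is correct and is the standard one. The identification of the zero-section push-forward with the Euler class is exactly what the paper records immediately before the lemma (``$e(V)$ is equal to $z_\sharp$''), and your Step~2 together with the additivity $\Theta(TX|_{X^{T(a)}})\cong\Theta(T(X^{T(a)}))\otimes\Theta(T_{X^{T(a)}}X)$ gives the formula for $i_{a*}$ after restriction along $i_a^\sharp$. Since Lemma~\ref{lem:localization} makes $i_a^\sharp$ an isomorphism on a Zariski open containing $a$, this pins down $i_{a*}$ there, which is all the lemma claims. Your Step~3 is also fine: the weights of $T_{X^{T(a)}}X$ are by construction nontrivial on $T(a)$, so the corresponding theta-factors do not vanish at $a$, and the multiplicativity of $e$ under the splitting of normal bundles gives the compatibility for nested $T(a)<T(a')$.

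One small wording issue in Step~1: you should not phrase the reduction as factoring $i_{a*}$ through a push-forward along the open inclusion $U\hookrightarrow X$, since no such push-forward exists. The cleaner way to say what you mean is that $i_{a*}$ is, by definition (property~(1) of the Gysin map), the Thom isomorphism for $N=T_{X^{T(a)}}X$ followed by the map $\Ell^0_T(Th(i_a))\to\Ell^0_T(X)$; composing with $i_a^\sharp$ gives the self-intersection $e(N)$, and then localization inverts $i_a^\sharp$ near $a$. You clearly know this (it is how your Step~2 is written), so this is only a matter of presentation.
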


\subsection{The Quillen-Weyl-Kac formula}\label{subsec:Quillen}
To illustrate the Thom bundles and the push-forwards, we prove the formula for push-forward in equivariant elliptic cohomology from a projective bundle, which will be used later.
Let $V\to X$ be a rank-$n$ $G$-vector bundle. Let $p:\PP(V)\to X$ be the projection. 
Recall from Lemma \ref{lem:GKV_classify} that the following diagram \begin{equation}\label{eqn:diagram_projbundle}
\xymatrix{
\catA_{G}^{\PP(V)}\ar[r]^{c_{\PP(V)}}\ar[d]&\catA_P\ar[d]^-{\pi} \\
\catA_G^X\ar[r]^{c_V}&\catA_{U_n}. 
}\end{equation}
is a Cartesian square.
We describe the Thom bundle $\Theta(p)$  as 
follows. Consider the map
\[c_{Tp}:\catA_P\cong E^{(n-1)}\times E\to \catA_{U_{n-1}}\cong E^{(n-1)}, \quad 
((y_1,\dots,y_{n-1}),y_n)\mapsto 
(y_1-y_n,\dots,y_{n-1}-y_n).\]
 The pullback of the  natural section $\prod_{i=1}^{n-1}\vartheta(x_i)$ of the 
 line bundle $\Theta(\xi_{n-1})^{-1}$ on $\catA_{U_{n-1}}\cong E^{(n-1)}$ is 
 equal to $\prod_{i=1}^{n-1}\vartheta(y_i-y_n)$,
 as a section of $c_{Tp}^*\Theta(\xi_{n-1})^{-1}$ on $\catA_{P}$. The Euler 
 class $e(Tp)$ is the section $c^{-1}_{\PP(V)}(\prod_{i=1}^{n-1}\vartheta(y_i-y_n))$ of the line bundle $\Theta(p)^{-1}\cong 
 c^{*}_{\PP(V)}c_{Tp}^*\Theta(\xi_{n-1})^{-1}$. 

Pushing-forward the structure sheaves in 	\eqref{eqn:diagram_projbundle},  $\pi_*c_{\bbP(V)*}\calE ^0_G(\PP(V))$, $\pi_* \calE ^0_G(X)$, and $c_{V*}\calO_{E^{(n-1)}\times E}$ as coherent sheaves on $\catA_{U_n}\cong E^{(n)}$. 
The morphisms $\pi$, $c_V$, and $c_{\bbP(V)}$ are all affine morphisms. When there is no confusion, we omit $\pi_*c_{\bbP(V)*}$, $\pi_* $, and $c_{V*}$ from the notations of the above sheaves following the convention of  \S~\ref{subsec:dersch}.

\begin{lemma}\label{lem:proj_fiber}
Let $X$ be a finite $G$-space with a $G$-vector bundle $V$ of rank $n$. As coherent sheaves on $\catA_{U_n}\cong E^{(n)}$,  we have the isomorphism \[\calE ^0_G(\PP(V))\cong   \calE ^0_G(X)\otimes_{\calO_{\catA_{U_n}}}\calO_{E^{(n-1)}\times E}.\]
\end{lemma} 
\begin{proof}
	Note that the map $\catA_P\cong E^{(n-1)}\times E\to E^{(n)}\cong \catA_{U_n}$ is a finite map, hence affine. The base change of an affine morphism is affine. Hence, by definition of base change of affine schemes, we get the isomorphism  $\calE ^0_G(\PP(V))\cong c_V^*\pi_*(\calO_{E^{(n-1)}\times E})$ as coherent sheaves on $\catA_G^X$. Then using the projection formula we get that  $c_{V*}\calE ^0_G(\PP(V))\cong c_{V*}\calE ^0_G(X)\otimes_{\calO_{\catA_{U_n}}} \pi_*(\calO_{E^{(n-1)}\times E})$ as claimed.
\end{proof}
We comment that the flatness of the map $E^{(n-1)}\times E\to E^{(n)}$ should be thought of as the splitting principle in elliptic cohomology. 

We analyse $\pi_*\calO_{E^{(n-1)}\times E}$ as a sheaf of $\calO_{E^{(n)}}$-modules. The map $E^{(n-1)}\times E\to E^{(n)}\times E$ is injective, and hence locally on $U\subseteq E^{(n)}$ a section of $\pi_*\calO_{E^{(n-1)}\times E}$ is determined uniquely by a function on an open subset of $E^{(n)}\times E$.  Note that the map $E^{(n-1)}\times E\to E^{(n)}\times E$ is identity  on the $E$-component, and hence by our convention the coordinate of $E$ is denoted by $y_n$. We write a section as $f(y_n)$ to emphasize its dependence on $y_n$. In this sense, locally a section $f$ of $\pi_*\calO_{E^{(n-1)}\times E}$ is a function on  $E$ valued in $\calO_{E^{(n)}}(U)$. 

Thanks to Lemma~\ref{lem:proj_fiber} above, $c_{V*}\calE ^0_G(\PP(V))$ as a sheaf of $c_{V*}\calE ^0_G(X)$-modules is $c_{V*}\calE ^0_G(X)\otimes_{\calO_{\catA_{U_n}}} \pi_*(\calO_{E^{(n-1)}\times E})$. 
Any local section $f$ of $c_{V*}\calE ^0_G(\PP(V))$ on $U\subseteq E^{(n)}$ is uniquely determined as a function on an open subset of $\catA_G^X\times E$, and consequently  can be written as a function on  the $E$-factor with values in $c_{V*}\calE ^0_G(X)(U)$.
Using this description of $c_{V*}\calE ^0_G(\PP(V))$, we write a local section as $f(y_n)$. Any local section $f$ on $U\subseteq E^{(n)}$ is uniquely determined as a function on an open subset of $\catA_G^X\times E$, and consequently  can be written as a function on  the $E$-factor with values in $c_{V*}\calE ^0_G(X)(U)$.
Similarly, for a coherent sheave $\calM$ on $\catA_P$,  the pullback to  $\catA_G^{\bbP(V)}$ is so that $c_{V*}c^*_{\bbP(V)}\calM\cong c_{V*}\calE ^0_G(X)\otimes_{\calO_{\catA_{U_n}}} \pi_*\calM$. Any local section $f$ on $U\subseteq E^{(n)}$ is uniquely determined as a function on an open subset of $\catA_G^X\times E$, and consequently  can be written as a function on  the $E$-factor with values in $c_{V*}\calM(U)$.
Taking $\calM$ to be $c_{Tp}^{*}\Theta(\xi_{n-1})$, its pullback to $\catA_G{\bbP(V)}$ is $\Theta(p)$. It is customary to compose with the inverse map of $E$ and write $f(-y_n)$. 
We have the following Quillen-Weyl-Kac formula, calculating $p_{\sharp}f(-y_n)$.
\begin{prop}\label{prop:Quillen-Weyl-Kac}
Let $V\to X$ be a rank-$n$ $G$-vector bundle with $p:\PP(V)\to X$  the projection.
The push-forward $p_{\sharp }:p_{\catA*}\Theta(p)\to \calE ^0_G(X)$ is given by
\begin{equation}\label{Quill-formula}
\sum_{i=1}^{n-1}(i,n)\frac{f(-y_n)}{\prod_{j=1}^{n-1}c^{-1}_{\PP(V)}\vartheta(y_j-y_n)}
\end{equation}
for any local section $f$  of $p_{\catA*}\Theta(p)$.
Here $(i,n)\in \fS_n$ is the transposition switching the variables $y_i$ and $y_n$. 
\end{prop}
The proof is standard, using splitting principle, similar to that of \cite[Theorem~5.30]{Vish}, based on functoriality of $()_\sharp$ with respect to composition of proper maps, and the fact that for a closed embedding the pushforward is given by the Euler class. For the convenience of the readers, we sketch it below. 
\begin{proof}
Passing to a projective bundle and then an affine bundle if necessary, we may assume without loss of generality that $V:=\calL_1\oplus\cdots\oplus\calL_n$. The proposition is clear when $n=1$. We argue by induction. Let $I$ be an non-empty subset of $\{1,\dots,n\}$, and let $P_I:=\bbP(\oplus_{i\in I}\calL_i)$ with $u_I: P_I\to \bbP(V)$ the natural embedding and $p_I:P_I\to X$ the projection. Hence we have $p_I=u_I\circ p$ and $p_{I\sharp}=p_\sharp \circ u_{I\sharp}$. By Theorem~\ref{thm:thom} and the discussion in the paragraph before Proposition~\ref{prop:push}, $u_{I\sharp}$ is multiplication by $\prod_{i\in I}e(\calO(1)\otimes\calL_i)$ (see the paragraph before Lemma~\ref{lem:loc_Eul}). Then, the same calculation as that of \cite[Proof of Proposition~5.29]{Vish} gives the formula for $n$.
\end{proof}

The push-forward formula from a projective bundle in a general oriented cohomology theory is given by Quillen, which takes the above form with the theta-functions replaced by the formal group law of the cohomology theory. In $K$-theory, Quillen's formula when applied to line bundles on a flag variety  yields the Weyl character formula for representations of the general linear group. 
In the elliptic case, applying Quillen's formula above  to  line bundles on a flag variety, the domain of the pushforward map, when taking global sections, can be identified as Looijenga's space of theta-functions, and hence has a preferred choice of basis, the Looijenga basis \cite[Definition~8.5]{Gan}. For each such basis element, the right hand side of the above formula,  plugged in the specific form of theta-functions \eqref{ex:theta}, is equal to the 
 Kac character formula for representations of the affine Lie algebras \cite[Corollary 8.6]{Gan}. See also \cite[11.4]{And00} for the presentation of Kac character formula in this form. 

An illustrating example of this formula will be given in Lemma~\ref{Lem:Quillen P1}.

Let $G=\hbox{SU}_2\subset G^{\bbC}=\hbox{PGL}_2(\bbC)$ act on $\Aff^2$ in the natural way. 
The only positive root of $G$ is denoted by $\alpha$, and the fundamental weight by $\omega$. We have $\alpha=2\omega$, and the standard 
$T= S^1$-action on $\Aff^2$ has two weights, $-\alpha/2$ and $\alpha/2$.
We identify $\PP^1$ with $\PP(\Aff^2)$ endowed with the above actions.  The projection $\PP^1\to\pt$ is denoted by $p$. Let $p_i:\PP^1\times\PP^1\to\PP^1$ be the $i$-th projection, for $i=1,2$.

Following \S~\ref{sec:grp_alg}, the Weyl group of $\hbox{SU}_2$ is $\fS_2$ with the only generator denoted by $s_\alpha$. Its action on $\catA_T=E\otimes\bbX_*(T)\cong E$  is by taking inverse of an element in $E$. Write the coordinate of $\catA_{G}^{\PP(\Aff^2)}\cong\catA_{T}^{\pt}\cong E$ as $(x)$. The $s_\alpha$-action on $\catA_{G}^{\PP(\Aff^2)}$  sends $(x)$ to $(-x)$.
\begin{lemma}\label{lem:P1PullPush}
We have the following formula for $p_{1\sharp}p_2^{\sharp}:\Theta(p)\to \calO_{\catA_G^{\PP^1}}$: for any local section of $\Theta(p)$ on a $\fS_2$-invariant open subset of $E$,
\[p_{1\sharp}p_2^{\sharp}\left(\sigma\right)=\frac{s_\alpha\sigma(x)}{\vartheta(-\chi_{\alpha})}+\frac{\sigma(x)}{\vartheta(\chi_{\alpha})}.\]
\end{lemma}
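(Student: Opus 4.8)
\emph{Proof plan.} The idea is to realize the first projection as a projective bundle and apply the Quillen--Weyl--Kac formula \eqref{Quill-formula}. Write $\PP^1\times\PP^1=Y_1\times Y_2$. Then the first projection $p_1\colon Y_1\times Y_2\to Y_1$ is the projectivization $\PP(V)\to Y_1$ of the rank-$2$ $G$-vector bundle $V$ on $Y_1=\PP^1$ obtained by pulling back the standard representation $\Aff^2$ of $G$ along $\PP^1\to\pt$, and under this identification $p_2$ is the classifying map of the tautological line $\calO(-1)\hookrightarrow p_1^*V=\underline{\Aff^2}$. Since $Tp_1=p_2^*T\PP^1=p_2^*(Tp)$ and the classifying maps satisfy $c_{p_2^*(Tp)}=c_{Tp}\circ p_{2\catA}$, Proposition~\ref{rmk: GKVClass_general}(1) gives $\Theta(Tp_1)\cong p_{2\catA}^*\Theta(Tp)$, so $p_2^\sharp$ is the canonical map $\Theta(Tp)\to\Theta(Tp_1)$ and the composite $p_{1\sharp}p_2^\sharp$ is well defined as in \S\ref{subsec:Thom}.

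Next I fix coordinates. Because $\PP^1=G/T$, the induction property in Assumption~\ref{assum:G-equiv} identifies $\catA_G^{\PP^1}$ with $\catA_T\cong E$; I choose the coordinate $x$ so that $\chi_{\alpha/2}(x)=x$, whence $\chi_{-\alpha/2}(x)=-x$ and, by Lemma~\ref{lem:tensor_c1}(1), $\chi_\alpha(x)=2x$. Since $\Aff^2$ has $T$-weights $\pm\alpha/2$, the GKV classifying map $c_V\colon\catA_G^{\PP^1}\to\catA_{U_2}=E^{(2)}$ is $x\mapsto\{-x,x\}$, so the Cartesian square of Lemma~\ref{lem:GKV_classify} identifies $\catA_G^{\PP^1\times\PP^1}$ with the divisor in $\catA_T\times E$, with coordinates $(x,y_2)$, cut out by $y_2\in\{-x,x\}$, the complementary weight being $y_1=-y_2$; concretely it has the two components $\{y_2=x,\ y_1=-x\}$ and $\{y_2=-x,\ y_1=x\}$, each mapping isomorphically onto $\catA_T$ under $p_{1\catA}$. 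On the first component $c_{\PP(V)}^{-1}\vartheta(y_1-y_2)=\vartheta(-2x)=\vartheta(-\chi_\alpha)$ and $c_{\PP(V)}^{-1}\vartheta(y_2-y_1)=\vartheta(2x)=\vartheta(\chi_\alpha)$, while $p_2^\sharp\sigma$, being the pull-back of $\sigma$ along $p_2$, is represented in the fiber coordinate $y_2$ by $\sigma(y_2)$.

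Finally I apply \eqref{Quill-formula} with $n=2$, the sum running over the two cosets $e,(1\,2)$ in $\fS_2/\fS_1$; the transposition $(1\,2)$ acts on $\catA_T$ as the Weyl reflection $s_\alpha\colon x\mapsto -x$. Substituting $f=p_2^\sharp\sigma$ and restricting to the component $\{y_2=x\}$ gives
\begin{align*}
p_{1\sharp}p_2^\sharp(\sigma)
&=\frac{\sigma(-y_2)}{c_{\PP(V)}^{-1}\vartheta(y_1-y_2)}+\frac{\sigma(-y_1)}{c_{\PP(V)}^{-1}\vartheta(y_2-y_1)}\\
&=\frac{\sigma(-x)}{\vartheta(-\chi_\alpha)}+\frac{\sigma(x)}{\vartheta(\chi_\alpha)}
=\frac{s_\alpha\sigma(x)}{\vartheta(-\chi_\alpha)}+\frac{\sigma(x)}{\vartheta(\chi_\alpha)},
\end{align*}
using $\sigma(-x)=s_\alpha\sigma(x)$; evaluating on the other component produces the $s_\alpha$-translate of this, consistent with $p_{1\catA}$ being two-to-one.

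The step needing the most care is the coordinate bookkeeping of the second paragraph: matching the abstract coordinates $y_1,y_2$ on $\catA_G^{\PP^1\times\PP^1}$ with the torus weights $\pm\alpha/2$, pinning down the line-bundle twists precisely enough to see that $p_2^\sharp\sigma$ really is $\sigma$ in the fiber variable, and tracking the sign in the argument $-y_n$ of \eqref{Quill-formula} so that the $e$-coset contributes $s_\alpha\sigma$ rather than $\sigma$. Once these identifications are in place the conclusion is a one-line substitution together with the oddness $\vartheta(-\chi_\alpha)=-\vartheta(\chi_\alpha)$ of Lemma~\ref{lem:tensor_c1}(2).
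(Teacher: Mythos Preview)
Your proposal is correct and follows essentially the same approach as the paper: identify $p_1$ as a $\PP^1$-bundle, set up coordinates via the GKV projective classifying map using the $T$-weights $\pm\alpha/2$ on $\Aff^2$, and then apply the Quillen--Weyl--Kac formula \eqref{Quill-formula} with $n=2$. Your write-up is in fact more explicit than the paper's own proof, particularly in spelling out why the two summands correspond to the cosets $e,(1\,2)\in\fS_2/\fS_1$ (the formula \eqref{Quill-formula} as printed omits the identity term) and in checking that $p_2^\sharp\sigma$ is represented by $\sigma(y_2)$ in the fiber variable; the paper simply asserts the substitution $y_2\leadsto x$, $y_1\leadsto -x$ and writes down the result.
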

\begin{proof}
The fact that the $T$-action on $\Aff^2$ has weights $-\alpha/2$ and $\alpha/2$
implies that the GKV projective  classifying map $c_{\PP(\Aff^2)}:E\to E\times E$ sends $x$ to $(-x,x)$.

Recall that  we have a natural isomorphism \[\catA_{G}^{\PP(\bbA^2)}\cong E \to (E/\fS_2)\times_{E^{(2)}}(E\times E)\] with the map $E \to (E/\fS_2)\times(E\times E)$  given by $x\mapsto (x,(-x,x))$. By our convention in Proposition~\ref{prop:Quillen-Weyl-Kac}, we write the coordinates of the $E\times E$-factor as $(y_1,y_2)$.
We consider  local sections $\sigma$ of $\Ell^0_{T}(\PP^1)$ on  $(E/\fS_2)\times(E\times E)$ as  $\calO_{(E/\fS_2)\times E}$-valued functions $\sigma(y_2)$ in the variable $y_2\in E$. 
Note that the map $E \to (E/\fS_2)\times_{E^{(2)}}(E\times E)$ induces an isomorphism from $E$ to the last $E$-factor in $(E/\fS_2)\times(E\times E)$. 
In particular, in the  Quillen-Weyl-Kac formula $y_2$ can be substituted by  $x$, and $y_1$ can be substituted by $-x$.
The Quillen-Weyl-Kac formula \eqref{Quill-formula} yields
\begin{eqnarray*}
p_{1\sharp}\left(p_2^{\sharp}\sigma\right)
&=&\frac{\sigma(-x)}{\vartheta(\chi_{-\alpha/2}-\chi_{\alpha/2})}+\frac{\sigma(x)}{\vartheta(\chi_{\alpha/2}-\chi_{-\alpha/2})}\\
&=&\frac{s_\alpha\sigma(x)}{\vartheta(-\chi_{\alpha})}+\frac{\sigma(x)}{\vartheta(\chi_{\alpha})}.
\end{eqnarray*}

\end{proof}
\subsection{Convolution with Lagrangian correspondences}
\label{subsec:ell_coh_conv}
Now we briefly recall that the  convolution with Lagrangian correspondences defines an associative algebra. For  more details, we refer to \cite[\S~2.8]{GKV95}.

Let $M_i$ be smooth complex quasi-projective $G$-varieties, for $i=1,2$. Let $N_i=T^*M_i$, endowed with $G$-actions such that $N_i\to M_i$ are equivariant. Note that we do not require the action on $N_i$ to be the induced action on from that on $M_i$. An example of such is when $G=H\times S$, where $H$ acts on $M_i$ non-trivially and acts on $T^*M_i$ via the induced action, while the action of $S$ on $M_i$ is trivial but on the fibers of $T^*M_i$ are non-trivial. Let $Z\subseteq T^*(M_1\times M_2)$ be a regular embedding of $G$-invariant subvariety which is proper over $T^*M_1$, not necessarily smooth or Lagrangian. Let $\proj_i:Z\to N_i$ be the composition $Z\overset{j}\longrightarrow T^*(M_1\times M_2)\overset{\pro_i}\longrightarrow N_i$. 
Define \[\Xi_G(Z):=\Theta(\proj_1)\]
 which is a coherent sheaf on $\catA_G^Z$.   There is  a natural map \[*_Z: \pi_{Z*}\Xi_G(Z)\to \sHom_{\catA_G}(\Ell^0_G(N_2),\Ell^0_G(N_1)),\]
 defined by pulling back via $\proj_2$, multiply, and then pushforward via $\proj_1$ \cite[(2.7.1)]{GKV95}.
\begin{remark}
In the case when $Z$ is Lagrangian, there is a twisted version of this construction in \cite{GKV95}, which we do not use in this paper, although in the case $M_1=M_2$ it is easy to find the relation between the two. 
With notations \[\xymatrix{M_2&T^*M_2\ar[l]_-{\wt b_2}&Z\ar[l]_-{\proj_2}\ar[r]^-{\proj_1}&T^*M_1&M_1\ar[l]_-{i_1},}\]where $i_1$ is the zero section, in \cite{GKV95}  the coherent sheaf $\Xi_G(Z)$ is defined to be 
\begin{equation}\label{eq:twistcon} \Theta(\proj_1)\otimes\Theta(\proj_2^*\wt b_2^*N_2)\otimes\Theta(\proj_1^*\wt b_1^*N_1)^{-1}
\end{equation}
 on $\catA_G^Z$. Here $\wt{b_i}:N_i\to M_i$ is the bundle projection $i=1,2$. With this twisted version, there is   a map \[*_Z: \pi_{Z*}\Xi_G(Z)\to \sHom_{\catA_G}(\pi_{{M_2}*}\Theta(N_2)^{-1},\pi_{{M_1}*}\Theta(N_1)^{-1})\]
defined by  pull-back, multiplying sections (functions),  and push-forward. We refer the interested readers to \cite[\S~2.8]{GKV95} for more details. 
\end{remark}

For simplicity, we write $b_2:N_1\times N_2\to N_1\times M_2$ as the identity on the first factor and the bundle projection on the second  factor, and  $i_1:M_1\times M_2\to N_1\times M_2$ as the zero section.

Again for simplicity, 
in the lemma below we write $\Theta(p_1^*TM_1)^{-1}$
 as $ \Xi_{G}(M_1\times M_2)$, where $p_i:M_1\times M_2\to M_i$ are the projections $i=1,2$. Note that there is a natural map \[*_\calM: \pi_{M_1\times M_2*}\Xi_{G}(M_1\times M_2)\to \sHom_{\catA_G}(\Ell^0_G(M_2),\Ell^0_G(M_1)))\] defined by pull-back, multiplying sections,  and push-forward.
\begin{lemma}\label{Lem:convol_zerosec}
\begin{enumerate}
\item
Assume  $Z$ is smooth. We have \[\Xi_G(Z)\cong \Theta(j)\otimes\Theta(\proj_2^*\wt{ b_2}^*TM_2)^{-1}\otimes\Theta(\proj_2^*\wt{ b_2}^*N_2)^{-1},\] where the right hand side is defined  in \eqref{eq:twistcon}. Note the subscripts of the second term in the tensor. 
\item 
Assume $Z$ is smooth and that $b_2\circ j$ is proper with image $Y$ lying in $M_1\times M_2$. 
Let $p:Z\to Y$ be the induced map, and $\overline p_1:Y\to M_1$ be the first projection $p_1:M_1\times M_2\to M_1$ restricted to $Y$.  Then, we also have
\[\Xi_{G}(Z)\cong \Theta(b_2\circ j) \otimes \Theta({\overline p_1})^{-1}.\]
\item 
Under the same assumption as in (2), we have $i_1^*\circ ( b_{2}\circ j)_*$ is a well-defined injective
map of sheaves $\pi_{Z*}\Xi_{G}(Z)\to\pi_{M_1\times M_2*}\Xi_{G}(M_1\times M_2)$. 
Moreover, the following diagram commutes 
\[\xymatrix{\pi_{Z*}\Xi_{G}(Z)\ar[r]^-{*_Z}\ar[d]^-{i_1^*\circ ( b_{2}\circ j)_*}& \sHom_{\catA_{G}}(\Ell^0_G(N_2),\Ell^0_G(N_1))\ar[d]^-{\cong}\\
\pi_{M_1\times M_2*}\Xi_{G}(M_1\times M_2)\ar[r]^-{*_M}&\sHom_{\catA_G}(\Ell^0_G(M_2),\Ell^0_G(M_1)).}\]

\end{enumerate}
\end{lemma}
\begin{proof}
We prove (1). By definition, $\proj_1$ is the composition of $j$ and $\pro_1:N_1\times N_2\to N_1$. Hence by Proposition~\ref{prop:push}(2)
 \begin{equation}\label{eqn:proj=jT}\Theta(\proj_1)\cong \Theta(j)\otimes \Theta(\proj_2^*TN_2)^{-1}.\end{equation}
 We have that $TN_2$ is an extension of $N_2$ by $TM_2$, considered as $G$-vector bundles on $M_2$. Therefore, $\Theta(TN_2)^{-1}\cong \Theta(TM_2)^{-1}\otimes \Theta(N_2)^{-1}$. Plugging it in to the definition of $\Xi_G(Z)$, we get the conclusion.
 
 Using Proposition~\ref{prop:push}(2) and the fact that $j^*T{b_2}\cong -\proj_2^*\wt{b_2}^*N_2$, we obtain 
  $\Theta(b_2\circ j)=\Theta(j)\otimes\Theta(\proj_2^*\wt{b_2}^*N_2)^{-1}$ as coherent sheaves on $\catA_G^Z$.  Plugging  into the proof above, and observe that $\Theta(\overline{p}_1)=\Theta(TM_2)^{-1}$ we obtain (2).

The proof of (3) is a word-for-word repetition of that of  \cite[Lemma~5.4.27]{CG}.
\end{proof}

The followings are about convolution algebra, their change of groups properties and localizations. 

\begin{lemma}\cite[(2.7.3)]{GKV95}
Let $M_3$ be a smooth $G$-manifold with $N_3=T^*M_3$, and $Z'\subseteq N_2\times N_3$ be a closed subvariety, with $Z\circ Z'\subset N_1\times N_3$ the image of $Z\times N_3\cap N_1\times Z'$ under the projection. Then 
\begin{enumerate}
\item The composition of actions lifts to a morphism \[\pi_{Z*}\Xi_G({Z})\otimes\pi_{Z'*}\Xi_G({Z'})\to \pi_{Z\circ Z'*}\Xi_G({Z\circ Z'})\]
which is associative in the usual sense.
\item When $M_1=M_2=M$ and $Z\circ Z=Z$, then $\pi_{Z*}\Xi_G(Z)$ is a sheaf of  algebras and $\pi_{M*}\calE ^0_G(N)$ on $\catA_G$ is a representation of it.
\end{enumerate} 
\end{lemma}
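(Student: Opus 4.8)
The plan is to reproduce the convolution construction of \cite[\S~2.7,~\S~5.2]{CG} in Borel--Moore homology and $K$-theory, correcting each operation by the appropriate $\Theta$-twist. Three ingredients are needed, and all are available: the pull-back $p^\sharp$ on equivariant elliptic cohomology, together with the compatibility $\Theta_H(V)\cong\phi_{\catA^X}^*\Theta_G(V)$ of $\Theta$ with pull-back (Assumption~\ref{assum:G-equiv}, Corollary~\ref{cor:Thom_Restrict}); the multiplication on the sheaves of graded algebras $\Ell^0_G$; and the Gysin push-forward $q_\sharp\colon\Theta(Tq)\to q_\catA^*\Ell^0_G(Y)$ with its two characterizing properties, which I would extend to proper maps between the (possibly singular) regularly embedded subvarieties at hand by the usual factorization through a closed embedding into a projective bundle, to which the Thom isomorphism (\S~\ref{subsec:Thom}) and the Quillen--Weyl--Kac formula of \S~\ref{subsec:Quillen} apply. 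The content of the lemma is that, so corrected, the composite takes values precisely in $\pi_{Z\circ Z'*}\Xi_G(Z\circ Z')$ and is associative.

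First I would set up the standard diagram inside $N=N_1\times N_2\times N_3$, a smooth $G$-manifold with the three partial projections $d_{12},d_{23},d_{13}$. Let $W=d_{12}^{-1}(Z)\cap d_{23}^{-1}(Z')\cong Z\times_{N_2}Z'$, viewed as a closed subvariety of $N$, and let $f\colon W\to N_1\times N_3$ be the restriction of $d_{13}$; by the hypothesis on composable Lagrangian correspondences $f$ is proper with image $Z\circ Z'$, which I assume (as is part of the setup for $\Xi_G$) to be again regularly embedded. The ``intersection'' of the pulled-back sections is to be read as a refined Gysin pull-back along the diagonal $N_2\hookrightarrow N_2\times N_2$: the product $Z\times Z'\subseteq N_1\times N_2\times N_2\times N_3$ is regularly embedded, $W$ is its fibre product with $N$ over $N_1\times N_2\times N_2\times N_3$, and the Gysin map $\delta^!$ along this diagonal (carrying the twist $\Theta(\delta^*TN_2)$, via restriction to the Thom space of the normal bundle and the Thom isomorphism) is defined even though $W$ itself need not be regularly embedded. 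Thus the convolution of $s\in\pi_{Z*}\Xi_G(Z)$ and $s'\in\pi_{Z'*}\Xi_G(Z')$ is $f_\sharp\bigl(\delta^!(s\boxtimes s')\bigr)$.

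The main step --- and the one I expect to be the chief obstacle --- is the $\Theta$-bookkeeping showing that this composite lands in $\pi_{Z\circ Z'*}\Xi_G(Z\circ Z')$. Starting from $\Xi_G(Z)\cong\Theta_Z(N_1\times N_2)\otimes\Theta(\proj_1^*b_1^*N_1)^{-1}\otimes\Theta(\proj_2^*b_2^*N_2)^{-1}$ from the preceding lemma (and its analogues for $Z'$ and $Z\circ Z'$), and using the additivity $\Theta(V\oplus V')\cong\Theta(V)\otimes\Theta(V')$ and the homomorphism $\Theta\colon K_G(-)\to\Pic$ of Proposition~\ref{rmk: GKVClass_general}, one identifies in turn the twist produced by the two pull-backs, the twist $\Theta(\delta^*TN_2)$ contributed by $\delta^!$, and the twist $\Theta(Tf)$ contributed by $f_\sharp$, and checks that they combine to $\Xi_G(Z\circ Z')$. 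The decisive cancellation is forced by $N_i=T^*M_i$: one has $TN_i\cong TM_i\oplus N_i$ and, via the symplectic form, $N_i\cong(TM_i)^\vee$, which makes the tangent- and normal-bundle contributions along the contracted middle factor pair off. This is exactly the calculation of \cite[(2.8.4)]{GKV95}. Associativity ``in the usual sense'' then follows from the standard fourfold-product argument: $(s*s')*s''$ and $s*(s'*s'')$ are both computed by exterior multiplication on $N_1\times N_2\times N_3\times N_4$, refined Gysin pull-back along the two middle diagonals, and push-forward to $N_1\times N_4$; the two bracketings agree by the commutativity of iterated refined Gysin maps, the projection formula, and base change for $q_\sharp$, all of which follow from the characterizing properties of the push-forward together with the $\Theta$-compatibilities just used.

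Finally, part (2) is the specialization $M_1=M_2=M$, hence $N_1=N_2=N=T^*M$, with $Z\circ Z=Z$: part (1) gives an associative multiplication $\pi_{Z*}\Xi_G(Z)\otimes\pi_{Z*}\Xi_G(Z)\to\pi_{Z*}\Xi_G(Z)$, so $\pi_{Z*}\Xi_G(Z)$ is a sheaf of algebras on $\catA_G$. The representation is the action on $\pi_{M*}\Theta(N)^{-1}$ --- which is $\pi_{M*}\calE^0_G(N)$ up to the fixed twist by the line bundle $\Theta(N)$ on $\catA_G^M$ --- given by the morphism $\pi_{Z*}\Xi_G(Z)\to\sHom_{\catA_G}(\pi_{M*}\Theta(N)^{-1},\pi_{M*}\Theta(N)^{-1})$ recalled at the start of \S~\ref{subsec:ell_coh_conv}; that this morphism is multiplicative follows from the case of part (1) with $M_3=\{\pt\}$ and $Z'=N\subseteq N\times\{\pt\}$, where the composition morphism of part~(1) specializes to the action map and compatibility with the product is an instance of the associativity already established.
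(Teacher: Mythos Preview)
The paper does not supply its own proof of this lemma: it is stated with a citation to \cite[(2.8.4),(2.8.5)]{GKV95} and no argument is given, the text passing directly to the corollary on change of groups. So there is nothing to compare your proposal against in the paper itself.

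That said, your reconstruction is faithful to the argument in \cite{GKV95} and to the convolution formalism of \cite[\S~2.7]{CG}: the triple-product diagram with the refined Gysin pull-back along the diagonal of $N_2$, the push-forward along the proper projection to $N_1\times N_3$, and the verification that the $\Theta$-twists assemble to $\Xi_G(Z\circ Z')$ using $TN_i\cong TM_i\oplus T^*M_i$ is exactly how the proof goes. Your remark that the natural module is $\pi_{M*}\Theta(N)^{-1}$ rather than $\pi_{M*}\calE^0_G(N)$ as literally written in the lemma is also correct; the two differ by a fixed line-bundle twist on $\catA_G^M$ (and $\calE^0_G(N)\cong\calE^0_G(M)$ by homotopy invariance), so the statement is harmless but your phrasing is the more precise one.
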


As a consequence of Corollary~\ref{cor:Thom_Restrict}, we have the following property about change of groups.
\begin{corollary}\label{cor:Xi_loc}
With notations as above, let $\phi:H\to G$ be a group homomorphism. Let $\catA_\phi:\catA_{H}\to \catA_G$ be the induced morphism. Then there is an isomorphism  $\pi^H_{Z*}\Xi_H(Z)\cong \catA^*_\phi(\pi^G_{Z*}\Xi_G(Z))$, which commutes with convolutions.
\end{corollary}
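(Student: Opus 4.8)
The plan is to reduce the statement to the pre-defined base cases already collected in the excerpt, exactly mirroring how the preceding corollaries (\ref{cor: ell_change_group}, \ref{cor:Thom_Restrict}) were obtained. Recall that $\Xi_G(Z)$ is, by the lemma just above, a tensor product of three Thom-bundle contributions,
\[
\Xi_G(Z)\cong \Theta_Z(N_1\times N_2)\otimes\Theta(\proj_1^*b_1^*N_1)^{-1}\otimes\Theta(\proj_2^*b_2^*N_2)^{-1},
\]
so the first step is to apply Corollary~\ref{cor:Thom_Restrict} to each of the three factors. For a group homomorphism $\phi:H\to G$ this gives, on $\catA_H^Z$, canonical isomorphisms $\Theta_H(N_i\times N_j)\cong\phi_{\catA^Z}^*\Theta_G(N_i\times N_j)$ and similarly for the pulled-back bundles $\proj_k^*b_k^*N_k$, where $\phi_{\catA^Z}:\catA_H^Z\to\catA_G^Z$ is the morphism induced by $\phi$ on the $Z$-spectra (note the various $\proj_k, b_k$ commute with this base change because $\calE^0$ is functorial). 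Tensoring the three isomorphisms together yields $\Xi_H(Z)\cong\phi_{\catA^Z}^*\Xi_G(Z)$ as line bundles on $\catA_H^Z$.

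The second step is to push this down to $\catA_H$. Let $\pi_Z^H:\catA_H^Z\to\catA_H$ and $\pi_Z^G:\catA_G^Z\to\catA_G$ be the structure maps, and observe that the square with vertical maps $\pi_Z^H,\pi_Z^G$ and horizontal maps $\phi_{\catA^Z},\phi_\catA$ is Cartesian — this is precisely Corollary~\ref{cor: ell_change_group} (or Assumption~\ref{assum:G-equiv}(4)) applied to the $G$-space $Z$, which identifies $\calE^0_H(Z)\cong\phi_\catA^*\calE^0_G(Z)$ and hence $\catA_H^Z\cong\catA_H\times_{\catA_G}\catA_G^Z$. Flat (indeed affine) base change along this Cartesian square then gives $\pi^H_{Z*}\phi_{\catA^Z}^*\calL\cong\phi_\catA^*\pi^G_{Z*}\calL$ for any quasi-coherent $\calL$ on $\catA_G^Z$; taking $\calL=\Xi_G(Z)$ and combining with the first step yields the desired isomorphism $\pi^H_{Z*}\Xi_H(Z)\cong\phi_\catA^*(\pi^G_{Z*}\Xi_G(Z))$.

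For the last clause, that this isomorphism commutes with convolution, I would check compatibility with the structure maps of Lemma (GKV, (2.8.4),(2.8.5)). The convolution product is built from (i) the three Thom bundles above, (ii) Thom isomorphisms, and (iii) proper push-forwards, each of which base-changes naturally along $\phi_\catA$ by Corollary~\ref{cor:Thom_Restrict} and the base-change compatibility of Gysin maps. So one traces through the definition of the morphism $\pi_{Z*}\Xi_G(Z)\otimes\pi_{Z'*}\Xi_G(Z')\to\pi_{Z\circ Z'*}\Xi_G(Z\circ Z')$ and observes that every constituent arrow commutes with $\phi_\catA^*$, so the product does too; in particular the multiplication on $\pi^H_{Z*}\Xi_H(Z)$ is the $\phi_\catA^*$-pullback of that on $\pi^G_{Z*}\Xi_G(Z)$.

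The main obstacle I anticipate is purely bookkeeping rather than conceptual: verifying that all the auxiliary maps ($\proj_1,\proj_2,b_1,b_2$, the frame-bundle classifying maps entering $\Theta$, and the Gysin maps) are genuinely compatible with the base change $\phi_{\catA^Z}$, and that the Cartesian square needed for flat base change really is Cartesian for the (possibly singular) $G$-space $Z$ — one must be careful that Corollary~\ref{cor: ell_change_group}, stated for change of groups via a normal subgroup acting freely, is being invoked in the correct generality (via Assumption~\ref{assum:G-equiv}(4), which covers an arbitrary homomorphism $\phi:H\to G$). Once the functoriality of $\calE^0_{(-)}(Z)$ in the group variable is in hand, everything else is formal.
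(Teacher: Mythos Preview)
Your proposal is correct and matches the paper's approach: the paper simply states this corollary ``as a consequence of Corollary~\ref{cor:Thom_Restrict}'' without further proof, and what you have written is exactly the unpacking of that sentence --- apply Corollary~\ref{cor:Thom_Restrict} to each Thom-bundle factor of $\Xi_G(Z)$, then use the base-change square coming from Assumption~\ref{assum:G-equiv}(4) to descend from $\catA_H^Z$ to $\catA_H$. Your care in distinguishing Assumption~\ref{assum:G-equiv}(4) (arbitrary $\phi:H\to G$) from Corollary~\ref{cor: ell_change_group} (free quotient) is well placed; the former is what is actually needed here.
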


In particular, if $N_1$ and $N_2$ are both endowed with trivial $G$-actions, then there is an isomorphism $\pi^G_{Z*}\Xi_G(Z)\cong p_{\catA_G}^*(\Ell^*(Z))$ where $p_{\catA_G}$ is the projection $\catA_G\to \pt$.


\subsection{Convolution algebra with abelian groups and localization}

\begin{assumption}\label{assum:Lag}
Let  $\xi$ be a 1-dimensional representation of $G$, and  assume the $G$-actions on the fiber directions of $T^*M_i, i=1,2$, are  given by the induced action on  $M_i$ twisted by $\xi$. Assume $Z$ is a Lagrangian subvariety.
\end{assumption}

In the remaining part of this section, let $G=T$ be a torus. Let $a\in \catA_T$ be an $R$-point, and  $T(a)<T$ be as in \eqref{eq:fixeda}. 
Let $i_Z: Z^{T(a)}\hookrightarrow Z$ and $i_{N}:N_1^{T(a)}\times N_2^{T(a)}\to N_1\times N_2$ be the inclusions. Then the following diagram is Cartesian
\[\xymatrix{
Z^{T(a)}\ar[r]_-{j^T}\ar[d]_{i_Z}&N_1^{T(a)}\times N_2^{T(a)}\ar[d]_{i_N}\\
Z\ar[r]_{j}&N_1\times N_2.
}\]
By Proposition~\ref{prop:push}.(6), we have the following map 
\begin{equation}\label{eqn:Loc}
i^{\sharp}:\Theta_T(j)\to i_{\catA*}\Theta_{{T(a)}}(j^T).
\end{equation}

As a consequence of Lemma~\ref{lem:loc_Eul}, 
under Assumption~\ref{assum:Lag}, multiplication by $e({T_{N_2^{T(a)}}N_2})^{-1}$ is a well-defined rational morphism of sheaves on $\catA_T$:
\begin{equation}
\pi_{N_2^{T(a)}*} \Theta(TN_2)^{-1}\to \pi_{N_2*}\Theta(TN_2^{T(a)})^{-1},
\end{equation}
which is regular and  invertible on a Zariski open subset of $\catA_T$ containing $a$ \cite[(2.9.1)]{GKV95}. Here recall that $T_{N_2^{T(a)}}N_2$ is the normal bundle of $N_2^{T(a)}$ in $N_2$.
Hence, \begin{equation}
\label{eqn:Thom_Loc}
 \Theta(\proj_2^*TN_2)^{-1}\to i_{\catA*}\Theta(\proj_2^{T*}TN_2^{T(a)})^{-1},
\end{equation}
where $\proj_2^T: N_1^{T(a)}\times N_2^{T(a)}\to N_1^{T(a)}$ is the projection.

Factoring $ \pi^T_{Z*}\Xi_T(Z)$ and $\pi^T_{Z^{T(a)}*}\Xi_T({Z^{T(a)}})$ as \eqref{eqn:proj=jT}, multiplying \eqref{eqn:Loc} and \eqref{eqn:Thom_Loc}
defines a rational morphism 
\[\rho_a:\pi^T_{Z*}\Xi_T(Z)\to \pi^T_{Z^{T(a)}*}\Xi_T({Z^{T(a)}}).\] 
\begin{prop}\cite[Proposition~2.10.1]{GKV95}\label{prop:rho_loc}
Under Assumption~\ref{assum:Lag},
the following is true.
\begin{enumerate}
\item There is a Zariski open subset of $\catA_T$ containing $a$, on which the restriction of $\rho_a$ is regular and invertible. In particular, $\rho_a$ is invertible on the stalks over $a\in \catA_T$.
\item The map $\rho_a$ commutes with the convolution actions.
\end{enumerate}
\end{prop}
 Let $\phi:T\to T/T(a)$ be the quotient and let $\catA_\phi:\catA_T\to \catA_{T/T(a)}$ be the induced map.
By Corollary~\ref{cor:Xi_loc}, we have an isomorphism
 \[\pi^T_{Z^{T(a)}*}\Xi_{T}({Z^{T(a)}})\cong {\catA_\phi}^*\pi^{T/T(a)}_{Z^{T(a)}*}\Xi_{T/T(a)}(Z^{T(a)}),\]
which commutes with convolutions.
When $T(a)=T$, $\catA_{T/T(a)}\cong \Spec R$ and $\Xi_{T/T(a)}(Z^{T(a)})=\Xi_{1}(Z^{T(a)})$ is an $R$-module. Recall that by Lemma~\ref{Lem:convol_zerosec} as well as the convention from Proposition~\ref{prop:push}(6),  $\Xi_{T/T(a)}(Z^{T(a)})$ is $\Theta(j)=\calE^0_{1}((N_1\times N_2)^T,(N_1\times N_2)^T\setminus Z^{T(a)})$ twisted by the Thom classes of $(N_2)^T\to M_2^T$ and $T(M_2^T)$. A choice of the local coordinate trivializes the twist and hence we obtain a convolution algebra structure on $\calE^0_{1}((N_1\times N_2)^T,(N_1\times N_2)^T\setminus Z^{T(a)})$.
Summarizing Corollary~\ref{cor:Xi_loc} and Proposition~\ref{prop:rho_loc}, we get the following bivariant Riemann-Roch theorem. Let $R_a$ be the skyscraper sheaf at the $R$-point $a\in\catA_T$. 
\begin{theorem}\label{thm:RR}
Assume $a\in\catA_T$ is such that $T(a)=T$. Then
the morphism 
\[\pi_{Z*}\Xi_T(Z)\otimes_{\calO_{\catA_T}} R_a\to \Xi_{T/T(a)}(Z^{T(a)})\]
induced by $\rho_a$ is an isomorphism,
and commutes with convolutions. Moreover, if $R$ is a $\bbQ$-algebra, then $\Xi_{T/T(a)}(Z^{T(a)})\cong H_*(Z^{T(a)};R)$.  If $Z\circ Z=Z$, then the above maps are  isomorphisms of  algebras.
\end{theorem}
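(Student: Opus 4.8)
The plan is to realize the stated map as a composite of three isomorphisms, all of which are already available from the results above, and then to transport the convolution product through each one. (Throughout we take $G=T$, which is the case the cited lemmas address; if $G$ is non-abelian with maximal torus $T$, one reduces to this case by taking $W$-invariants via $\Ell_G=\Ell_T^W$ as in Remark~\ref{rmk:assump_ell_satisf}(3), using that $W$ acts freely near the generic point $a$.)

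First I would localize to the fixed locus using Proposition~\ref{prop:rho_loc}: the rational morphism $\rho_a\colon\pi^T_{Z*}\Xi_T(Z)\to\pi^T_{Z^{T(a)}*}\Xi_T(Z^{T(a)})$ is regular and invertible on a Zariski neighbourhood of $a$, hence induces an isomorphism on the stalks at $a$, and it commutes with convolution. Since by hypothesis $T(a)=T$, we have $Z^{T(a)}=Z^T$, and $Z^T\subseteq T^*(M_1^T\times M_2^T)$ is a Lagrangian correspondence on which $T$ acts trivially (and likewise on each $M_i^T$).

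Next I would trivialize and pass to the fibre over $a$. Applying the final (``in particular'') clause of Corollary~\ref{cor:Xi_loc} to $Z^T$ with its trivial $T$-action yields a convolution-compatible isomorphism $\pi^T_{Z^T*}\Xi_T(Z^T)\cong p_{\catA_T}^*\bigl(\calE^0_1(Z^T)\bigr)$, where $p_{\catA_T}\colon\catA_T\to\pt$ is the structure map; that is, the sheaf is constant on the $R$-module $\calE^0_1(Z^T)$ with its non-equivariant convolution product. Since tensoring with the skyscraper sheaf $R_a$ commutes with passing to the stalk at $a$, and $a$ is an $R$-point, one gets $p_{\catA_T}^*\bigl(\calE^0_1(Z^T)\bigr)\otimes_{\calO_{\catA_T}}R_a\cong\calE^0_1(Z^T)$. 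Composing with the stalk isomorphism of the previous step produces the asserted isomorphism $\pi_{Z*}\Xi_T(Z)\otimes_{\calO_{\catA_T}}R_a\xrightarrow{\ \sim\ }\calE^0_1(Z^T)$, and it is compatible with convolution since both $\rho_a$ (Proposition~\ref{prop:rho_loc}(2)) and the change-of-groups isomorphism (Corollary~\ref{cor:Xi_loc}) are; when $M_1=M_2=M$ and $Z\circ Z=Z$ the source is a sheaf of algebras, so this becomes an isomorphism of algebras.

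Finally I would identify $\calE^0_1(Z^T)$ with $H_*(Z^T;R)$ through the non-equivariant Chern character of \S~\ref{subsec:Quillen}: since $\pi_0A$ is a $\bbQ$-algebra, $\ch^A$ is a functorial ring isomorphism from the oriented theory $\calE^*_1$ to $H^*(-;R)$, and dually on Borel--Moore homology. To see that this matches the two convolution products, recall that the convolution on $\calE^0_1(Z^T)$ is assembled from the $\Theta$-twisted Gysin maps of the projections of the correspondence, together with the twists $\Theta(\proj_2)$, $\Theta(\proj_1^*b_1^*N_1)$ and $\Theta(\proj_2^*b_2^*N_2)^{-1}$ in the definition of $\Xi$; by Theorem~\ref{thm:Riemann-Roch} each elliptic Gysin map becomes the ordinary one multiplied by an inverse Todd class, while under $\ch^A$ the Thom and Euler classes carried by those $\Theta$-twists become $c^H_{\mathrm{top}}(-)\cdot\Td_A(-)^{-1}$; thus the comparison map --- $\ch^A$, possibly twisted by the relevant Todd classes --- is an isomorphism of convolution algebras onto the classical Borel--Moore convolution algebra (as in \cite{CG}). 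The one step demanding real care is exactly this cancellation of Todd factors --- tracking how the twisting line bundles in the definition of $\Xi_G(Z)$ interact with the Todd classes produced by Riemann--Roch --- but this is precisely what those twists were built to accomplish, so it reduces to a routine computation with Euler and Todd classes of the relevant normal bundles.
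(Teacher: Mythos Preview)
Your argument is correct and follows essentially the same route as the paper: the theorem is presented there as a direct summary of Proposition~\ref{prop:rho_loc} and Corollary~\ref{cor:Xi_loc}, together with the non-equivariant bivariant Riemann--Roch identification $\calE^0_1(Z^{T(a)})\cong H_*(Z^{T(a)};R)$. The only difference is that the paper simply cites \cite[Corollary~5.8]{ZZ14} for this last identification (and its compatibility with convolution), whereas you sketch it by hand via the Chern character and Todd-class bookkeeping; your honest flag that this cancellation is ``the one step demanding real care'' is exactly where the paper outsources the work.
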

Here, $H_*(Z^{T(a)};R)$ is the Borel-Moore homology with $R$-coefficients, endowed with the convolution product. 
Recall Remark~\ref{rmk:ellBorelMoore}(1) that in the singular case we use smooth embedding where up to a twist, $\Xi_{T/T(a)}(Z^{T(a)})$  is isomorphic to $\calE^0_{1}((N_1\times N_2)^T,(N_1\times N_2)^T\setminus Z^{T(a)})$, which 
 in turn is isomorphic to  $H^*((N_1\times N_2)^T,(N_1\times N_2)^T\setminus Z^{T(a)};R)$ when $R$ is a $\bbQ$-algebra. The Poincar\'e duality induces an isomophism of the latter with $ H_*(Z^{T(a)};R)$ \cite[(2.6.1)]{CG}. Here the algebra structure of $ H_*(Z^{T(a)};R)$ comes from convolution. The usual bivariant Riemann-Roch theorem identifies this algebra structure with the algebra structure on $\Xi_{T/T(a)}(Z^{T(a)})$  (see also \cite[Corollary~5.8]{ZZ14}).

\subsection{Remarks on convolution product on sheaves of spectra}
\label{subsec:conv_spectra}
In the cohomological construction  of the elliptic affine Hecke algebra below, we use the convolution of elliptic cohomology for a connected, simply-connected, compact Lie group. However, as is remarked in \S~\ref{subsec:existence}, when using cohomology theory to study the representations of these algebras, we only use the convolution algebra for a compact abelian Lie group. 

Much of pushforward and convolution in equivariant elliptic cohomology, equivariant with respect to compact abelian Lie groups, can be carried out on the level of sheaves of spectra as in \S~\ref{sec:Lurie}. 
Indeed, let $E^{der}$ on $S^{der}$ be an oriented derived  elliptic curve, $T$ be a compact abelian group, and $X$ be a finite $T$-space, then $\Ell_T(X)$ is a coherent sheaf on $\catA_T^{der}$. Let $V$ be an $T$-equivariant vector bundle on $X$. Define the Thom sheaf $\Theta(V):=\Ell_T(V,V\setminus X)$.
For any $T$-equivariant proper map $f:X\to Y$ between smooth complex manifolds, by choosing a $T$-representation $V$ into which $X$ equivariantly embeds, we extend $f$ equivariantly in to a regular embedding $i:X\to Y_+\wedge SV$.  We define the Thom sheaf of $f$ to be
$\Theta(N_i)\otimes\Theta(V)^{-1}$ where $N_i$ is the normal bundle of the map $i$  and $\Theta(V)= \widetilde{\Ell}_T(Th(V))$ is the line bundle from Proposition~\ref{prop:equivThom}. 
We expect  it to be easy to verify all the properties from Proposition~\ref{prop:push} up to Lemma~\ref{Lem:convol_zerosec}. The associativity of the convolution algebra as a sheaf of spectra is however a subtle point, which we do not attempt to address in this paper.

We note that recently, for compact abelian $T$, the details in constructing $\Ell_T(X)$ as a coherent sheaf of algebras on $\catA_T^{der}$ for  elliptic curves in Theorem~\ref{prop:derived_Ell_Curv} has been established in \cite{GM} using Lurie's work \cite{Ell2}. Furthermore,   Gepner's work in preparation characterizes $\catA^{der}_{U_n}$ as the classifying space of effective divisors of degree $n$ on $E^{der}$. Based on this, it is reasonable to expect the GKV-classifying map to be defined as a map of sheaves of spectra,    and a version of the  Thom isomorphism theorem in a similar fashion as Theorem~\ref{thm:Thom}, the $\pi_0$ of which is given in \S~\ref{subsec:GKV_classifying}.  With this, we expect the explicit calculations in \S~\ref{sec:iso} to hold in this generality as well.

\section{The elliptic affine Hecke algebra: the algebraic construction}\label{sec:ellHecke}
In this section, we recall the definition of the elliptic affine Hecke algebra. We prove some basic properties about its structure. Throughout this section we assume that $R$ is an integral domain, unless otherwise stated.

\subsection{The elliptic affine Hecke algebra}
\label{sec:ell}

 We recall the notion of root datum following \cite[Exp. XXI, \S~1.1]{SGA3}. A root datum of rank $n$ consists of a lattice $\Lambda$ of rank $n$, a  non-empty subset $\Sigma\subset \La$, together with an embedding $\Sigma\hookrightarrow \La^\vee, \al\mapsto \al^\vee$ where $\La^\vee$ is the dual of $\La$. Elements in $\Sigma$ are called roots, and let $\Sigma=\Sigma^-\cup \Sigma^+$ be the decomposition of $\Sigma$ into union of positive roots and negative roots. Let $\La_r\subset \La$ denote the root lattice, and $\La_w$ be the weight lattice, with $\La_w^\vee$ the dual weight lattice. A root datum is irreducible if it is not a direct sum of two non-trivial root data, and is called semi-simple if $\La_r\otimes_\Z\bbQ=\La\otimes_\Z\bbQ$. We always assume that the root datum is semi-simple. A root datum is said to be adjoint (resp. simply connected) if $\La=\La_r$ (resp. $\La=\La_w$).

Let  $\Phi=\{\al_1,...,\al_n\}\subseteq \Sigma$ be the set of simple roots, with simple reflections $s_i:=s_{\al_i}$.  The Weyl group is denoted by $W$. For each $w\in W$, define $\Sigma(w)=w\Sigma^-\cap \Sigma^+$. 

We now recall the definition of the elliptic affine Hecke algebra  given in \cite{GKV97}.  Let $\Gamma$ be a free abelian group of rank one, and let $E$ be an elliptic curve over  $R$. Consider $\catA:=E\otimes_{\bbZ}(\Gamma\oplus\Lambda^\vee)\cong E^{n+1}$, which is an abelian variety. 
  Each group homomorphism $\la:\Gamma\oplus \La^\vee\to \Z$ extends to a map of abelian varieties $\chi_\la:\catA\to E=E\otimes_{\bbZ}\Z.$ In particular, each 
$\alpha\in \Sigma$ defines a map of abelian varieties $\chi_\al: \catA\to E$. Let  $D^\al$ be the divisor of $\catA$  defined by the kernel of $\chi_\al$. Let $\gamma:\Gamma\oplus\Lambda^\vee\to \bbZ$ be a group homomorphism which is zero on $\Lambda^\vee$ and isomorphism on $\Gamma$.  Let  $D^{\al, \ga}$ be the divisor  on $\catA$ defined by the kernel of $\chi_\al-\chi_\ga$. We have  $D^\al=D^{-\al}$ and $w(D^\al)=D^{w^{-1}(\al)}$. The action of the Weyl group $W$ on $\La^\vee$ naturally extends to an action on $\catA$. Let $\pi:\catA\to\UP$ be the quotient, and $\calS:=\pi_*\calO_{\catA}$ on $\UP$. There is a natural action of $W$ on the sheaf $\calS$. Define $\calS_W:=\calS\rtimes W$, a coherent sheaf of associative algebras. Note that $\calS$ is a sheaf of modules over $\calS_W$. On any open subset, sections of $\calS_W$ coming from elements in $W$ will be denoted by $\de_w$. For any root $\al\in \Sigma$, we write $\de_\al=\de_{s_\al}$.

Let $\catA^c:=(\catA\backslash (\cup_{\al\in \Sigma}D^\al))/W$, and let $j:{\catA^c}\inj \UP$ be the inclusion.   The action of $\calS_W$ on $\calS$ induces a morphism $\rho:\calS_W|_{{\catA^c}}\to \sEnd(\calS|_{{\catA^c}})$. Applying $j_*$, we get a morphism \[j_*\rho:j_*(\calS_ W|_{{\catA^c}})\to j_*\sEnd(\calS|_{{\catA^c}}).
\]
Let $p:\sEnd(\calS)\to j_*\sEnd(\calS|_{{\catA^c}})$ be the canonical map. 

\begin{definition} \label{def:res}We define $\widehat \calD$ to be $(j_*\rho)^{-1}\left(p(\sEnd(\calS)\right)$, as a quasi-coherent subsheaf of algebras of $j_*(\calS_W|_{{\catA^c}})$.  
\end{definition}

We consider local sections of $j_*(\calS_W|_{{\catA^c}})$ written as $\sum_{w\in W}f_w\delta_w$ where $f_w$ are local sections of $j_*(\calS|_{\catA^c})$, satisfying the following conditions.

\begin{enumerate}
\item[\textbf{R}$1$] for any root $\al$ and $w\in W$, each $f_w$ has a pole of order at most one along the  divisor $D^\al$; 
\item[\textbf{R}$2$] for any root $\alpha$ and $w\in W$, the residues of $f_w$ and $f_{s_\al w}$ along the divisor $D^\al$ differ by a negative sign;
\item[\textbf{R}$3$] for any $\al\in \Sigma(w)$,  $f_w$ vanishes along the divisor $D^{\al, \ga}$.
\end{enumerate}
It follow from \textbf{R}$2$ that $f_w+f_{s_\al w}$, a priori defined in the complement of $D^\alpha$, can be extended across $D^\alpha$.

\begin{definition}\cite[Definition~1.3]{GKV97}
The elliptic affine Demazure algebra $\calD$ is defined to be the quasi-coherent subsheaf of $j_*(\calS_W|_{{\catA^c}})$ whose local sections  satisfy conditions  \textbf{R}1 and  \textbf{R}2. The elliptic affine Hecke algebra $\calH$ is defined to be the quasi-coherent subsheaf of $\calD$ whose local sections satisfy \textbf{R}3.
\end{definition}

\begin{remark}\label{Rmk:Formal}
Fix a local uniformizer of $E$, which defines an elliptic formal group law $F_e$. Then the completion of the stalk of $\calO_{\catA}$ at the origin of $E$ is precisely the formal group algebra defined in \cite[Definition~2.4]{CPZ}, and the completion of $\calD$ and $\calH$ at the origin of  $\UP$,  are  the elliptic formal affine Demazure algebra $\bfD_{F_e}$ and the elliptic formal affine Hecke algebra $\bfH_{F_e}$ considered in \cite{ZZ14}, respectively.   For general formal group law $F$, the dual of $\bfD_F$ is the algebraic model of the corresponding equivariant oriented cohomology of complete flag variety. For more details, please refer to \cite{CZZ1, CZZ2, CZZ3}. 
\end{remark}
The following proposition will be referred  as the structure theorem of the elliptic affine Demazure algebra and the elliptic affine Hecke algebra.
\begin{prop}\cite[Theorem~4.4]{GKV97}
 Let $\calJ$ be the sheaf of ideals of  $\calO_{\catA}$ corresponding to the divisor $\sum_{\alpha\in \Sigma}D^{\alpha,\gamma}$ on $\catA$. We have 
\[\calD\cong \widehat\calD\]
as sheaves of  algebras on $\catA/W$, and 
$\calH$ is isomorphic to the subsheaf of algebras of $\calD$ consisting of sections $u$ of $\calD$ such that \[u(\pi_*\calJ)\subset \pi_*\calJ.\]
\end{prop}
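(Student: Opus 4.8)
The plan is to establish both assertions by a local analysis at the generic points of divisors, after two standard reductions. Since $\catA$ is smooth and $\pi$ finite, $\calS=\pi_{*}\calO_{\catA}$ is reflexive and $\catA/W$ is normal with $\catA/W\setminus\catA^{c}$ of pure codimension one; hence the comparison map $p$ is injective, and $\widehat\calD$ is exactly the sheaf of local sections $u$ of $j_{*}(\calS_{W}|_{\catA^{c}})$ for which $\rho(u)$ carries $\calS$ into $\calS$ --- a condition which, like \textbf{R}1 and \textbf{R}2 defining $\calD$, only involves the generic points of the components of the $\pi(D^{\alpha})$. Likewise, membership of a section of $\calD$ in the $\pi_{*}\calJ$-preserving subsheaf, and condition \textbf{R}3 cutting out $\calH$, are conditions at the generic points of the components of the $D^{\alpha,\gamma}$. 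Finally, since $W$ acts freely on $\catA\setminus\bigcup_{\alpha}D^{\alpha}$, the sheaf $\calS|_{\catA^{c}}$ is an \'etale $W$-cover of $\calO_{\catA^{c}}$ and Galois descent gives $\calS_{W}|_{\catA^{c}}\cong\sEnd_{\calO_{\catA^{c}}}(\calS|_{\catA^{c}})$; so $\rho$ is faithful, the three sheaves are genuine subsheaves of algebras of $j_{*}(\calS_{W}|_{\catA^{c}})$, and it will be enough to work in the discrete valuation ring $\calO_{\catA,\eta}$ at the generic point $\eta$ of a component of the divisor in question.

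For $\calD\cong\widehat\calD$ I would fix a root $\alpha$ and let $\eta$ be the generic point of a component of $D^{\alpha}$. A direct computation shows $s_{\alpha}$ fixes $D^{\alpha}$ pointwise (for $a\in D^{\alpha}$ one has $s_{\alpha}(a)=a-\chi_{\alpha}(a)\cdot\alpha^{\vee}=a$), so $s_{\alpha}$ acts trivially on $\kappa(\eta)$ and nontrivially on the cotangent line; choose a uniformizer $t$ with $s_{\alpha}(t)\equiv -t\pmod{\mathfrak{m}^{2}}$. Writing $u=\sum_{w}f_{w}\delta_{w}$ and partitioning $W$ into cosets $\{w,s_{\alpha}w\}$, the points $w^{-1}\eta$ attached to distinct cosets are distinct, so prescribing independent germs of $g\in\calS$ at them shows that $\rho(u)$ preserves $\calS$ near $\pi(\eta)$ iff, for each coset, $f_{w}\phi+f_{s_{\alpha}w}\,s_{\alpha}(\phi)$ is regular for every $\phi\in\calO_{\catA,\eta}$. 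Taking $\phi=1$ forces $f_{w}+f_{s_{\alpha}w}$ to be regular; and since $s_{\alpha}(\phi)-\phi$ always lies in $\mathfrak{m}$, and generates $\mathfrak{m}/\mathfrak{m}^{2}$ for $\phi=t$, regularity of $f_{s_{\alpha}w}\bigl(s_{\alpha}(\phi)-\phi\bigr)$ for all $\phi$ is equivalent to $f_{s_{\alpha}w}$, hence also $f_{w}$, having a pole of order at most one. These two conditions are exactly \textbf{R}1 and \textbf{R}2 along $D^{\alpha}$, and they are clearly also sufficient; so $\widehat\calD=\calD$ as subsheaves of the sheaf of algebras $j_{*}(\calS_{W}|_{\catA^{c}})$, hence as sheaves of algebras. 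When several $D^{\beta}$ share a component through $\eta$, $\Stab_{W}(\eta)$ is a larger reflection group and one decomposes the argument along it; alternatively one specializes to a generic elliptic curve.

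For the description of $\calH$ I would use that $\calH\subseteq\calD$ and argue at the generic point $\zeta$ of a component of $D^{\alpha,\gamma}$. Each $f_{w}$ is automatically regular at $\zeta$ (since $D^{\alpha,\gamma}$ lies in no $D^{\beta}$), $\Stab_{W}(\zeta)$ is trivial in general position ($s_{\alpha}$ moving $D^{\alpha,\gamma}$ to the distinct divisor $D^{-\alpha,\gamma}$), and for a local section $g$ of $\pi_{*}\calJ$ the germ of $\delta_{w}g$ at $\zeta$ equals the germ of $g$ at $w^{-1}\zeta$, the generic point of $D^{w^{-1}\alpha,\gamma}$. That germ vanishes precisely when $D^{w^{-1}\alpha,\gamma}$ is a component of the divisor defining $\calJ$, and unwinding this against $\Sigma(w)=w\Sigma^{-}\cap\Sigma^{+}$ identifies the set of $w$ for which $\delta_{w}g$ need not vanish at $\zeta$ with $\{\,w:\alpha\in\Sigma(w)\,\}$. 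Because the germs $\delta_{w}g$ over this set can be prescribed independently, $\rho(u)(g)=\sum_{w}f_{w}\,\delta_{w}g$ lies in $\pi_{*}\calJ$ for all such $g$ iff $f_{w}$ vanishes at $\zeta$ for every $w$ with $\alpha\in\Sigma(w)$; ranging over all $D^{\alpha,\gamma}$, this is exactly \textbf{R}3. Finally, preserving $\pi_{*}\calJ$ is visibly closed under composition, so this subsheaf is a subalgebra of $\calD$, and it coincides with $\calH$.

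I expect the crux to be the matching in the third paragraph: for each component of $D^{\alpha,\gamma}$, pinning down which Weyl elements $w$ force a section of $\pi_{*}\calJ$ to vanish along its $w$-translate, and checking that the complementary set is exactly $\{w:\alpha\in\Sigma(w)\}$; this is where the precise collection of roots indexing $\calJ$ and the signs have to be tracked carefully. The two separation inputs --- realizing independent germs of a section of $\calS$, or of $\pi_{*}\calJ$, at a $W$-orbit of codimension-one points --- are routine but presuppose that the orbit points are genuinely distinct, so the exceptional configurations (coincidences among the $D^{\alpha}$ or the $D^{\alpha,\gamma}$, or a non-reflection fixing a component) would be absorbed by specializing from a generic elliptic curve, over which the conditions at issue are fiberwise.
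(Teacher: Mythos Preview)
The paper does not supply its own proof of this proposition; it simply quotes \cite[Theorem~4.4]{GKV97}. Your localization-at-codimension-one strategy is the standard route to such residue-type structure theorems, and your argument for $\calD\cong\widehat\calD$ is correct (the reduction to the local ring at a generic point of $D^{\alpha}$, the coset decomposition, and the identification of the two resulting conditions with \textbf{R}1 and \textbf{R}2 all go through as you describe).

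There is, however, a genuine problem in the second half, and it sits exactly where you flag the crux. With $\calJ$ defined as the ideal of $\sum_{\alpha\in\Sigma}D^{\alpha,\gamma}$ (summing over \emph{all} roots, as the paper states), the divisor is $W$-invariant: $w$ sends $D^{\alpha,\gamma}$ to $D^{w\alpha,\gamma}$, and $w\alpha\in\Sigma$ always. Hence $\calJ$ is a $W$-stable ideal, every element of $\calS_W$ already preserves $\pi_*\calJ$, and the condition $u(\pi_*\calJ)\subset\pi_*\calJ$ is vacuous. In your own notation: $w^{-1}\zeta$ is the generic point of $D^{w^{-1}\alpha,\gamma}$, and since $w^{-1}\alpha\in\Sigma$ this is \emph{always} a component of the divisor cutting out $\calJ$, so the germ of $\delta_w g$ at $\zeta$ vanishes for every $w$ and every $g\in\calJ$. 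Your ``unwinding'' would therefore produce the empty set, not $\{w:\alpha\in\Sigma(w)\}$.

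The argument does go through if $\calJ$ is taken to be the ideal of $\sum_{\alpha\in\Sigma^+}D^{\alpha,\gamma}$ (positive roots only): then $g(w^{-1}\zeta)$ is unconstrained precisely when $w^{-1}\alpha\in\Sigma^-$, i.e.\ when $\alpha\in w\Sigma^-\cap\Sigma^+=\Sigma(w)$, and your matching with \textbf{R}3 is exact. This is consistent with \textbf{R}3 itself, which only imposes vanishing along $D^{\alpha,\gamma}$ for $\alpha\in\Sigma(w)\subset\Sigma^+$; the paper's statement of the proposition appears to contain a typo. You rightly identified this unwinding as the step requiring care about ``the precise collection of roots indexing $\calJ$,'' but carrying it out rather than deferring it would have exposed the discrepancy.
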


For any  rational function $f$ on  $E$, which vanishes of order 1 along the zero section of $E$, define $\Sing(f)$ to be the set of non-origin zeros of $f$ union with the set of poles of $f$.
Define \begin{equation}\label{eq:T}
T_\al^f=\frac{f(\chi_\gamma)}{f(\chi_\alpha)}+(1-\frac{f(\chi_\gamma)}{f(\chi_\alpha)})\delta_\alpha,
\end{equation}
 which is a regular section of $\calH$ on the open complement of $\chi_\gamma^{-1}(\Sing(f))\cup\chi_\alpha^{-1}(\Sing(f))$.
\begin{example}\cite[\S~4]{GKV97}\label{ex:ellip}  Let $E$ be a complex elliptic curve. Let $x$ be an order-2 torsion point on $E$, and consider the Jacobi-sign function $\sn$, which is the unique rational function that has simple zeros at the origin and $x$, poles of order 1 at the other two order-2 points, and derivative 1 at the origin.  According to Definition~\ref{def:loc_coord}, $\sn$ is a local coordinate of $E$. The sheaf of algebras $\calS$ is $\pi_*\calO_{E^{n+1}}$. Let
\[
V=[\catA\backslash (\cup_{\al\in \Sigma^+}\chi_\al^{-1}(x))]/W.
\]
Then $H^0(V, \calH)$ is the subalgebra of $\End(H^0(V,\calS))$ generated by $H^0(V,\calS)$ and $T_\alpha=\frac{\sn(\chi_\gamma)}{\sn(\chi_\alpha)}+(1-\frac{\sn(\chi_\gamma)}{\sn(\chi_\alpha)})\delta_\alpha$ for $\al\in \Phi$. 
\end{example}

\subsection{Filtration by the Bruhat order} \label{subsec:filH} 
For any $w\in W$, define $\calH_{\le w}\subseteq \calH$ to be the subsheaf whose local sections consist of $\sum_{y\le w}f_y\delta_y$. Denote $\calH_{w}=\calH_{\le w}/\calH_{<w}$.

For any point $p\in \catA$ and  any simple root $\alpha$, let $p_\alpha$ be the $\alpha$-coordinate of $p$. Let $f_{p,\alpha}$  be a rational function on $E$ such that $p_\alpha,-p_\alpha\notin\Sing(f_{p,\alpha})$. Note that such a function always exists (although non-unique). 
Let $U_{p,\alpha}\subseteq E$ be the open complement of $\Sing(f_{p,\alpha})\cup\Sing(-f_{p,\alpha})$. Then $U_p:=\prod_{\alpha}U_{p,\alpha}$ is a $W$-invariant open subset of $\catA$ that contains $p$. This subset $U_p$ depends on $p$ and $f_{p,\al}$.

\begin{lemma}\label{lem:basis}
Let $p\in \catA$. For each $w\in W$, we fix a reduced sequence $I_w=(i_1,...,i_l)$ of $w$, i.e., $w=s_{i_1}\cdots s_{i_l}$. 
\begin{enumerate}
\item Consider the subsheaf of the sheaf of rational sections of $\calS$, whose local sections consist of rational sections $f$ of $S$ that have simple poles at each divisor $D^\alpha$, and vanish  along the divisors $D^{\alpha,\gamma}$ for each $\alpha\in \Sigma(w)$. This subsheaf is a locally free sheaf of rank 1 over $\calS$.
\item On $U_p/W$, the sheaf in (1) is globally free, generated by $F_{I_w}:=(1-\frac{f_{\alpha_{i_1},p}(\chi_\gamma)}{f_{\alpha_{i_1},p}(\chi_{\alpha_{i_1}})})\cdots (1-\frac{f_{\alpha_{i_l},p}(\chi_\gamma)}{f_{\alpha_{i_l},p}(\chi_{\alpha_{i_l}})})$.
\item $\calH_w|_{U_p/W}$ is free of rank 1 as a module over $\calS|_{U_p/W}$, generated by $T_{I_w}:=T^{f_{\alpha_{i_1},p}}_{\alpha_{i_l}}\cdots T^{f_{\alpha_{i_l},p}}_{\alpha_{i_l}}$.
\end{enumerate}
\end{lemma}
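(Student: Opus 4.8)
The plan is to prove the three parts in order. Part (1) is really a statement about a line bundle on the abelian variety $\catA$ and its pushforward along $\pi\colon\catA\to\catA/W$; parts (2) and (3) are explicit local trivializations carried out with theta‑functions. For (1) I would first recognize the sheaf in question — call it $\calM_w$ — intrinsically: the two conditions (at most a simple pole along $D^\alpha$, necessarily regular when $\alpha\in\Sigma^+\setminus\Sigma(w)$; and vanishing along $D^{\alpha,\gamma}$ for $\alpha\in\Sigma(w)$) say precisely that a local section, viewed as a rational function on the relevant $W$‑invariant open of $\catA$, is a section of the line bundle $\widetilde\calM_w:=\calO_\catA\big(\sum_{\alpha\in\Sigma(w)}(D^\alpha-D^{\alpha,\gamma})\big)$ on $\catA$. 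By Section~\ref{sec:grp_alg}, $D^\alpha$ (resp. $D^{\alpha,\gamma}$) is the zero divisor of the theta‑section $\vartheta(\chi_\alpha)$ (resp. $\vartheta(\chi_\alpha-\chi_\gamma)$) of a line bundle on $\catA$, so $\widetilde\calM_w$ is genuinely a line bundle and $\calM_w=\pi_*\widetilde\calM_w$. Since $\pi$ is finite flat with finite fibres, the stalk $(\calM_w)_{\bar p}=\widetilde\calM_w\otimes_{\calO_\catA}\calS_{\bar p}$ is an invertible module over the semilocal ring $\calS_{\bar p}=\bigoplus_{q\in\pi^{-1}(\bar p)}\calO_{\catA,q}$, hence free of rank one; thus $\calM_w$ is locally free of rank one over $\calS$. (No global or $W$‑equivariant triviality of $\widetilde\calM_w$ is needed.)

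For (2) and (3) I would work over $U/W$, where the Demazure–Lusztig operators $T^{f_{\alpha,p}}_\alpha=a_\alpha+b_\alpha\de_\alpha$ ($a_\alpha,b_\alpha$ sections of $\calS|_{U/W}$) are regular sections of $\calH$. First, for (2): expanding $T_{I_w}:=T^{f_{\alpha_{i_1},p}}_{\alpha_{i_1}}\cdots T^{f_{\alpha_{i_l},p}}_{\alpha_{i_l}}$ and collecting the coefficient of $\de_w$, using that $\gamma$ is $W$‑invariant and $w'\cdot(f\circ\chi_\beta)=f\circ\chi_{w'\beta}$, one gets $\prod_{k=1}^l\big(1-\frac{f_{\alpha_{i_k},p}(\chi_\gamma)}{f_{\alpha_{i_k},p}(\chi_{\beta_k})}\big)$ with $\beta_k=s_{i_1}\cdots s_{i_{k-1}}\alpha_{i_k}$, so that $\{\beta_1,\dots,\beta_l\}=\Sigma(w)$; this is $F_{I_w}$. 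A theta‑function computation on $U$ — using that $f_{\alpha,p}$ vanishes to order one at $0\in E$ and that $\Sing(f_{\alpha,p})$ (and the auxiliary loci where $f_{\alpha,p}\circ\chi_\gamma$ is singular, and the second branch of $\{f_{\alpha,p}(\chi_\beta)=f_{\alpha,p}(\chi_\gamma)\}$) has been removed from $U$ — then shows that the divisor of $F_{I_w}$ on $U$ is exactly $\sum_{\alpha\in\Sigma(w)}(D^{\alpha,\gamma}-D^\alpha)$, i.e. $F_{I_w}$ is a nowhere‑vanishing section of $\widetilde\calM_w|_U$. By (1) it therefore generates $\calM_w|_{U/W}=\pi_*\widetilde\calM_w|_{U/W}$ freely over $\calS|_{U/W}$, which is (2).

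For (3): by \textbf{R}1 each coefficient of a local section of $\calH_{\le w}$ has at most a simple pole along each $D^\alpha$; if $\alpha>0$ with $\alpha\notin\Sigma(w)$, then $\ell(s_\alpha w)>\ell(w)$, so $s_\alpha w\not\le w$, so $f_{s_\alpha w}=0$, and \textbf{R}2 forces $f_w$ regular along $D^\alpha$; and \textbf{R}3 gives the vanishing of $f_w$ along $D^{\alpha,\gamma}$ for $\alpha\in\Sigma(w)$. Hence $\sum_{y\le w}f_y\de_y\mapsto f_w$ is an $\calS$‑linear map $\calH_{\le w}\to\calM_w$ with kernel $\calH_{<w}$, giving an injection $\calH_w\hookrightarrow\calM_w$. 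On the other hand $T_{I_w}$ is a section of $\calH_{\le w}$ over $U/W$: it is a product of sections of the sheaf of algebras $\calH$, and expanding it as $\sum_{\varepsilon\in\{0,1\}^l}c_\varepsilon\de_{y_\varepsilon}$ with $y_\varepsilon=s_{i_1}^{\varepsilon_1}\cdots s_{i_l}^{\varepsilon_l}$, each $y_\varepsilon\le w$ by the subword characterization of the Bruhat order. Since the image of $T_{I_w}$ in $\calH_w$ is $F_{I_w}$, a generator of $\calM_w|_{U/W}$ by (2), any $\sigma\in\calH_{\le w}(U/W)$ with $\de_w$‑coefficient $f_w=g\,F_{I_w}$ ($g\in\calS|_{U/W}$) satisfies $\sigma-g\,T_{I_w}\in\calH_{<w}$; therefore $\calH_{\le w}|_{U/W}=\calH_{<w}|_{U/W}\oplus\calS|_{U/W}\cdot T_{I_w}$, so $\calH_w|_{U/W}$ is free of rank one on $T_{I_w}$ (and the injection $\calH_w\hookrightarrow\calM_w$ is an isomorphism over $U/W$).

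The step I expect to be the main obstacle is the divisor computation in (2): one must choose the auxiliary functions $f_{\alpha,p}$ and shrink $U$ so that $F_{I_w}$ acquires no spurious zero or pole on $U$ — concretely, so that on $U$ each $f_{\alpha,p}\circ\chi_\beta$ has a single simple zero (along $D^\beta$), and $f_{\alpha,p}(\chi_\beta)-f_{\alpha,p}(\chi_\gamma)$ vanishes, apart from along $D^{\beta,\gamma}$, only outside $U$. Granting the theta‑function formalism of Section~\ref{sec:grp_alg}, the remaining ingredients — the line‑bundle description in (1), and the Bruhat/subword bookkeeping together with the leading‑term splitting in (3) — are routine.
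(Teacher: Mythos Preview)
Your proof is correct and follows the same route as the paper's (very terse) argument, which simply declares (1) clear, derives (2) from (1), and for (3) invokes the leading-term identity $T_{I_w}-F_{I_w}\delta_w\in\calH_{<w}|_{U/W}$ (citing \cite{GKV97}). One point worth flagging: your computation of the $\delta_w$-coefficient of $T_{I_w}$ gives $\prod_{k}\bigl(1-f_{\alpha_{i_k},p}(\chi_\gamma)/f_{\alpha_{i_k},p}(\chi_{\beta_k})\bigr)$ with $\beta_k=s_{i_1}\cdots s_{i_{k-1}}\alpha_{i_k}\in\Sigma(w)$, and this is indeed the section with divisor $\sum_{\alpha\in\Sigma(w)}(D^{\alpha,\gamma}-D^\alpha)$ on $U$, hence a generator of your $\calM_w$; the lemma as printed has $\chi_{\alpha_{i_k}}$ in place of $\chi_{\beta_k}$, but since the simple roots $\alpha_{i_k}$ may repeat and need not lie in $\Sigma(w)$, your version is the one that makes both the divisor computation in (2) and the splitting in (3) go through.
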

\begin{proof}
Claim (1) is clear.
Claim (2) is a consequence of (1). 
An easy calculation as in \cite[Lemma~2.8.(ii)]{GKV97} shows that $T_{I_w}-F_{I_w}\delta_w\in(\calH_{<w})|_{U_p/W}$. Hence,  (3) follows from (2).
\end{proof}

The following theorem is essentially proved in \cite{GKV95}. 

\begin{theorem}\label{thm:ellHeckFree}
We have the following
\begin{enumerate}
\item $\calH_w$ is a locally free sheaf of modules over $\calS$ of rank 1;
\item $\calH$ is a locally free sheaf of modules over $\calS$ of rank $|W|$;
\item $\calH$ is a locally free sheaf of modules over $\calO_{\catA/W}$ of rank $|W|^2$.
\end{enumerate}
\end{theorem}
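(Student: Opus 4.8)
The plan is to deduce all three statements from the filtration $\calH_{\le w}$ by the Bruhat order together with the local generators produced in Lemma~\ref{lem:basis}. First I would observe that statements (2) and (3) follow formally from (1): since the poset $W$ is finite, the associated graded sheaf of the filtration is $\gr\calH=\bigoplus_{w\in W}\calH_w$, and a sheaf of modules over an integral scheme which admits a finite filtration whose subquotients are all locally free of rank $1$ over $\calS$ is itself locally free of rank $|W|$ over $\calS$ (the extensions split locally because locally free sheaves over a regular, or even just over any, ring have no obstruction to splitting once we pass to small enough opens — more precisely, one checks freeness on the sets $U/W$ of Lemma~\ref{lem:basis}, where each $\calH_w|_{U/W}$ is genuinely free with basis $T_{I_w}$, and then the $T_{I_w}$ for $w\in W$ form an $\calS|_{U/W}$-basis of $\calH|_{U/W}$). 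For (3), I would combine (2) with the fact, already used implicitly in Section~\ref{sec:grp_alg}, that $\calS=\pi_*\calO_{\catA}$ is locally free of rank $|W|$ over $\calO_{\catA/W}$ because $\pi:\catA\to\catA/W$ is a finite flat morphism of degree $|W|$ (the action of $W$ on the abelian variety $\catA$ is generically free, and $\catA$, $\catA/W$ are smooth in the relevant sense, so $\pi$ is finite locally free); then rank multiplicativity gives $|W|\cdot|W|=|W|^2$.

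So the real content is (1). Here I would argue as follows. By Lemma~\ref{lem:basis}(3), on each open set $U/W$ (as $p$ ranges over $\catA$ these cover $\catA/W$, since for any $p$ one can choose the auxiliary rational functions $f_{\alpha,p}$ avoiding $\pm p$) the sheaf $\calH_w|_{U/W}$ is free of rank $1$ over $\calS|_{U/W}$, with explicit generator $T_{I_w}$. This already shows $\calH_w$ is locally free of rank $1$; what remains is to check that the local trivializations are compatible, i.e. that $\calH_w$ is coherent and that its rank is constant, which is automatic once we know it is the quotient of two coherent sheaves and locally free of rank $1$ on a covering. The only subtlety is that the generator $T_{I_w}$ a priori depends on the choice of reduced word $I_w$ and on the functions $f_{\alpha,p}$; but changing these alters $T_{I_w}$ by a unit in $\calS|_{U/W}$ plus lower-order terms in $\calH_{<w}$, hence does not change the class in $\calH_w$ up to a unit, so the local pieces glue to a well-defined line bundle over $\calS$.

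The step I expect to be the main obstacle is establishing Lemma~\ref{lem:basis}(1) cleanly — that the sheaf of rational sections of $\calS$ with at most simple poles along each $D^\alpha$ and vanishing along $D^{\alpha,\gamma}$ for $\alpha\in\Sigma(w)$ is invertible over $\calS$. This is a statement about sections of a specific line bundle on the abelian variety $\catA$ (built from the divisors $\sum_{\alpha}D^\alpha$ and $\sum_{\alpha\in\Sigma(w)}D^{\alpha,\gamma}$), pushed forward along $\pi$; one must check that the $W$-equivariant structure is such that the invariants (equivalently, the sections over $\catA^c/W$ with the prescribed pole/vanishing behaviour) form a rank-$1$ $\calS$-module rather than something larger or with torsion. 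I would handle this by a local computation near a generic point of each $D^\alpha$: the condition ``simple pole with residue data'' pins down a one-dimensional space of Laurent tails, the vanishing conditions along $D^{\alpha,\gamma}$ are independent linear conditions cutting out the correct sub-line-bundle, and away from these divisors the sheaf is just $\calS$ itself. Once (1) is in hand, (2), (3), and the gluing in (1) are bookkeeping, and the explicit Demazure–Lusztig generators $T^f_\alpha$ of Section~\ref{sec:ell} supply the concrete bases. The argument is, as the statement says, essentially the one in \cite{GKV95}; the work is in checking the abelian-variety line-bundle assertion in the present sheaf-theoretic language.
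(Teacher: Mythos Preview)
Your proposal is correct and follows essentially the same approach as the paper: both arguments reduce everything to Lemma~\ref{lem:basis}(3), using the open sets $U/W$ (which cover $\catA/W$ as $p$ varies) to exhibit the explicit local generators $T_{I_w}$, and then assemble local freeness of $\calH$ from the Bruhat filtration. The paper phrases this as an induction on the Bruhat order showing $\calH_{\le w}$ is locally free of rank $\#\{y\le w\}$, while you first prove (1) for each $\calH_w$ and then deduce (2) from the filtration; these are equivalent reorderings of the same computation. Your extended discussion of the ``main obstacle'' in Lemma~\ref{lem:basis}(1) is unnecessary here, since that lemma precedes the theorem and may simply be cited.
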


\begin{proof}
We prove, by induction on Bruhat order, that $\calH_{\leq w} $ is locally free of rank $\#\{y\in W\mid y\leq w\}$. The rest of the theorem follows easily from this, where the inductive step is taken care of by  Lemma~\ref{lem:basis}.(3) above.
\end{proof}  

\subsection{Elliptic Hecke algebra with dynamical parameters}\label{subsec:dyn}
In this section, we define a version of elliptic affine Hecke algebra with dynamical parameters. In this sheaf of algebras, we find global sections that satisfy the braid relations. We postpone to future publications the geometric study of this version of elliptic affine Hecke algebra, as well as its relation with the dynamical elliptic quantum group of Felder. 

Let $T$ be a torus. 
Recall that $\mathfrak  A_T\cong E\otimes \bbX_*(T)$, and let $\catA_T^\vee:=\Pic^0(\mathfrak A_T)\cong E\otimes \bbX^*(T)$ be the dual abelian variety. In this subsection only, we use $\la$ to denote elements in $\catA_T^\vee$.  The line bundle determined by $\la\in \catA_T^\vee$ is denoted by $\calO(\la)$.
Recall that by definition of $\Pic^0$, there is a universal line bundle $\calP$ on $\catA_T\times\catA_T^\vee$, called the Poincar\'e line bundle, determined by the property that for any $\lambda\in \catA_T^\vee$, the restriction    $\calP|_{\catA_T\times\{\lambda\}}$ coincides with $\calO(\lambda)$, and $\calP|_{0\times \catA_T^\vee}\cong \calO_{\catA^\vee_T}$.

The Weyl group actions on $\bbX^*(T)$ and on $\bbX_*(T)$ induce actions on $\catA_T$ and $\catA_T^\vee$, denoted by $w$ and $w^{\dyn}$, respectively. Denoting $w^{-1}:\catA_T\to \catA_T$, then for any  $\la\in \catA_T^\vee$, 
\[\calO(w^{\dyn}\la)=(w^{-1})^*\calO(\la). 
\]
In other words, the bundle $\calP$ is preserved by the diagonal Weyl group action. Moreover, $\calP$ is $W$-equivariant. 

Let  $\rho$ be the half sum of simple roots. 
Consider the  map  
\[vor:\mathfrak A_T\times\mathfrak A_T^\vee\times E\to \mathfrak A_T\times\mathfrak A_T^\vee\times E,  \quad (z,\lambda, \hbar)\mapsto (z,\lambda+\rho\hbar,\hbar). \]
 Let  $\mathbb L=vor^*(\calP\boxtimes\calO_{E})$ on $\mathfrak A_T\times\mathfrak A_T^\vee\times E$. Then for each $(\lambda,\hbar)\in \mathfrak A_T^\vee\times E$, the line bundle $\mathbb L$ restricted to the fiber $\mathfrak A_T\times\{(\lambda, \hbar)\}\cong \catA_T$ is $\mathbb L_{\lambda,\hbar}:=\calO(\lambda+\rho\hbar)$.

\begin{lemma} \label{lem:equivL}
There exists an action of  $W$ on $\mathfrak A_T\times\mathfrak A_T^\vee\times E$, denoted by $\star$,  satisfying the following two conditions:
\begin{enumerate}
\item the projection $\mathfrak A_T\times\mathfrak A_T^\vee\times E\to \catA_T$ is $W$-equivariant;
\item the line bundle $\mathbb L$ is preserved by the action.
\end{enumerate}
\end{lemma}
\begin{proof}
We first consider the diagonal action of $W$ on $\catA_T\times \catA_T^\vee\times E$. Since the map $vor:\mathfrak A_T\times\mathfrak A_T^\vee\times E\to \mathfrak A_T\times\mathfrak A_T^\vee\times E$ is an isomorphism, the $W$-action on the target  induces an action on the domain, denoted by $\star$. We have
\begin{align*}
w \star(z,\la, \hbar)&=vor^{-1}\circ w \circ vor (z,\la, \hbar)\\
&=vor^{-1}(wz, w^{\dyn}(\la+\rho \hbar), \hbar)\\
&=(wz, w^{\dyn}(\la+\rho \hbar)-\rho \hbar, \hbar).
\end{align*}
We then show that $\bbL$ is preserved by the $\star$-action. Let $U\subset \catA_T$ be an open subset. Then
\begin{align*}
\Gamma(U,((w\star\_)^*\bbL)_{\la, \hbar})&=\Gamma(wU, \bbL_{w^{\dyn}(\la+\rho\hbar)-\rho\hbar, \hbar})\\
&=\Gamma(U, \bbL_{\la, \hbar}). 
\end{align*}
Therefore, $(w\star\_)^*\bbL=\bbL$. 
\end{proof}

The $\star$ action induces a new $W$-action on $\catA_T^\vee\times E$, denoted by $\bullet$ so that the projection $\catA_T\times\catA_T^\vee\times E\to \catA_T^\vee\times E$ is equivariant. More explicitly, we have
\begin{align}\label{eq:bullet}
(w^{-1})^*\bbL_{\la, \hbar}&=(w^{-1})^*\calO(\la+\rho \hbar)=\calO(w^{\dyn}\la+\hbar w\rho)=\bbL_{w\bullet \la, h},
\end{align}
where \[
w\bullet \la=w^{\dyn}\la+\hbar w\rho -\hbar \rho=w^{\dyn}(\la+\hbar \rho)-\hbar \rho. 
\]
This notation $(W,\bullet)$ is motivated by the usual dot-action of the Weyl group.
Note that $(w,\star)=(w,\bullet)\circ \delta_w=\delta_w\circ (w,\bullet)$ as maps $\catA_T\times\catA_T^\vee\times E\to\catA_T\times\catA_T^\vee\times E$ for any $w\in W$.

To avoid possible confusion, we spell out explicitly the effect of the group actions on the line bundles. 
As mentioned above, the Poincar\'e line bundle on $\catA_T\times\catA_T^\vee$ is equivariant with respect to the diagonal $W$-action, which has $w(z, \la)=(wz, w^{\dyn}\la)$ for any $w\in W$. Indeed, for any $w\in W$, consider $w^*(\calP)$ on $\catA_T\times\catA_T^\vee$ via the map $w:\catA_T\times\catA_T^\vee\to \catA_T\times\catA_T^\vee$. By the universality of the Poincar\'e line bundle, we get an isomorphism $w^*(\calP)\cong \calP$, denoted by $w$. 
The same is true for $\calP\boxtimes\calO_E$ on $\catA_T\times\catA_T^\vee\times E$. 
Note also that $vor$ is $W$-equivariant when the codomain has the diagonal action, and the domain the $\star$-action.
Pulling-back via the $W$-equivariant isomorphism $vor$ endows $\bbL$ an equivariant structure with respect to the $\star$-action. That is, we have isomorphisms $\bbL\cong (w\star\_)^*\bbL$, as mentioned in Lemma \ref{lem:equivL}. 

Now consider $\delta_\alpha:\catA_T\times\catA_T^\vee\times E\to \catA_T\times\catA_T^\vee\times E$, $(z,\la, \hbar)\mapsto (s_\al(z), \la, \hbar)$. The following triangle obviously commutes \[\xymatrix{\catA_T\times\catA_T^\vee\times E\ar[rr]^{\delta_\alpha}\ar[dr]_\pi&&\catA_T\times\catA_T^\vee\times E\ar[dl]^{\pi}\\
&\catA_T/W\times\catA_T^\vee\times E
}.\]
Hence, we have the obvious map of sheaves $\pi_*\bbL\to \pi_*\delta_\alpha^*\bbL$. Composing with the isomorphisms $\delta_\alpha^*\bbL=(s_\alpha,\bullet)^*(s_\alpha,\star)^*\bbL\cong (s_\alpha,\bullet)^*\bbL$, we get a map of sheaves $\pi_*\bbL\to\pi_*(s_\alpha,\bullet)^*\bbL$. Restricting to the fiber over $(\lambda,\hbar)\in \catA_T^\vee\times E$, we get a map of sheaves \begin{equation}\label{eq:de}\pi_*\bbL_{\lambda,\hbar}\to \pi_*\bbL_{s_\alpha\bullet\lambda,\hbar}.\end{equation} 
Here note that the operations of applying $\pi_*$ and taking restrictions to $(\la, \hbar)$ commute with each other. Without causing any confusion, we denote this map of sheaves still by $\delta_\alpha$. 

Similarly, we also have the commutative triangle \[\xymatrix{\catA_T\times\catA_T^\vee\times E\ar[rr]^{s^{dyn}_\alpha}\ar[dr]_\varpi&&\catA_T\times\catA_T^\vee\times E\ar[dl]^{\varpi}\\
&\catA_T\times\catA_T^\vee/W\times E
},\] and hence a map of sheaves $\varpi_*\bbL\to \varpi_*s^{dyn*}_\alpha\bbL$. However, unlike in the case of $\delta_\alpha$, here restriction to the fiber over  $(\lambda,\hbar)\in \catA_T^\vee\times E$ is not well-defined. Nevertheless, taking the orbits of $\lambda$ under $W$ gives 
\begin{equation}
\label{eq:sdyn}\bigoplus_{w^{dyn}\lambda=\lambda'}\pi_*\bbL_{\lambda',\hbar}\to \bigoplus_{w^{dyn}\lambda=\lambda'}\pi_*\bbL_{\lambda',\hbar}.\end{equation}
This map is denoted by $\de_\al^{\dyn}$. 
By definition, the map $\de_\al$ from \eqref{eq:de} is induced from the $W$-action on the variable $z\in \catA_T$ while the map $\de_\al^{\dyn}$ is induced from the $W$-action on the variable $\la\in \catA_T^\vee$, hence they commute with each other.

Recall from \S~\ref{sec:grp_alg} that for each simple root $\alpha$, we have the map  $\chi_\al:\catA_T \cong E\otimes \bbX_*(T)\to E$. For $z\in \catA_T$, denote $z_\al=\chi_\al(z)$. Similarly, $\al^\vee$ determines a map $\chi_{\al^\vee}:\catA^\vee_T\to E$, and for $\la\in \catA_T^\vee$, denote $\la_{\al^\vee}=\chi_{\al^\vee}(\la)$. 

Denote $\theta(x)=\vartheta(e^x)$. Note that our $\theta$ is related with the one in \cite[\S~2.1]{RW} by a factor
\[
\theta(x)=\frac{1}{2\pi \kappa i}\theta^{\text RW}(x), \quad \kappa:=\prod_{s>0}(1-q^s)^2. 
\]For any simple root $\al$ and element $\la\in \catA_T^\vee$, 
define
\begin{equation}
\label{eqref:DL_dyn}
T^\lambda_\alpha=\frac{\theta(\hbar)\theta(z_\alpha+\lambda_{\alpha^\vee})}{\theta(z_\alpha)\theta(\lambda_{\alpha^\vee}+\hbar)}+\frac{\theta(z_\alpha-\hbar)\theta(\lambda_{\alpha^\vee})}{\theta(z_\alpha)\theta(\lambda_{\alpha^\vee}+\hbar)}\delta_\alpha, \quad T_\al^{\dyn}=T_\al^\la \de_\al^{\dyn}. 
\end{equation}

For each $(\lambda,\hbar)\in \catA_T^\vee\times E$, the function  $\theta(z+\la_{\al^\vee})/\theta(z)$ on $E$ has a pole at $0\in E$ and a zero at $-\la_{\al^\vee}\in E$, and hence is a rational section of the degree 0 line bundle coming from the Weil divisor class $[\{-\la_{\al^\vee}\}]-[\{0\}]$ on $E$. Pulling back to $\catA_T$ along the map $\chi_\al:\catA_T\to E$, we obtain the  degree 0 line bundle  $\calO(-\la_{\al^\vee} \al)$ together with a section $\frac{\theta(z_\al+\la_{\al^\vee})}{\theta(z_\al)}$ on $\catA_T$.  
Similarly, the function $\frac{\theta(z-\hbar)}{\theta(z)}$ is a section of the line bundle $\calO(\hbar  \al)$ over $\catA_T$.  One also observes that the operator $T^\la_{\al^\vee}$ satisfies all three properties of the residue definition (Definition \ref{def:res}), therefore, fixing $\la_{\al^\vee}\neq -\hbar$, then $T_\al^\la$ can be viewed as an element  of  a version of the elliptic affine Hecke algebra $\calH$ to be made precise in Definition~\ref{def:dyn_hecke}.

\begin{lemma}
Assume $\lambda\in\catA^\vee_T$ lies in the complement of  the divisor $\lambda_{\al^\vee}=-\hbar$, then 
$T^\lambda_{\alpha}$ is a well-defined morphism of sheaves \[\pi_*\mathbb L_{\lambda,\hbar}\to \pi_*\mathbb L_{\lambda-\lambda_{\alpha^\vee} \alpha,\hbar}.\]
\end{lemma}
\begin{proof}
Indeed, the first term of $T^\lambda_\alpha$ defines a rational map $\pi_*\mathbb L_{\lambda,\hbar}\to \pi_*(\mathbb L_{\lambda,\hbar}\otimes \calO(-\lambda_{\alpha^\vee} \alpha))$ with an order 1 pole at $z_\alpha=0$. By definition of the addition in $\catA_T^\vee=\Pic^0(\catA_T)$, the target of this map is $\pi_*\mathbb L_{\lambda-\lambda_{\alpha^\vee} \alpha,\hbar}$. 
Now  we consider the second term, where $\delta_\alpha:\pi_*\mathbb L_{\lambda,\hbar}\to \pi_*\mathbb L_{s_\alpha\bullet \lambda,\hbar}$ by \eqref{eq:de}. The coefficient is a rational section of $\calO(\hbar\alpha)$. So the second term defines a rational map 
\[\pi_*\bbL_{\la, \hbar}\to \pi_*\bbL_{s_\al\bullet \la, \hbar}\to \pi_*(\bbL_{s_\al\bullet \la, \hbar}\otimes \calO(\hbar \al))=\pi_*\bbL_{\la-\la_{\al^\vee} \al, \hbar}\] with an order 1 pole at $z_\alpha=0$.
The usual argument that the zeros of the numerator cancels the order one pole on the denominator at $z_\alpha=0$ now implies the statement. 
\end{proof}

Consequently, assuming $\lambda\in\catA^\vee_T$ lies in the complement of  the divisor $\lambda_{\alpha^\vee}=-\hbar$, then $T^{\dyn}_\alpha=T^\lambda_\alpha\circ s_\alpha^{\dyn}$ is a well-defined element of $\End(\oplus_{\lambda'=w^{dyn}\lambda}\pi_*\mathbb L_{\lambda',\hbar})$. Or equivalently, let $U\subseteq \catA_T^\vee/W\times E$ be the open subset where $\lambda_{\alpha^\vee}\neq-\hbar$  for any reflections $\alpha$, and let $(\pi\times\varpi)_*\bbL|_{\catA_T/W\times U}$  be the restriction of $(\pi\times\varpi)_*\bbL$. Then, we have the endomorphism $T^{\dyn}_\alpha$ for any simple root $\alpha$.

\begin{remark}
Recall that  Felder's elliptic $R$-matrices with dynamical parameters of $\mathfrak{sl}_n$  on $\bbC^n\otimes\bbC^n$ where $\bbC^n$ is the natural representation of $\mathfrak{sl}_n$ has the form 
\[
R(z,\lambda)=\sum_{i=1}^nE_{i,i}\otimes E_{i,i}+\sum_{i\neq j}a(z,\lambda_i-\lambda_j)E_{i,i}\otimes E_{j,j}+\sum_{i\neq j}b(z,\lambda_i-\lambda_j)E_{i,j}\otimes E_{j,i}.\]
Here \[a(z,\lambda)=\frac{\theta(z)\theta(z+\hbar)}{\theta(z-\hbar)\theta(\lambda)};\quad b(z,\lambda)=\frac{\theta(\hbar)\theta(z+\lambda)}{\theta(z-\hbar)\theta(\lambda)}.\]
Originally, the above definition \eqref{eqref:DL_dyn} is motivated by this formula. Here the coefficient in $T_\alpha^\la$ of $\delta_\alpha$ is $\frac{1}{a(z_\alpha,\lambda_{\alpha^\vee})}$ and the coefficient   of $\id$ is $\frac{b(z_\alpha,\la_{\al^\vee})}{a(z_\alpha,\la_{\al^\vee})}$. This $R$-matrix satisfies the dynamical Yang-Baxter equation 
\begin{small}
\[R^{12}(z_1-z_2, \lambda-\hbar \lambda(3))
R^{13}(z_1-z_3, \lambda)
R^{23}(z_2-z_3, \lambda-\hbar\lambda(1))=R^{23}(z_2-z_3, \lambda)
R^{13}(z_1-z_3, \lambda-\hbar \lambda(2))
R^{12}(z_1-z_2, \lambda).\]
\end{small}
Here we are following the convention as in \cite[\S~2]{FV2} which differs from that in \cite{F1} be a shift. We refer the readers to the former for the notations and the details. 
\end{remark}

\begin{prop}\label{prop:Tbraid}Assuming $E$ is an elliptic curve over $\bbC$, then the operators $T^{\dyn}_\alpha$ satisfy $(T^{\dyn}_\al)^2=1$, and the braid relations. 
\end{prop}
\begin{proof} The first identity follows from direct computation, using Fay's trisecant identity \cite[(10)]{RW}.

For the second part, we use the algebraic properties of the operators from \cite{RW}. In {\it loc. cit.},  the operator $C_\alpha$ from \S~5.4 acts on  the local elliptic classes defined in \S~4.2, while we would like to focus on the elliptic  classes defined in \S~3.4, and then normalized before Theorem 1.3, which is denoted by $\mathfrak{C}(X_w)$. For such classes, the operators that permute them  is the following
\[
C_\al':=\frac{\theta(z_\al+\la_{\al^\vee})\theta(\hbar)}{\theta(z_\al)\theta(\la_{\al^\vee}-\hbar)}\de_\al^{\dyn}+\frac{\theta(z_\al+\hbar)\theta(\la_{\al^\vee})}{\theta(z_\al)\theta(\hbar-\la_{\al^\vee})}\de_\al\de_\al^{\dyn},
\]
if we identify $L_\al$ with $z_\al$,  $h$ with $\hbar$, $h^{\langle\la, \al^\vee\rangle}$ with $\la_{\al^\vee}$, $s_\al^\gamma$ (shifting on the fixed points) with $\de_\al$, and $s^\mu_\al$ (action on $\bbX^*(T)\otimes_\bbZ\bbC\cong \mathfrak{t}^*$) with $\de_\al^{\dyn}$.

If we denote $\hbar'=-\hbar$, then we see that $T_\al^{\dyn}=-C_\al'|_{\hbar'=-\hbar}$. Therefore, the fact that elliptic classes satisfying braid relations implies that $C_\al'$ satisfy braid relations, then so do $T_\al^{\dyn}$. The proof is then finished.
\end{proof}

Let $U\subseteq \catA_T^\vee/W\times E$ be the open subset where $\lambda_{\alpha^\vee}\neq-\hbar$  for any positive root  $\alpha$. Let $\calS^{\dyn}$ be the sheaf of algebras $(\pi\times\varpi)_*\calO$ on $\catA_T/W\times U$, on which $W$ acts via $\de_w^{\dyn}$ for any $w\in W$. That is, for any open subset $V\subset \catA_T/W\times U$, we define
\[
\Gamma(V,\calS^{\dyn})=\Gamma((\pi\times \varpi)^{-1}(V), \calO)\to \Gamma(V,\calS^{\dyn}), \quad f\mapsto ((z,\la, \hbar)\mapsto f(z,(w^{dyn})^{-1}(\la), \hbar)).
\] Hence the twisted product $\calS^{\dyn}\rtimes W$ makes sense. The discussion in the paragraph containing \eqref{eq:sdyn} implies that $(\pi\times\varpi)_*(\id\times\varpi)^*(\id\times\varpi)_*\bbL|_U$ is a sheaf of modules over $\calS^{\dyn}\rtimes W$.
\begin{definition}\label{def:dyn_hecke}
On $\catA_T/W\times U$, we define the dynamical elliptic Hecke algebra $\calH^{\dyn}$ to be the sheaf of subalgebras of $\sEnd((\pi\times\varpi)_*(\id\times\varpi)^*(\id\times\varpi)_*\bbL)$ generated by $\calS^{\dyn}\rtimes W$ and the global sections $T_\al^{\dyn}$ for all simple roots $\al$.
\end{definition}
It follows from Proposition \ref{prop:Tbraid} that $\calH^{\dyn}$ is a free module over $\calS^{\dyn}\rtimes W$ of rank $|W|$. 

\begin{remark}We believe that Proposition \ref{prop:Tbraid} holds for elliptic curve over general base ring $R$. For example, it is possible to verify the braid relations of $T^{\dyn}_\alpha$ algebraically, 
 by direct computation based on Fay's trisecant identity, which holds if $R$ is an algebraically closed field \cite[Theorem 18.6]{Po05},  taking into account the following facts
\[
s_\al(\theta(z_\al))=\theta(z_{-\al})=-\theta(z_\al),~ s_\al^{\dyn}(\theta(\la_{\al^\vee}))=\theta(\la_{-\al^\vee})=-\theta(\la_{\al^\vee}).
\]
However, over complex numbers, 
reducing this to the properties of the $C_\alpha$-operators of Rimanyi and Weber makes the argument much easier. 
The fact that $C_\alpha'$ satisfies the braid relations is proven by Rimanyi and Weber topologically based on earlier work of Borisov and Libgober. This is another application of topology in the study of elliptic affine Hecke algebra, although of a different flavour, compared to that in \S~\ref{sec:repn}.

Observe also the symmetry between $z_\alpha$ and $\la_{\al^\vee}+\hbar$ in  the coefficients of $T^{\dyn}_\alpha$. This reflects the invariance of the operators $T^{\dyn}_\alpha$ under the Langlands duality \cite{RW2}.
\end{remark}

\subsection{The  elliptic  Demazure operators} We continue following notations from \S~\ref{sec:ell}.
Recall that $\calO(-\{0\})$ is the line bundle corresponding to  the divisor $\{0\}$ on $E$, and $\vartheta$  is the natural section of $\calO(-\{0\})^{-1}=\calO(\{0\})$. 
Moreover,  $\calL_\al$ is defined as $\calO(-D^\al)$ and  $\varth(\chi_\al)$ is the section of $\calL_\al^{-1}$, which is the pull-back of  $\vartheta$ along $\chi_\al:\catA\to E$. The Weyl group action on $\catA_T$ gives the identity $w(\calL_\la)=\calL_{w\la}$. Consider $\vartheta(\chi_\al)$ as a section of $\pi_*\calL_\al^{-1}$. Define 
\[\Delta_\alpha:=\frac{1}{\vartheta(\chi_\al)}-\frac{1}{\vartheta(\chi_\al)}s_\alpha.\] 
Then $\Delta_\alpha$ is a well defined element in \[H^0(\catA^c,\sHom(\calS, \pi_*\calL_\alpha^{-1}) |_{{\catA^c}}).\]
For $\alpha_i\in\Phi$ we write $\Delta_i=\Delta_{\al_i}$ for short.

\begin{lemma}\label{lem:XTglobal}
For any root $\alpha$,  the section 
$\Delta_\al$ of $ \sHom(\calS, \pi_*\calL_\alpha^{-1}) |_{{\catA^c}}$ extends to  a global  section of $\sHom(\calS, \pi_*\calL_\alpha^{-1}) $. 
\end{lemma}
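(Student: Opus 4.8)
The plan is to extend $\Delta_\alpha=\rho(X_\alpha)$ across all of $\catA/W$ by checking regularity one prime divisor of $\catA$ at a time, using the explicit formula. Unwinding the definitions, $\Delta_\alpha$ sends a local section $f$ of $\calS=\pi_*\calO_\catA$ to $(f-s_\alpha f)/\vartheta(\chi_\alpha)$, where $s_\alpha f$ denotes the Weyl action $\rho(\delta_\alpha)(f)$. Since the $W$-invariant functions are exactly the sections of $\calO_{\catA/W}$, the rule $f\mapsto (f-s_\alpha f)/\vartheta(\chi_\alpha)$ is $\calO_{\catA/W}$-linear, so it defines a section of $\sHom(\calS,\pi_*\calL_\alpha^\vee)$ over $\catA^c$, and the only issue is whether its values stay in $\pi_*\calL_\alpha^\vee$ along each $D^\beta$. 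Because $\calL_\alpha^\vee$ is a line bundle on the smooth variety $\catA$, it suffices to show that for every local section $f$ of $\calO_\catA$ the rational section $(f-s_\alpha f)/\vartheta(\chi_\alpha)$ of $\calL_\alpha^\vee$ has no pole at the generic point of any prime divisor of $\catA$; normality of $\catA$ then forces it to be a regular section, and the $\calO_{\catA/W}$-linearity assembles these into the desired global morphism $\calS\to\pi_*\calL_\alpha^\vee$ restricting to $\rho(X_\alpha)$ over $\catA^c$.

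Along a prime divisor $Y\neq D^\alpha$ there is nothing to check: the zero locus of $\vartheta(\chi_\alpha)$ is precisely $D^\alpha$, so $\vartheta(\chi_\alpha)$ is invertible near the generic point of $Y$, while $f$ and $s_\alpha f$ are regular, hence so is the quotient.

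The heart of the matter is the divisor $D^\alpha$, and it rests on two points. First, $\vartheta(\chi_\alpha)=\chi_\alpha^\ast\vartheta$ has a simple zero along $D^\alpha$: the map $\chi_\alpha\colon\catA=E\otimes_{\bbZ}(\Gamma\oplus\Lambda^\vee)\to E$ is a smooth surjective homomorphism of abelian varieties (its differential at the origin is $\mathrm{id}_{\Lie E}\otimes\langle\alpha,-\rangle$, which is surjective because $\alpha\neq0$ in $\Lambda$), and it pulls the simple zero of $\vartheta$ at $0\in E$ back to a simple zero along $D^\alpha=\chi_\alpha^{-1}(0)$. Second, $f-s_\alpha f$ vanishes along $D^\alpha$ for every $f$: the element $s_\alpha$ acts on $\catA$ through its action $\mu\mapsto\mu-\langle\alpha,\mu\rangle\alpha^\vee$ on $\Lambda^\vee$ and trivially on $\Gamma$, so for $a\in\catA$ one computes $s_\alpha(a)-a=-\chi_\alpha(a)\otimes\alpha^\vee$; in particular $s_\alpha$ fixes $D^\alpha=\ker\chi_\alpha$ pointwise, so $f-s_\alpha f$ restricts to zero on the reduced divisor $D^\alpha$ and therefore lies in the ideal sheaf $\calO(-D^\alpha)$. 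Combining the two, the would-be pole of $(f-s_\alpha f)/\vartheta(\chi_\alpha)$ along $D^\alpha$ is absorbed by the zero of the numerator, so the quotient is regular there as well.

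Putting the two cases together gives regularity in codimension one, hence regularity, of $(f-s_\alpha f)/\vartheta(\chi_\alpha)$ for every $f$, which completes the extension. I expect the only genuinely delicate step to be the identity $s_\alpha(a)-a=-\chi_\alpha(a)\otimes\alpha^\vee$ — that is, the geometric fact that the fixed divisor of $s_\alpha$ contains $D^\alpha$, which is exactly what makes the zero of $f-s_\alpha f$ cancel the simple pole of $1/\vartheta(\chi_\alpha)$; everything else is bookkeeping with the finite quotient $\pi\colon\catA\to\catA/W$ and normality. Alternatively, for $G$ of rank one this regularity is already visible in Lemma~\ref{lem:P1PullPush}, where $\Delta_\alpha$ appears as the composite $p_{1\sharp}p_2^{\sharp}$ of an honest Gysin map and an honest pullback, and the general case reduces to this along the minimal-parabolic $\PP^1$-bundle $G/B\to G/P_\alpha$ attached to $\alpha$.
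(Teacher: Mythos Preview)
Your proof is correct and follows essentially the same approach as the paper's own proof: reduce to showing that $(\sigma - s_\alpha\sigma)/\vartheta(\chi_\alpha)$ is regular along $D^\alpha$, and note that the numerator vanishes there because $s_\alpha$ fixes $D^\alpha$ pointwise. The paper dispatches this in two sentences (``the numerator vanishes along $\pi(D^\alpha)$; this is clear from definition''), whereas you spell out the supporting details --- smoothness of $\chi_\alpha$ giving a simple zero of $\vartheta(\chi_\alpha)$, the explicit formula $s_\alpha(a)-a=-\chi_\alpha(a)\otimes\alpha^\vee$ identifying the fixed locus, and the appeal to normality --- but the core idea is identical.
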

\begin{proof}
It suffices to show that for each local section $\sigma$ of $\calS$ on an open set $U$, the  element $\Delta_\al( \sigma)$ in $H^0(U,\pi_*\calL_\alpha^{-1})$ is regular along the divisor $U\cap \pi(D^\al)$. This in turn amounts to show that on $U\cap{{\catA^c}}$, the rational section of $\pi_*\calL_\alpha^{-1}$ \[\frac{\sigma- s_\alpha(\sigma)}{\vartheta(\chi_\alpha)}\] has numerator vanishing along the divisor $\pi(D^\al)$. But this is clear from definition.
\end{proof}

\begin{lemma} \label{lem:ellipXTproperty}
The operators satisfies the following relations: 
\begin{enumerate} 
\item $\de_w \Delta_\al\de_{w^{-1}}=\Delta_{w(\al)}\in\Hom(\calS,\pi_*\calL_{w\alpha}^{-1})$;
\item for any open $U\subseteq {\catA^c}$ and $\sigma\in H^0(U, \calS)$, $\Delta_\al \sigma-s_\al(\sigma)\Delta_{\al}=\Dem_\al(\sigma)\in \Hom(\calS, \pi_*\calL_{\al}^{-1}).$
\end{enumerate}
\end{lemma}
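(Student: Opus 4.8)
The plan is to obtain all three identities by a direct computation in the algebra of rational local sections of $j_*(\calS_W|_{\catA^c})$ and then to push the resulting equalities forward along $\rho$. The only ingredients needed are the defining relation $\de_w f = w(f)\de_w$ of the smash product $\calS_W = \calS\rtimes W$, the identity $w s_\al w^{-1} = s_{w(\al)}$ in $W$, and two facts about the theta function from Section~\ref{sec:grp_alg}: the $W$-equivariance $w^*\vartheta(\chi_\al)=\vartheta(\chi_{w(\al)})$ under the canonical isomorphism $w^*\calL_\al\cong\calL_{w(\al)}$, and the oddness $\vartheta(\chi_{-\al})=-\vartheta(\chi_\al)$ recorded in Lemma~\ref{lem:tensor_c1}.(2).

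For (1), I would move $\de_w$ to the right across $X_\al=\tfrac{1}{\vartheta(\chi_\al)}(1-\de_\al)$. Since $\de_w\tfrac{1}{\vartheta(\chi_\al)}=\tfrac{1}{w(\vartheta(\chi_\al))}\de_w=\tfrac{1}{\vartheta(\chi_{w(\al)})}\de_w$ and $\de_w\de_\al\de_{w^{-1}}=\de_{ws_\al w^{-1}}=\de_{w(\al)}$, one gets
\[
\de_w X_\al\de_{w^{-1}}=\frac{1}{\vartheta(\chi_{w(\al)})}\big(1-\de_{w(\al)}\big)=X_{w(\al)}.
\]
What remains is to check that this equality holds in $\sHom(\calS,\pi_*\calL_{w(\al)}^\vee)$, which amounts to reading both sides through the identification $w^*\calL_\al\cong\calL_{w(\al)}$.

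Relation (2) is a twisted Leibniz rule. For $\sigma\in H^0(U,\calS)$ I would commute $\de_\al$ past $\sigma$ using $\de_\al\sigma=s_\al(\sigma)\de_\al$ to obtain
\[
X_\al\sigma=\frac{\sigma}{\vartheta(\chi_\al)}-\frac{s_\al(\sigma)}{\vartheta(\chi_\al)}\de_\al=\frac{\sigma-s_\al(\sigma)}{\vartheta(\chi_\al)}+s_\al(\sigma)\frac{1-\de_\al}{\vartheta(\chi_\al)}=\Dem_\al(\sigma)+s_\al(\sigma)X_\al,
\]
where the first summand is precisely $\rho(X_\al)(\sigma)=\Dem_\al(\sigma)$, a genuine regular section of $\pi_*\calL_\al^\vee$ by Lemma~\ref{lem:XTglobal}. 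For (3), the key step is that $\de_\al$ sends $\tfrac{1}{\vartheta(\chi_\al)}$ to its negative via oddness: $\de_\al\tfrac{1}{\vartheta(\chi_\al)}=\tfrac{1}{\vartheta(\chi_{-\al})}\de_\al=-\tfrac{1}{\vartheta(\chi_\al)}\de_\al$, so $(1-\de_\al)\tfrac{1}{\vartheta(\chi_\al)}=\tfrac{1}{\vartheta(\chi_\al)}(1+\de_\al)$; since $\de_\al^2=1$ we have $(1+\de_\al)(1-\de_\al)=1-\de_\al^2=0$, whence
\[
X_\al^2=\frac{1}{\vartheta(\chi_\al)^2}(1+\de_\al)(1-\de_\al)=0.
\]

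None of this goes beyond the defining relations of the smash product together with the elementary behaviour of $\vartheta$, so there is no genuine obstacle. The only places calling for a little care are bookkeeping the line-bundle identifications $w^*\calL_\al\cong\calL_{w(\al)}$ in (1) so that both sides live in the same sheaf, and noting in (2) that the constant term is the regular section $\Dem_\al(\sigma)$ supplied by Lemma~\ref{lem:XTglobal} rather than an a priori merely rational one.
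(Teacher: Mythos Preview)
Your proof is correct and follows exactly the same approach as the paper: the paper's proof merely records the key ingredients---$w(\vartheta(\chi_\al))=\vartheta(\chi_{w(\al)})$ for (1), the standard twisted-group-algebra Leibniz computation for (2), and the oddness $\vartheta(\chi_\al)=-\vartheta(\chi_{-\al})$ for (3)---and you have written out precisely those calculations in full.
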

\begin{proof}
(1) follows from the fact that $w(\vartheta(\chi_\al))=\vartheta(\chi_{w(\al)})$. (2) follows from calculation similar as in the twisted formal group algebra case. (3) follows from the fact that $\vartheta(\chi_\al)=-\vartheta(\chi_{-\al})$.
\end{proof}

\subsection{Geometric meaning of the elliptic Demazure operators}
In this subsection we are back to the assumption that $R$ is a $\bbQ$-algebra. 
Let $G$ be a connected, simply connected compact Lie group, $T$ be a maximal torus.  Note that $W$ naturally acts on $\bbX^*(T)=\La$, hence acts on $\catA_T$. The quotient $\catA_T /W$ is the moduli scheme of fiber-wise topologically trivial stable principal $G^{\alg}$-bundles over $E^\vee$. Let $\pi:\catA_T \to \catA_T /W$ be the natural projection. 

\begin{lemma}\label{lem:catA}
Let $P$ be a connected closed Levi subgroup of $G$ such that $T<P$. Let $W_P$ be the Weyl group of $P$.
Then, there are isomorphisms $\catA_P\cong\catA_G^{G/P}$, and $\catA_P\cong \catA_T/{W_P}$, making following diagram  commutative. 
\[\xymatrix{\catA_T\ar[r]\ar[d]&\catA_{P}\ar[dl]\\
\catA_G
}\]
All the maps can be identified with quotient maps by the Weyl group action.
\end{lemma}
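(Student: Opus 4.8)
The plan is to establish the three facts in turn: the identification $\catA_P \cong \catA_T/W_P$, the identification $\catA_P \cong \catA_G^{G/P}$, and the commutativity of the triangle with its interpretation via Weyl-group quotients. First I would recall from Section~\ref{sec:grp_alg} (and the discussion preceding Assumption~\ref{assum:G-equiv}) that for any connected compact Lie group $H$ with maximal torus $T$ and Weyl group $W_H$ one has a canonical isomorphism $\catA_H \cong \catA_T/W_H$; this is exactly the statement ``$\catA_G \cong \catA_T/W$'' recorded in the introduction, applied to $H = P$ with maximal torus $T$ (note $T < P$ is still maximal in $P$ since $P$ is a Levi subgroup of $G$, hence has the same rank). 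That yields $\catA_P \cong \catA_T/W_P$ immediately, and the natural projection $\catA_T \to \catA_P$ is the quotient by $W_P$; composing with $\catA_T/W_P \twoheadrightarrow \catA_T/W = \catA_G$ (which makes sense since $W_P \leq W$) gives the map $\catA_P \to \catA_G$ and makes the triangle commute, with all three arrows quotient maps by the indicated Weyl group actions. This part is essentially bookkeeping.

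The substantive point is the identification $\catA_P \cong \catA_G^{G/P}$. Here I would apply Corollary~\ref{cor: ell_change_group} to the $G$-space $X = G/P$: taking $K = \{1\}$ is not quite what is needed, so instead I would use Assumption~\ref{assum:G-equiv}(\ref{prop:ell_induction}) directly. Writing $G/P = (\mathrm{pt} \times G)/P$ as a homogeneous space, the induction isomorphism $\Ell_G((\mathrm{pt}\times G)/P) \cong \phi_{\catA*}\Ell_P(\mathrm{pt}) = \phi_{\catA*}\calO_{\catA_P^{der}}$, where $\phi\colon P \hookrightarrow G$, gives $\Ell^0_G(G/P) \cong \phi_{\catA*}\calO_{\catA_P}$ as sheaves of algebras on $\catA_G$. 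Taking relative $\underline\Spec$ over $\catA_G$ and using that $\underline\Spec$ of a pushforward of a structure sheaf along a finite (indeed, as will follow, finite flat) morphism recovers the source, I get $\catA_G^{G/P} = \underline\Spec_{\catA_G}\Ell^0_G(G/P) \cong \catA_P$, and the structure map $\pi_{G/P}^G\colon \catA_G^{G/P} \to \catA_G$ is identified with $\phi_\catA\colon \catA_P \to \catA_G$. This is precisely the map appearing in the triangle, so the diagram commutes on the nose.

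The step I expect to be the main obstacle is making the $\underline\Spec$ argument rigorous, i.e.\ checking that $\phi_\catA\colon \catA_P \to \catA_G$ is an affine (better: finite flat) morphism so that $\underline\Spec_{\catA_G}(\phi_{\catA*}\calO_{\catA_P})$ really is $\catA_P$ and not some completion or derived artifact. Under the standing hypothesis that $R$ is a $\bbQ$-algebra, $\catA_T \to \catA_T/W_P$ is a finite flat quotient by a finite group (with the $\frac{1}{2}\in R$, or more simply $\QQ\subseteq R$, ensuring averaging works), and $\catA_T/W_P \to \catA_T/W$ is likewise a quotient of normal varieties, so the composite $\catA_P\to\catA_G$ is finite and, being a map between smooth $R$-schemes of the same relative dimension, flat by miracle flatness. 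Once affineness is in hand, the identification $\underline\Spec(\phi_{\catA*}\calO_{\catA_P}) = \catA_P$ over $\catA_G$ is formal, and one checks the two descriptions of $\catA_P$ (as $\catA_T/W_P$ and as $\catA_G^{G/P}$) agree by comparing them both over $\catA_G$ and using that $\catA_T \to \catA_G$ factors through each. The remaining identifications of all maps with Weyl-quotient maps then follow from the functoriality of $\catA_{(-)}$ in Assumption~\ref{assum:G-equiv}(5) and the description of $\pi\colon\catA_T\to\catA_G$ in the Notations.
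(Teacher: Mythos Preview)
Your proposal is correct and follows essentially the same route as the paper: the paper's proof is the single line ``This follows directly from Assumption~\ref{assum:G-equiv}(3),'' and your argument is exactly the unpacking of that reference---apply the induction isomorphism with $H=P$ and $X=\pt$ to get $\Ell^0_G(G/P)\cong\phi_{\catA*}\calO_{\catA_P}$, then take relative $\underline\Spec$. One small remark: your miracle-flatness step is slightly off (the target $\catA_G=\catA_T/W$ need not be smooth), but this is harmless since you only need $\phi_\catA$ to be \emph{affine}, and that follows already from finiteness of the quotient $\catA_T/W_P\to\catA_T/W$.
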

\begin{proof}
This follow directly from Assumption~\ref{assum:G-equiv} \eqref{prop:ell_induction}. 
\end{proof}

For any simple root $\alpha$, let $P_\alpha$ be the corresponding minimal parabolic group. Let $p:G/T\to G/P_\alpha$ be the natural projection, which is a $\PP^1$-bundle. It induces a morphism $p^{\sharp}:\Ell_G^0(G/P_\alpha)\to \Ell_G^0(G/T)$, and after taking spectra, we have $p_\catA:\catA_G^{G/T}\to \catA_G^{G/P_\alpha}$. It induces a morphism $p^{\sharp}: p_{\catA}^*\calO_{\catA_G^{G/P_\alpha}}\to \calO_{\catA_G^{G/T}}$, and from Section~\ref{subsec:Thom}, we have the pushforward  $p_{\sharp}:\Theta(p)\to p_{\catA}^*\calO_{\catA_G^{G/P_\alpha}}=p_{\catA}^*\Ell^0_G(G/P_\alpha)$.
Note that from Lemma \ref{lem:catA},   $\pi:\catA\to \catA/W$ coincides with  $\catA_\phi:\catA_{T\times S^1}\to \catA_{G\times S^1}$ where $\phi:T\times S^1\to G\times S^1$. We then have
\[\calS=\pi_*\calO_{\catA}=\catA_{\phi*}\calO_{\catA_{T\times S^1}}=\catA_{\phi*}\Ell_{T\times S^1}^0(\pt)=\Ell_{G\times S^1}^0(G/T)=\calO_{\catA_G^{G/T}}.\]
Here the $S^1$-factor acts trivially on $G/T$. 

\begin{prop}
The following diagram commutes:
\[\xymatrix{
\pi_{G/T*}^{G\times S^1}\Theta(p)\ar[rr]^-{p^{\sharp}\circ p_{\sharp}}\ar[d]^{\cong}& &  \calO_{\catA_{G\times S^1}^{G/T}}\ar@{=}[d] & \\
\pi_*\calL_\alpha\ar[rr]^{\Delta_\alpha}& & \calO_{\catA_{G\times S^1}^{G/T}}\ar[r]^\cong&\calS.
}\]
\end{prop}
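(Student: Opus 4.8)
The plan is to verify the claimed commutativity by a local computation on $\catA_G^{G/B}$, reducing everything to the rank-one situation where the Quillen--Weyl--Kac formula of Lemma~\ref{lem:P1PullPush} applies. First I would recall that, by the Levi lemma, $\catA_G^{G/P_\alpha}\cong\catA_{P_\alpha}\cong\catA_T/W_{P_\alpha}$, where $W_{P_\alpha}=\{1,s_\alpha\}$, and $\catA_G^{G/B}\cong\catA_T$ (since $G$ is simply connected, $\bbX^*(T)=\Lambda$ and there is no subtlety about the identification $\catA_G^{G/B}\cong\catA_T$). Thus $p_\catA:\catA_T\to\catA_T/\{1,s_\alpha\}$ is the quotient by the $s_\alpha$-action, and the $\PP^1$-bundle $p:G/T\to G/P_\alpha$ becomes, after base change to the stalk at a point, the projectivization of the rank-two bundle associated to the $SU_2$ inside $G$ corresponding to $\alpha$, exactly the situation of Lemma~\ref{lem:P1PullPush}. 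Here I would use that the relevant $T$-weights on the fiber direction are $-\alpha/2$ and $\alpha/2$ (or $-\alpha,\alpha$, after the cover $\catA_T\to\catA_T$ induced by doubling — I'd be careful about this factor of $2$ and note $1/2\in R$ is not needed here since we work over a $\bbQ$-algebra), so that the GKV projective classifying map sends the coordinate $x$ to $(-x,x)$.

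The heart of the argument is then the identification $\Theta(Tp)\cong\calL_\alpha$ together with the comparison of $p^\sharp\circ p_\sharp$ with $X_\alpha$ as maps $\calL_\alpha\to\calO_{\catA_G^{G/B}}$. For the first isomorphism: by the description of the Thom bundle in \S\ref{subsec:Quillen}, $\Theta(Tp)^\vee\cong c^*_{\PP(V)}c_{Tp}^*\Theta(\xi_1)^\vee$, and chasing through $c_{\PP(V)}(x)=(-x,x)$ and $c_{Tp}(y_1,y_2)=y_1-y_2$ gives that $\Theta(Tp)^\vee$ is $\calO(-D^\alpha)^\vee=\calL_\alpha^\vee$ with natural section $\vartheta(-x-x)=\vartheta(-\chi_\alpha)=-\vartheta(\chi_\alpha)$; absorbing the sign into the chosen isomorphism $\Theta(Tp)\xrightarrow{\sim}\calL_\alpha$ produces $\vartheta(\chi_\alpha)$ as the distinguished section. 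For the second: Lemma~\ref{lem:P1PullPush} computes $p_{1\sharp}p_2^\sharp(\sigma)=\frac{s_\alpha\sigma}{\vartheta(-\chi_\alpha)}+\frac{\sigma}{\vartheta(\chi_\alpha)}$. I would argue that $p^\sharp\circ p_\sharp$ differs from $p_{1\sharp}p_2^\sharp$ only by the adjunction bookkeeping (the section $p_2^\sharp\sigma$ versus a general section over $\PP(V)$), so that under the identification $\Theta(Tp)\cong\calL_\alpha$ the composite $p^\sharp\circ p_\sharp$ sends the section $\vartheta(\chi_\alpha)^{-1}$ of $\calL_\alpha$ (i.e.\ the generator dual to $\vartheta(\chi_\alpha)$) to $\frac{1}{\vartheta(\chi_\alpha)}-\frac{1}{\vartheta(\chi_\alpha)}\delta_\alpha$ acting on $\calS$, which is precisely $X_\alpha$ by its definition in \S\ref{subsec:filH}. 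Since both maps are $\calS$-linear and regular (by Lemma~\ref{lem:XTglobal}), it suffices to check the equality generically on $\catA^c/W$, where the computation above is literally the content of Lemma~\ref{lem:P1PullPush}.

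The main obstacle, I expect, is the careful identification of the line bundles and distinguished sections on the nose — i.e.\ tracking the twist $\Theta(Tp)\cong\calL_\alpha$ through the Cartesian square of Lemma~\ref{lem:GKV_classify} and matching the sign/normalization of $\vartheta(\chi_\alpha)$ so that the diagram commutes strictly, not just up to a unit. A secondary subtlety is confirming that $p^\sharp\circ p_\sharp$ (push then pull along the $\PP^1$-bundle $G/T\to G/P_\alpha$) really is computed by the $p_{1\sharp}p_2^\sharp$ of Lemma~\ref{lem:P1PullPush}: one should observe that $p^\sharp p_\sharp$ is, by base change and the projection formula in equivariant elliptic cohomology, the convolution operator attached to the fiber product $(G/T)\times_{G/P_\alpha}(G/T)$, which is exactly the $\PP^1\times\PP^1$ over a point appearing there. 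Once these identifications are pinned down, commutativity is immediate from the displayed formula in Lemma~\ref{lem:P1PullPush} and the definition of $X_\alpha$.
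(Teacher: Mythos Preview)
Your proposal is correct and follows essentially the same route as the paper: reduce to the rank-one situation, use base change along the transversal Cartesian square
\[
\xymatrix{\PP^1\times\PP^1\ar[r]^{p_2}\ar[d]_{p_1}&\PP^1\ar[d]^p\\ \PP^1\ar[r]_p&\pt}
\]
to identify $p^\sharp\circ p_\sharp$ with $p_{1\sharp}p_2^\sharp$, and then invoke Lemma~\ref{lem:P1PullPush}. The paper is terser---it simply writes ``without loss of generality, assume $G=\mathrm{PSU}_2$'' and cites base change---whereas you spell out the Levi identification $\catA_G^{G/P_\alpha}\cong\catA_T/\{1,s_\alpha\}$, the isomorphism $\Theta(Tp)\cong\calL_\alpha$ via the GKV classifying map, and the sign bookkeeping for $\vartheta(\chi_\alpha)$; these elaborations are welcome but do not change the argument.
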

\begin{proof}
Without loss of generality, we assume $G=\hbox{PSU}_2\subset G^{\bbC}=\hbox{PGL}_2(\bbC)$ acts on $\Aff^2$ in the usual way, and $p:\PP(\Aff^2)\to \pt$ is the natural projection. The diagram \[\xymatrix{\PP^1\times\PP^1\ar[r]^{p_2}\ar[d]_{p_1}&\PP^1\ar[d]^p\\
\PP^1\ar[r]_{p}&\pt
}\]
is a transversal Cartesian diagram. By base change, $p^{\sharp}\circ p_{\sharp}:\Theta(p)\to \calO_{\catA_G^{\PP^1}}$ is naturally equivalent to the map 
$p_{1\sharp}p_2^{\sharp}:\Theta(p)\to \calO_{\catA_G^{\PP^1}}$. The latter is calculated in Lemma~\ref{lem:P1PullPush}. Therefore, we are done.
\end{proof}

\section{The elliptic affine Hecke algebra: the convolution construction}\label{sec:iso}
In this section, we prove the isomorphism between the elliptic affine Hecke 
algebra and  equivariant elliptic cohomology of the Steinberg variety. 
Throughout this section, $G$  is a connected, 
simply-connected, compact Lie group, with maximal torus $T$.  Let $B\subseteq 
G^{\bbC}$ be the Borel subgroup. Let $\calB= G^{\bbC}/B\cong G/T$ as variety 
over $\bbC$.

\subsection{Convolution construction of the Demazure-Lusztig operator}\label{subsec:DL_Conv}
Recall that $\calN\subseteq \fg^*$ is the cone of nilpotent matrices, and $\widetilde{\calN}:= T^*\calB$ with the moment map $\widetilde{\calN}\to \calN$ being a resolution of singularity, the Springer resolution.  The group $G^{\bbC}$ and the maximal compact subgroup $G$ acts in the obvious way on $\widetilde{\calN}$. Let $S^1$ acts on $T^*\calB$ by scaling the cotangent  fibers (weight 1 on each fiber).  

Recall that in $\calB\times\calB$, the orbits of the diagonal $G^{\bbC}$-action are in natural one-to-one correspondence with elements of the Weyl group $W$. Let $Y_\alpha$ be the orbit corresponding to the simple root $\alpha\in \Phi$ where $\Phi$ is the set of simple roots. Its closure $\overline{Y_\alpha}$ is the union of $Y_\alpha$ and $\calB_\Delta$, the latter being the diagonal. In general $\overline{Y_w}$  is singular as a variety of dimension $\dim \calB+l(w)$, called Schubert variety. Nevertheless $\overline{Y_\alpha}$  is smooth.
Let $Z_w:=T^*_{Y_w}(\calB\times\calB)$ be the conormal bundle, considered as a locally-closed Lagrangian subvariety of $\widetilde{\calN}\times\widetilde{\calN}$.  The Steinberg variety $Z:=\widetilde{\calN}\times_{\calN}\widetilde{\calN}$ has  a decomposition $Z:=\sqcup_{w\in W}Z_w$ \cite[Corollary 3.3.5]{CG}. Denote $\proj_j:Z\to \calN$ the $i$th projection, and without causing confusion, its restriction to $\overline{Z_\alpha}$ still by $\proj_i$ for $i=1,2$.  Although the closer of each $Z_w$ is singular in general, $\overline{Z_\alpha}$ is smooth with $p_\al:\overline{Z_\al}\to \overline{Y_\al}$ a line bundle, and the second projection $\proj_2:\overline{Z_\alpha}\to \widetilde{\calN}$ is proper. 
Via convolution and the embedding $\overline{Z_\alpha}\inj Z$, a local section $\eta$ of  
$\Xi_{G\times S^1}(\overline{Z_\alpha})$ defines locally a section of 
$\sEnd_{\catA_{G\times S^1}}(\calE ^0_{G\times S^1}(\widetilde{\calN}))$, which  
we denoted by $\eta*_Z\_$ following the notations as in \S~\ref{subsec:ell_coh_conv}.

We follow the same notations as in Lemma~\ref{Lem:convol_zerosec}, where $\overline p_1:\overline{Y_\alpha}\to \calB$ is the first projection $\calB\times\calB\to \calB$ restricted to $\overline{Y_\alpha}$. It is a fiber bundle with each fiber isomorphic to $\PP^1$ \cite[p395]{CG}. Let $\Omega^1_{\overline p_1}$ be the relative cotangent bundle of $\overline p_1$, which is a line bundle on $\overline{Y_\al}$. Define $\calJ_\alpha:=p_\alpha^*(\Omega^1_{\overline p_1})^{-1}$ on $Z_\alpha$.
Let $k_q$ be the natural 1-dimensional $S^1$-representation. By the Lagrangian property of $\overline{Z_\alpha}$ and the action of $G\times S^1$ on $T^*\calB$, one can see that the relative normal bundle $N_{b_2\circ j}$ of the map $b_2\circ j$ is isomorphic to $p_\alpha^*(\Omega^1_{\overline p_1})\otimes k_q\cong \calJ_\alpha^{-1}\otimes k_q$. 
We apply Lemma~\ref{Lem:convol_zerosec}, to get  $\Xi_{G\times S^1}(\overline{Z_\alpha})\cong \Theta(\calJ_\alpha^{-1}\otimes k_q)\otimes \Theta(\calJ_\alpha)^{{-1}}$.
Let $f$ be a rational function on $E$ vanishing along the zero section  of order 1. 
Define 
\begin{equation}\label{eqn:Q_alpha}
J^f_\alpha:=\frac{e(\calJ_\alpha)}{e(\calJ_\alpha^{-1}\otimes k_q)}\cdot \left(1-\frac{c^f_1( k_q)}{c^f_1(\calJ_\alpha)}\right),
\end{equation} 
which by the discussion above is a rational section of $\Xi_{G\times S^1}(\overline{Z_\alpha})$, the locus on which it is regular is determined later.
 Note that although $\overline{Z_\alpha}$'s are conormal bundles to orbits in $\calB$, Proposition~2.8.6 in \cite{GKV95} does not apply, since the $ S^1$-factor in $G\times S^1$ acts non-trivially on $T^*\calB$ but trivially on $\calB$. However, its action on $\Ell_{G\times S^1}^0(\wt\calN)$ can be calculated using Lemma~\ref{Lem:convol_zerosec}, which we carry out here. 

Note that the scheme $\catA$ from Section \ref{sec:ellHecke} coincides with $\catA_{T\times S^1}$, and we have
\[\Ell_{G\times S^1}^0(\wt\calN)\cong \Ell_{G\times S^1}^0(\calB)\cong \catA_{\phi*}\Ell_{T\times S^1}^0(\pt)=\catA_{\phi*}\calO_{\catA_{T\times S^1}}\cong   \calS,\]
where $\catA_{\phi}:\catA_{T\times S^1}\to \catA_{G\times S^1}\cong \catA_{T\times S^1}/W$  is induced by the embedding $\phi:T\times S^1\to G\times S^1$.
For any  local section $\sigma(x)$ of $\calS$, from the definition of $T_\alpha^f$ in \eqref{eq:T},  we have
\begin{equation}\label{eq:JFal}
(T^f_{-\al}-1)\cdot \sigma(x)=(\frac{f(\chi_\gamma)}{f(\chi_\alpha)}-1)(s_\alpha\sigma(x)-\sigma(x))=\vartheta(\chi_\alpha)(1-\frac{f(\chi_\ga)}{f(\chi_{\al})})(\frac{s_\al \sigma(x)}{\vartheta(\chi_{-\al})}+\frac{\sigma(x)}{\vartheta(\chi_{\al})}).
\end{equation}

\begin{theorem}\label{thm:ell_DemLusz}
With notations as above,
\[J^f_\alpha*_Z\_=(T^f_{-\al}-1)\] as rational sections of $\sEnd_{\catA_{G\times S^1}}(\calE ^0_{G\times  S^1}(\widetilde\calN))\cong \sEnd_{\catA_{G\times S^1}}(\calS)$.
\end{theorem}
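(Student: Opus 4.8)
The plan is to imitate the rank-one reduction that was used above for the geometric description of the Demazure operator $\Delta_\alpha$, now carrying along the extra $S^1$-factor, and then to recognize the answer via the elementary identity \eqref{eq:JFal}.

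\textbf{Step 1: reduction to semisimple rank one.} Since $\overline{Y_\alpha}=\calB\times_{\calB_\alpha}\calB$ with $\calB_\alpha=G^{\bbC}/P_\alpha$, the conormal variety $Z_\alpha$ together with the projections $\proj_1,\proj_2$ and the line bundle $\calJ_\alpha=p_\alpha^*\Omega^1_{p_2}$ is pulled back along $\calB_\alpha$ from the corresponding data for the semisimple rank-one Levi of $P_\alpha$. By Corollary~\ref{cor:Thom_Restrict} and Corollary~\ref{cor:Xi_loc} the $\Theta$-bundles entering $\Xi_{G\times S^1}(Z_\alpha)$ and the convolution product are compatible with this base change, and under $\catA_{P_\alpha}\cong\catA_G^{\calB_\alpha}\cong\catA_T/W_{P_\alpha}$ the operator $J^f_\alpha*_Z$ is determined by its restriction to the rank-one slice. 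So I would assume $G^{\bbC}=\mathrm{PGL}_2$ acting on $\Aff^2$, $\calB=\PP^1=\PP(\Aff^2)$; then $Y_\alpha$ is the open orbit, $\overline{Y_\alpha}=\PP^1\times\PP^1$, and $Z_\alpha=T^*_{\PP^1\times\PP^1}(\PP^1\times\PP^1)$ is the zero section, which I identify with $\PP^1\times\PP^1$. Consequently $p_\alpha=\id$, $\calJ_\alpha=\Omega^1_{p_2}$, and $\proj_i=z\circ p_i$, where $p_i\colon\PP^1\times\PP^1\to\PP^1$ is the $i$-th projection and $z\colon\PP^1\hookrightarrow T^*\PP^1$ is the zero section, an $S^1$-equivariant regular embedding with normal bundle $\calJ_\alpha\otimes k_q$.

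\textbf{Step 2: unwinding the convolution.} For a local section $\sigma$ of $\calE^0_{G\times S^1}(\widetilde\calN)$ one has $J^f_\alpha*_Z\sigma=\proj_{1\sharp}\bigl(J^f_\alpha\cdot\proj_2^\sharp\sigma\bigr)$, with the choice of which projection is pushed fixed so as to reproduce the $\PP^1$-bundle push-forward of Lemma~\ref{lem:P1PullPush}. Factoring $\proj_i=z\circ p_i$ and using functoriality of pull-back and push-forward, this reads: identify $\calE^0_{G\times S^1}(\widetilde\calN)\cong\calE^0_{G\times S^1}(\PP^1)\cong\calS$ via $z^\sharp$; pull back by $p_2$; multiply by $J^f_\alpha$; push forward by $p_1$; apply $z_\sharp$, which after the $z^\sharp$-identification is multiplication by the Euler class $e(\calJ_\alpha\otimes k_q)$ of the normal bundle of $z$. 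This is the content of the footnote to \eqref{eqn:Q_alpha}: because $S^1$ acts nontrivially on the cotangent fibers one cannot apply the naive description of $\Xi_{G\times S^1}(Z_\alpha)$, and the factor $\tfrac{e(\calJ_\alpha)}{e(\calJ_\alpha\otimes k_q)}$ hard-wired into $J^f_\alpha$ is exactly what cancels the $e(\calJ_\alpha\otimes k_q)$ produced by $z_\sharp$. Hence $J^f_\alpha*_Z\sigma = e(\calJ_\alpha)\bigl(1-\tfrac{c^f_1(k_q)}{c^f_1(\calJ_\alpha)}\bigr)\,p_{1\sharp}p_2^\sharp\sigma = \vartheta(\chi_\alpha)\bigl(1-\tfrac{f(\chi_\gamma)}{f(\chi_\alpha)}\bigr)\,p_{1\sharp}p_2^\sharp\sigma$, using $e(\calJ_\alpha)=\vartheta(\chi_\alpha)$, $c^f_1(k_q)=f(\chi_\gamma)$ and $c^f_1(\calJ_\alpha)=f(\chi_\alpha)$ in the coordinates of Lemma~\ref{lem:P1PullPush}.

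\textbf{Step 3: conclusion.} Lemma~\ref{lem:P1PullPush} gives $p_{1\sharp}p_2^\sharp\sigma=\tfrac{s_\alpha\sigma(x)}{\vartheta(-\chi_\alpha)}+\tfrac{\sigma(x)}{\vartheta(\chi_\alpha)}$, and \eqref{eq:JFal} says precisely that $\vartheta(\chi_\alpha)\bigl(1-\tfrac{f(\chi_\gamma)}{f(\chi_\alpha)}\bigr)\bigl(\tfrac{s_\alpha\sigma}{\vartheta(-\chi_\alpha)}+\tfrac{\sigma}{\vartheta(\chi_\alpha)}\bigr)=(T^f_{-\alpha}-1)\sigma$. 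Comparing, $J^f_\alpha*_Z=T^f_{-\alpha}-1$ in rank one, hence in general by Step 1. The hard part will be Step 2: making the $S^1$-equivariant bookkeeping rigorous — tracking the $\Theta$-twists in $\Xi_{G\times S^1}(Z_\alpha)$, the Thom class of the zero section $z$, the identification $\calE^0_{G\times S^1}(\widetilde\calN)\cong\calS$, and all the signs — so that the prefactor the convolution produces is exactly the one built into \eqref{eqn:Q_alpha}; the $\PP^1$-bundle computation itself is already packaged in Lemma~\ref{lem:P1PullPush} and the algebraic identity in \eqref{eq:JFal}.
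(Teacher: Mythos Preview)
Your proposal is correct and follows essentially the same approach as the paper. The paper packages your ``hard part'' (Step~2) as Lemma~\ref{Lem:convol_zerosec}, quoted from \cite[Lemma~5.4.27]{CG}: the map $i^{\sharp}\circ b_{2*}\circ j_*$ identifies convolution on $Z_\alpha$ with convolution on $\calB\times\calB$, and in rank one this map is exactly multiplication by $e(\calJ_\alpha\otimes k_q)$---precisely the Euler class of the zero section you single out; from there the paper proceeds just as you do, via the projection formula (its Lemma~\ref{lem:conv}), Lemma~\ref{lem:P1PullPush}, and the identity \eqref{eq:JFal}.
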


The proof of this theorem is similar to that of Theorem~6.3 in \cite{ZZ14}. However, there are  differences due to the phenomenon of Thom bundles in  equivariant  elliptic cohomology. Therefore, we include the proof for convenience of the readers.

\subsection{Rank-1 case}

Now we assume $G$ has rank 1. We follow the same notations as in Lemma~\ref{lem:P1PullPush}.
The only  simple root is denoted by $\alpha$. In this case, $\calB\cong\PP^1$, and $Z_\alpha=\overline{Y_\alpha}=\PP^1\times\PP^1$. 
We identify $\PP^1$ with $\PP(\Aff^2)$. Let $T=S^1$ acts on $\Aff^2$ with weights $-\alpha/2$ and $\alpha/2$. For any character $\la$, the associated line bundle $\calL_\lambda$ is isomorphic to $\calO(\angl{\lambda,\alpha^\vee})$ on $\PP^1$, and $\Omega^1_{\PP^1}=\calO(-2)$.   The  line bundle $\calJ_\alpha=(\Omega^1_{p_1})^{-1}$ on $Z_\al$ is identified with $\calO_{\bbP^1}\boxtimes \calO_{\bbP^1}(2)$.
The map $i^{\sharp}\circ (b_{2}\circ j)_\sharp:\pi_{Z_\alpha*}\Xi_{G\times S^1}(Z_\al)\to \pi_{\calB\times\calB*}\Xi_{G\times S^1}(\calB\times\calB)$, according to Theorem~\ref{thm:thom}, is multiplication by $e(\calO_{\PP^1}\boxtimes\calO_{\PP^1}(-2) \otimes k_q)$, which is a section of $\Theta(\proj_2^*b_2^*N_2)^{-1}\cong \Theta(\calO_{\PP^1}\boxtimes\calO_{\PP^1}(-2) \otimes k_q)^{-1}$. Consequently,  we have 
\[J_\al^f=\frac{e(\calO_{\PP^1}\boxtimes\calO_{\PP^1}(2))}{e(\calO_{\PP^1}\boxtimes\calO_{\PP^1}(-2))\otimes k_q)}\left(1-\frac{c_1^f(k_q)}{c_1^f(\calO_{\PP^1}\boxtimes\calO_{\PP^1}(2)))}\right)\]
as a rational section of $\pi_{Z_\alpha*}\Xi_{G\times S^1}(Z_\al)$.

Let $p_i:\PP^1\times\PP^1\to \PP^1$ be the $i$-th projection for $i=1,2$.
The Euler class $e(\calJ_\alpha)=\vartheta(\chi_\alpha)$ is a global section of $ \pi_{\PP^1\times\PP^1*}\Xi_{G\times S^1}(\PP^1\times\PP^1)\cong  \pi_{\PP^1\times\PP^1*}\Theta(\calJ_\alpha)^{-1}$. Hence, we have the operator $e(\calJ_\alpha)*_\calB\in\End_{\catA_{G\times S^1}}(\calE ^0_{G\times S^1}(\PP^1))$.
\begin{lemma}\label{Lem:Quillen P1}
For any local section $\sigma$ of $\Ell^0_{G\times S^1}(\PP^1)$, we have
\[ e(\calJ_\al)*_\calB\sigma=\sigma-s_\al\sigma.\]
\end{lemma}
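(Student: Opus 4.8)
The claim is a direct computation of the convolution operator $e(\calJ_\al)*_B$ on $\Ell^0_{G\times S^1}(\PP^1)$, where $e(\calJ_\al)=\vartheta(\chi_\al)$ is viewed as a section of $\pi_{\PP^1\times\PP^1*}\Theta(\calJ_\al)^\vee$. The plan is to unwind the definition of convolution on $\PP^1\times\PP^1$ in terms of the pull-push along the two projections $p_1,p_2:\PP^1\times\PP^1\to\PP^1$, and then apply the Quillen-Weyl-Kac formula together with Lemma~\ref{lem:P1PullPush}.

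\textbf{Step 1: reduce convolution to $p_{1\sharp}\circ($multiplication$)\circ p_2^\sharp$.} By the general formalism of convolution with Lagrangian correspondences (\S~\ref{subsec:ell_coh_conv}), for the correspondence $Z_\al=\PP^1\times\PP^1$ inside $T^*(\PP^1\times\PP^1)$ realized as a zero section, the operator $\sigma*_B\sigma$ is, up to the appropriate Thom twists recorded in $\Xi_G(\PP^1\times\PP^1)\cong\pi_{\PP^1\times\PP^1*}\Theta(\calJ_\al)^\vee$, given by $p_{1\sharp}\bigl(\eta\cdot p_2^\sharp\sigma\bigr)$ where $\eta$ is the section of $\Theta(\calJ_\al)^\vee$ defining the class — here $\eta=\vartheta(\chi_\al)=e(\calJ_\al)$. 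Thus $e(\calJ_\al)*_B\sigma=p_{1\sharp}\bigl(\vartheta(\chi_\al)\cdot p_2^\sharp\sigma\bigr)$. First I would make this identification precise, matching the line bundles: $\calJ_\al=\Omega^1_{p_2}=\Omega^1_{\PP^1}\boxtimes\calO_{\PP^1}$ is exactly the relative cotangent bundle $Tp_1^\vee$ of the first projection, so the Thom/push-forward data attached to $p_1$ is $\Theta(Tp_1)=\Theta(\calJ_\al)$, and multiplication by $e(\calJ_\al)$ precisely accounts for this twist so that the composite lands in $\End(\Ell^0_{G\times S^1}(\PP^1))$ with no residual bundle.

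\textbf{Step 2: apply Quillen-Weyl-Kac.} Now $e(\calJ_\al)*_B\sigma = p_{1\sharp}\bigl(\vartheta(\chi_\al)\cdot p_2^\sharp\sigma\bigr)$, and since $\vartheta(\chi_\al)$ is exactly the Euler class $e(Tp_1)$ of the relative tangent bundle of $p_1$, multiplying by it before pushing forward cancels the $\vartheta$-denominator in the Quillen-Weyl-Kac formula of \S~\ref{subsec:Quillen}. Concretely, using the rank-$2$ (i.e. $n=2$) case of \eqref{Quill-formula} and the explicit substitutions of coordinates for $\catA_G^{\PP^1}\cong E$ made in the proof of Lemma~\ref{lem:P1PullPush} (namely $y_2\mapsto x$, $y_1\mapsto -x$, with $s_\al$ acting by $x\mapsto -x$), one gets
\[
p_{1\sharp}\bigl(e(Tp_1)\cdot p_2^\sharp\sigma\bigr)
= \vartheta(\chi_\al)\left(\frac{s_\al\sigma(x)}{\vartheta(-\chi_\al)}+\frac{\sigma(x)}{\vartheta(\chi_\al)}\right).
\]
Here I am simply multiplying the formula of Lemma~\ref{lem:P1PullPush} for $p_{1\sharp}p_2^\sharp$ by the extra factor $\vartheta(\chi_\al)=e(\calJ_\al)$, which is legitimate because that factor is inserted before the push-forward and the push-forward is $\Ell^0_{G\times S^1}(\PP^1)$-linear on the $p_1^{-1}$-pulled-back part — more carefully, one observes the extra factor is the pull-back along $c_{\PP(\Aff^2)}$ of $\vartheta(y_1-y_2)$, which is exactly the denominator appearing in \eqref{Quill-formula}, hence it cancels.

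\textbf{Step 3: simplify using oddness of $\vartheta$.} Finally, $\vartheta(-\chi_\al)=-\vartheta(\chi_\al)$ (oddness, Lemma~\ref{lem:tensor_c1}.(2)), so the right-hand side becomes $\vartheta(\chi_\al)\left(-\frac{s_\al\sigma}{\vartheta(\chi_\al)}+\frac{\sigma}{\vartheta(\chi_\al)}\right)=\sigma-s_\al\sigma$, which is the claim. \textbf{The main obstacle} I anticipate is not any of these manipulations individually but Step~1: making the bookkeeping of Thom line bundles airtight — i.e. verifying that the twist built into the definition of $\Xi_{G\times S^1}(Z_\al)$ (coming from $\Theta(\proj_2)\otimes\Theta(\proj_1^*b_1^*N_1)\otimes\Theta(\proj_2^*b_2^*N_2)^{-1}$) together with multiplication by the section $e(\calJ_\al)$ really does collapse to the bare map $p_{1\sharp}p_2^\sharp$ multiplied by $e(\calJ_\al)$, with all bundle factors matching on the nose. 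This requires carefully tracking that on $Z_\al=\PP^1\times\PP^1$ realized as a zero section the relevant normal bundles reduce to $\Omega^1_{p_2}$, and invoking the characterization of push-forward from \S~\ref{subsec:Thom} (the regular-embedding case composed with the $\PP^1$-bundle case), exactly as in Lemma~\ref{Lem:convol_zerosec}.
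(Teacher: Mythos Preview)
Your approach is essentially the paper's: unwind $e(\calJ_\al)*_B\sigma$ as $p_{1\sharp}\bigl(e(\calJ_\al)\cdot p_2^\sharp\sigma\bigr)$, pull the factor $e(\calJ_\al)$ through $p_{1\sharp}$ via the projection formula, and then apply Lemma~\ref{lem:P1PullPush} and the oddness of $\vartheta$. The computation in Steps~2--3 is correct and matches the paper line for line.

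One slip to fix in Step~1: you write that $\calJ_\al=\Omega^1_{p_2}=\Omega^1_{\PP^1}\boxtimes\calO_{\PP^1}$ ``is exactly the relative cotangent bundle $Tp_1^\vee$ of the first projection,'' and hence $\Theta(Tp_1)=\Theta(\calJ_\al)$. This is not right: $Tp_1^\vee=\Omega^1_{p_1}=\calO_{\PP^1}\boxtimes\Omega^1_{\PP^1}$, which is a different (equivariant) line bundle from $\calJ_\al$. The correct and sufficient observation---the one the paper uses---is that $\calJ_\al=p_1^*\Omega^1_{\PP^1}$ is \emph{pulled back along $p_1$}, so $e(\calJ_\al)=p_1^\sharp\bigl(e(\Omega^1_{\PP^1})\bigr)=p_1^\sharp\vartheta(\chi_\al)$ and the projection formula gives
\[
p_{1\sharp}\bigl(e(\calJ_\al)\cdot p_2^\sharp\sigma\bigr)=\vartheta(\chi_\al)\cdot p_{1\sharp}p_2^\sharp\sigma.
\]
You in fact invoke exactly this in Step~2 (``the push-forward is $\Ell^0_{G\times S^1}(\PP^1)$-linear on the $p_1^{-1}$-pulled-back part''), so the argument goes through once you drop the mistaken $Tp_1^\vee$ identification.
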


\begin{proof}
We have 
\begin{eqnarray*}
p_{1\sharp}\left(e(\calJ_\alpha)p_2^{\sharp}(\sigma)\right)
&=&
p_{1\sharp}\left(p_1^{\sharp}[e(\calO(-2))]\cdot p_2^{\sharp}(\sigma)\right)\\
&=&e(\calO(-2))\cdot p_{1\sharp}\left(p_2^{\sharp}\sigma\right)\\
&=&\vartheta(\chi_\alpha)\cdot p_{1\sharp}\left(p_2^{\sharp}\sigma\right).
\end{eqnarray*}
By Lemma~\ref{lem:P1PullPush} and Lemma~\ref{lem:tensor_c1}, $\vartheta(\chi_\alpha)\cdot p_{1\sharp}\left(p_2^{\sharp}\sigma\right)=\sigma-s_\al\sigma$. Therefore, we are done.
\end{proof}

By the projection formula and Lemma~\ref{Lem:Quillen P1}, we have the following more general formula.

\begin{lemma}\label{lem:conv}
Let $\sigma_1, \sigma_2$ be   local sections of  ${\pi_*}\calO_{E\times E}\cong \calE ^0_{G\times S^1 }(\PP^1)$. Then we have the following identity:
\[\left(e(\calJ_\alpha)\cdot p_1^{\sharp}(\sigma_1)\right)*_\calB\sigma_2=\sigma_1\cdot\left(\sigma_2-s_\alpha\sigma_2\right).\]
\end{lemma}

\begin{prop}\label{prop:Q_alpha}
Under the identification $\calS\cong {\pi_*}\calO_{E\times E}\cong \calE ^0_{G\times S^1 }(\PP^1)$, the operator $J^f_\al*_Z$ on any local section $\sigma$ of $\calE ^0_{G\times S^1 }(\PP^1)$ is given by  $T^{f}_{-\al}-1$.
\end{prop}

\begin{proof}
By definition and Lemma~\ref{Lem:convol_zerosec}, 
\[J^f_\alpha*_Z \sigma=p_{1\sharp}\left[\left(p_2^{\sharp}(\sigma)\right)\cdot e((\Omega^1_{\PP^1} \otimes k_q)\boxtimes\calO)\cdot J^f_\alpha\right]\in \calE ^0_{G\times S^1}(\bbP^1).\]
Recall that $G=\hbox{SU}_2$, by Lemma~\ref{lem:conv},
we have
\begin{eqnarray*}
p_{1\sharp}\left(\left(p_2^{\sharp}(\sigma)\right)\cdot e((\Omega^1_{\PP^1}\otimes k_q)\boxtimes\calO )\cdot J^f_\alpha\right)
&=&p_{1\sharp}\left(p_2^{\sharp}\left(\sigma\right)\cdot e(\calJ_\alpha)\cdot p_1^{\sharp}\left(1-\frac{c^f_1( k_q)}{c^f_1(\calJ_\alpha)}\right)\right)\\
&=&\left(1-\frac{f(\chi_\gamma)}{f(\chi_\alpha)}\right)\left(-s_\alpha\sigma+\sigma\right)\\
&=&\left(\frac{f(\chi_\gamma)}{f(\chi_\alpha)}-1\right)\left(s_\alpha\sigma-\sigma\right).
\end{eqnarray*}
Comparing with \eqref{eq:JFal} we know that the effect of $J^f_\al$ on any local section $\sigma$ coincides with that of $T^{f}_{-\al}-1$, so the conclusion follows.
\end{proof}
Proposition~\ref{prop:Q_alpha}
 together with Lemma~\ref{Lem:convol_zerosec} proves  Theorem~\ref{thm:ell_DemLusz}. 

\subsection{Convolution algebra of the Steinberg variety}\label{subsec:conv_Steinberg}
 The goal of this subsection is to prove the following.
\begin{theorem}\label{thm:main}
There is an isomorphism $\Upsilon:\calH\cong \pi_{Z*}\Xi_{G\times S^1}(Z)$ of sheaves of algebras on $\catA_{G\times S^1}$ extending the assignment sending the rational section $T_{-\al}^f-1$   (defined in \eqref{eq:T}) of $\calH$ to the rational section $J_\alpha^f$ of $\Xi_{G\times S^1}(Z)$ for any simple root $\alpha$ and any rational function $f$ of $E$.
\end{theorem}
  The proof goes essentially the same way as its classical counterpart in \cite{CG}. Nevertheless, the fact that $\calH$ has very few global sections makes differences. We outline the proof in the rest of this section, with emphasis on the parts that are different. For simplicity, in the remaining part of this section, we will omit the subscript $G\times S^1$ in $\Xi_{G\times S^1}(Z)$.

Recall that for any $w\in W$, $Y_w\subseteq \calB\times\calB$ be the $G^{\bbC}$-orbit corresponding to $w\in W$, and $Z_w=T^*_{Y_w}(\calB\times\calB)$. Let  $Z_{\leq w}=\sqcup_{v\le w}Z_v$ and $Z_{< w}=\sqcup_{v< w}Z_v$, and $i_w$, $i_{\leq w}$ and $i_{<w}$ be the natural embeddings into $Z$. 
Using the long exact sequence of cohomology and the vanishing of odd degree cohomology, we see that the induced map  $i_{\leq w\sharp}$ on cohomology is injective. Similarly, we have the short exact sequence $0\to \pi_{Z_{< w}*}\Xi(Z_{<w})\to\pi_{Z_{\leq w}*}\Xi(Z_{\leq w})\to \pi_{Z_w*}\Xi(Z_{w})\to 0$. Note here that $\Xi(Z_{w})$ is a line bundle on $\catA_{G\times S^1}^{Z_w}$. 

\begin{lemma}\label{lem:Xi_free}
The sheaf $\pi_{Z*}\Xi(Z)$ is locally free, and the action of $\pi_{Z*}\Xi(Z)$ on $\calE^0_{G\times S^1}(T^*\calB)$ is faithful, i.e., the morphism $\pi_{Z*}\Xi(Z)\to \sEnd_{\catA_{G\times S^1}}(\calE^0_{G\times S^1}(T^*\calB))$ is injective.
\end{lemma}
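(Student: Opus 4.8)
The plan is to prove the two assertions separately: local freeness by d\'evissage along the Bruhat filtration, and faithfulness by localizing to a general point and invoking Theorem~\ref{thm:RR}.

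\emph{Local freeness.} Fix a total order $e = w_1 < w_2 < \cdots < w_N$ ($N = |W|$) refining the Bruhat order and set $Z^{(k)} = \bigsqcup_{j\le k} Z_{w_j}$, a closed $(G\times S^1)$-stable subvariety of $Z$. As in the paragraph preceding the statement, the vanishing of odd-degree equivariant elliptic cohomology yields short exact sequences
\[
0 \to \pi_{Z^{(k-1)}*}\Xi(Z^{(k-1)}) \to \pi_{Z^{(k)}*}\Xi(Z^{(k)}) \to \pi_{Z_{w_k}*}\Xi(Z_{w_k}) \to 0 .
\]
Since an extension of a locally free sheaf by a locally free sheaf is locally free (the quotient being locally projective, the sequence splits locally), it suffices to show each graded piece $\pi_{Z_w*}\Xi(Z_w)$ is locally free. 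Now $Z_w = T^*_{\calY_w}(\calB\times\calB)$ is a $(G\times S^1)$-vector bundle over $\calY_w$, and the first projection $\calY_w \to \calB$ is a $(G\times S^1)$-equivariant affine bundle, so $Z_w$ is $(G\times S^1)$-equivariantly homotopy equivalent to $\calB \cong G/T$. By homotopy invariance and change of groups (Assumption~\ref{assum:G-equiv}(3)), $\catA_{G\times S^1}^{Z_w} \cong \catA_{G\times S^1}^{G/T} \cong \catA_{T\times S^1}$, with $\pi_{Z_w}$ identified with the finite flat quotient $\pi\times\id\colon\catA_{T\times S^1} \to \catA_{G\times S^1}$ of degree $|W|$. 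As $\Xi(Z_w)$ is a line bundle on $\catA_{T\times S^1}$, its pushforward $\pi_{Z_w*}\Xi(Z_w)$ is locally free of rank $|W|$, and running the d\'evissage shows $\pi_{Z*}\Xi(Z)$ is locally free of rank $|W|^2$.

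\emph{Faithfulness.} Write $\phi\colon \pi_{Z*}\Xi(Z) \to \sEnd_{\catA_{G\times S^1}}(\calE^0_{G\times S^1}(\calB))$ for the convolution action; recall $\calE^0_{G\times S^1}(\calB) \cong \pi_*\calO_{\catA_{T\times S^1}}$. Since $\pi_{Z*}\Xi(Z)$ is locally free over the reduced, irreducible scheme $\catA_{G\times S^1}$, the kernel of $\phi$ is torsion-free, hence vanishes as soon as $\phi$ is injective at the stalk at some $R$-point $a$ lying on no $\catA_{T'}$ with $T' \subsetneq T\times S^1$ — such an $a$ has $T(a) = T\times S^1$. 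For such $a$ one computes $\widetilde{\calN}^{T\times S^1} = \{(wB,0) : w\in W\} \cong W$ (the maximal torus has isolated fixed points on $\calB$, and $S^1$ scales the cotangent fibers), whence $Z^{T\times S^1} = W\times W$, since the Springer map collapses all of $W\times W$ to $0\in\calN$. By Theorem~\ref{thm:RR}, together with its module-level refinement that the localization $\rho_a$ intertwines the convolution action on $\calE^0_{G\times S^1}(\widetilde{\calN})$ with the one on $H_*(\calB^{T(a)})$, the fiber $\phi\otimes R_a$ becomes the convolution action of $H_*(W\times W) \cong \mathrm{Mat}_{|W|}(R)$ on $H_*(W) \cong R^{|W|}$, which is an isomorphism; hence $\phi$ is injective.

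\emph{Main obstacle.} The delicate point is the module-level refinement of Theorem~\ref{thm:RR} used above: one must check that the bivariant localization $\rho_a$ of Proposition~\ref{prop:rho_loc} is compatible not merely with composition of Lagrangian correspondences but also with the representation on $\calE^0_{G\times S^1}(\widetilde{\calN})$, which amounts to unwinding the Thom twists in \eqref{eqn:Thom_Loc} for the correspondence $Z$ acting on $\widetilde{\calN}$, in parallel with the proof of Proposition~\ref{prop:rho_loc}. A route to faithfulness that sidesteps this: by Theorem~\ref{thm:ell_DemLusz} the image of $\phi$ contains every $T^f_{-\alpha} - 1$, while the bottom step $\pi_{Z_e*}\Xi(Z_e)\cong\calS$ of the filtration acts by multiplication operators (convolution by the conormal bundle of the diagonal being multiplication); over the dense open on which $W$ acts freely and the $T^f_\alpha$ are invertible these generate the twisted group algebra $\calS\rtimes W$, which embeds into $\sEnd(\calS)$ as for any \'etale $W$-torsor and has rank $|W|^2 = \rank \pi_{Z*}\Xi(Z)$. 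Thus the image of $\phi$ has generic rank $|W|^2$, forcing $\ker\phi = 0$ by torsion-freeness.
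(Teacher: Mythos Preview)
Your proof is correct and takes essentially the approach the paper defers to \cite[Lemma~4.6.1]{GKV95}: d\'evissage along the Bruhat filtration for local freeness (the same mechanism as in the proof of Theorem~\ref{thm:ellHeckFree}), and reduction to a generic fiber for faithfulness. Your second route to faithfulness---via Theorem~\ref{thm:ell_DemLusz} and the observation that the image of $\phi$ generically contains the twisted group algebra $\calS\rtimes W$ of rank $|W|^2$---is a pleasant self-contained alternative that sidesteps the module-level compatibility of $\rho_a$; since the rank-one computation establishing Theorem~\ref{thm:ell_DemLusz} (Proposition~\ref{prop:Q_alpha}) precedes this lemma and does not invoke it, there is no circularity.
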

\begin{proof}
It can be proved exactly the same way as \cite[Lemma~4.6.1]{GKV95}.
\end{proof}

By definition of $\calH$, the natural morphism $\calH\to \sEnd_{\catA/W}(\calS)$ is injective. Therefore, we get the following consequence of Lemma~\ref{lem:Xi_free}.
\begin{corollary}
The assignment in Theorem~\ref{thm:main} extends to a well-defined morphism $\Upsilon:\calH\to \pi_{Z*}\Xi(Z)$ of coherent sheaves of algebras, which is injective.
\end{corollary}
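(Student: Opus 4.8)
The plan is to realise both $\calH$ and $\pi_{Z*}\Xi(Z)$ as subsheaves of a single sheaf of endomorphism algebras and to compare them there. By homotopy invariance the zero section $\calB\hookrightarrow\widetilde\calN$ induces $\calE^0_{G\times S^1}(\widetilde\calN)\cong\calE^0_{G\times S^1}(\calB)$, and by Corollary~\ref{cor: ell_change_group} for $T\times S^1\hookrightarrow G\times S^1$ this is canonically isomorphic, as a sheaf of algebras on $\catA_{G\times S^1}=\catA/W$, to $\calS$. Under this identification the Steinberg convolution of \S\ref{subsec:ell_coh_conv} gives a morphism of sheaves of algebras $\rho_Z\colon\pi_{Z*}\Xi(Z)\to\sEnd_{\catA/W}(\calS)$, injective by Lemma~\ref{lem:Xi_free}, while the action $\rho$ of $\calS_W$ on $\calS$ restricts to a morphism $\rho_\calH\colon\calH\to\sEnd_{\catA/W}(\calS)$, injective by the very definition of $\calH$. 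I will moreover use that $\calS$ lies inside the image of $\rho_Z$ as the multiplication operators: these are the convolutions along the diagonal component $Z_e=T^*_{\calB_\Delta}(\calB\times\calB)\cong\widetilde\calN$ of $Z$, for which $\Xi(Z_e)\cong\calO$, so that $\pi_{Z_e*}\Xi(Z_e)\cong\calS$ acts on $\calS$ by multiplication; in particular the unit of $\pi_{Z*}\Xi(Z)$ is carried to $\id$.

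The key step will be to prove the inclusion $\rho_\calH(\calH)\subseteq\rho_Z(\pi_{Z*}\Xi(Z))$ of subsheaves of $\sEnd_{\catA/W}(\calS)$. Granting it, I set $\Upsilon:=\rho_Z^{-1}\circ\rho_\calH$; this is a morphism of sheaves of rings, injective because $\rho_\calH$ is, and an algebra morphism because $\rho_\calH$ and $\rho_Z$ are and $\rho_Z$ is an isomorphism onto its image. It restricts to the assignment of Theorem~\ref{thm:main}: by Theorem~\ref{thm:ell_DemLusz} one has $\rho_Z(J^f_\alpha)=J^f_\alpha*_Z=T^f_{-\alpha}-1=\rho_\calH(J^{f,E}_\alpha)$, so $\Upsilon(J^{f,E}_\alpha)=J^f_\alpha$. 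For the inclusion itself, which is local on $\catA/W$, I would work over the $W$-invariant opens $V=U/W$ of Lemma~\ref{lem:basis}, with the auxiliary functions $f_{\alpha,p}$ chosen (shrinking $V$ around a given point if necessary) so that over $V$ both the Demazure--Lusztig operator $T^{f_{\alpha,p}}_\alpha$ and the geometric section $J^{f_{\alpha,p}}_\alpha$ of $\pi_{Z*}\Xi(Z)$ are regular. By Lemma~\ref{lem:basis}(3) together with the Bruhat filtration of \S\ref{subsec:filH}, $\calH|_V$ is the $\calS|_V$-subalgebra of $\sEnd(\calS)|_V$ generated by these $T^{f_{\alpha,p}}_\alpha$; since $\calS|_V$ already lies in $\rho_Z(\pi_{Z*}\Xi(Z))|_V$ by the first paragraph, it suffices to place each $T^f_\alpha$ there. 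But the definition of $T^f_\alpha$ together with \eqref{eq:JFal} gives $T^f_\alpha-1=-(T^f_{-\alpha}-1)$ as operators on $\calS$, whence, by Theorem~\ref{thm:ell_DemLusz}, $\rho_\calH(T^f_\alpha)=\id-\rho_Z(J^f_\alpha)$, which lies in $\rho_Z(\pi_{Z*}\Xi(Z))|_V$ since $J^f_\alpha$ is a section of $\pi_{Z*}\Xi(Z)$ over $V$ via $Z_\alpha\hookrightarrow Z$ and the Bruhat-order short exact sequences $0\to\pi_{Z_{<w}*}\Xi(Z_{<w})\to\pi_{Z_{\le w}*}\Xi(Z_{\le w})\to\pi_{Z_w*}\Xi(Z_w)\to 0$.

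I expect the only real obstacle to be the regularity bookkeeping hidden in that last point: $J^f_\alpha$ is naturally a section not of $\calO$ but of the line bundle $\Xi(Z_\alpha)\cong\Theta(\proj_1^*T^*\PP^1)^\vee\otimes\Theta(\calJ_\alpha)^\vee$, and one must check that, once lifted into $\pi_{Z*}\Xi(Z)$ over $V$, it acquires no pole along the divisors $D^\alpha$ or $D^{\alpha,\gamma}$. The relevant cancellations are exactly those seen in the proofs of Lemma~\ref{Lem:Quillen P1} and Proposition~\ref{prop:Q_alpha} --- the zero of $e(\calJ_\alpha)=\vartheta(\chi_\alpha)$ absorbs the pole of $1/c^f_1(\calJ_\alpha)$ along $D^\alpha$, and the zero of $1-c^f_1(k_q)/c^f_1(\calJ_\alpha)$ absorbs the pole of $1/e(\calJ_\alpha\otimes k_q)$ along $D^{\alpha,\gamma}$ --- and this interplay of Thom and Euler classes is precisely the feature of equivariant elliptic cohomology absent from the $K$-theoretic argument of \cite{CG}, which is why the present proof has to diverge from it.
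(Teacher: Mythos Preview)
Your approach is correct and is essentially the same as the paper's, though you have spelled out details the paper leaves implicit. The paper's ``proof'' of this corollary is literally the single sentence preceding it: both $\calH$ and $\pi_{Z*}\Xi(Z)$ embed faithfully into $\sEnd_{\catA/W}(\calS)$ (the first by definition, the second by Lemma~\ref{lem:Xi_free}), and since Theorem~\ref{thm:ell_DemLusz} shows the generators act identically, the map is well-defined and injective. You have made explicit the step the paper elides, namely the local containment $\rho_\calH(\calH)\subseteq\rho_Z(\pi_{Z*}\Xi(Z))$ via the $\calS$-module generation of Lemma~\ref{lem:basis}; this is exactly the right way to fill the gap.

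Your final paragraph correctly isolates the one genuine subtlety --- that $J^f_\alpha$, a priori only rational, must be a \emph{regular} section of $\pi_{Z*}\Xi(Z)$ over the chosen open $U$. The paper does not address this at the point of the Corollary, but handles it in the subsequent proof of Theorem~\ref{thm:main}, where it asserts (and uses) that $\frac{e(\calJ_\alpha)}{e(\calJ_\alpha\otimes k_q)}\bigl(1-\frac{c^{f_{\alpha,p}}_1(k_q)}{c^{f_{\alpha,p}}_1(\calJ_\alpha)}\bigr)$ is an invertible section of $\Xi(Z_\alpha)$ over $U$. Your description of the pole cancellation (the zero of $e(\calJ_\alpha)$ along $D^\alpha$ and the zero of the bracketed factor along $D^{\alpha,\gamma}$) is the right mechanism. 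So the hedging in your last paragraph is unnecessary: the obstacle you anticipate is real but is overcome exactly as you indicate.
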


Now we finish the proof of Theorem~\ref{thm:main}.
\begin{proof}[Proof of Theorem~\ref{thm:main}]
Clearly $\Upsilon$ is filtration preserving, with the filtration of the domain given by \S~\ref{subsec:filH}  and that of the codomain defined as above. Hence, $\Upsilon$ induces a morphism $\gr_w\Upsilon$ on associated graded piece $\calH_w\to \pi_{Z_w*}\Xi(Z_w)$ for any $w\in W$, which is a morphism of rank 1 locally free sheaves of module over $\calS$.

Let $I_w=(i_1,...,i_r)$ be a reduced sequence of $w$. For any $p\in \catA$, we fix a collection of rational functions $\{f_{\al_{i_j}, p}\mid j=1,...,r\}$, and an open neighbourhood  $U$ of $p$ as in \S~\ref{subsec:filH}. 
For each simple root $\al$, we say a local section $s$ of $\Xi(Z_\alpha)$  is invertible if there is a local section $s'$ of $\Xi(Z_\alpha)^{-1}$ on the same open set such that $s\otimes s'$ is $1\in \calS$.  In particular,  on $U$ the element $\frac{e(\calJ_\alpha)}{e(\calJ_\alpha^{-1}\otimes k_q)}\cdot \left(1-\frac{c^{f_{\al_i,p}}_1( k_q)}{c^{f_{\al_i,p}}_1(\calJ_\alpha)}\right)$ is invertible as a section of $\Xi(Z_{\alpha_i})$.

Following the proof of \cite[Proposition 7.6.12.(2)]{CG}, we define $\proj_{j,j+1}:(T^*\calB)^{r+1}\to T^*(\calB\times\calB)\cong T^*(\calB)\times T^*(\calB)$ to be the projection to the $(j,j+1)$ factor, and define $\calZ_{i_j}=\proj_{j,j+1}^{-1}(T^*_{Y_{s_{i_j}}}(\calB\times\calB))$, for $j=1,\dots,r-1$.  The projection
\[\calZ_{{i_1}}\times_{\tilde{\calN}} \calZ_{{i_2}}\times_{\tilde{\calN}}\cdots\times_{\tilde{\calN}}\calZ_{{i_r}}\to Z_w\] restricts to an isomorphism \[\calZ_{i_1}\cap\calZ_{i_2}\cap\cdots\cap\calZ_{i_r}\cong T^*_{\calY_w}(\calB\times\calB).\]
Consequently, the section $J_{I_w}:=J^{f_{\alpha_{i_1},p}}_{\alpha_{i_1}}\cdots J^{f_{\alpha_{i_r},p}}_{\alpha_{i_r}}$ is invertible in \[\Xi(\calZ_{i_1}\cap\calZ_{i_2}\cap\cdots\cap\calZ_{i_r})|_U\cong\Xi(Z_w)|_U.\]
By the same argument as \cite[Theorem~7.6.12.]{CG}, when restricted to $Z_w$, the convolution of $J_{I_w}$ is equal to $\gr_w\Upsilon (T_{I_w})$ (where $T_{I_w}$ is as in Lemma~\ref{lem:basis}). Hence, the restriction of $\Upsilon$ is a morphism $\calH_w|_U\to \Xi(Z_w)|_U$ that sends the generator $T_{I_w}$ of the domain to the generator $J_{I_w}$ of the codomain. So  $\gr_w\Upsilon$ is an isomorphism of  sheaves of modules over $\calS$. By the same argument as \cite[Proposition~2.3.20.(ii)]{CG}, the fact that $\gr\Upsilon$ is an isomorphism implies that $\Upsilon$ is an isomorphism.
\end{proof}
\subsection{Remarks on the integral form}\label{subsec:existence}
The algebra $\calH$ is well-defined for any elliptic curve on an integral domain $R$. All the properties in \S~\ref{sec:ellHecke} hold in this generality. The existence of an  elliptic cohomological construction  of $\calH$ in Theorem~\ref{thm:main} is proven under the Assumption \ref{assum:G-equiv}, which is known to be satisfied in many cases (see e.g., \S~\ref{subsec:conv_spectra} for a detailed discussion) and expected to hold in a greater generality. 
Nevertheless,  $\pi^*\calH$ as a sheaf of algebras on $\catA_T\times E$ always admits an elliptic cohomological construction without Assumption \ref{assum:G-equiv}.

By the discussion in \S~\ref{subsec:conv_spectra}, there exists a coherent sheaf $\Xi_T(Z)^{der}$ over $\catA_T^{der}$, and a well-defined map $\Xi_T(Z)^{der}\otimes \Xi_T(Z)^{der}\to \Xi_T(Z)^{der}$, the $\pi_0$ of which gives $\pi^*\calH$ and its multiplication, where $\pi:\catA_{T}\times E\to \catA$ is the projection. The associativity of the map of sheaves of spectra, however, is obvious only up to homotopy.
It is unknown to us whether this map can be lifted to an $A_\infty$-ring structure. 

For any central character of $\calH$, that is, a closed point $p:\Spec k\to \catA$, and any lifting $p':\Spec k\to \catA_T$, by the flatness of $\calH$ as coherent sheaf, the algebras $\calH\otimes_pk$  and $(\pi^*\calH)\otimes_{p'}k$ are isomorphic, and hence have equivalent categories of representations. 
In the next section we study the representation theory of the former using that of the latter.

From next section on, we only consider the algebra $\calH$ over a complex elliptic curve. We consider this as an illustration that the representation theory of the elliptic Hecke algebra is interesting even over the complex numbers. It is equally interesting, although beyond the scope of this paper, to consider the representation theory over a field of positive characteristic, a question well-posed due to the discussion above. 

\section{Geometric construction of representations at non-torsion points}\label{sec:repn}
In this section, we assume $E$ is an elliptic curve over $\bbC$ (although all the results in this section are true if more generally the base  field is characteristic zero). 
Again, $G$ is a connected, simply-connected, compact Lie group. The Borel-Moore homology we consider are with complex coefficients. 

As $\calH$ is a coherent sheaf of algebras on the Noetherian scheme $\catA/W$, any irreducible representation of $\calH$ is supported on a closed point of $\catA/W$. We generalize Kazhdan--Lusztig's classification of irreducible representations of affine Hecke algebra \cite{KL} to the case of elliptic affine Hecke algebra at closed points of $\catA/W$ whose $\gamma$-coordinate is non-torsion.

\begin{definition}
A representation of $\calH$ is a coherent sheaf on $\catA/W$ which  is a sheaf of modules over the coherent sheaf of algebras $\calH$.\end{definition}
In particular, the structure of coherent sheaf on $\catA/W$ is required to coincide with the action of the subsheaf $\calO_{\catA/W}\inj \calH$.
\subsection{Reminder on the decomposition theorem}\label{subsec:DecompThm}
We  recall briefly some basic facts about representations of convolution algebras, following \cite[\S~8.6]{CG}.

Let $f:M\to N$ be a projective morphism of quasi-projective complex varieties, with $M$  smooth. Let $D^b_c(N)$ be the derived category of constructible sheaves on $N$. Let $X=M\times_NM$. 
Then $H_*(X;\bbC)$ is endowed with a convolution product, and there exists an isomorphism $H_*(X;\bbC)\cong\Ext^*_{D^b_c(N)}(f_*\bbC_M, f_*\bbC_M)$ of  associative algebras. Applying the decomposition theorem \cite{BBD} to $f_*\bbC_M$, we get \[f_*\bbC_M\cong \bigoplus_{\phi,k}L_{\phi,k}\otimes IC_\phi[k],\] where $k$ runs through $\bbZ$, and $\phi$ runs through the set of isomorphism classes of simple perverse sheaves. 
Consequently, $\{L_{\phi}:=\oplus_kL_{\phi,k}\mid L_{\phi}\neq0\}$  is a complete set of pair-wise non-isomorphic simple modules over $H_*(X;\bbC)$ (see, e.g., \cite[Theorem~8.6.12]{CG}). 

If furthermore $f$ is a $G$-equivariant map between $G$-varieties, such that $N$ has only finitely many orbits, then we can label the set of simple perverse sheaves $\phi$ in the decomposition above  by pairs $(\bbO,\chi)$, where $\bbO$ is an orbit in $N$, and $\chi$ is an equivariant local system on $\bbO$ (see, e.g., \cite[Theorem~8.4.12]{CG}). Choosing a base point $x_\bbO$ for each orbit $\bbO$, and writing its isotropy subgroup as $G_{x_\bbO}$, then the local system $\chi$ is identified with an irreducible representation of the component group $G(x_\bbO)/G(x_\bbO)^0$ (see, e.g., \cite[8.4.13.(ii)]{CG}).

In the equivariant set-up, for each $\bbO$, let $x\in\bbO$ and let $H_*(M_x)$ be $H^*(i_x^!f_*\bbC_M; \bbC)$ , where $i_x:\{x\}\inj N$ is the embedding. We define the {\it standard modules} over $H^*(X;\bbC)$ associated to $\bbO$ as  $H_*(M_x)$
with the $H^*(X)$-module structure.
For each $\phi=(\bbO_\phi,\chi_\phi)$, let $H_*(M_x)_\phi$ be the $G(x_\bbO)/G(x_\bbO)^0$-isotropical component in $H_*(M_x)$ that transforms as the irreducible $G(x_\bbO)/G(x_\bbO)^0$-module $\chi_\phi$. Note that $H_*(M_x)_\phi$ is a submodule of $H_*(M_x)$ over the convolution algebra. We have by decomposition theorem an isomorphism
\[H_*(M_x)\cong \bigoplus_{\phi}L_\phi\otimes H^*(i^!_xIC_\phi),\]  which does not depend on the choice of $x_\bbO$.
Therefore, the top of standard module $H_*(M_x)_\phi$  is  irreducible, which is isomorphic to the irreducible module $L_{\phi,0}$.
For any two parameters $\psi=(\bbO_\psi,\chi_\psi)$ and $\phi=(\bbO_\phi,\chi_\phi)$, the multiplicity of the simple object $L_{\phi,0}$ in the standard module $H_*(M_x)_\psi$ is given by 
\[[H_*(M_x)_\psi:L_\phi]=\sum_k\dim H^k(i^!_x IC_\phi)_\psi\] (see, e.g., \cite[Theorem~8.6.23]{CG}). Here the simple perverse sheaf $IC_\phi$ is the intersection cohomology sheaf  associated to the local system $\chi_\phi$ on $\bbO_\phi$. 
We follow the standard notation $[M:S]$ for the multiplicity of an irreducible object $S$ in an object $M$ in any abelian category. 

\subsection{Pontryagin duality}\label{subsec:Pontryagin}
We recall some well-known facts about the moduli space $\catA_G$. For a simply-connected compact Lie group $G$, for any pair of commuting elements $s_1,s_2\in G$, there exists a maximal torus $T<G$ and some $g\in G$ such that 
$g\cdot s_i\cdot g^{-1}\in T$, $i=1,2$.  For  a fixed maximal torus $T<G$, for any two pairs of elements  $(g_1,g_2),(h_1,h_2)\in T^{2}$, if there is  $g\in G$ such that $g\cdot g_1\cdot g^{-1}=h_1$ and $g\cdot g_2\cdot g^{-1}=h_2$, then this $g$ can be chosen from the normalizer of $T$ in $G$.
It is well-known that a closed point in $\catA_G$ corresponds to an ordered  pair of semi-simple elements in $T$, up to simultaneous conjugation. Equivalently, we have the diffeomorphism $\catA_G\cong (T\times T)/W$. In particular, when $G=T$, this yields $\catA_T\cong T^2$.

This isomorphism can be made more explicit, after fixing an isomorphism $E\cong S^1\times S^1$. By definition of $\catA_T$, any closed point $a\in \catA_T$ defines a homomorphism of abelian groups $a:\bbX^*(T)\to E$,  hence defines an element in $\Hom(\bbX^*(T),S^1\times S^1)\cong T^2$, denoted by $DD(a)$. This defines a map \[DD:\catA_T\to T^2.\]

Recall that $T(a)=\cap_{a\in \catA_{T'}}T'$ for  any closed point $a\in \catA_T$. 
\begin{lemma} \label{lem:DDclosedgp}
Let $a\in \catA_T$ be any closed point, and let $DD(a)=(s_1,s_2)\in  T^{2}$. Then $T(a)\subseteq T$ is the minimal closed subgroup of $T$ containing $s_1$ and $s_2$.
\end{lemma}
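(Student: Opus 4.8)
The plan is to unwind the definitions of $T(a)$ and of the map $DD$, and then observe that both $T(a)$ and the closed subgroup generated by $s_1,s_2$ can be characterized by the same universal property: namely, being the smallest closed subgroup $T'<T$ through which the homomorphism $a:\bbX^*(T)\to E$ "factors" in an appropriate sense. First I would fix the identification $E\cong S^1\times S^1$ once and for all, so that a closed point $a\in\catA_T$ is a homomorphism of abelian groups $a:\bbX^*(T)\to S^1\times S^1$, which under $\Hom(\bbX^*(T),S^1\times S^1)\cong T\times T$ gives the pair $DD(a)=(s_1,s_2)$. Concretely, $s_i\in T$ is the homomorphism $\bbX^*(T)\to S^1$ obtained by post-composing $a$ with the $i$-th projection $E=S^1\times S^1\to S^1$. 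Thus evaluating a character $\lambda\in\bbX^*(T)$ on $s_i$ recovers the $i$-th coordinate of $a(\lambda)\in E$.

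Next I would translate the condition "$a\in\catA_{T'}$" into lattice language. A closed subgroup $T'<T$ corresponds, via Pontrjagin duality, to a quotient $\bbX^*(T)\twoheadrightarrow\bbX^*(T')$, i.e. to a sublattice-with-torsion $K_{T'}=\ker(\bbX^*(T)\to\bbX^*(T'))$; and the inclusion $\catA_{T'}\hookrightarrow\catA_T$ identifies $\catA_{T'}$ with those homomorphisms $\bbX^*(T)\to E$ that kill $K_{T'}$ (since $\catA_{T'}=\Hom(\bbX^*(T'),E)$ and $\bbX^*(T')=\bbX^*(T)/K_{T'}$). Hence $a\in\catA_{T'}$ if and only if $a(K_{T'})=0$ in $E$, equivalently if and only if $s_1|_{K_{T'}}=s_2|_{K_{T'}}=0$, i.e. $s_1,s_2$ both lie in $T'$ (here I use that an element $t\in T$ lies in $T'$ iff the corresponding homomorphism $\bbX^*(T)\to S^1$ kills $K_{T'}$). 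Therefore the set of closed subgroups $T'$ with $a\in\catA_{T'}$ is exactly the set of closed subgroups $T'<T$ containing both $s_1$ and $s_2$. Taking the intersection, $T(a)=\cap_{a\in\catA_{T'}}T'$ is the intersection of all closed subgroups containing $s_1$ and $s_2$, which is by definition the closed subgroup generated by $s_1$ and $s_2$.

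The one genuine point to be careful about — and the main (modest) obstacle — is the duality dictionary between closed subgroups of the compact torus $T$ and "cofinite" subgroups $K\subseteq\bbX^*(T)$, together with the claim that $\cap\catA_{T'}$ as a scheme coincides with $\catA_{T(a)}$ for the group-theoretic $T(a)$; this requires knowing that an arbitrary closed subgroup of $(S^1)^n$ is a finite extension of a subtorus and that $\catA_{(-)}$ is compatible with such intersections, which follows from the fact that $E$ is divisible and $\bbX^*$ is exact on the relevant short exact sequences of finitely generated abelian groups. I would phrase this cleanly by noting that $T(a)$, being an intersection of closed subgroups, is the closed subgroup dual to the subgroup $\sum_{a(K)=0}K\subseteq\bbX^*(T)$ generated by all $K$ with $a(K)=0$; and $s_1,s_2$ generate the closed subgroup dual to $\{\lambda\in\bbX^*(T):s_1(\lambda)=s_2(\lambda)=0\}=\{\lambda: a(\lambda)=0\}=\ker a$. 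Since $\sum_{a(K)=0}K=\ker a$ (the largest such $K$ is $\ker a$ itself), the two subgroups of $\bbX^*(T)$ agree, hence so do the two closed subgroups of $T$, completing the proof.
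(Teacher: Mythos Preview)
Your proof is correct and follows essentially the same approach as the paper: both arguments use the order-reversing correspondence (Pontrjagin duality) between closed subgroups $T'<T$ and kernels $K_{T'}\subseteq\bbX^*(T)$, observe that $a\in\catA_{T'}$ iff $a:\bbX^*(T)\to E$ factors through $\bbX^*(T')$ iff $(s_1,s_2)\in T'\times T'$, and then take the minimal such $T'$. Your final paragraph's explicit identification of the dual subgroup with $\ker a$ is a slight elaboration beyond what the paper writes, but the underlying argument is the same.
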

\begin{proof}
Closed subgroups of $T$ form a lattice under inclusions, and so do  subgroups of $\bbX^*(T)$. There is an order-reversing one-to-one correspondence between these two lattices, sending any $T'<T$ to the kernel of the quotient $\bbX^*(T)\surj \bbX^*(T')$.

Let $T'$ be a closed subgroup of $T$. Then  $a\in \catA_{T'}\subseteq \catA_T$ if and only if the morphism $a: \bbX^*(T)\to E$ factors through $\bbX^*(T)\surj \bbX^*(T')$.  In turn, this happens if and only if $DD(a)$ is contained in the subgroup $T'\times T'\subseteq T\times T$. Therefore, the smallest closed subgroup $T'<T$ with the property that $a\in \catA_{T'}\subseteq \catA_T$ is also the smallest closed subgroup of $T'<T$ with the property that $DD(a)\in T'\times T'$. 
\end{proof}

Let $T^{\bbC}$ be the corresponding algebraic torus containing $T$ as a maximal compact subgroup. Clearly, $\Hom( \bbX^*(T),\bbC^*)\cong T^{\alg}$, and $\Hom( \bbX^*(T),S^1)\cong T$. Intersection with $T$ defines an inclusion-preserving one-to-one correspondence between  algebraic subgroups of $T^{\bbC}$ and closed subgroups of $T$. In particular, for any subset $Z\subseteq T$, the smallest algebraic subgroup of $T^{\bbC}$ containing $Z$ corresponds to the closed subgroup of $T$ generated by $Z$. Composing $DD:\catA_T\to T^2$ with the natural embedding  $T^2\subseteq (T^{\bbC})^2$, we get a map $\catA_T\to  (T^{\bbC})^2$, which, without causing confusion, is also denoted by $DD$.

\subsection{A non-vanishing theorem} \label{subsec:nonvan}
In this subsection, we fix a quintuple $(s_1,q_1,s_2,q_2,x)$, where $s_i\in T$, $q_i\in \bbC^*$, and $x\in \calN$ such that $s_ixs_i^{-1}=q_ix$ for $i=1,2$. We assume that $q_1$ and $q_2$ are not simultaneously roots of unity. Without loss of generality, we assume $q_1$ has infinite order. Let $u=e^x\in G^{\bbC}$. In this subsection we prove a non-vanishing theorem (Proposition \ref{prop:non-zero_multiplicity}). The proof is similar to \cite[\S~7.1]{KL} (see also \cite[\S~8]{CG}). 

Following \cite[\S~7.1]{KL}, we fix (not-necessarily continuous) group homomorphisms $v_i:\bbC^*\to \bbR$ (with the codomain the additive group) for $i=1,2$ such that $v_i(q_i)>0$. The existence of such $v_i$ follows from \cite[Lemma~8.8.12]{CG}. Also fix $\phi:\SL_2(\bbC)\to G^{\bbC}$ such that $\phi(\begin{smallmatrix}1&1\\0&1\end{smallmatrix})=u$. For $i=1,2$, let $s_i^\phi=\phi(\begin{smallmatrix}q_i&0\\0&q_i^{-1}\end{smallmatrix})$, and let $s'_i=(s_i^{\phi})^{-1}\cdot s_i$. One can easily check that $s'_i$ commutes with $\phi(\SL_2(\bbC))$ for $i=1,2$, and $s'_1$ commutes with $s'_2$. 
Then, there is a decomposition of $\fg$ into simultaneous eigenspaces of $s'_1$, $s'_2$, and $\phi(\begin{smallmatrix}z&0\\0&z^{-1}\end{smallmatrix})$ for any $z\in \bbC^*$ as follows
\begin{equation}\label{eq:eigen}\fg=\bigoplus_{\alpha_1\in \bbC^*,\alpha_2\in \bbC^*,j\in\bbZ}\fg_{\alpha_1,\alpha_2,j},
\end{equation} where \[ \fg_{\alpha_1,\alpha_2,j}=\{y\in\fg\mid s'_i\cdot y\cdot (s'_i)^{-1}=\alpha_i y\hbox{ for }i=1,2,\hbox{ and }\phi(\begin{smallmatrix}z&0\\0&z^{-1}\end{smallmatrix})\cdot y\cdot\phi(\begin{smallmatrix}z&0\\0&z^{-1}\end{smallmatrix})^{-1}=z^jy\}.\]

Define $\fp_i=\bigoplus_{v_i(\alpha_i)\leq 0}\fg_{\alpha_1,\alpha_2,j}$ for $i=1,2$, and define $\fp=\fp_1\cap\fp_2$. Let the parabolic subgroup of $G$ corresponding to $\fp_i$ be $P_i$, and let $P$ be that corresponding to $\fp$. Let $L_i$ be the Levi subgroup of $P_i$, and $L$ be that of $P$.

Let  $G(s_1,s_2)$ (resp. $G(s_1,s_2,x)$) be the simultaneous centralizer of $s_1$ and $s_2$ (resp. $s_1,s_2$ and $x$) in $G^{\bbC}$,  and let $\fg(s_1,s_2)$ (resp. $ \fg(s_1,s_2,x)$) be its Lie algebra. Define $\fg^{s_1,s_2}=\{y\in\fg\mid s_i\cdot y\cdot s_i^{-1}=q_iy\hbox{ for }i=1,2\}$.  We write $\fg(s_1,s_2)\cap\fp$ as $\fp(s_1,s_2)$, and  $\fp\cap\fg^{s_1,s_2}$as $\fp^{s_1,s_2}$.

The following lemma is essential in the proof of the non-vanishing theorem. Its proof is almost the same as that of \cite[Lemma~8.8.22]{CG}. The key assumption is that $v_1(q_1)$ is strictly positive.

\begin{lemma}\label{lem:decomp_g_p}
The followings hold
\begin{enumerate}
\item $\fg(s_1,s_2,x)\subseteq \underset{\alpha_1=q_1^{-j},\alpha_2=q_2^{-j},j\geq0}\bigoplus\fg_{\alpha_1,\alpha_2,j};$
\item $ \fp(s_1,s_2)=\underset{\alpha_1=q_1^{-j},\alpha_2=q_2^{-j},j\geq0}\bigoplus\fg_{\alpha_1,\alpha_2,j};$
\item $ \fp^{s_1,s_2}=\underset {\alpha_1=q_1^{2-j},\alpha_2=q_2^{2-j},j\geq2}\bigoplus\fg_{\alpha_1,\alpha_2,j};$
\item $[x,\fp(s)]=\fp^{s_1,s_2}$.
\end{enumerate}
\end{lemma}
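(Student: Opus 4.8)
\textbf{Proof plan for Lemma~\ref{lem:decomp_g_p}.}

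The strategy is to reduce everything to the eigenspace decomposition \eqref{eq:eigen}, exactly as in the classical case \cite[Lemma~8.8.22]{CG}, carefully tracking how the two semisimple elements $s_1,s_2$ and the two parameters $q_1,q_2$ interact. The key observation is that a vector $y\in\fg_{\alpha_1,\alpha_2,j}$ satisfies $s_i\cdot y\cdot s_i^{-1}=(s_i^\phi\cdot s'_i)\cdot y\cdot(s_i^\phi\cdot s'_i)^{-1}$, and since $s_i^\phi=\phi(\begin{smallmatrix}q_i&0\\0&q_i^{-1}\end{smallmatrix})$ acts on $\fg_{\alpha_1,\alpha_2,j}$ by $q_i^{j}$ while $s'_i$ acts by $\alpha_i$, we get $s_i\cdot y\cdot s_i^{-1}=\alpha_i q_i^{j}y$. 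Thus $y\in\fg(s_1,s_2,x)$ forces $\alpha_i q_i^j=1$, i.e. $\alpha_i=q_i^{-j}$ for $i=1,2$ (together with $[x,y]=0$, which pins down the $\phi$-behaviour), and $y\in\fg^{s_1,s_2}$ forces $\alpha_i q_i^j=q_i$, i.e. $\alpha_i=q_i^{1-j}$. For (3) I would first observe that $\fp^{s_1,s_2}=[x,\fp(s_1,s_2)]$ lies in the image of $\ad x$, which shifts the $\phi$-weight $j$ up by $2$; combined with $\alpha_i=q_i^{1-(j-2)}=q_i^{3-j}$ being forced — wait, I must re-examine the indexing so that it matches the stated $\alpha_i=q_i^{2-j}$, $j\geq 2$; the point is that $\ad x:\fg_{\alpha_1,\alpha_2,j}\to\fg_{q_1\alpha_1,q_2\alpha_2,j+2}$ combined with membership in $\fg^{s_1,s_2}$ gives the claimed constraint.

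First I would establish (1): using the $\SL_2$-theory for $\phi$, the centralizer of $\phi(\begin{smallmatrix}1&1\\0&1\end{smallmatrix})=u$ inside $\phi(\SL_2(\bbC))$ consists only of nonnegative $\phi$-weight vectors (this is the standard fact that $\ad x$ is nilpotent and that $\ker(\ad x)$ in each $\SL_2$-isotypic piece sits in nonnegative weights), so $\fg(s_1,s_2,x)\subseteq\bigoplus_{j\geq0}\bigoplus_{\alpha_1,\alpha_2}\fg_{\alpha_1,\alpha_2,j}$; intersecting with the eigenvalue conditions $\alpha_i q_i^j=1$ derived above gives the displayed formula. Here the hypothesis that $v_1(q_1)>0$ enters to guarantee that the $\fp_1$-condition $v_1(\alpha_1)\leq 0$ is automatically satisfied when $\alpha_1=q_1^{-j}$ with $j\geq 0$, i.e. $v_1(q_1^{-j})=-jv_1(q_1)\leq 0$, so that $\fg(s_1,s_2,x)\subseteq\fp$ and hence $\fg(s_1,s_2,x)\subseteq\fp(s_1,s_2)$. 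For (2) I would prove the reverse inclusion: a vector in $\fp(s_1,s_2)=\fg(s_1,s_2)\cap\fp$ has $\alpha_i q_i^j=1$ (from centralizing $s_1,s_2$) hence $\alpha_i=q_i^{-j}$, and the $\fp_1$-condition $v_1(\alpha_1)\leq0$ reads $-jv_1(q_1)\leq 0$, forcing $j\geq 0$; conversely every such summand satisfies all the $\fp_i$ inequalities, using $v_2(q_2)\geq 0$ for the $i=2$ part. Then (3) follows the same way with the eigenvalue condition $\alpha_i q_i^j=q_i$ replacing $\alpha_i q_i^j=1$, together with the $\fp$-constraints forcing $j\geq 2$ (the cases $j=0,1$ being excluded because $v_1(q_1^{1-j})=(1-j)v_1(q_1)$ would be positive).

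Finally, (4) is the assertion $[x,\fp(s_1,s_2)]=\fp^{s_1,s_2}$. The inclusion $\subseteq$ is immediate from (2), (3) and the fact that $\ad x$ raises $j$ by $2$ and multiplies $\alpha_i$ by $q_i$. For $\supseteq$ I would argue weight-space by weight-space: by $\SL_2$-representation theory, $\ad x:\fg_{q_1^{-j},q_2^{-j},j}\to\fg_{q_1^{1-j},q_2^{1-j},j+2}$ is surjective whenever $j\geq 0$ (this is the statement that in a finite-dimensional $\mathfrak{sl}_2$-module, $\ad x$ is surjective from weight space $j$ onto weight space $j+2$ for $j\geq 0$, wait — more precisely it is surjective onto the $(j+2)$-weight space when $j\geq -1$, in particular for $j\geq 0$), so every summand of $\fp^{s_1,s_2}$ (indexed by $j+2\geq 2$, i.e. $j\geq 0$) is hit. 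The main obstacle I anticipate is the careful bookkeeping in (4): one must verify the $\mathfrak{sl}_2$-surjectivity claim in exactly the weight range that appears, and confirm that no summand of $\fp^{s_1,s_2}$ is indexed by a $j+2$-value that would require $\ad x$ to be surjective from a negative weight space — this is precisely where the positivity $v_1(q_1)>0$ is used to exclude the problematic low weights, mirroring \cite[Lemma~8.8.22]{CG} but now with the extra bookkeeping of the second pair $(s_2,q_2)$ running in parallel under the milder hypothesis $v_2(q_2)\geq 0$.
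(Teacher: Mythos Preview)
Your overall approach is the same as the paper's, but there is one concrete error that propagates through your handling of (3) and (4). You assert that $\ad x:\fg_{\alpha_1,\alpha_2,j}\to\fg_{q_1\alpha_1,q_2\alpha_2,j+2}$, i.e.\ that $\ad x$ shifts the $s'_i$-eigenvalue by a factor of $q_i$. This is false: by construction $s'_i$ commutes with the entire image $\phi(\SL_2(\bbC))$, in particular with $x$, so $\ad x$ preserves each $s'_i$-eigenspace and one has
\[
\ad x:\fg_{\alpha_1,\alpha_2,j}\longrightarrow\fg_{\alpha_1,\alpha_2,j+2}.
\]
This is exactly what the paper uses in its proof of (4). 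Once you make this correction, your map in (4) reads $\ad x:\fg_{q_1^{-j},q_2^{-j},j}\to\fg_{q_1^{-j},q_2^{-j},j+2}$, and reindexing the target by $k=j+2$ gives precisely the summands $\fg_{q_1^{2-k},q_2^{2-k},k}$ with $k\geq 2$, matching (3). Your $\SL_2$-surjectivity argument then goes through unchanged.

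Your direct eigenvalue computation for (3), yielding $\alpha_i=q_i^{1-j}$ with $j\geq 1$, is what the literal definition $\fg^{s_1,s_2}=\{y:s_iys_i^{-1}=q_iy\}$ in the text produces; the discrepancy with the stated indices $\alpha_i=q_i^{2-j}$, $j\geq 2$, reflects a normalization convention (in \cite[\S 8.8]{CG} the analogous eigenvalue is $q^2$, which is also consistent with $s_i^\phi$ acting on $x\in\fg_{1,1,2}$ by $q_i^{2}$). This is a bookkeeping issue rather than a flaw in your strategy; the substantive correction you need is the $\ad x$ shift above.
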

\begin{proof}
Recall that $s'_i$ commutes with the image of $\phi$ for $i=1,2$. Hence, the decomposition  \eqref{eq:eigen} induces  decompositions of $\fg(s_1,s_2,x)$, $\fp(s_1,s_2)$, and $\fp^{s_1,s_2}$. 

For part (1), note that for any $y\in \fg(s_1,s_2,x)\cap\fg_{\alpha_1,\alpha_2,j}$, we have 
\[y=s_i\cdot y\cdot s_i^{-1}=s_i^\phi\cdot \big(s'_i\cdot y\cdot (s'_i)^{-1}\big)\cdot (s_i^\phi)^{-1}=\alpha_i q_i^jy.
\]
 Hence $\alpha_i=q_i^{-j}$. 
Recall that $x\in \fg$ is nilpotent, and that $u=e^x$  extends to the $\SL_2$-triple $\angl{u,\phi(\begin{smallmatrix}z&0\\0&z^{-1}\end{smallmatrix}),\phi(\begin{smallmatrix}1&0\\1&1\end{smallmatrix})}$.
 Let $G(u)$ be the centralizer of $u$, and $\fg(u)$ its Lie algebra.  By \cite[Corollary~3.7.11]{CG}, the eigenvalues of $\phi(\begin{smallmatrix}z&0\\0&z^{-1}\end{smallmatrix})$ on  $ \fg(s_1,s_2,x)\subseteq\fg(u)$ are all non-negative. Therefore, if $ \fg(s_1,s_2,x)\cap\fg_{\alpha_1,\alpha_2,j}$ contains $y$, then we have $j\geq0$.

Similarly, considering part (2),  for $y\in \fp(s_1,s_2)\cap \fg_{\alpha_1,\alpha_2,j}$, we also have $\alpha_i=q_i^{-j}$ for $i=1,2$. We have $v_i(\alpha_i)=-jv_i(q_i)$ for $i=1,2$. By definition of $\fp$, we have $v_i(\alpha_i)\leq0$ for $i=1,2$. The condition $v_1(q_1)>0$ then implies that  $j\geq0$.

The  decomposition in (3) is proved in the same way as (2).

To prove (4), we only need to show the surjectivity of the adjoint action of $x$ considered as a  map
\[\ad x:\bigoplus_{\alpha_1=q_1^{-j},\alpha_2=q_2^{-j},j\geq0}\fg_{\alpha_1,\alpha_2,j}\to\bigoplus_{\alpha_1=q_1^{2-j},\alpha_2=q_2^{2-j},j\geq2}\fg_{\alpha_1,\alpha_2,j},\]
which is equivalent to the surjectivity of 
\[\ad x:\fg_{q_1^{-j},q_2^{-j},j}\to\fg_{q_1^{-j},q_2^{-j},j+2}, j\geq0.\]
Indeed, the operator $\ad x$ extends to an  $\fs\fl_2$-action,  which is the image of $\phi$. According to the general theory of $\fs\fl_2$-representation, $\ad x$ is surjective on positive weight spaces.
\end{proof}

Let $\calB^{s_1,s_2}_x$ be the variety of Borel subgroups $B$ such that $s_1,s_2\in B$ and $x\in \Lie B$, and let $\calB^{P,s_1,s_2}_x=\{B\in\calB_x^{s_1,s_2}\mid B\subseteq P\}$. Here and below we use $\Lie H$ to denote the Lie algebra of a complex algebraic group. Since any solvable subgroup of $G^{\bbC}$ is contained in at least one Borel subgroup, we have  $\calB^{P,s_1,s_2}_x\neq\emptyset$.

Let $\bbO\subseteq \calN$ be the $G(s_1,s_2)$-orbit of $x\in\calN$, and let $\overline{\bbO}$ be the closure of $\bbO$. The following lemma is an analogue of \cite[Proposition~8.8.19]{CG}.

\begin{lemma}
Let $B\in \calB^{P,s_1,s_2}_x$ be a Borel subalgebra, and let $\fn$ be the nil-radical of $\Lie B$. Then \[G(s_1,s_2)\cdot (\fn\cap\fg^{s_1,s_2})=\overline{\bbO}.\]
\end{lemma}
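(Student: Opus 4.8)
The plan is to follow the template of \cite[Proposition~8.8.19]{CG}, adapting it to the setting where we have two commuting semisimple elements $s_1,s_2$ rather than one. First I would establish the inclusion $G(s_1,s_2)\cdot(\fn\cap\fg^{s_1,s_2})\subseteq\overline{\bbO}$. The key point is that for $y\in\fn\cap\fg^{s_1,s_2}$, the element $x+ty$ lies in $\fg^{s_1,s_2}$ for all $t$, and one shows that $x+ty$ is $G(s_1,s_2)$-conjugate to $x$ for generic $t$ (hence $y$ is in $\overline{\bbO}$ by a limiting argument sending $t\to 0$), or more directly that the whole line through $x$ in $\fn\cap\fg^{s_1,s_2}$ stays in $\overline{\bbO}$. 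Here I would use the parabolic $P$ and its Levi $L$ constructed above, together with Lemma~\ref{lem:decomp_g_p}: since $B\subseteq P$, the nil-radical $\fn$ decomposes compatibly with the eigenspace decomposition \eqref{eq:eigen}, and $\fn\cap\fg^{s_1,s_2}$ sits inside $\fp^{s_1,s_2}=\bigoplus_{j\geq 2}\fg_{q_1^{2-j},q_2^{2-j},j}$. The contracting $\Gm$-action coming from $\phi(\begin{smallmatrix}z&0\\0&z^{-1}\end{smallmatrix})$ (which acts with positive weights on $\fp^{s_1,s_2}$ and commutes with $s_1,s_2$ up to the $s_i^\phi$-twist that is already accounted for) sweeps $\fn\cap\fg^{s_1,s_2}$ into the closure of the $G(s_1,s_2)$-orbit of $x$.

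For the reverse inclusion $\overline{\bbO}\subseteq G(s_1,s_2)\cdot(\fn\cap\fg^{s_1,s_2})$, the strategy is a dimension count combined with showing the right-hand side is closed and irreducible. I would first observe that $G(s_1,s_2)\cdot(\fn\cap\fg^{s_1,s_2})$ is the image of the collapsing map $G(s_1,s_2)\times_{P(s_1,s_2)}(\fn\cap\fg^{s_1,s_2})\to\fg^{s_1,s_2}$ (suitably interpreted using $P(s_1,s_2):=G(s_1,s_2)\cap P$), where one must check $P(s_1,s_2)$ normalizes $\fn\cap\fg^{s_1,s_2}$; this is where $B\subseteq P$ is used, so that $\fn$ is $P$-stable and intersecting with $\fg^{s_1,s_2}$ preserves this. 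Properness of this collapsing (the source is a fiber bundle over the partial flag variety $G(s_1,s_2)/P(s_1,s_2)$, which is projective) gives that the image is closed. It contains $\bbO$ since $x\in\fn\cap\fg^{s_1,s_2}$ (as $x$ lies in $\Lie B$ and has $s_i$-weight $q_i$), and since $\overline{\bbO}$ is the closure of $\bbO$, we get $\overline{\bbO}\subseteq G(s_1,s_2)\cdot(\fn\cap\fg^{s_1,s_2})$. Combined with the first inclusion, equality follows.

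The main obstacle I anticipate is verifying that the parabolic $P$ (and its intersection with $G(s_1,s_2)$) is genuinely adapted to the pair $(s_1,s_2)$ simultaneously, rather than to a single element as in \cite{CG}. Concretely, one needs: (i) $B\in\calB^{P,s_1,s_2}_x$ is nonempty, which was already noted; (ii) Lemma~\ref{lem:decomp_g_p}(4), $[x,\fp(s_1,s_2)]=\fp^{s_1,s_2}$, which is the infinitesimal surjectivity statement that forces the orbit $\bbO$ to be dense in $G(s_1,s_2)\cdot(\fn\cap\fg^{s_1,s_2})$ — this is the tangent-space computation underlying the dimension count; and (iii) the interplay between the two homomorphisms $v_1,v_2$, where only $v_1(q_1)>0$ is strict while $v_2(q_2)\geq 0$ may be an equality. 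The asymmetry means the $\Gm$-contraction argument must be run using the $v_1$-grading (equivalently the $\phi$-weight grading, since on $\fp^{s_1,s_2}$ the relevant eigenvalues are $q_1^{2-j}$ with $j\geq 2$ and $v_1(q_1)>0$), and one must check this contraction is compatible with the $G(s_1,s_2)$-action. Once Lemma~\ref{lem:decomp_g_p} is in hand, this is essentially bookkeeping, but getting the gradings to line up correctly is the delicate part.
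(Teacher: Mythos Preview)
Your overall architecture matches the paper's: sandwich $G(s_1,s_2)\cdot(\fn\cap\fg^{s_1,s_2})$ between $\bbO$ and a closed set, and use Lemma~\ref{lem:decomp_g_p}(4) as the infinitesimal surjectivity input. But there is a concrete error in your collapsing step. You write that ``$B\subseteq P$ \dots\ so that $\fn$ is $P$-stable,'' and then form $G(s_1,s_2)\times_{P(s_1,s_2)}(\fn\cap\fg^{s_1,s_2})$. This is false: the nil-radical $\fn$ of a Borel $B\subseteq P$ is \emph{not} normalized by $P$ (only by $B$ itself); what $P$ normalizes is the nil-radical of $\fp$, which is strictly smaller. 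So your associated bundle is not well-defined as written, and the properness/closedness argument for the image collapses.

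The paper avoids this by never forming a collapsing with $\fn\cap\fg^{s_1,s_2}$ at all. Instead it replaces $\fn\cap\fg^{s_1,s_2}$ by the larger subspace $\fp^{s_1,s_2}$, which \emph{is} $P(s_1,s_2)$-stable, and runs the proper collapsing $G(s_1,s_2)\times_{P(s_1,s_2)}\fp^{s_1,s_2}\to G(s_1,s_2)\cdot\fp^{s_1,s_2}$. Then the single claim to prove is that $G(s_1,s_2)\cdot x$ is dense in $G(s_1,s_2)\cdot\fp^{s_1,s_2}$; this is obtained (via \cite[Lemma~1.4.12]{CG}) directly from the surjectivity $[x,\fp(s_1,s_2)]=\fp^{s_1,s_2}$ of Lemma~\ref{lem:decomp_g_p}(4), which shows the $P(s_1,s_2)$-orbit of $x$ is open in $\fp^{s_1,s_2}$. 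The chain $x\in\fn\cap\fg^{s_1,s_2}\subseteq\fp^{s_1,s_2}$ then gives the sandwich $\bbO\subseteq G(s_1,s_2)\cdot(\fn\cap\fg^{s_1,s_2})\subseteq G(s_1,s_2)\cdot\fp^{s_1,s_2}=\overline{\bbO}$. Your $\Gm$-contraction sketch for the first inclusion is also not the mechanism used: the $\phi$-torus contracts $\fp^{s_1,s_2}$ toward $0$, not toward $x$, so it does not by itself put points into $\overline{\bbO}$; the density genuinely comes from the tangent-space surjectivity, as the paper does. (If you want to repair your version rather than switch to $\fp^{s_1,s_2}$, the correct collapsing for $\fn\cap\fg^{s_1,s_2}$ uses $B(s_1,s_2)=B\cap G(s_1,s_2)$, which is a Borel of $G(s_1,s_2)$ and does normalize $\fn\cap\fg^{s_1,s_2}$.)
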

\begin{proof}
By definition, we have $x\in\fg^{s_1,s_2}$ and $x\in\fn$. Therefore, to prove this lemma, we only need to show that $G(s_1,s_2)x$ is dense in $G(s_1,s_2)\fp^{s_1,s_2}$.

Note that $x\in\fp^{s_1,s_2}$, and that $\fp^{s_1,s_2}$ is stable under the action of $P(s_1,s_2):=P\cap G(s_1,s_2)$. We have a proper map \[G(s_1,s_2)\times_{P(s_1,s_2)}\fp^{s_1,s_2}\surj G(s_1,s_2)\cdot \fp^{s_1,s_2}.\]
To show that $G(s_1,s_2)x$ is dense in $G(s_1,s_2)\fp^{s_1,s_2}$, we only need to show that the $P(s_1,s_2)$-adjoint orbit of $x$ is dense in $\fp^{s_1,s_2}$. This in turn follows directly from Lemma~\ref{lem:decomp_g_p} and \cite[Lemma~1.4.12]{CG}.
\end{proof}

Let $\widetilde{\calN}^{(s_1,q_1),(s_2,q_2)}$ be the subset of $\widetilde{\calN}$ consisting of elements fixed by both $(s_1,q_1)$ and $(s_2,q_2)$. Here the action of $(s,q)\in G^{\bbC} \times  \bbC^* $ on $n\in\widetilde{\calN}$ is given by $q^{-1}\cdot( s\cdot n\cdot s^{-1})$. Define $\hat{\bbO}$ to be the union of connected components of $\widetilde{\calN}^{(s_1,q_1),(s_2,q_2)}$ that intersect non-trivially with  $\calB^{P,s_1,s_2}_x$. Then $\hat{\bbO}$ is a $G(s_1,s_2)$-stable subvariety  of $\widetilde{\calN}^{(s_1,q_1),(s_2,q_2)}$,  and is both open and closed.

So far we have proved the following lemma, which is an analogue of \cite[Theorem~8.8.1]{CG}.

\begin{lemma}
Let $\mu:\wt\calN\to \calN$ be the canonical map. With notations as above, we have  $\mu(\hat{\bbO})=\overline{\bbO}$.
\end{lemma}

Let $L(s_1,s_2,x)$ be the simultaneous centralizer of $s_1,s_2,x$ in L.  Let $C(s_1,s_2,x)$ be the component group of $G(s_1,s_2,x)$, i.e., the quotient of $G(s_1,s_2,x)$ by its connected component containing the identity. Then,  $C(s_1,s_2,x)$ acts on $H^*(\calB_x^{s_1,s_2})$ and  $H^*(\widehat{\calB_x^{s_1,s_2}})$, where $\widehat{\calB_x^{s_1,s_2}}=\calB_x^{s_1,s_2}\cap \hat{\bbO}$.

\begin{lemma} 
\label{lem:component_surjective} We have \begin{enumerate}
\item $G(s_1,s_2,x)\subseteq P$;
\item the map of component groups $L(s_1,s_2,x)/L(s_1,s_2,x)^0\to C(s_1,s_2,x) $ is surjective.
\end{enumerate}
\end{lemma}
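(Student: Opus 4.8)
The statement asserts two things about the simultaneous centralizer $G(s_1,s_2,x)$ inside $G^{\alg}$: (1) it is contained in the parabolic $P$ constructed via the homomorphisms $v_i$ and the $\mathfrak{sl}_2$-triple $\phi$, and (2) the induced map on component groups $L(s_1,s_2,x)/L(s_1,s_2,x)^0\to C(s_1,s_2,x)$ is surjective. This is the precise elliptic analogue of \cite[Proposition~8.8.21]{CG} (or the corresponding statement in \cite[\S~7.1]{KL}), and my plan is to follow that argument, tracking the two commuting semisimple elements $s_1$ and $s_2$ in place of the single one.

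\textbf{Step 1: containment.} First I would show $G(s_1,s_2,x)\subseteq P_i$ for $i=1,2$, and then intersect. Fix $i$. An element $g\in G(s_1,s_2,x)$ commutes with $s_i$ and with $x$; since $s_i^\phi$ and $s_i'=(s_i^\phi)^{-1}s_i$ are both expressible through $x$ and $s_i$ (the former lies in $\phi(\SL_2)$, which is determined by $x$ together with the $\SL_2$-triple, and $g$ commutes with the whole triple by Jacobson--Morozov uniqueness up to conjugation — more precisely one uses that $g\in G(u)$ normalizes a fixed $\SL_2$-triple through $u$, after adjusting $g$ by an element of the unipotent radical of $G(u)$, cf. \cite[Corollary~3.7.11]{CG}), $g$ commutes with $s_i'$. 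Decompose $g$'s action on $\fg$ along the eigenspace decomposition \eqref{eq:eigen}. Because $g$ commutes with $s_i'$ and with $\phi(\begin{smallmatrix}z&0\\0&z^{-1}\end{smallmatrix})$, its Lie algebra $\fg(s_1,s_2,x)$ is graded by the $(\alpha_1,\alpha_2,j)$-decomposition; by Lemma~\ref{lem:decomp_g_p}(1) it sits in weights with $\alpha_i=q_i^{-j}$, $j\geq 0$, hence $v_i(\alpha_i)=-j\,v_i(q_i)\leq 0$ (here I use $v_1(q_1)>0$ and $v_2(q_2)\geq 0$; for $i=2$ the weight-$0$ part needs separate care but lands in the Levi $L_2\subseteq P_2$ anyway). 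So $\fg(s_1,s_2,x)\subseteq\fp_i$, i.e.\ $G(s_1,s_2,x)^0\subseteq P_i$. For the full group (not just identity component), one argues as in \cite{CG}: $P_i$ is the unique parabolic with the property that its Lie algebra is the non-negative part of the $v_i$-grading, and $G(s_1,s_2,x)$ normalizes its own identity component, hence normalizes the parabolic $P_i$ it generates, hence lies in it since parabolics are self-normalizing. Intersecting over $i=1,2$ gives $G(s_1,s_2,x)\subseteq P$, which is (1).

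\textbf{Step 2: surjectivity on component groups.} With (1) in hand, $G(s_1,s_2,x)\subseteq P$, so the projection $P\to L$ restricts to a homomorphism $G(s_1,s_2,x)\to L$; since $s_1,s_2$ map to elements of $L$ (they lie in a maximal torus of $L$ by construction, being built from $\phi$ and the $s_i'$ which centralize $\phi(\SL_2)$) and $x$ maps to its $L$-part, the image lands in $L(s_1,s_2,x)$. The kernel of $G(s_1,s_2,x)\to L(s_1,s_2,x)$ is $G(s_1,s_2,x)\cap U_P$ where $U_P$ is the unipotent radical of $P$, which is connected (unipotent). The standard Levi-decomposition argument then shows the map on component groups is surjective: any $g\in G(s_1,s_2,x)$ can be written $g=lu$ with $l\in L$, $u\in U_P$, and the condition that $g$ centralizes $(s_1,s_2,x)$, together with the fact that $U_P$ is $L(s_1,s_2)$-stable and the $x$-action on the relevant graded pieces is surjective (Lemma~\ref{lem:decomp_g_p}(4), which gives $[x,\fp(s)]=\fp^{s_1,s_2}$, exactly the infinitesimal input needed to conjugate $l$ back into $L(s_1,s_2,x)$), lets one adjust $l$ by an element of $L(s_1,s_2,x)^0$ so that the whole component of $g$ is hit. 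This is the content of \cite[Lemma~8.8.20]{CG} / \cite[\S~7.1]{KL}, adapted verbatim to two commuting semisimple elements.

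\textbf{Main obstacle.} The delicate point is not the formal Levi-decomposition bookkeeping in Step 2 but the reduction in Step 1 to the case where $g$ actually commutes with the full $\SL_2$-triple $\phi(\SL_2)$ and with each $s_i'$ — i.e.\ the interplay between the Jacobson--Morozov choices and the two semisimple elements. In \cite{CG} this is handled by \cite[Corollary~3.7.11]{CG} and the structure of $G(u)$; the only new wrinkle here is that one must choose $\phi$ and the $v_i$ simultaneously compatibly with both $s_1$ and $s_2$ (which the excerpt has already done, via \cite[Lemma~8.8.12]{CG} for the existence of $v_2$), and then check that the eigenspace decomposition \eqref{eq:eigen} with respect to $s_1',s_2'$ and $\phi$ is fine enough to run the grading argument. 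Given Lemma~\ref{lem:decomp_g_p}, which packages precisely these compatibilities, the proof is then essentially a transcription of the classical one, and I would write it as such, citing \cite{CG} and \cite{KL} for the steps that go through unchanged.
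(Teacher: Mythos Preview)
Your Step~1 has a genuine gap. The Lie-algebra containment $\fg(s_1,s_2,x)\subseteq\fp$ is indeed immediate from Lemma~\ref{lem:decomp_g_p}(1)(2), but your lift to the group level does not work as written: $G(s_1,s_2,x)^0$ is merely \emph{contained in} $P_i$, it does not generate $P_i$, so ``$G(s_1,s_2,x)$ normalizes its identity component, hence normalizes $P_i$'' is a non sequitur. To show $G(s_1,s_2,x)$ normalizes $\fp_i$ directly you would need each $g\in G(s_1,s_2,x)$ to preserve the $s_i'$-eigenspace decomposition, i.e.\ to commute with $s_i'$ (equivalently with $s_i^\phi\in\phi(\SL_2)$); but elements of $G(u)$ need not commute with the whole $\SL_2$-triple, only with $u$. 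You flag exactly this as the ``main obstacle'' and then assert it is handled by Lemma~\ref{lem:decomp_g_p}---but that lemma is purely infinitesimal and does not resolve the group-level issue.

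The paper sidesteps this entirely. It introduces, for each $i$, the larger group
\[
M(s_i,x)=\{(g,q)\in G\times\bbC^*\mid gs_ig^{-1}=s_i,\ gxg^{-1}=q^2x\},
\]
and simply invokes the established single-semisimple result \cite[Lemma~7.2(a)]{KL} (equivalently \cite[Lemma~8.8.23]{CG}) that $M(s_i,x)\subseteq P_i\times\bbC^*$. Since $G(s_1,s_2,x)\times\{1\}\subseteq M(s_1,x)\cap M(s_2,x)\subseteq (P_1\cap P_2)\times\bbC^*=P\times\bbC^*$, part~(1) follows at once. The point is that the delicate interaction between $G(u)$ and the $\SL_2$-triple is already absorbed into the cited result, applied once for each $s_i$; nothing about having two commuting semisimple elements needs to be reproved. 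For part~(2) both you and the paper defer to \cite[Lemma~8.8.25]{CG}, and your Levi-decomposition sketch is fine there.
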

\begin{proof}
For $i=1,2$, let 
\[
M(s_i,x)=\{(g,q)\in G\times \bbC^*\mid gs_ig^{-1}=s_i, gxg^{-1}=q^2x\},
\] and let  $M(s_1,s_2,x)=M(s_1,x)\cap M(s_2,x)$. It is shown in 
\cite[Lemma~7.2(a)]{KL} 
(see also \cite[Lemma~8.8.23]{CG}) that $M(x,s_i)\subseteq P_i\times\bbC^*$ for $i=1,2$. Hence, $M(x,s_1)\cap M(x,s_2)=M(x,s_1,s_2)\subseteq P\times  \bbC^* =(P_1\times \bbC^* )\cap(P_2\times \bbC^* )$.
In particular, $G(s_1,s_2,x)\subseteq P$.

Claim (2) follows similarly as \cite[Lemma~8.8.25]{CG}.
\end{proof}

The following non-vanishing theorem is an analogue of \cite[Proposition~8.8.2]{CG}. For  convenience of the readers, we include the proof.
\begin{prop}\label{prop:non-zero_multiplicity} Any simple $C(s_1,s_2,x)$-module having non-zero multiplicity in $H^*(\calB_x^{s_1,s_2})$   also has non-zero multiplicity in $H^*(\widehat{\calB_x^{s_1,s_2}})$.
\end{prop}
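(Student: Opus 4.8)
The strategy follows the template of \cite[Proposition~8.8.2]{CG}, and is driven by the three lemmas just established: $\mu(\hat{\bbO})=\overline{\bbO}$, the inclusion $G(s_1,s_2,x)\subseteq P$, and the surjectivity of $L(s_1,s_2,x)/L(s_1,s_2,x)^0\to C(s_1,s_2,x)$. The plan is to compare the cohomology of $\calB_x^{s_1,s_2}$ with that of $\widehat{\calB_x^{s_1,s_2}}$ by realizing the latter as a ``parabolic induction'' picture for the Levi $L$, and then to track $C(s_1,s_2,x)$-isotypic components through this comparison.

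First I would reduce to a statement about the Levi $L$. Since $\hat{\bbO}$ is the union of the connected components of $\widetilde{\calN}^{(s_1,q_1),(s_2,q_2)}$ meeting $\calB^{P,s_1,s_2}_x$, and a Borel $B\in\calB_x^{s_1,s_2}$ lies in $\hat{\bbO}$ iff $B\subseteq P$, one has $\widehat{\calB_x^{s_1,s_2}}=\{B\in\calB_x^{s_1,s_2}\mid B\subseteq P\}$. Choosing a Borel $B_L$ of the Levi $L$ (containing the maximal torus through which everything is conjugated), the map $B\mapsto B\cap L$ identifies $\widehat{\calB_x^{s_1,s_2}}$ with the variety $\calB_{L,x_L}^{s_1,s_2}$ of Borels of $L$ containing $s_1,s_2$ and whose Lie algebra contains the $L$-part $x_L$ of $x$; here one uses that $x\in\fp^{s_1,s_2}$ and Lemma~\ref{lem:decomp_g_p} to see that the relevant component of $x$ in $\Lie L$ is again a nilpotent with the right equivariance, and that $G(s_1,s_2,x)\subseteq P$ so that the component group acts compatibly. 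Concretely I expect an equivariant homeomorphism $\widehat{\calB_x^{s_1,s_2}}\cong \calB_{L}^{s_1,s_2}(x_L)$ intertwining the $C(s_1,s_2,x)$-action on the left with the $L(s_1,s_2,x)/L(s_1,s_2,x)^0$-action on the right, via the surjection of Lemma~\ref{lem:component_surjective}(2). Thus any simple $C(s_1,s_2,x)$-module occurring in $H^*(\widehat{\calB_x^{s_1,s_2}})$ is exactly a simple $L(s_1,s_2,x)/L(s_1,s_2,x)^0$-module occurring in $H^*(\calB_L^{s_1,s_2}(x_L))$ that is trivial on the kernel of the surjection.

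Next I would run the geometric argument of \cite[\S~8.8]{CG}: consider the generalized Springer-type map $\mu\colon\hat{\bbO}\to\overline{\bbO}$ (the previous lemma), together with the parabolic-induction diagram relating $\widetilde{\calN}$ over $\overline{\bbO}$ and the $L$-Springer picture over the $L(s_1,s_2)$-orbit of $x_L$. Applying the decomposition theorem to $\mu$ and to the $L$-analogue, and using that $\bbO$ is dense in $G(s_1,s_2)\cdot\fp^{s_1,s_2}$ (Lemma~\ref{lem:decomp_g_p}(4) plus \cite[Lemma~1.4.12]{CG}), one gets that the $IC$-sheaf of the local system attached to a given simple $C(s_1,s_2,x)$-module appears in $\mu_*\bbC$; restricting to the open stratum $\bbO$ and using proper base change shows this local system is a direct summand of $R^*\mu_*\bbC|_{\bbO}$, i.e.\ occurs in $H^*(\mu^{-1}(x))=H^*(\widehat{\calB_x^{s_1,s_2}})$. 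The hypothesis that it occurs in $H^*(\calB_x^{s_1,s_2})$ guarantees the corresponding $IC$-sheaf is nonzero on all of $\overline\bbO$, so its restriction to $\bbO$ is nonzero; this is where one imports the full Springer picture over $\overline{\bbO}\subseteq\calN^{s_1,s_2}$ rather than just the parabolic part.

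The main obstacle I anticipate is the bookkeeping of the two component groups and, relatedly, making precise the equivariant identification $\widehat{\calB_x^{s_1,s_2}}\cong \calB_L^{s_1,s_2}(x_L)$: one must check that passing to $L$ does not lose any local system, i.e.\ that a simple $C(s_1,s_2,x)$-module occurring upstairs really does pull back from $L(s_1,s_2,x)/L(s_1,s_2,x)^0$ along the surjection of Lemma~\ref{lem:component_surjective}(2). This is exactly the content of \cite[Lemma~8.8.25]{CG} and its consequences, and the argument there goes through verbatim once the decomposition of $\fg$ in Lemma~\ref{lem:decomp_g_p} is in place, since the only feature of $\bbC^*$ that was used there was the existence of the homomorphisms $v_i$ with $v_1(q_1)>0$, which we have arranged. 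Apart from this, every step is a formal transcription of \cite[\S~8.8]{CG}, so I would present the proof as ``the argument of \cite[Proposition~8.8.2]{CG} applies verbatim, using Lemmas~\ref{lem:decomp_g_p}--\ref{lem:component_surjective} in place of their classical counterparts,'' spelling out only the identification of $\widehat{\calB_x^{s_1,s_2}}$ with the $L$-Springer fiber.
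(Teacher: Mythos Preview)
Your proposal has two genuine problems.

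First, the identification $\widehat{\calB_x^{s_1,s_2}}=\{B\in\calB_x^{s_1,s_2}\mid B\subseteq P\}=\calB^{P,s_1,s_2}_x$ is not correct. By definition $\hat{\bbO}$ is the union of connected components of $\widetilde{\calN}^{(s_1,q_1),(s_2,q_2)}$ that \emph{meet} $\calB^{P,s_1,s_2}_x$; a component can meet $\calB^{P,s_1,s_2}_x$ without being contained in it. One only has the inclusion $\calB^{P,s_1,s_2}_x\subseteq\widehat{\calB_x^{s_1,s_2}}$, and in fact $\calB^{P,s_1,s_2}_x$ sits inside $(\widehat{\calB_x^{s_1,s_2}})^{Z^0(L)}$ as a union of connected components --- this is what the paper uses.

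Second, and more seriously, your decomposition-theorem step is circular. The hypothesis says $\chi$ occurs in $H^*(\calB_x^{s_1,s_2})$, which is the fiber of the \emph{full} map $\widetilde{\calN}^{(s_1,q_1),(s_2,q_2)}\to\calN^{(s_1,q_1),(s_2,q_2)}$ over $x$. This tells you $IC_{\bbO,\chi}$ is a summand of the pushforward from the \emph{entire} fixed-point locus, but gives no information about whether it comes from the particular open-and-closed piece $\hat{\bbO}$. Passing from the full fiber to the $\hat{\bbO}$-fiber is exactly the content of the proposition; your argument assumes it.

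The paper's proof supplies the missing mechanism: localization with respect to the connected torus $Z^0(L)$. Since $Z^0(L)$ acts with isolated fixed components and we are over $\bbC$, one has $H^*(\calB_x^{s_1,s_2})\cong H^*\big((\calB_x^{s_1,s_2})^{Z^0(L)}\big)$ equivariantly for the component-group action. Now each connected component of $\calB^{Z^0(L)}$ is $L$-equivariantly isomorphic to $\calB(L)\cong\calB^P$, so $(\calB_x^{s_1,s_2})^{Z^0(L)}$ is a disjoint union of copies of $\calB^{P,s_1,s_2}_x$, and $H^*((\calB_x^{s_1,s_2})^{Z^0(L)})\cong H^*(\calB^{P,s_1,s_2}_x)^{\oplus m}$ as $L(s_1,s_2,x)/L(s_1,s_2,x)^0$-modules. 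Together with Lemma~\ref{lem:component_surjective}(2) this forces $\chi$ to occur in $H^*(\calB^{P,s_1,s_2}_x)$, hence in $H^*((\widehat{\calB_x^{s_1,s_2}})^{Z^0(L)})$, hence (by localization again) in $H^*(\widehat{\calB_x^{s_1,s_2}})$. The $Z^0(L)$-trick is the key idea you are missing; the decomposition theorem is not needed here at all.
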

\begin{proof}
Let $Z^0(L)$ be the identity component of the center of $L$.
We have \[H^*(\calB_x^{s_1,s_2})\cong H^*((\calB_x^{s_1,s_2})^{Z^0(L)}).\]
In particular, for any simple $C(x,s_1,s_2)$-module $\chi$, the multiplicity $[H^*(\calB_x^{s_1,s_2}):\chi]$ is non-zero if and only if $[H^*((\calB_x^{s_1,s_2})^{Z^0(L)}):\chi]$ is non-zero.

Let $\calB^P$ be the variety  consisting of  all the Borel subgroups contained in $P$. By \cite[(8.8.28)]{CG}, the flag variety $\calB(L)$ of $L$ is isomorphic to $\calB^P$. Also, by \cite[Proposition~8.8.2]{CG}, each connected component of $\calB^{Z^0(L)}$ is $L$-equivariantly isomorphic to $\calB(L)$. This in turn implies that  $(\calB_x^{s_1,s_2})^{Z^0(L)}$ is a disjoint union subsets, each piece a union of connected components, $L(s_1,s_2,x)$-equivariantly isomorphic to $\calB(L)_x^{s_1,s_2}\cong \calB^{P,s_1,s_2}_x$.
Therefore, $H^*((\calB_x^{s_1,s_2})^{Z^0(L)})\cong H^*(\calB^{P,s_1,s_2}_x)^{\oplus m}$ for some $m>0$ as $L(s_1,s_2,x)/L(s_1,s_2,x)^0$-modules. 

By Lemma \ref{lem:component_surjective}, for any simple $C(s_1,s_2,x)$-module $\chi$ such that $[H^*((\calB_x^{s_1,s_2})^{Z^0(L)}):\chi]\neq 0$, we also have $[H^*(\calB^{P,s_1,s_2}_x):\chi]\neq0$. 
Recall that by definition we have $\calB^{P,s_1,s_2}_x\subseteq \widehat{\calB_x^{s_1,s_2}}$, and hence $(\widehat{\calB_x^{s_1,s_2}})^{Z^0(L)}$ contains $\calB^{P,s_1,s_2}_x$ as a union of connected components. 
Therefore, 
 we then also have $[H^*(\widehat{\calB_x^{s_1,s_2}})^{Z^0(L)}):\chi]\neq0$. 
This in turn implies that $[H^*(\widehat{\calB_x^{s_1,s_2}}):\chi]\neq0,$ which finishes the proof.
\end{proof}

\subsection{Classification of irreducible representations at non-torsion points}\label{subsec:irrep}
Let $(a,t)\in\catA_T\times E$ be a closed point, such that $t\in E$ is not a torsion point. 

Let $DD(a,t)=((s_1,q_1),(s_2,q_2))\in (T\times S^1)^2$ be as defined in \S~ \ref{subsec:Pontryagin}. The condition that $t$ is non-torsion is equivalent to saying that $q_1$ and $q_2$ are not simultaneously roots of unity. Let $T(a,t)<T\times S^1$ be the closed subgroup generated by $(s_1,q_1)$ and $(s_2,q_2)$.
Let $x\in \calN^{T(a,t)}$ be a nilpotent element fixed by the subgroup $T(a,t)$. Then, we have $s_ixs_i^{-1}=q_ix$ for $i=1,2$. 
Let $T(a)<T$ be defined as in \eqref{eq:fixeda}, and let $G(a)$ be the centralizer of $T(a)$ in $G^{\alg}$. By Lemma \ref{lem:DDclosedgp}, $T(a)$ is generated by $s_1$ and $s_2$, hence $G(a)=G(s_1,s_2)$ where the latter is defined in \S~\ref{subsec:nonvan}. Let $G(a,x)<G(a)$ be the centralizer of $x$ in $G(a)$, then we have $G(a,x)=G(s_1,s_2,x)$. The component group $C(a,x)$ of $G(a,x)$ is equal to $C(s_1,s_2,x)$ in \S~\ref{subsec:nonvan}.

We need some basic properties about  equivariant elliptic cohomology of the Steinberg variety $Z$.
\begin{lemma}\label{lem:G&A}
Let $\pi:=\catA_\pi^Z:\catA^Z_{T\times S^1}\to \catA^Z_{G\times S^1}$ be the natural projection induced by $\catA_\pi:\catA_{T\times S^1}\to \catA_{G\times S^1}$.  Then \[\pi^*\Xi_{G\times S^1}(Z)\cong \Xi_{T\times S^1}(Z)\] as sheaves of  algebras on $\catA^Z_{T\times S^1}$.
\end{lemma}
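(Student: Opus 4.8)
The plan is to derive this as the un-pushed-forward, and $T$-specialized, version of Corollary~\ref{cor:Xi_loc}, applied to the inclusion $\phi\colon T\times S^1\hookrightarrow G\times S^1$, together with an affine base-change argument.

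First I would identify the base scheme. By Assumption~\ref{assum:G-equiv}(4) applied to $\phi$ and the $(G\times S^1)$-space $Z$, there is a canonical isomorphism of $\calO_{\catA_{T\times S^1}}$-algebras $\phi_\catA^*\calE^0_{G\times S^1}(Z)\cong \calE^0_{T\times S^1}(Z)$, where $\phi_\catA=\pi\colon \catA_{T\times S^1}\to\catA_{G\times S^1}$ is the quotient map. Taking relative $\Spec$, and using that relative $\Spec$ commutes with arbitrary base change, this identifies $\catA^Z_{T\times S^1}\cong \catA^Z_{G\times S^1}\times_{\catA_{G\times S^1}}\catA_{T\times S^1}$, under which the projection $\pi$ of the lemma becomes the first projection. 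I would also record that $\pi^{G\times S^1}_Z\colon\catA^Z_{G\times S^1}\to\catA_{G\times S^1}$ is an affine morphism, so its pushforward commutes with the base change $\phi_\catA$.

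Next I would establish the isomorphism of the underlying line bundles. Writing $\Xi_{G\times S^1}(Z)=\Theta(\proj_2)\otimes\Theta(\proj_1^*b_1^*N_1)\otimes\Theta(\proj_2^*b_2^*N_2)^{-1}$, each tensor factor is $\Theta_{G\times S^1}$ of a (virtual) $(G\times S^1)$-equivariant vector bundle on $Z$. Since $\Theta$ extends to a group homomorphism $K_{G\times S^1}(Z)\to\Pic(\catA^Z_{G\times S^1})$ (Proposition~\ref{rmk: GKVClass_general}(3)) and is compatible with restriction of the structure group (Corollary~\ref{cor:Thom_Restrict}), each factor satisfies $\Theta_{T\times S^1}(V)\cong\pi^*\Theta_{G\times S^1}(V)$; tensoring these gives $\Xi_{T\times S^1}(Z)\cong\pi^*\Xi_{G\times S^1}(Z)$ as line bundles on $\catA^Z_{T\times S^1}$.

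Finally I would upgrade this to an isomorphism of sheaves of algebras. Since $Z\circ Z=Z$, the pushforwards $\pi^{T\times S^1}_{Z*}\Xi_{T\times S^1}(Z)$ and $\pi^{G\times S^1}_{Z*}\Xi_{G\times S^1}(Z)$ carry convolution products, and Corollary~\ref{cor:Xi_loc} gives an isomorphism of the former with $\phi_\catA^*$ of the latter commuting with convolution. By the affine base change recorded above, $\phi_\catA^*\pi^{G\times S^1}_{Z*}\Xi_{G\times S^1}(Z)\cong\pi^{T\times S^1}_{Z*}\pi^*\Xi_{G\times S^1}(Z)$; since $\pi^{T\times S^1}_Z$ is affine, $\pi^{T\times S^1}_{Z*}$ is fully faithful on quasi-coherent sheaves, so the algebra isomorphism descends to $\catA^Z_{T\times S^1}$, yielding the claim. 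The main point requiring care — though it is really bookkeeping rather than a genuine obstacle — is to check that the isomorphism obtained this way agrees with, or at least intertwines the same algebra structures as, the factor-by-factor line-bundle isomorphism of the previous paragraph; both are assembled from the canonical Thom-restriction isomorphisms of Corollary~\ref{cor:Thom_Restrict}, so this is a matter of unwinding definitions. In short, the lemma is essentially a reformulation of Corollary~\ref{cor:Xi_loc}.
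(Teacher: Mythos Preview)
Your argument is correct and is in fact more explicit than the paper's own one-line sketch. The paper simply says the lemma follows from local freeness of $\pi_{Z*}\Xi_{T\times S^1}(Z)$ together with faithfully flat descent along $\catA_{T\times S^1}\to\catA_{G\times S^1}$, by analogy with the $K$-theory statement in Chriss--Ginzburg; your route via Corollary~\ref{cor:Xi_loc} and affine base change is a cleaner packaging of the same idea, since Corollary~\ref{cor:Xi_loc} (itself a consequence of Corollary~\ref{cor:Thom_Restrict}) already encodes the change-of-groups compatibility that the paper's descent argument would re-derive.

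One small point of phrasing: the convolution product lives on the pushforward $\pi_{Z*}\Xi(Z)$ over $\catA_{T\times S^1}$ (resp.\ $\catA_{G\times S^1}$), not on $\Xi(Z)$ over $\catA^Z$, so saying the ``algebra isomorphism descends to $\catA^Z_{T\times S^1}$'' is a slight abuse --- as you yourself note. What you actually get is an isomorphism of line bundles on $\catA^Z_{T\times S^1}$ whose pushforward is the algebra isomorphism of Corollary~\ref{cor:Xi_loc}; this is exactly what the lemma means, and your final paragraph handles it correctly.
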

The $K$-theory analogue of this lemma is \cite[(6.2.(6))]{CG}. In the elliptic case, it follows from the fact that $\pi_{Z*}\Xi_{T\times S^1}(Z)$ is locally free on $\catA_{G\times S^1}$, together with faithfully flat descent for the map $\catA_\pi:\catA_{T\times S^1}\to \catA_{G\times S^1}$, similar to its $K$-theory analogue.

For any $t\in E$, let $\calH_t$ be the pull-back of $\calH$ to the subvariety $\catA_G\times\{t\}\subseteq \catA_G\times E$. Similarly, for closed point $(a',t)\in \catA_G\times E$, let $\calH_{(a',t)}$ be the pull-back of $\calH$ to $(a',t)$.

\begin{prop}[\cite{CG}, (8.1.6)]
Let $a\in\catA_T$ and   $a'=\pi(a)\in\catA_G$.  Then, we have an isomorphism of  algebras \[\calH_{(a',t)}\cong H_*(Z^{T(a,t)};\bbC).\]
\end{prop}
\begin{proof}
Let $\iota:T(a,t)\inj T\times S^1\subseteq G\times S^1$ be the natural embedding, and let $\catA_\iota:\catA_{T(a,t)}\to \catA_{G\times S^1}$ be the induced map.
By Lemma~\ref{lem:G&A}, we have $\pi^*\Xi_{G\times S^1}(Z)\cong \Xi_{T\times S^1}(Z)$ as vector bundles on $\catA^Z_{T\times S^1}$, hence $\iota_\catA^*\pi_{Z*}\Xi_{T\times S^1}(Z)\cong \pi_{Z*}\Xi_{T(a,t)}(Z)$. In particular, \[\calH_{(a',t)}\cong (\pi_{Z*}\Xi_{T(a,t)}(Z))\otimes_{\catA_{T(a,t)}}\bbC_{(a,t)}.\]
By Theorem~\ref{thm:RR}, we have an isomorphism $\pi_{Z*}\Xi_{T(a,t)}(Z)\otimes_{\catA_{T(a,t)}}\bbC_{(a,t)}\cong H_*(Z^{T(a,t)};\bbC)$.
\end{proof}

Apply \S~\ref{subsec:DecompThm} to $\mu:\widetilde{\calN}^{T(a,t)}\to \calN^{T(a,t)}$ with the action of $G(a)$, we get the following corollary of Proposition~\ref{prop:non-zero_multiplicity}.
\begin{corollary}\label{cor:classify}
Assume $t\in E$ is a non-torsion point. 
Then we have the following properties. 
\begin{enumerate}
\item For $a\in \catA_G$,  $x\in \calN^{T(a,t)}$, and $\chi$  an irreducible representation of $C(a,x)$ so that $[H^*(\calB_x^{T(a)});\chi]\neq0$, the $\bbC$-vector space  $H_*(\calB^{T(a)}_x)_\chi$ has a natural $\calH_t$-module structure.
\item For each triple $(a,x,\chi)$ with $a, x$, and $\chi$ as in (1), the $\calH_t$-module $H_*(\calB^{T(a)}_x)_\chi$ has an irreducible top, denoted by $L_{a,x,\chi}$. Moreover, two such irreducible representations are isomorphic iff the two triples are conjugate, and each irreducible representations of $\calH_t$ is isomorphic to one of these. In particular, the set of irreducible representations of $\calH_t$ up to isomorphism is in one-to-one correspondence with the set of triples $(a,x,\chi)$ up to conjugation;
\item Let $(a,y,\kappa)$ be another triple. The multiplicity of the simple object $L_{a,x,\chi}$ in $H_*(\calB^{T(a)}_y)_\kappa$ is given by $\sum_k\dim H^k(i_x^!IC_{x,\chi})_{y,\kappa}$.
\end{enumerate} 
\end{corollary}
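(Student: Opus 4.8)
The plan is to deduce all three assertions from the immediately preceding proposition, which identifies $\calH_{(a,t)}\cong H_*(Z^{T(a,t)};\bbC)$ as algebras, together with the decomposition-theorem package of \S~\ref{subsec:DecompThm} and the non-vanishing theorem (Proposition~\ref{prop:non-zero_multiplicity}). Since $\calH_t$ is a coherent sheaf of algebras on $\catA_G$, every irreducible $\calH_t$-module is supported at one closed point and is therefore a module over the fibre $\calH_{(a,t)}$, while conversely every finite-dimensional $\calH_{(a,t)}$-module is a representation of $\calH_t$; so it suffices to describe the simple $H_*(Z^{T(a,t)};\bbC)$-modules. Fixing a lift of $a$ to $\catA_T$ and writing $DD(a,t)=((s_1,q_1),(s_2,q_2))$, the hypothesis that $t$ is non-torsion means exactly that $q_1,q_2$ are not simultaneously roots of unity, so we are in the setting of \S~\ref{subsec:nonvan}. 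First I would note $Z^{T(a,t)}\cong\widetilde{\calN}^{T(a,t)}\times_{\calN^{T(a,t)}}\widetilde{\calN}^{T(a,t)}$, with $\widetilde{\calN}^{T(a,t)}$ smooth, $\mu\colon\widetilde{\calN}^{T(a,t)}\to\calN^{T(a,t)}$ projective and equivariant for $G(a)=G(s_1,s_2)$ (Lemma~\ref{lem:DDclosedgp}), and $\calN^{T(a,t)}$ a union of finitely many $G(a)$-orbits; moreover the fibre of $\mu$ over $x$ is $\calB^{s_1,s_2}_x=\calB^{T(a)}_x$ because the $S^1$-factor acts trivially on $\calB$.

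Feeding $\mu$ with the $G(a)$-action into \S~\ref{subsec:DecompThm} then yields at once: the simple perverse constituents of $\mu_*\bbC$ are labelled by pairs $(\bbO,\chi)$, where $\bbO$ is a $G(a)$-orbit in $\calN^{T(a,t)}$ and $\chi$ an irreducible representation of the isotropy component group $C(a,x)=C(s_1,s_2,x)$ for $x\in\bbO$; the set $\{L_{\phi,0}\mid L_{\phi,0}\neq 0\}$ is a complete irredundant list of simple $H_*(Z^{T(a,t)};\bbC)$-modules; the standard module attached to $\phi=(\bbO,\chi)$ is the submodule $H_*(\calB^{T(a)}_x)_\chi$ of $H_*(\calB^{T(a)}_x)$ cut out by the $C(a,x)$-action, with simple top $L_{\phi,0}$; and the multiplicity formula of part (3) is exactly the one in \S~\ref{subsec:DecompThm} applied to $\phi=(\bbO_x,\chi)$ and $\psi=(\bbO_y,\kappa)$. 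This gives (1) and (3) immediately, and reduces (2) to the identity
\[
\{(\bbO,\chi)\mid L_{(\bbO,\chi),0}\neq 0\}=\{(\bbO,\chi)\mid \chi\text{ occurs in }H^*(\calB^{T(a)}_x)\},
\]
after which passing from $\catA_T$ back to $\catA_G=\catA_T/W$ produces the qualifier ``up to conjugation'' (modulo the $W$-action on $a$ and the $G(a)$-action on $(x,\chi)$).

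For the displayed identity, the inclusion $\subseteq$ is easy: if $L_{(\bbO,\chi),0}\neq 0$ then $IC_{(\bbO,\chi)}$ is a direct summand of $\mu_*\bbC$, so its costalk at $x\in\bbO$, which over the smooth open stratum $\bbO$ is $\chi$ up to shift, is a summand of the costalk $i_x^!\mu_*\bbC$ computing $H_*(\calB^{T(a)}_x)$; hence $\chi$ occurs in $H^*(\calB^{T(a)}_x)$. The reverse inclusion is the crux, and is precisely what Proposition~\ref{prop:non-zero_multiplicity} with the parabolic gadget of \S~\ref{subsec:nonvan} delivers: if $\chi$ occurs in $H^*(\calB^{T(a)}_x)$, then by Proposition~\ref{prop:non-zero_multiplicity} it occurs in $H^*(\widehat{\calB^{T(a)}_x})$, the costalk at $x$ of $(\mu|_{\widehat{\bbO}})_*\bbC$, where $\widehat{\bbO}$ is a union of connected components of $\widetilde{\calN}^{T(a,t)}$ with $\mu(\widehat{\bbO})=\overline{\bbO}$ (the lemma preceding Proposition~\ref{prop:non-zero_multiplicity}); since $x$ lies in the $G(a)$-orbit that is open in the target $\overline{\bbO}$, a boundary-orbit $IC$-sheaf has vanishing costalk at $x$, so the only simple perverse summands of $(\mu|_{\widehat{\bbO}})_*\bbC$ contributing to $H^*(\widehat{\calB^{T(a)}_x})$ are the $IC_{(\bbO,\chi')}$, forcing $IC_{(\bbO,\chi)}$ to be a summand of $(\mu|_{\widehat{\bbO}})_*\bbC$, hence of $\mu_*\bbC$ because $\widehat{\bbO}$ is a union of components; thus $L_{(\bbO,\chi),0}\neq 0$. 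The main obstacle is exactly this reverse inclusion: a priori the occurrence of $\chi$ in the costalk $H^*(\calB^{T(a)}_x)$ could be contributed entirely by $IC$-sheaves of strictly larger orbits, and only the parabolic-reduction argument of \S~\ref{subsec:nonvan}---which moves $x$ into an open orbit while keeping control of the component-group action---rules this out.
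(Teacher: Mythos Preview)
Your proposal is correct and follows essentially the same approach as the paper, which simply states that the corollary ``follows directly from Proposition~\ref{prop:non-zero_multiplicity} by the same argument as in \cite[\S~8.8]{CG}.'' You have unpacked that reference accurately: the reduction to the fibre algebra $\calH_{(a,t)}\cong H_*(Z^{T(a,t)};\bbC)$, the application of the decomposition-theorem package of \S~\ref{subsec:DecompThm} to $\mu\colon\widetilde{\calN}^{T(a,t)}\to\calN^{T(a,t)}$ with its $G(a)$-action, and the use of Proposition~\ref{prop:non-zero_multiplicity} together with $\mu(\widehat{\bbO})=\overline{\bbO}$ to establish the non-trivial inclusion in (2) are exactly the steps of \cite[\S~8.8]{CG} transported to the present two-parameter setting.
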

Here $IC_{x,\chi}$ is the intersection cohomology sheaf on $\calN^{T(a)}$ associated to the local system $\chi$ on the orbit $G(a)x$, and $H^k(i_y^!IC_{x,\chi})_{\kappa}$ is the $C(a,y)$-isotypical component transforms as the irreducible $C(a,y)$-module $\kappa$.
This corollary follows directly from Proposition~\ref{prop:non-zero_multiplicity} by the same argument as in \cite[\S~8.8]{CG}.

The following remark was communicated to the authors by Sasha Braverman at the Park City Mathematics Institute 2015.
\begin{remark} As a direct application of \cite[Corollary~1.10]{Kal}, $H^*(\calB_x^{T(a)})$ is generated by algebraic cycles. In particular, the Euler characteristic of $\calB_x^{T(a)}$ is the same as its dimension.
\end{remark}

\subsection{Higgs bundle interpretations}\label{subsec:Higgs}

For any principle $G^{\alg}$-bundle $\calP$ on $E^\vee$, let $\ad(\calP)$ be the Lie algebra bundle $\calP\times_{G^{\alg}}\fg$. For any $t\in E$, denote the line bundle $\calO(\{0\}-\{t\})$ on $E^\vee$ by $\calL_t$. A $\calL_t$-valued {\it Higgs field} is a holomorphic section of the vector bundle $\ad(\calP)\otimes_{\calO_{E^\vee}}\calL_t$. (For a $\fg$-bundle, we use the terminology {\it holomorphic section} instead of {\it regular section} to avoid confusion with regularity of elements in $\fg$.) The pair $(\calP,x)$ is called a {\it Higgs bundle}. 
A $B$-structures on the Higgs bundle $(\calP,x)$ is a principal $B$-bundle $\calP'$ on $E^\vee$, together with a holomorphic section $x'$ of $\ad(\calP')\otimes_{\calO_{E^\vee}}\calL_t$, such that $(\calP,x)$ is induced from $(\calP',x')$.  
Let $\bfB_{\calP,x}$ be the variety of all  $B$-structures on the Higgs bundle $(\calP,x)$.

\begin{theorem}\label{thm:Higgs}
For  $t\in E$ and  $a\in \catA_G$, let $\calL_t$ be the line bundle corresponding to $t$, and let $\calP_a$ be the principal $G^{\alg}$-bundle corresponding to $a$. Then: 
\begin{enumerate}
\item Holomorphic sections of $\ad(\calP_a)\otimes_{\calO_{E^\vee}}\calL_t^{-1}$ are in one-to-one correspondence with $\fg^{T(a,t)}$;
\item There is an isomorphism of algebraic varieties $\bfB_{\calP_a,x}\cong \calB_x^{T(a)}$, where we use $x$ to denote both the holomorphic section and the Lie algebra element;
\item There is an isomorphism of algebraic groups $Aut(\calP_a,x)\cong G(a,x)$, which intertwines the action of   $Aut(\calP_a,x)$ on $\bfB_{\calP_a,x}$ and the action of $G(a,x)$ on $\calB_x^{T(a)}$.
\end{enumerate}
\end{theorem}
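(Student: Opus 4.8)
The plan is to unwind the standard dictionary between principal $G^{\alg}$-bundles on the elliptic curve $E^\vee$ that are classified by $\catA_G$ and commuting pairs of semisimple elements, and to see that adding a Higgs field in $\calL_t$ corresponds precisely to adding one more ``twisted'' commutation datum, namely the element $x$ together with its $t$-eigenvalue behaviour. First I would recall (using the material of \S~\ref{subsec:Pontrjagin}) that the bundle $P_a$ can be described concretely: pull $a\in\catA_G$ back to $a'\in\catA_T$, write $DD(a')=(s_1,s_2)\in T^2$, and realize $P_a$ as the bundle whose automorphisms over a suitable description are governed by the subgroup $T(a)=T(a',t)\cap T$. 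More usefully, I would use that, after fixing $E^\vee\cong S^1\times S^1$, a principal $G^{\alg}$-bundle corresponding to $a$ is the bundle associated to the representation $\pi_1(E^\vee)\cong\ZZ^2\to G^{\alg}$, $(1,0)\mapsto s_1$, $(0,1)\mapsto s_2$ (this is the standard flat picture of semistable topologically trivial bundles on an elliptic curve; over $\C$ every such bundle is flat). Then $\ad(P_a)$ is the flat bundle associated to the adjoint action, and a global holomorphic section of $\ad(P_a)$ is an element of $\fg$ fixed by $\Ad(s_1)$ and $\Ad(s_2)$, i.e. an element of $\fg^{T(a)}$.

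For part (1) the new ingredient is the twist by $\calL_t=\calO(\{0\}-\{t\})$, which is the degree-zero line bundle on $E^\vee$ corresponding to the point $t\in E\cong\Pic^0(E^\vee)$; as a flat line bundle it is given by the character $\pi_1(E^\vee)\cong\ZZ^2\to\C^*$, $(1,0)\mapsto q_1^{-1}$, $(0,1)\mapsto q_2^{-1}$, where $DD(a,t)=((s_1,q_1),(s_2,q_2))$ (the signs/conventions I would match to the definition of $\vartheta$ being an odd function and to the normalization $\calL_t=\calO(\{0\}-\{t\})$). Hence a global holomorphic section of $\ad(P_a)\otimes\calL_t^{-1}$ is an element $x\in\fg$ with $\Ad(s_i)x=q_i x$ for $i=1,2$, which is exactly the condition $x\in\fg^{T(a,t)}$ (the subgroup $T(a,t)<T\times S^1$ acts on $\calN$ by the twisted action $q^{-1}\cdot(s\cdot-\cdot s^{-1})$ as in \S~\ref{subsec:nonvan}, and its fixed points are precisely the $x$ with $\Ad(s_i)x=q_i x$). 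I would note here that holomorphicity is automatic: a $C^\infty$ flat section of a flat bundle is automatically holomorphic, since the flat structure and the holomorphic structure agree, so ``holomorphic section of $\ad(P_a)\otimes\calL_t$'' really is just the $\pi_1$-invariants.

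For parts (2) and (3), once $(P_a,x)$ is identified with the flat datum, a $B$-structure on $(P_a,x)$ is a reduction of the flat $G^{\alg}$-bundle to a flat $B$-bundle compatible with $x$; since the holonomies $s_1,s_2$ are semisimple and $x$ lies in the appropriate twisted eigenspace, a flat reduction to $B$ is the same as a choice of Borel subgroup $B_0\subset G^{\alg}$ containing $s_1,s_2$ with $x\in\Lie B_0$ — and the holonomy then automatically preserves this Borel. This sets up a canonical bijection $\bfB_{P_a,x}\cong\{B_0\in\calB\mid s_1,s_2\in B_0,\ x\in\Lie B_0\}=\calB^{T(a)}_x$; I would upgrade it to an isomorphism of algebraic varieties by exhibiting it as a map over the moduli functors (the left side is the fibre of the relative flag bundle of $P_a$ over the zero section, cut out by the Higgs-field condition, and the right side is cut out inside $\calB$ by closed conditions, and the flat-uniformization gives an algebraic iso of the ambient spaces). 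For (3), an automorphism of the pair $(P_a,x)$ is a flat gauge transformation fixing $x$, i.e. an element $g$ with $g s_i g^{-1}=s_i$ and $\Ad(g)x=x$, which is exactly $G(s_1,s_2,x)=G(a,x)$; this identification is $\Aut$-equivariant on the flag varieties by construction.

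The main obstacle I expect is part (2): making the correspondence between $B$-structures on a Higgs bundle and Borels containing the holonomy into an honest isomorphism of \emph{schemes} (not just a bijection on points), and handling the case where $P_a$ is semistable but the holonomy representation is only well-defined up to the subtleties of the $S$-equivalence class — i.e. checking that for the specific representative $P_a$ coming from $DD(a)$ the flat picture is faithful enough. This is presumably where the help of Eyal Markman acknowledged in the introduction enters; the cleanest route is probably to work relatively over $\catA_G$, identify $\widetilde\calN^{T(a,t)}\to\calN^{T(a,t)}$ fibrewise with the universal Higgs-field/flag-reduction data, and invoke that a family of flat bundles on $E^\vee$ with semisimple holonomy is rigid enough that reductions of structure group are detected on a single fibre. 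I would also need to be slightly careful that over a non-algebraically-closed base the component group statement in (3) is the scheme-theoretic $\Aut$, but since we are ultimately working over $\C$ this is harmless.
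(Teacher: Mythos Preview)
Your proposal is correct and follows essentially the same route as the paper. The paper factors the argument through an intermediate Proposition~\ref{prop:flat_bundle} about flat bundles (local systems) rather than holomorphic bundles, proving the three claims for the pair $(a_\rho,x)$ where $a_\rho$ is the local $\fg$-system with monodromy $s_1,s_2$ and $t_\eta$ is the rank-one local system with monodromy $q_1,q_2$; it then invokes Narasimhan--Seshadri \cite{NS} to pass from flat to holomorphic (a semistable degree-zero bundle on $E$ admits a unitary flat connection, and for the adjoint bundle a section is flat iff holomorphic). Your argument does the same thing in one sweep, and your remark that ``holomorphicity is automatic'' is exactly the content of this last step. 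Your worry about upgrading the bijection in (2) to a scheme isomorphism and about $S$-equivalence is reasonable, but the paper treats this at the same level of detail you do; the proof there simply says that $B$-structures on $(a_\rho,x)$ are ``parametrized by $\calB_x^{s_1,s_2}$'' without further scheme-theoretic elaboration.
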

We prove this theorem in the end of this section. First, let us mention an application. 
Let $Higgs^{nil}_t(E^\vee)$ be the set of isomorphism classes of triples $(\calP,x,\chi)$, where $\calP$ is a semi-simple semi-stable topologically trivial principle $G^{\alg}$-bundle on $E^\vee$, $x$ is a nilpotent holomorphic section of $\ad(\calP)\otimes_{\calO_{E^\vee}}\calL_t^{-1}$, and $\chi$ is an irreducible representation of $Aut(\calP,x)/Aut^0(\calP,x)$ with non-trivial multiplicity in $H^*(\bfB_{\calP,x})$. Here, a nilpotent section of $\ad(\calP)\otimes_{\calO_{E^\vee}}\calL_t$ is a section such that the image at each fiber is an nilpotent element of $\fg$.

The following is a direct corollary of Theorem~\ref{thm:Higgs}.
\begin{corollary}\label{cor:Higgs_Irrep}
Let $t\in E$ be a non-torsion point, then the irreducible objects in the category Mod-$\calH_t$ are in one-to-one correspondence with  $Higgs^{nil}_t(E^\vee)$.
\end{corollary}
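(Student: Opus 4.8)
The plan is to deduce Corollary~\ref{cor:Higgs_Irrep} by combining the geometric classification in Corollary~\ref{cor:classify} with the dictionary established in Theorem~\ref{thm:Higgs}, so essentially no new geometry is needed; the work is purely bookkeeping about which data match up. First I would recall that by Corollary~\ref{cor:classify}.(2), for a non-torsion point $t\in E$ the irreducible $\calH_t$-modules are in bijection with conjugacy classes of triples $(a,x,\chi)$, where $a\in\catA_G$, $x\in\calN^{T(a,t)}$, and $\chi$ is an irreducible representation of $C(a,x)$ occurring with nonzero multiplicity in $H^*(\calB_x^{T(a)})$. On the other side, $Higgs^{nil}_t(E^\vee)$ consists of isomorphism classes of triples $(P,x',\chi')$ with $P$ a semi-simple semi-stable topologically trivial principal $G^{\alg}$-bundle on $E^\vee$, $x'$ a nilpotent holomorphic section of $\ad(P)\otimes\calL_t^{-1}$, and $\chi'$ an irreducible representation of $Aut(P,x')/Aut^0(P,x')$ appearing in $H^*(\bfB_{P,x'})$.

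The key step is to check that the three pieces of data correspond under Theorem~\ref{thm:Higgs}. For the first component: closed points $a\in\catA_G$ are exactly isomorphism classes of semi-simple semi-stable topologically trivial principal $G^{\alg}$-bundles $P_a$ on $E^\vee$ (this is the moduli interpretation recalled in \S~\ref{subsec:Pontrjagin}), so $a\leftrightarrow P_a$ is a bijection at the level of these points. For the second component: Theorem~\ref{thm:Higgs}.(1) says a nilpotent holomorphic section $x'$ of $\ad(P_a)\otimes\calL_t^{-1}$ is the same datum as an element $x\in\fg^{T(a,t)}$ that is nilpotent, i.e. $x\in\calN^{T(a,t)}$ — matching exactly the second entry of the triple $(a,x,\chi)$. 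For the third: Theorem~\ref{thm:Higgs}.(3) gives an isomorphism of algebraic groups $Aut(P_a,x)\cong G(a,x)$, hence an isomorphism of component groups $Aut(P_a,x)/Aut^0(P_a,x)\cong C(a,x)$, and Theorem~\ref{thm:Higgs}.(2) gives a $G(a,x)$-equivariant isomorphism $\bfB_{P_a,x}\cong\calB_x^{T(a)}$, so $H^*(\bfB_{P_a,x})\cong H^*(\calB_x^{T(a)})$ as representations of these (isomorphic) component groups. Therefore an irreducible $\chi'$ of $Aut(P_a,x)/Aut^0(P_a,x)$ appears in $H^*(\bfB_{P_a,x})$ if and only if the corresponding $\chi$ of $C(a,x)$ appears in $H^*(\calB_x^{T(a)})$ with nonzero multiplicity.

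Finally I would observe that both sides are taken up to the appropriate equivalence: on the Higgs side, isomorphism classes of triples; on the Hecke side, conjugation classes of triples $(a,x,\chi)$ under $G^{\bbC}$ (equivalently, after fixing $a$, under $G(a)$). Since $a\leftrightarrow P_a$ already identifies $G^{\bbC}$-conjugacy of the pair $(s_1,s_2)$ (or equivalently of $a$) with isomorphism of $P_a$, and the remaining freedom in $(x,\chi)$ is precisely the $G(a)$-action matched with the $Aut(P_a)$-action, the two quotients coincide. Assembling these identifications termwise yields a bijection between the indexing sets of Corollary~\ref{cor:classify}.(2) and $Higgs^{nil}_t(E^\vee)$, hence a bijection between the irreducible objects of $\calH_t\text{-mod}$ and $Higgs^{nil}_t(E^\vee)$, which is the claim. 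I expect the only subtle point to be carefully matching the equivalence relations (conjugacy versus isomorphism) and confirming the nonzero-multiplicity condition transports correctly under the equivariant isomorphism of Theorem~\ref{thm:Higgs}.(2)--(3); everything else is a direct substitution, so the main obstacle is really just ensuring Theorem~\ref{thm:Higgs} is stated in sufficient generality (semi-simple semi-stable bundles, nilpotent sections) to cover all the triples appearing in Corollary~\ref{cor:classify}.
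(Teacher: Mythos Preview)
Your proposal is correct and is exactly the approach the paper takes: the paper simply declares this to be ``a direct corollary of Theorem~\ref{thm:Higgs}'' without further argument, and what you have written is precisely the bookkeeping that makes this explicit, matching the triples $(a,x,\chi)$ of Corollary~\ref{cor:classify}(2) with the Higgs triples $(P,x',\chi')$ via the three parts of Theorem~\ref{thm:Higgs}.
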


The proof of Theorem~\ref{thm:Higgs} will occupy the rest of this section. As an intermediate step, we interpret $Higgs^{nil}_t(E^\vee)$ in terms of principal bundles with flat connections. 

Let $\pi_1:=\pi_1(E,0)$ be the fundamental group of $E$ with base point $0\in E$, and let $\tilde{E}\to E$ be the universal cover. Denote the two generators of $\pi_1$ by $\gamma_1$ and $\gamma_2$. 
For $a\in\catA_G$, let $(s_1,s_2)=DD(a)\in (T^{\bbC})^2$ as in \S~\ref{subsec:Pontryagin},  and for $t\in E$ we associate $(q_1,q_2)\in (S^1)^2\subseteq \bbC^{*2}$. Note that by construction $s_i$ is unitary, in the sense that $s_i$ lies in the maximal compact subgroup $T\subset T^{\bbC}$.  Let $\rho:\pi_1\to T\subset G^{\bbC}$ be the group homomorphism sending $\gamma_i$ to $s_i$, and  let $\eta:\pi_1\to S^1\subset \bbC^*$  be the group homomorphism sending $\gamma_i$ to $q_i$, for $i=1,2$.

The group $\pi_1$ acts on $\tilde{E}$ by deck transform,  and acts on $\fg$ via $\rho:\pi_1\to T\subset G^{\bbC}$ and the adjoint action of $G$ on $\fg$. Define $\tilde{a_\rho}$ to be the local $\fg$-system on $\tilde{E}$, endowed with the diagonal $\pi_1$-action. The quotient $\tilde{a_\rho}/\pi_1$ is a local $\fg$-system on $E$, denoted by $a_\rho$. Note that as a vector bundle on $E$, $a_\rho$ is the adjoint bundle $\ad(a)$. The construction above endows $\ad(a)$ with a unitary flat connection. 

Similarly, we have the rank-1 local system $\tilde{t}_\eta$ on $\tilde{E}$, whose descent to $E$ is denoted by $t_\eta$. This local system $t_\eta$ can be considered as a unitary flat line bundle, whose underlying line bundle   is $\calL_t$.
\begin{prop}\label{prop:flat_bundle}
The followings hold.
\begin{enumerate}
\item Flat sections of $a_\rho\otimes_\bbC t_\eta^{-1}$ are in one-to-one correspondence with $\fg^{T(a,t)}$.
\item There is an isomorphism of algebraic varieties  between $ \calB_x^{T(a)}$ and $\bfB_{a_\rho,x}$, the latter being the variety of $B$-structures on the pair $(a_\rho,x)$;
\item There is an isomorphism of algebraic groups $Aut(a_\rho,x)\cong G(a,x)$, which intertwines the action of   $Aut(a_\rho,x)$ on $\bfB_{a_\rho,x}$ and the action of $G(a,x)$ on $\calB_x^{T(a)}$.
\end{enumerate}
\end{prop}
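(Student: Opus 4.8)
The plan is to exploit the equivalence between local systems on $E$ and representations of $\pi_1 = \pi_1(E,0) \cong \bbZ^2$, which is the Riemann--Hilbert correspondence in its most elementary (topological) form. A principal $G^{\alg}$-bundle with flat connection on $E$ is the same as a pair of commuting elements of $G^{\alg}$, namely the holonomies $\rho(\gamma_1) = s_1$, $\rho(\gamma_2) = s_2$ around the two generators; this is precisely how $a_\rho$ was constructed, and by the description of $\catA_G$ in \S~\ref{subsec:Pontrjagin} (and $DD(a) = (s_1,s_2)$) this pair is determined by $a$ up to simultaneous conjugation. Likewise $t_\eta$ has holonomies $q_1,q_2$. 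So I would first translate each of the three assertions into a statement purely about the commuting pair $(s_1,s_2)$ acting on $\fg$, and about the element $x$.

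\emph{Step 1 (part (1)).} A flat section of the flat bundle $a_\rho \otimes t_\eta^{-1}$ is, by definition, an element of $\fg$ (the fiber over $0$, using $\tilde E$ to trivialize) that is invariant under the $\pi_1$-action, where $\gamma_i$ acts by $\Ad(s_i)$ composed with multiplication by $q_i^{-1}$. Thus a flat section is an $x \in \fg$ with $s_i x s_i^{-1} = q_i x$ for $i = 1,2$; this says exactly $x$ is fixed by the subgroup of $T\times S^1$ generated by $(s_1,q_1)$ and $(s_2,q_2)$, i.e. $x \in \fg^{T(a,t)}$ by Lemma~\ref{lem:DDclosedgp} applied to $T\times S^1$. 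This is essentially a definition-chase.

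\emph{Steps 2--3 (parts (2) and (3)).} For part (2): a $B$-structure on $(a_\rho, x)$ is a flat $B$-subbundle, i.e. a $\pi_1$-stable choice of Borel; concretely this is a Borel subgroup $B' < G^{\alg}$ with $s_1, s_2 \in B'$ (so that the flat structure descends) and $x \in \Lie B'$ (so that $x$ is a section of $\ad(P')\otimes\calL_t$). That is exactly the defining condition for a point of $\calB_x^{T(a)}$ once one checks, via Lemma~\ref{lem:DDclosedgp}, that $s_1, s_2 \in B'$ is equivalent to $T(a) < B'$; and one must verify this bijection is an isomorphism of varieties, not just of sets, which follows because both sides are closed subvarieties of the flag variety cut out by the same incidence conditions. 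For part (3): an automorphism of the pair $(a_\rho, x)$ is a $\pi_1$-equivariant bundle automorphism preserving $x$, i.e. a $g \in G^{\alg}$ commuting with $s_1, s_2$ and fixing $x$ under $\Ad$; this is $G(s_1,s_2,x) = G(a,x)$, and the compatibility of the two actions on $\calB_x^{T(a)}$ is immediate from the constructions.

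\emph{Main obstacle.} The genuinely delicate point is making the identifications in (2) algebraic rather than merely set-theoretic, and in particular checking that the constructions $a \mapsto a_\rho$, $t \mapsto t_\eta$ are compatible with the scheme structure on $\catA_G$ and with the complex-analytic structure on the Higgs moduli — this is where one must be careful that "flat section" and "holomorphic section of $\ad(P)\otimes\calL_t$" match up, which is the content that feeds into Theorem~\ref{thm:Higgs}. Everything else is bookkeeping with the Riemann--Hilbert dictionary over $E$.
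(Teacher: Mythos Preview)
Your proposal is correct and follows essentially the same approach as the paper: both arguments unwind the Riemann--Hilbert dictionary over $E$ to translate flat sections, $B$-structures, and automorphisms into conditions on the holonomies $(s_1,s_2)$ and the element $x$, arriving at $\fg^{T(a,t)}$, $\calB_x^{s_1,s_2}$, and $G(s_1,s_2,x)$ respectively. The paper's proof is in fact more terse than yours and does not explicitly address the point you flag as the ``main obstacle'' (upgrading the bijection in (2) from sets to varieties); it treats the identification $\bfB_{a_\rho,x}\cong\calB_x^{s_1,s_2}$ as immediate from the definitions.
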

\begin{proof}
(1). Note that  $\tilde{a}_\rho\otimes_\bbC\tilde{t}_\eta$, as a local $\fg$-system on $\tilde{E}$, is trivial. Hence a flat section of $\tilde{a}_\rho\otimes_\bbC\tilde{t}_\eta$ is canonically identified with an element $x\in \fg$. On the other hand, a flat section of the local $\fg$-system  $a_\rho\otimes_\bbC t_\eta$ on $E$ is a $\pi_1$-invariant flat section of $\tilde{a}_\rho\otimes_\bbC\tilde{t}_\eta$.
Therefore, a flat section of $a_\rho\otimes_\bbC t_\eta^{-1}$ is canonically identified with an element $x\in\fg$ with the property that $q_i^{-1}\ad(s_i)x=x$ for $i=1,2$. Recall that the $T^{\bbC}\times\bbC^*$-action on $\fg$ is by $(t,q)\cdot x=q^{-1}\ad(t)x$ for $(t,q)\in T^{\bbC}\times\bbC^*$ and $x\in\fg$. Hence, $x\in\fg$ such that $q_i^{-1}\ad(s_i)x=x$ for $i=1,2$ is the same as an element $x\in \fg^{T(a,t)}$.

(2). A $B$-structure on $(a_\rho,x)$ is a Borel subgroup $B\subseteq G$ such that $s_i\in B$ for $i=1,2$, and $x\in\Lie(B)$. Note that $s_i\in B$ if and only if $B\in\calB^{s_i}$. Therefore, the $B$-structure on $(a_\rho,x)$ are parametrized by $\calB_x^{s_1,s_2}$.

(3). It is well-known that the group of automorphisms of a local system is the centralizer of the image of $\pi_1$ in $G$. Hence, $\Aut(\rho_a)\cong G(a)$. This in term implies that the group of automorphisms of the pair $(\rho_a,x)$ is $G(a,x)$.
\end{proof}

\begin{proof}[Proof of Theorem~\ref{thm:Higgs}]
It is well-known that a holomorphic $G$-bundle on $E$ admits a  unitary flat connection if and only if it is semi-stable of degree zero \cite{NS}. For the adjoint bundle of such a $G$-bundle, a section is flat if and only if it is holomorphic. Therefore, Theorem~\ref{thm:Higgs} follows directly from Proposition~\ref{prop:flat_bundle}.
\end{proof}

\section{Miscellaneous discussions}\label{sec:discussion}
In this section we discuss various mathematical and mathematico-physical contexts in which $Higgs^{nil}_{t}(E^\vee)$ plays a role. We offer some  speculations based on the correspondence Corollary~\ref{cor:Higgs_Irrep}.

\subsection{DAHA and Hitchin system on an elliptic curve}
The set $Higgs^{nil}_{t}(E^\vee)$ is introduced in \cite{BEG} in the study of representations of the double affine Hecke algebra (DAHA) of Cherednik.
\begin{remark}\label{rmk:BEG_Vass}
\begin{enumerate}
\item For $t=0$, the set $Higgs^{nil}_{t=0}(E^\vee)$ is shown to parametrize irreducible representations of the quantum torus  in \cite{BEG}.
\item Moreover, for non-torsion $t\in E$,  it was conjectured in {\it loc. cit.} that    the set $Higgs^{nil}_t(E^\vee)$ parametrizes  irreducible representations of DAHA. 
\item However, in \cite{Vas} a classification of irreducible DAHA representations has been achieved, which shows more DAHA representations  than the ones predicted by this conjecture.
\end{enumerate}\end{remark}
Therefore, Remark~\ref{rmk:BEG_Vass} gives raise to two natural problems.
\begin{problem} \label{que:BEG_Vass}
\begin{enumerate}
\item Find a family of algebras, parametrized by $t\in E$, whose irreducible representations are parametrized by $Higgs^{nil}_t(E^\vee)$ for non-torsion point $t\in E$.
\item Find a property  $\fP$, such that the irreducible integrable representations of DAHA having property $\fP$ are parametrized by $Higgs^{nil}_t(E^\vee)$. 
\end{enumerate} 
\end{problem}

Corollary~\ref{cor:Higgs_Irrep} answers Problem~\ref{que:BEG_Vass}(1).  Problem~\ref{que:BEG_Vass}(2) is to be addressed in work in progress of the first named author in collaboration with Valerio Toledano Laredo. The property $\fP$ is expected to be {\it non-torsion} when considered as a module over one of the subalgebras isomorphic to the affine Hecke algebra.

For simplicity, from now on until the end of this section, we focus on the case when $G$ is of type $A_{n-1}$.
Non-trivial local systems $\chi$ in the classification do not show up, hence $Higgs^{nil}_{t}(E^\vee)$ becomes the moduli space of nilpotent meromorphic Higgs bundles. More precisely, let $Hitch_t(n)$ be the moduli space parametrizing 
\[\left\{(\calV,X)\mid {\begin{matrix}\calV: \rank=n, \deg=0 \hbox{ vector bundle on }E\\  X:\calV\to\calV(\{t\})=\calV\otimes\calO(\{t\}), \hbox{ semi-stable}\end{matrix}}\right\}.\] It is shown in \cite[Main Theorem]{Markman} that $Hitch_t(n)$ is a Poisson variety with Poisson structure induced from the natural section $\vartheta(z-t)$ in $H^0(E;\calO(\{t\})\otimes \Omega_E^{-1})$. The symplectic leaves are determined by coadjoint orbits.

Let $L_t$ be the subvariety of $Hitch_t(n)$ in which the Higgs field $X$ is fiber-wise nilpotent. This is an analogue of the global nilpotent cone, hence we expect   $L_t$ is  to be Lagrangian. Note that due to the failure of $Hitch_t(n)$ being symplectic in general,  $L_t$ being Lagrangian means its intersection with any symplectic leaf is Lagrangian in the leaf. For special symplectic leaves, the fact that $L_t$ is Lagrangian is already proved in \cite[\S~9.3]{Markman}.

The closed points on $L_t$ are in bijection with the set $Higgs^{nil}_{t}(E^\vee)$. Indeed, composition with the natural section $\vartheta(z)\in \Hom_E(\calL_t,\Omega_E(t))$ defines a bijection $Higgs^{nil}_{t}(E^\vee)\to L_t$,  sending a Higgs field $X:\calV\to \calV\otimes\calL_t^{-1}$ to $X':\calV\to\calV(\{t\})$, the latter of  which is $X\otimes\vartheta(z)$.

The set of closed points on $L_t$ in turn can be interpreted as the set of irreducible objects in the abelian category $\Coh_{L_t}Hitch_t(n)$ of coherent sheaves on $Hitch_t(n)$ which are set-theoretically supported on $L_t$.
In particular, Corollary~\ref{cor:Higgs_Irrep} yields that the irreducible objects in the abelian categories $\Mod \calH_t$ and $\Coh_{L_t}Hitch_t(n)$ are in natural bijection.
However, the abelian categories $\Mod \calH_t$ and $\Coh_{L_t}Hitch_t(n)$ per  se are not Morita equivalent, since the former has non-trivial extensions between different simple objects while the latter does not. This opens the following question.
\begin{question}\label{conj:KoszulDual}
Assume $G$ is of type $A_{n-1}$, are there graded liftings of the abelian categories $\Mod \calH_t$ and $\Coh_{L_t}Hitch_t(n)$ which are Koszul dual?
\end{question}
As the only evidence we have so far is Corollary~\ref{cor:Higgs_Irrep}, which is a statement about irreducible objects, it is possible that in the above question the category $\Coh_{L_t}Hitch_t$ needs to be replaced by its analogue for local systems instead of Higgs bundles. Due to the focus of the present paper, we postpone the study of this duality question to future investigations. 

\subsection{Indications in 4d $\calN=2$ gauge theories}
There is a string theory $ST_{IIB}$ of type $IIB$, whose dimension reduction gives a $6d$ $\calN=(2,0)$ supersymmetric quantum field theory denoted by $S_\Gamma$. 
This dimension reduction depends on a quiver $\Gamma$, which for the purpose of the present paper, will be restricted to be of type $A_{n-1}$.
Although mathematically the theory $S_\Gamma$ has not been understood completely, various aspects of its dimension reduction to $4d$ has been of interest to mathematicians \cite{Nak}. 

More concretely, let $\fg_\Gamma$ be the Lie algebra associated to the Dynkin diagram $\Gamma$. Let $(C,x_i,\rho_i)_{i=1,\dots,k}$ be a Riemann surface $C$ with marked points $x_1,\dots,x_k$, and $\fs\fl_2$-triples $\rho_i:\fs\fl_2\to\fg_\Gamma$ for $i=1,\dots,k$. There is a $4d$ $\calN=2$ supersymmetric quantum field theory denoted by $S_\Gamma[(C,x_i,\rho_i)_{i=1,\dots,k}]$, obtained from $S_\Gamma$ by compactification over $C$ with defects at the marked point $x_i$ specified by $\rho_i$. 
The Coulomb branch of the moduli space of vacua is the moduli space of meromorphic Higgs bundles. In particular, let $C=E$ be an elliptic curve, with one marked point $t\in E$, for each $\rho: \fs\fl_2\to \fg_\Gamma$, the Coulomb branch of the corresponding theory $S_\Gamma[(E,t,\rho)]$ is a symplectic leaf in the space $Hitch_t(n)$.

Now we compactify the theory $S_\Gamma[(E,x,\rho)]$ along a circle $S^1_R$ with radius $R$, and take limit $R\to 0$, we get a $3d$ $\calN=4$ quantum field theory. 
We formally replace $E$ by  a sphere $\bbP^1$, the  $3d$ theory obtained this way is mirror dual to a quiver gauge theory, where the quiver and dimension vectors are determined by $\rho:\fs\fl_2\to\fg_\Gamma$. 
It is well-known that the Higgs branches of type-$A$ quiver gauge theories are  the closure of nilpotent orbits.
 Theorem \ref{thm:main} then yields that the convolution algebra in elliptic cohomology of its resolution is the elliptic affine Hecke algebra. 

To summarize the above discussion, in terms of quantum field theories, Corollary~\ref{cor:Higgs_Irrep} and Question~\ref{conj:KoszulDual} are about the mirror dual $\bbD$ of the $3d$ theory $S_\Gamma[\bbP^1,x,\rho][S^1]$ and the $4d$ theory $S_\Gamma[E,t,\rho]$.
Schematically this duality can be summarized as a correspondence 
\[\Mod \Ell_{G_\Gamma}\calM_{Higgs}(\bbD S_\Gamma[\bbP^1,x,\rho][S^1]) \longleftrightarrow \Coh \calM_{Coulomb}(S_\Gamma[E,x,\rho]),\]
which we speculate to be a Koszul duality  Question~\ref{conj:KoszulDual}.

Now we make a few comments about some features observed in the above.
\begin{remark}
\begin{enumerate}
\item It has been expected that singular cohomology of a modification of the Higgs branch $\calM_{Higgs}(\bbD S_\Gamma[\bbP^1,x,\rho][S^1]) $ of the mirror dual of $S_\Gamma[\bbP^1,x,\rho][S^1]$  is isomorphic to the space of functions on a torus fixed locus of $\calM_{Coulomb}(\bbD S_\Gamma[\bbP^1,x,\rho][S^1])$ \cite[\S~1(viii)]{Nak}. The nature of the above duality suggests that if the singular cohomology is replaced by the elliptic cohomology, than it is related to functions on the Coulomb branch of a $4d$ theory that is obtained from $S_\Gamma$ via a different compactification.
\item We postpone to the future the study of Schur-Weyl duality between the elliptic affine Hecke algebras $\calH_t$ and the elliptic quantum groups $E_{\tau,\hbar}(\fs\fl_d)$. Under this duality, the quantization parameter $\hbar$ of the quantum group corresponds to the parameter $t$ of the elliptic affine Hecke algebra, which in turn is the marked point in the theory $S_\Gamma[E,t,\rho]$.
\end{enumerate}
\end{remark}

\section{Representations at torsion points in type-$A$}
\label{sec:torsion_points}
In this section we still work under the assumption that $E$ is an elliptic curve over $\bbC$. We study the combinatorics related to representations of the elliptic affine Hecke algebra corresponding to $U_n$, when the parameter $\gamma$ is evaluated at a torsion point $t\in E$.

\subsection{Reminder on quiver Hecke algebras}\label{subsec:quiverHecke}
Let $\Gamma$ be an arbitrary finite symmetric quiver, with the set of vertices denoted by $I$, and the corresponding Cartan matrix denoted by $C$. Following the conventions of Rouqier \cite[\S3.2.4]{Rouq08}, for any pair of vertices $i,j\in I$, we define a polynomial in two variables $P_{i,j}(u,v)=(v^{h/j\cdot j}-u^{h/i\cdot i})^{d_{i,j}}$ where $h=\lcm(i\cdot i,j\cdot j)$ if $i\neq j$, and $P_{i,i}(u,v)=0$.

Associated to $\Gamma$, there is a quiver Hecke algebra  (also known as the KLR-algebra) $H_n(\Gamma)$ for any $n\geq0$. See \cite[\S3.2.1]{Rouq08}  for a presentation of this algebra, which we will not  use  in this paper. Instead, we recall the following fact (proved in \cite[Proposition~3.12]{Rouq08} and \cite[\S~2.3]{KhoLau}), which can be taken as the definition.

Let $\calO'=\bigoplus_{\nu\in I^n}\bbC[x_1,\dots,x_n][\{(x_i-x_j)^{-1}\}_{i\neq j,\nu_i=\nu_j}]$ with the component-wise multiplication. With the unity in the direct summand labelled by $\nu\in I^n$ denoted by $1_\nu$, the set $\{1_\nu\}$ consists of pair-wise orthogonal idempotents with $1=\sum_\nu 1_\nu$. Let $A_n(I)=\bbC^{(I)}[x]\wr \fS_n$, the wreath product. The variable $x$ in the $i$-th $\bbC^{(I)}[x]$-tensor factor of $A_n(I)$ is denoted by $x_i$. Define $\tau_i\in \calO'\otimes_{\Z^{(I)}[x]^{\otimes n}}A_n(I)$ as follows
\[\tau_{i,\nu}=\left\{ \begin{array}{ll}  \frac{s_i-1}{x_i-x_{i+1}}1_\nu, & \text{if } \nu_i=\nu_{i+1}, \\
P_{\nu_i,\nu_{i+1}}(x_{i+1},x_i)s_i1_\nu, &\text{ otherwise.}\end{array}\right.\]

\begin{prop}\label{prop:KLR_poly_rep}
The algebra $H_n(\Gamma)$ is the subalgebra of $\calO'\otimes_{\Z^{(I)}[x]^{\otimes n}}A_n(I)$ generated by  
$1_\nu$ for $ \nu\in I^n$,
$x_i$  and $\tau_i$ for $i=1,\dots,n$.

\end{prop}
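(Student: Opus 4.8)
The plan is to exhibit the quiver Hecke algebra $H_n(\Gamma)$ inside $B:=\calO'\otimes_{\Z^{(I)}[x]^{\otimes n}}A_n(I)$ in two steps: first show that the prescription $1_\nu\mapsto 1_\nu$, $x_i\mapsto x_i$, $\tau_i\mapsto\tau_i$ (with the right-hand sides the explicit elements of $B$ written out above, $\tau_i=\sum_\nu\tau_{i,\nu}$) extends to a well-defined algebra homomorphism $\Phi\colon H_n(\Gamma)\to B$; and second, show that $\Phi$ is injective. Granting both, the image of $\Phi$ is by definition the subalgebra of $B$ generated by the images of the generators of $H_n(\Gamma)$, which is exactly the subalgebra in the statement, so the proposition follows.

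For the first step I would start from the presentation of $H_n(\Gamma)$ recalled in \cite[\S3.2.1]{Rouq08}: the $1_\nu$ are orthogonal idempotents summing to $1$ on each block, the $x_j$ commute and intertwine the $1_\nu$ correctly, the $\tau_k$ carry $1_\nu$ to $1_{s_k\nu}$, there is a quadratic relation for $\tau_k^2 1_\nu$ expressed through the polynomials $P_{\nu_k,\nu_{k+1}}$ (using $P_{i,i}=0$), a Leibniz-type relation controlling $\tau_k x_j-x_{s_k(j)}\tau_k$, and the braid relations among the $\tau$'s. Substituting the formula for $\tau_{i,\nu}$ and using that on the block $\nu_i=\nu_{i+1}$ the element $\tfrac{s_i-1}{x_i-x_{i+1}}1_\nu$ is the Demazure–BGG operator — hence lands in $\calO'$, squares to zero, satisfies the braid identities, and has the standard commutator with multiplication by $x_j$ — while on the block $\nu_i\neq\nu_{i+1}$ the element $P_{\nu_i,\nu_{i+1}}(x_{i+1},x_i)s_i1_\nu$ is an honest element of the wreath product, one verifies each defining relation by a direct computation. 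The only genuinely delicate checks are the sign/normalization conventions in the quadratic relation and the braid relation for three consecutive strands with mixed coincidence patterns among the $\nu$'s; these are precisely the verifications of \cite[\S~2.3]{KhoLau} and \cite[Proposition~3.12]{Rouq08}, which I would transcribe.

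For injectivity I would invoke the Khovanov–Lauda basis theorem: fixing a reduced word $\underline w$ for each $w\in\fS_n$, the set $\{\tau_{\underline w}\,x_1^{a_1}\cdots x_n^{a_n}\,1_\nu : w\in\fS_n,\ a\in\Z_{\ge0}^n,\ \nu\in I^n\}$ is a $\bbC$-basis of $H_n(\Gamma)$, so it suffices to prove that the images of these elements are $\bbC$-linearly independent in $B$. Viewing $B$ as the free left $\calO'$-module $\bigoplus_{w\in\fS_n}\calO'\cdot w$, an induction on $\ell(w)$ along the chosen reduced word shows $\Phi(\tau_{\underline w})1_\nu=f_{w,\nu}\,s_w+\sum_{v<w}(\,\cdot\,)\,s_v$, where the leading coefficient $f_{w,\nu}\in\calO'$ is a product of $\fS_n$-conjugates of factors $(x_a-x_b)^{-1}$ and $P_{\cdot,\cdot}(\cdot,\cdot)$, hence nonzero on its block — the ``cross terms'' obtained by ever selecting the degree-lowering piece $-(x_i-x_{i+1})^{-1}1_\nu$ of some $\tau_i$ always land on strictly shorter permutations. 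Right multiplication by a monomial $x^a$ only permutes exponents, so $\Phi(\tau_{\underline w}x^a1_\nu)$ has $s_w1_\nu$-coefficient $f_{w,\nu}\,x^{w(a)}$, and for fixed $w,\nu$ these are $\bbC$-linearly independent as $a$ varies. This Bruhat-triangular shape forces linear independence: in a vanishing linear combination, choosing $w_0$ maximal in its support, only the $w=w_0$ terms contribute to the $s_{w_0}1_\nu$-component, so $f_{w_0,\nu}\sum_a c_{w_0,a,\nu}x^{w_0(a)}=0$, whence all $c_{w_0,a,\nu}=0$, a contradiction. The main obstacle is exactly this injectivity, and inside it the Khovanov–Lauda basis theorem, a substantial external input; a self-contained alternative is to instead prove directly that the polynomial representation of $H_n(\Gamma)$ on $\bigoplus_\nu\bbC[x_1,\dots,x_n]1_\nu$ is faithful (the subalgebra of $B$ generated by $1_\nu,x_i,\tau_i$ does preserve polynomials, even though $B$ itself does not), which is again \cite[\S~2.3]{KhoLau}. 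By contrast, the relation-checking of the first step, though lengthy, is entirely mechanical.
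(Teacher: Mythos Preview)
The paper does not give its own proof of this proposition: it explicitly states that the result is ``proved in \cite[Proposition~3.12]{Rouq08} and \cite[\S~2.3]{KhoLau}'' and that it ``can be taken as the definition'' of $H_n(\Gamma)$. Your two-step outline (verify the defining relations, then prove injectivity via Bruhat-triangularity against the Khovanov--Lauda basis) is correct and is precisely the strategy of the cited references, so you are reproducing rather than diverging from the intended argument.
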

In particular, the algebra $H_n(\Gamma)$ admits a faithful representation on $\bigoplus_{\nu\in I^n}\bbC[x_1,\dots,x_n]1_\nu$.

We recall some well-known facts about  representations of $H_n(\Gamma)$. The idempotents $1_\nu$'s define a direct sum decomposition 
\begin{equation}\label{eq:dec}
H_n(\Gamma)=\bigoplus_{\nu\in I^n}H_{\nu}(\Gamma).
\end{equation}
For each $\nu$, the algebra $H_\nu(\Gamma)$ has a natural grading (see \cite[\S~2.1]{KhoLau}), which is compatible with the decomposition \eqref{eq:dec}. 
Let $\Mod_0^{\gr}H_n(\Gamma)$ be the abelian category of finite dimensional graded $H_n(\Gamma)$-modules, and let $\Proj^{\gr}H_n(\Gamma)$ be the exact category of graded projective modules. Let $K^0(\Mod_0^{\gr}H_n(\Gamma))$ (resp. $K^0(\Proj^{\gr}H_n(\Gamma))$) be  the Grothendieck group of $\Mod_0^{\gr}H_n(\Gamma)$ (resp. $\Proj^{\gr}H_n(\Gamma)$).
They are  $\ZZ[q^\pm]$-algebras where $q$ acts by degree shifting. Note that the Euler pairing induces an isomorphism $K^0(\Proj^{\gr}H_n(\Gamma))_{\bbC}\cong K^0(\Mod_0^{\gr}H_n(\Gamma))^*_{\bbC}$. Under this pairing, the basis on the left hand side formed by the classes of indecomposable projective objects are mapped to the dual basis to the classes of simple objects on the right hand side.

Observe that the natural embeddings $H_k(\Gamma)\otimes H_l(\Gamma)\inj H_{k+l}(\Gamma)$ for any $l,k\in\bbN$ define induction functors \[\Ind_{k,l}:\Proj^{\gr}H_k(\Gamma)\otimes \Proj^{\gr}H_l(\Gamma)\to \Proj^{\gr}H_{k+l}(\Gamma),\] and hence induce a multiplication on $\bigoplus_nK^0\left(\Proj^{\gr}H_n(\Gamma)\right)$, making it into an associative algebra.

We summarize the basic theory of quiver Hecke algebras as the following.
\begin{theorem}\label{thm:quiverHecke}
With notations as above, we have the followings.
\begin{enumerate}
\item There is an isomorphism of $\bbC[q^\pm]$-algebras 
\[\bigoplus_nK^0\left(\Proj^{\gr}H_n(\Gamma)\right)\cong U_{q^\pm}(\fn^-),\]
 where $\fn^-$ is the negative half of the Kac-Moody Lie algebra  associated to the quiver $\Gamma$.
\item For any $i\in I$, let $P(i)=H_1(\Gamma)\cdot 1_i$. Then the isomorphism above sends $[P(i)]$ to $f_i\in U_{q^\pm}(\fn^-)$, the Chevalley generator of the quantum group.
\item The basis of the left hand side formed by the classes of indecomposable projective objects are mapped to the Lusztig canonical basis of the right hand side.
\end{enumerate}
\end{theorem}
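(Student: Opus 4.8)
The plan is to assemble Theorem~\ref{thm:quiverHecke} directly from the literature; the statement as written is a compilation of known results about KLR-algebras of Khovanov--Lauda and Rouquier, adapted to the set-up of this section. So the bulk of the work is to match conventions (gradings, the polynomials $P_{i,j}$, the base field $\bbC$) rather than to produce anything new.

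For part (1), I would invoke the main categorification theorem for KLR-algebras: the graded Grothendieck group $\bigoplus_n K^0(\Proj^{\gr}H_n(\Gamma))$ is isomorphic as a $\bbC[q^\pm]$-algebra to $U_{q^\pm}(\fn^-)$, where the algebra structure on the left is given by the induction functors $\Ind_{k,l}$ and the one on the right is the standard one. This is \cite[Proposition~3.4]{Rouq08} (see also the companion results in \cite{KhoLau}); one checks that the symmetric quiver $\Gamma$ here, with its Cartan matrix $C$, gives rise to precisely the Kac--Moody algebra whose negative half appears on the right, and that the polynomials $P_{i,j}(u,v)$ chosen in \S~\ref{subsec:quiverHecke} are in the family for which the cited results apply (symmetric type, so the normalization is harmless up to an automorphism of $H_n(\Gamma)$ rescaling the $x_i$'s, which does not change the Grothendieck group). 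For part (2), the indecomposable projective $P(i)=H_1(\Gamma)\cdot 1_i$ at $n=1$ is, up to grading shift, the unique graded indecomposable projective in $\Proj^{\gr}H_1(\Gamma)$ for the vertex $i$, and $H_1(\Gamma)_{i}\cong\bbC[x_1]$ is a polynomial ring; the isomorphism of (1) is normalized so that $[P(i)]\mapsto f_i$ by construction of the categorification (this is how Khovanov--Lauda and Rouquier set up the isomorphism, matching the degree-one generators). For part (3), I would cite Varagnolo--Vasserot \cite{VV} and Rouquier's refinement: over a field of characteristic zero the classes of the indecomposable projective modules correspond under the isomorphism of (1) to Lusztig's canonical (lower global) basis of $U_{q^\pm}(\fn^-)$. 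The key hypothesis to flag is that we are working over $\bbC$, i.e.\ characteristic zero, which is exactly what makes the canonical-basis statement hold (in positive characteristic one gets the $p$-canonical basis instead); this is guaranteed by the standing assumption of this section that $E$ is a complex elliptic curve.

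The main obstacle, such as it is, is purely bookkeeping: confirming that the explicit presentation recalled in Proposition~\ref{prop:KLR_poly_rep} --- via the polynomial representation on $\bigoplus_{\nu\in I^n}\bbC[x_1,\dots,x_n]1_\nu$ with the operators $\tau_{i,\nu}$ --- really is the KLR-algebra in the normalization used by the references we cite for (1)--(3), so that the induction functors, gradings, and the identification of $[P(i)]$ with $f_i$ are all simultaneously consistent. Since \S~\ref{subsec:quiverHecke} has already set this up by quoting \cite[Proposition~3.12]{Rouq08} and \cite[\S~2.3]{KhoLau} as the definition, this consistency is immediate, and the proof reduces to citing the three results above in sequence. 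I would therefore write the proof as essentially a one-paragraph assembly: ``(1) is \cite[Proposition~3.4]{Rouq08}; (2) is built into the normalization of that isomorphism, cf.\ \cite[\S2.1]{KhoLau}; (3) is \cite{VV} together with \cite[\S3.3]{Rouq08}, using that the base field has characteristic zero.''
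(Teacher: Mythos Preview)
Your proposal is correct and matches the paper's own treatment: the paper does not give an independent proof but simply records that parts (1) and (2) are proved in \cite{KhoLau} and \cite{Rouq08}, while part (3) was conjectured by Khovanov--Lauda and proved by Varagnolo--Vasserot in \cite{VV}. Your additional remarks on normalization and the characteristic-zero hypothesis are accurate but go slightly beyond what the paper says.
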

The first two statements in Theorem~\ref{thm:quiverHecke} are proved in \cite{KhoLau} and \cite{Rouq08}. The third statement is 
conjectured by Khovanov and Lauda and proved by Varagnolo and Vasserot in \cite{VV}.

There is also a version of Theorem~\ref{thm:quiverHecke} without grading. Let $\Mod_0H_n(\Gamma)$ be the category of finite dimensional $H_n(\Gamma)$-modules on which $x_i$ acts nilpotently for any $i=1,\dots,n$. The inclusion \[H_{n-1}(\Gamma)\otimes H_1(\Gamma)\inj H_n(\Gamma)\] induces a restriction of scalars $\Mod_0H_n(\Gamma)\to \Mod_0( H_{n-1}(\Gamma)\otimes H_1(\Gamma))$.
For any $i\in I$, the  right multiplication by the idempotent $1_i\in H_1(\Gamma)$ defines $\Mod_0 (H_{n-1}(\Gamma)\otimes H_1(\Gamma))\to \Mod_0H_{n-1}(\Gamma)$. Let \[\Res_i:\Mod_0H_n(\Gamma)\to \Mod_0H_{n-1}(\Gamma)\] be their composition.
 Then there is a $\bbC$-algebra isomorphism \[\bigoplus_nK^0\left(\Mod_0H_n(\Gamma)\right)^*_{\bbC}\cong U(\fn^-).\] 
This isomorphism intertwines the operation $[\Res_i]^*$ on the left hand side and  multiplication by $f_i$ on the right hand side.

\subsection{Completion of $\calH_n$ and the quiver Hecke algebra}
From now on $\calH_n$ is the elliptic affine Hecke algebra of $G=U_n$. We drop the lower subscript $n$ if it is understood from the context. The simple roots of $U_n$ are given by $\al_i=e_i-e_j$, $i=1,...,n-1$. We have $\catA=\catA_{T\times S^1}=E^{n+1}, \catA_G=E^{(n)}$, $\catA/W=E^{(n)}\times E$, and  $\calS=\pi_*\calO_{E^{n+1}}$ where $\pi:E^{n+1}\to E^{(n)}\times E$ is the symmetrization map.  For any $\nu\in \catA=E^{n+1}$, we write it in coordinates as $\nu=(\nu_1,\dots,\nu_n,\nu_\gamma)$. The following lemma is a direct consequence of the structure theorem. Let $\fl$ be an arbitrary local parameter of the elliptic curve $E$. 

Recall from \S~\ref{sec:ell} that the divisor $D^i$ is given by the equation $\nu_i=\nu_{i+1}$, and $D^{i,\gamma}$ is given by $\nu_{i+1}-\nu_i=\nu_\gamma$.
\begin{lemma}\label{lem:loc_complete_hecke}
\begin{enumerate}
\item Let $\widetilde{\catA}^c=[\catA\backslash(\cup_{1\le i\le n-1}(D^i\cup D^{i,\gamma}))]/W$. The sheaf of algebras $\calH|_{\widetilde{\catA}^c}$ is isomorphic to $\calS_W|_{\widetilde\catA^c}$.
\item Let $\nu\in E^{n+1}$ be a closed point. Let $\calO^\wedge_\nu$ be the completion of $\calO_{E^{n+1}}$ at $\nu$. Then, the algebra structure on $\calH$ induces an algebra structure on 
\[\calH^\wedge_{\fS_n\cdot\nu}:=(\bigoplus_{\mu\in \fS_n\cdot \nu}\pi_*\calO_\mu^\wedge)\otimes_{\calS}\calH.\]
\item The algebra $\calH^\wedge_{\fS_n\cdot\nu}$ is a subalgebra of $\End(\underset{\mu\in \fS_n\cdot \nu}\bigoplus\pi_*\calO_\mu^\wedge)$, generated by $\underset{\mu\in \fS_n\cdot \nu}\bigoplus\pi_*\calO_\mu^\wedge$ and  the operators $T_i$ with $i=1,...,n-1$ such that 
\[(T_{i})_\mu:=\left\{ \begin{array}{ll}
\frac{\fl(\gamma)}{\fl(x_{i+1}-\mu_{i+1})-\fl(x_i-\mu_i)}(1_\mu-s_i)+s_i, &\text{ if }\mu\in D^{i} ;\\
 \big(\fl(x_{i+1}-\mu_{i})-\fl(x_i-\mu_i)-\fl(\gamma-\mu_\gamma)\big)s_i, &\text{if }\mu\in D^{i,\gamma};\\
s_i, & \text{otherwise}.\end{array} \right.\]
\item Evaluating $\gamma=\nu_\gamma$, we get that the algebra $(\calH^\wedge_{\fS_n\cdot\nu})|_{\gamma=\nu_\gamma}$ is generated by \[(\underset{\mu\in \fS_n\cdot \nu}{\bigoplus}\pi_*\calO_\mu^\wedge)/(\gamma=\nu_\gamma)\] and the  operators $T_i$:
\[(T_i)_\mu:=\left\{\begin{array}{ll}
\frac{\fl(\nu_\gamma)}{\fl(x_{i+1})-\fl(x_i)}(1_\mu-s_i)+s_i,& \hbox{ if }\mu\in D^{i};\\
\big(\fl(x_{i+1}-\mu_{i+1})-\fl(x_i-\mu_i)\big)s_i, & \hbox{ if }\mu\in D^{i,\gamma};\\
s_i, &\hbox{ otherwise}.\end{array}\right.\]
\end{enumerate}
\end{lemma}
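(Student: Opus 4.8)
The plan is to prove each of the four statements in turn, building on the structure theorem for $\calH$ (that is, the description of $\calH$ in terms of sections $\sum_{w\in W}f_w\delta_w$ satisfying conditions \textbf{R}1--\textbf{R}3, together with Theorem~\ref{thm:ellHeckFree} giving local freeness) and on the explicit formula for the elliptic Demazure--Lusztig operators $T_\alpha^f=\frac{f(\chi_\gamma)}{f(\chi_\alpha)}+(1-\frac{f(\chi_\gamma)}{f(\chi_\alpha)})\delta_\alpha$. Throughout, the group is $W=\fS_n$ acting on $\catA=E^{n+1}$ by permuting the first $n$ coordinates, and for type-$A$ every positive root is of the form $\alpha_i=\nu_{i+1}-\nu_i$, so the relevant divisors are $D^i=\{\nu_i=\nu_{i+1}\}$ and $D^{i,\gamma}=\{\nu_{i+1}-\nu_i=\nu_\gamma\}$ exactly as stated.

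For (1): on the open set $\widetilde\catA^c=\catA\setminus\bigcup_i(D^i\cup D^{i,\gamma})$, all the divisors along which conditions \textbf{R}1--\textbf{R}3 impose constraints have been removed, so a section $\sum_w f_w\delta_w$ of $j_*(\calS_W|_{\catA^c/W})$ lies in $\calH|_{\widetilde\catA^c/W}$ iff each $f_w$ is regular on $\widetilde\catA^c$, i.e.\ iff it is a section of $\calS_W|_{\widetilde\catA^c/W}$. The only thing to check is that the subalgebra inclusion is in fact an equality of sheaves of algebras, which follows because $\widetilde\calS^c$ contains no wall of the hyperplane arrangement and hence the poles/vanishing conditions become vacuous; I would cite the structure theorem (Theorem~\ref{thm:ellHeckFree}) to see that the natural map $\calH\to\calS_W$ is filtration-preserving and an isomorphism after restricting to the locus where all the $T_\alpha^f$ become invertible.

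For (2) and (3): completion is an exact, flat operation, so tensoring the locally free $\calS$-module $\calH$ against $\bigoplus_{\mu\in\fS_n\cdot\nu}\pi_*\calO_\mu^\wedge$ (a flat $\calS$-algebra at the orbit $\fS_n\cdot\nu$, since $\pi$ is finite étale away from the diagonals and the orbit decomposition just reindexes the stalks of $\calS$ over $\catA/W$) produces a sheaf that inherits the algebra structure; this gives (2). For (3), I would observe that $\calH^\wedge_{\fS_n\cdot\nu}$ acts faithfully on $\bigoplus_\mu\pi_*\calO_\mu^\wedge$ because $\calH$ acts faithfully on $\calS$ (the morphism $\calH\to\sEnd_{\catA/W}(\calS)$ is injective by the very definition of $\calH$, or by Lemma~\ref{lem:Xi_free} via Theorem~\ref{thm:main}), and faithfulness is preserved under the flat completion. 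It remains to identify the generators: $\bigoplus_\mu\pi_*\calO_\mu^\wedge$ is clearly in the image, and one must write $\delta_{s_i}=\delta_\alpha$ in terms of the $T_\alpha^f$, which is possible precisely because $T_\alpha^f=\frac{f(\chi_\gamma)}{f(\chi_\alpha)}(1-\delta_\alpha)+\delta_\alpha$ can be inverted in the completion once a convenient local parameter $\fl$ is chosen for $f$. Substituting $f=\fl$ and expanding $\fl(\chi_{\alpha_i})=\fl(\nu_{i+1}-\nu_i)$ in the local coordinates $x_j=\nu_j$ at the point $\mu$ gives exactly the three-case formula for $(T_i)_\mu$: on $D^i$ one has $\mu_i=\mu_{i+1}$ so $\fl(x_{i+1}-\mu_{i+1})-\fl(x_i-\mu_i)$ is the local expansion of $\fl(\chi_{\alpha_i})$ and the operator is $\frac{\fl(\gamma)}{\fl(x_{i+1}-\mu_{i+1})-\fl(x_i-\mu_i)}(1_\mu-s_i)+s_i$; on $D^{i,\gamma}$ the factor $\frac{\fl(\gamma)}{\fl(\chi_{\alpha_i})}$ is a unit and, after clearing it and using condition \textbf{R}3 (the residue vanishes), one is left with multiplication by $\big(\fl(x_{i+1}-\mu_i)-\fl(x_i-\mu_i)-\fl(\gamma-\mu_\gamma)\big)s_i$ up to a unit, which I would absorb into the choice of generator; elsewhere both ratios are units so $T_i$ differs from $s_i$ only by units, and one normalizes to get $s_i$.

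For (4): this is just setting $\gamma=\nu_\gamma$ (equivalently, tensoring over $\calO_E$, the last-coordinate factor of $\calS$, with the residue field at $\nu_\gamma$) in the formula of (3). The only subtlety is that on $D^{i,\gamma}$ the specialization sends $\fl(\gamma-\mu_\gamma)\mapsto 0$, so the coefficient $\fl(x_{i+1}-\mu_i)-\fl(x_i-\mu_i)-\fl(\gamma-\mu_\gamma)$ becomes $\fl(x_{i+1}-\mu_i)-\fl(x_i-\mu_i)=\fl(x_{i+1}-\mu_{i+1})-\fl(x_i-\mu_i)$ using $\mu_{i+1}-\mu_i=\mu_\gamma=\nu_\gamma$ on that divisor, matching the stated formula. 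I expect the main obstacle to be bookkeeping rather than conceptual: one must be careful that the specialization $\gamma=\nu_\gamma$ commutes with the completion at the orbit and does not introduce new poles (it does not, because $\nu_\gamma$ is a specific point and the only potential pole in $\gamma$ comes from the factor $1/\fl(\chi_\alpha)$, which involves no $\gamma$), and one must track the unit normalizations consistently across the three cases so that the generators $T_i$ really do satisfy the braid and quadratic relations inherited from $\calH$. The verification that the resulting operators generate — rather than merely sit inside — the specialized completion follows from (3) by right-exactness of $-\otimes_{\calO_E}\kappa(\nu_\gamma)$ applied to the surjection from the free algebra on $\{T_i\}\cup\bigoplus_\mu\pi_*\calO_\mu^\wedge$.
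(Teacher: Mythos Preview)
Your approach is essentially the one the paper takes: the paper states the lemma as ``a direct consequence of the structure theorem'' and gives no further proof, so your expansion via conditions \textbf{R}1--\textbf{R}3, local freeness (Theorem~\ref{thm:ellHeckFree}), and the explicit formula for $T_\alpha^\fl$ is exactly the intended argument. One small imprecision: in part (1) you say that on $\widetilde\catA^c$ ``all the divisors along which \textbf{R}1--\textbf{R}3 impose constraints have been removed,'' but literally only the \emph{simple} root divisors are removed; the point is that after passing to the quotient by $W=\fS_n$ the images of the simple and the full root hyperplane arrangements coincide, so the conditions do become vacuous on $\widetilde\catA^c/W$.
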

In (3) and (4), the idempotent element in $\underset{\mu\in \fS_n\cdot \nu}\bigoplus\pi_*\calO_\mu^\wedge$ corresponding to the multiplicative identity in $\pi_*\calO_\mu^\wedge$ is denoted by $1_\mu$. The operators $s_i$ is understood as going from $\pi_*\calO_\mu^\wedge$ to $\calO_{s_i\mu}^\wedge$.

Let $t\in E$ be a torsion point. 
We naturally identify  $E^n$ with $E^n\times\{t\}\subseteq E^{n+1}$. Then $\calH_n/(\nu_\gamma=t)$ is a sheaf of algebras on $E^{(n)}$.
Let  $(q_1,q_2)\in (\bbC^*)^2$ be $DD(t)$, where $DD$ is the map defined in \S~\ref{subsec:Pontryagin}.  Assume that $q_1$ is of order $n_1$, and $q_2$ is of order $n_2$, where $n_1,n_2$ are integers strictly grater than 1.
Let $S_t\subset E$ be the subset consisting of $z\in E$  such that $DD(z)$ is of the form $(q_1^u,q_2^v)\in \bbC^*\times\bbC^*$ for $u,v\in \bbZ$. Let $S_t^n\subseteq E^n$ be the subset of $E^n$ consisting of points whose coordinates are in $S_t$.
Let $\Mod_{t}\calH_n$ be the subcategory of finite dimensional $\calH_n-$modules, whose restriction to the action of  $\calS$, considered as a coherent sheaf on $E^{(n)}$, is set-theoretically supported on $\pi(S_t^n)$.

We have the following commutative diagram 
\[\xymatrix{E^k\times E^l\ar@{=}[r]\ar[d]&E^{k+l}\ar[d]\\
E^{(k)}\times E^{(l)}\ar[r]^{\pi_{k,l}}&E^{(k+l)}.}\]
Consider $\calH_k\boxtimes\calH_l$ as a coherent sheaf of algebras on $E^{(k)}\times E^{(l)}$.
By definition of the elliptic affine Hecke algebras, $\pi_{k,l*}\calH_k\boxtimes\calH_l$ is a subsheaf of algebras of $\sEnd_{E^{(k+l)}}(\pi_*\calO_{E^{k+l}})$. It is  easy  to verify that $\pi_{k,l*}\calH_k\boxtimes\calH_l$ lies in the subsheaf $\calH_{k+l}$. Therefore, we have an  injective map $\pi_{k,l*}\calH_k\boxtimes\calH_l\to\calH_{k+l}$, which induces induction and restriction functors on the category of representations.
One can easily check that when restricting to subcategories of representations, the following functors \[\xymatrix{\Mod_{t}\calH_k\otimes \Mod_{t}\calH_l\ar@/^/@<1ex>[rr]^-{Ind}&&\Mod_{t}\calH_{k+l}\ar@/^/@<1ex>[ll]^-{\Res}}\] are well-defined

Let $d=\lcm(n_1,n_2)$ and let $l=\frac{n_1n_2}{d}$.
Let $\Gamma=\Gamma_{d,l}$ be the disjoint union of $l$ quivers, each of type $A^{(1)}_{d-1}$. Note that $\Gamma$ naturally embeds into $E$ as follows. Let $I:=S_t$, which is a subset of $E$. For $z, z'\in I$ with $DD(z)=(q_1^u,q_2^v)$ and $DD(z')=(q_1^{u+1},q_2^{v+1})$, there is a single arrow from $z$ to $z'$. In particular, the set of vertices $I$ can be relabelled by the set $\{(i,j)\mid i=0,\dots,d-1,\hbox{ and }j=1\dots,l\}$. There is an arrow from $(i,j)$ to $(i',j')$ if $i'=i+1\mod d$ and $j'=j\mod l$. No arrows otherwise. One can simply verify that the quiver obtained this way is isomorphic to $\Gamma_{d,l}$.

\begin{remark}
It follows from \S~\ref{subsec:ell_coh_conv} and Theorem~\ref{thm:main} that $(\calH_n)_{S_t^n/\fS_n}^\wedge\cong \bigoplus_{\mu\in S_t^n/\fS_n}H^*_{G(\mu,t)}(Z^{T(\mu,t)})$.   It is not hard to show that the right hand side is isomorphic to the quiver Hecke algebra for the quiver $\Gamma_{d,l}$. The induction and restriction functors can also be described geometrically similar to \cite[\S~4]{AJL}.  However, thanks to Lemma~\ref{lem:loc_complete_hecke}, it turns out to be easier to follow the purely algebraic approach of Rouquier in \cite{Rouq08}.
\end{remark}

Let $\calO^\wedge=\bigoplus_{\nu\in I^n}\bbC[x_1,\dots,x_n]^\wedge_{x=\nu}$, and let the idempotent corresponding to $\nu$ be denoted by $1_\nu$.
Define an algebra structure on 
\[\calO^\wedge H_n(\Gamma):=\calO^\wedge \otimes_{(\bigoplus_{\nu\in I^n}\bbC[x_1,\dots,x_n]1_\nu)}H_n(\Gamma)\] by setting \[\tau_i1_\nu-1_{s_i\nu}\tau_i=(x_{i+1}-x_i)^{-1}(1_\nu-1_{s_i\nu}).\]
Here $s_i$ is the simple reflection exchanging $i$ and $i+1$. Let the operators $T_i$ be defined as in Lemma~\ref{lem:loc_complete_hecke}.
\begin{lemma}
The following assignment induces an isomorphism of algebras
\[\phi:\calO^\wedge H_n(\Gamma_{d,l})\to \bigoplus_{\mu\in S_t^n/\fS_n}(\calH_n)^\wedge_\mu\]
where 
\begin{align*}
\phi(x_i1_\nu)&=\fl(z_i-\nu_i)1_\nu&\\
\phi(\tau_i)&=\frac{T_i-1}{\fl(z_{i+1}-\nu_{i+1})-\fl(z_i-\nu_i)-\fl(t)}1_\nu=\frac{s_i-1}{\fl(z_{i+1}-\nu_{i+1})-\fl(z_i-\nu_i)}1_\nu,&\hbox{ if }\nu_{i+1}=\nu_i,\\
\phi(\tau_i)&=T_i1_\nu=\big(\fl(z_{i+1}-\nu_{i+1})-\fl(z_i-\nu_i)\big)s_i1_\nu,&\hbox{ if }\nu_{i+1}=\nu_i+t,\\
\phi(\tau_i)&=\big(\fl(z_{i+1}-\nu_{i+1})-\fl(z_i-\nu_i)\big)s_i1_\nu,&\hbox{ otherwise.}
\end{align*}
\end{lemma}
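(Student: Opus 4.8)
The plan is to realize both algebras concretely as operator subalgebras of $\End_{\bbC}\big(\bigoplus_{\nu\in I^n}\pi_*\calO_\nu^\wedge\big)$, with $I=S_t$, and then to match generators. First I would fix, for each vertex $\nu_i\in I$, the uniformizer $\fl(z_i-\nu_i)$ of the completed local ring of $E$ at $\nu_i$; since $\fl$ vanishes to order one along the zero section (Definition~\ref{def:loc_coord}), the assignment $x_i\mapsto\fl(z_i-\nu_i)$ on the $\nu$-summand extends to an isomorphism of $\bbC$-algebras
\[
\Psi:\ \calO^\wedge=\bigoplus_{\nu\in I^n}\bbC[x_1,\dots,x_n]^\wedge_{x=\nu}\ \xrightarrow{\ \sim\ }\ \bigoplus_{\nu\in I^n}\pi_*\calO_\nu^\wedge,
\]
which is compatible with the two $\fS_n$-actions (the transposition $s_i$ exchanges the $\nu$- and $s_i\nu$-summands and swaps $x_i\leftrightarrow x_{i+1}$, resp.\ $z_i\leftrightarrow z_{i+1}$). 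Via $\Psi$, the faithful polynomial representation of $H_n(\Gamma_{d,l})$ from Proposition~\ref{prop:KLR_poly_rep} (and hence of $\calO^\wedge H_n(\Gamma_{d,l})$) becomes an operator subalgebra of $\End\big(\bigoplus_\nu\pi_*\calO_\nu^\wedge\big)$, while $\bigoplus_{\mu\in S_t^n/\fS_n}(\calH_n)^\wedge_\mu$ is already such a subalgebra by Lemma~\ref{lem:loc_complete_hecke}(3),(4). By construction $\phi$ is the resulting $\Psi$-semilinear comparison map, and the assertion is that its image is exactly $\bigoplus_\mu(\calH_n)^\wedge_\mu$.

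The computational heart is to evaluate $\Psi\circ\tau_i\circ\Psi^{-1}$ case by case on the relative position of the colors $\nu_i,\nu_{i+1}$. Here the role of $\Gamma_{d,l}$ is to encode how $S_t$ sits inside $E$ under translation by $t$: the three cases $\nu_{i+1}=\nu_i$, $\nu_{i+1}=\nu_i+t$, and ``otherwise'' are precisely the loci $D^i$, $D^{i,\gamma}$ (after setting $\gamma=t$), and their complement occurring in Lemma~\ref{lem:loc_complete_hecke}(4), and these correspond under $\Psi$ to the three cases ($\nu_i=\nu_{i+1}$, $\nu_i,\nu_{i+1}$ joined by an edge of $\Gamma_{d,l}$, otherwise) governing the formula $\tau_{i,\nu}=\tfrac{s_i-1}{x_i-x_{i+1}}1_\nu$ resp.\ $P_{\nu_i,\nu_{i+1}}(x_{i+1},x_i)\,s_i1_\nu$ of Proposition~\ref{prop:KLR_poly_rep}. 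Substituting $x_j\mapsto\fl(z_j-\nu_j)$ and comparing with the expressions for $T_i$ in Lemma~\ref{lem:loc_complete_hecke}(4) produces the displayed formulas for $\phi(\tau_i)$; the normalization of $\tau_i$ inside $\calO^\wedge H_n(\Gamma_{d,l})$ (the cross-relation $\tau_i1_\nu-1_{s_i\nu}\tau_i=(x_{i+1}-x_i)^{-1}(1_\nu-1_{s_i\nu})$) is chosen exactly so that this match is exact. Once this is done $\phi$ is manifestly a homomorphism: all the KLR relations for $\Gamma_{d,l}$ — the $x_i$ commuting, the $x_i$–$\tau_i$ Leibniz relations, the quadratic relations $\tau_i^2\,1_\nu=Q_{\nu_i,\nu_{i+1}}(x_i,x_{i+1})\,1_\nu$, the distant commutations, and the braid relations — follow from the corresponding relations among the $T_i$ and the commutative subalgebra $\bigoplus_\mu\pi_*\calO_\mu^\wedge$ in $\calH_n$ by the manipulations of \cite[\S3]{Rouq08}, carried out with the elliptic Demazure–Lusztig operators of Example~\ref{ex:ellip} replacing their trigonometric analogues.

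Next I would prove surjectivity of $\phi$. The subalgebra $\bigoplus_\mu\pi_*\calO_\mu^\wedge$ is in the image since $\phi(x_i1_\nu)=\fl(z_i-\nu_i)$ is a uniformizer and $\phi(1_\nu)=1_\nu$. For the $T_i$: inverting the displayed formulas recovers $T_i1_\nu$ from $\phi(\tau_i)$ and elements of $\bigoplus_\mu\pi_*\calO_\mu^\wedge$ in the cases $\nu_{i+1}\in\{\nu_i,\nu_i+t\}$; in the remaining ``otherwise'' case one is away from the divisors $D^i\cup D^{i,\gamma}$, where Lemma~\ref{lem:loc_complete_hecke}(1) identifies $\calH_n$ with $\calS_W$, so $s_i$ — hence $T_i$ — already lies in the localized algebra, and again in the image. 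Finally, to promote ``surjective'' to ``isomorphism'' I would count ranks: by Theorem~\ref{thm:ellHeckFree} each $(\calH_n)^\wedge_\mu$ is free of rank $|\fS_n|=n!$ over $\bigoplus_{\mu'\in\fS_n\mu}\pi_*\calO_{\mu'}^\wedge$, while the basis theorem of Khovanov–Lauda \cite{KhoLau} makes $\calO^\wedge H_n(\Gamma_{d,l})$ free of rank $n!$ over $\calO^\wedge$; since $\phi$ is $\Psi$-semilinear, it is a surjection of finitely generated free modules of equal rank over a commutative Noetherian ring, hence bijective, and therefore an isomorphism of algebras. (Alternatively, one checks directly that the conjugated polynomial representation of $\calO^\wedge H_n(\Gamma_{d,l})$ is faithful and realizes it as exactly the operator subalgebra $\bigoplus_\mu(\calH_n)^\wedge_\mu$.)

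The main obstacle is the relation-matching step of the second paragraph: one must verify that \emph{every} KLR relation for $\Gamma_{d,l}$ — most delicately the quadratic and braid relations, whose right-hand sides involve the polynomials $P_{\nu_i,\nu_{i+1}}$ — is reproduced by the operators $\phi(\tau_i)$. This rests on pinning down the dictionary between the edges of $\Gamma_{d,l}$, the difference $\pm t$ of colors, and the case division of Lemma~\ref{lem:loc_complete_hecke}(4) with all signs correct, together with the behaviour of the odd theta section $\vartheta$ (equivalently of the chosen local parameter $\fl$) under $z\mapsto -z$; everything else is routine bookkeeping once that dictionary is fixed.
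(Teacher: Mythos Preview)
Your proposal is correct and follows the same essential strategy as the paper: identify both algebras as operator subalgebras of $\End\big(\bigoplus_{\nu\in I^n}\pi_*\calO_\nu^\wedge\big)$ via the faithful polynomial representations of Proposition~\ref{prop:KLR_poly_rep} and Lemma~\ref{lem:loc_complete_hecke}, then match generators under the change of variables $x_i\mapsto\fl(z_i-\nu_i)$. The paper's proof is in fact just the one-line citation of these two inputs; your parenthetical ``alternative'' at the very end --- that the conjugated polynomial representation is faithful and realizes $\calO^\wedge H_n(\Gamma_{d,l})$ as exactly the subalgebra $\bigoplus_\mu(\calH_n)^\wedge_\mu$ --- \emph{is} the paper's argument, and it makes your relation-checking and rank-counting redundant (though not incorrect).
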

\begin{proof}
This Lemma follows directly from \cite[Proposition~3.12]{Rouq08} (which is recalled above as Proposition~\ref{prop:KLR_poly_rep}) and Lemma~\ref{lem:loc_complete_hecke}.
\end{proof}

Due to this Lemma, there is a $\Gm$-action on $\bigoplus_{\mu\in S_t^n/\fS_n}(\calH_n)^\wedge_\mu$ with non-negative weights coming from the $\Gm$-action on $\calO^\wedge H_n(\Gamma_{d,l})$. Hence, there is a well-defined notion of finite dimensional graded modules over $\calH_n$ supported on $S_t^n$. We denote the category of such modules by $\Mod^{\gr}_{t}\calH_n$. For each pair of integers $k,l\in \bbN$, the induction and restriction functors \[\xymatrix{\Mod^{\gr}_{t}\calH_k\otimes \Mod^{\gr}_{t}\calH_l\ar@/^/@<1ex>[rr]^-{\Ind}&&\Mod^{\gr}_{t}\calH_{k+l}\ar@/^/@<1ex>[ll]^-{\Res}}\] are well-defined, and are compatible with the forgetful functors $\Mod^{\gr}_{t}\calH_n\to \Mod_{t}\calH_n$.

Summarizing the discussion above, we have the following.
\begin{theorem}\label{thm:ellHecke_quiverHecke}
With notations as above, we have
\begin{enumerate}
\item There is an equivalence of abelian categories
$\Mod_0^{\gr}H_n(\Gamma_{d,l})\cong\Mod^{\gr}_{t}\calH_n$
which is compatible with $\Ind$ and $\Res$ on both sides.
\item There is an equivalence of abelian categories
$\Mod_0H_n(\Gamma_{d,l})\cong\Mod_{t}\calH_n$
which is compatible with $\Ind$ and $\Res$ on both sides.
\item The equivalences in (1) and (2) intertwines with the forgetful functors $\Mod^{\gr}_{t}\calH_n\to \Mod_{t}\calH_n$ and $\Mod_0^{\gr}H_n(\Gamma_{d,l})\to \Mod_0H_n(\Gamma_{d,l})$.
\end{enumerate}
\end{theorem}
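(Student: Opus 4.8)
The plan is to deduce the theorem from the explicit algebra isomorphism $\phi$ of the preceding lemma, by first reducing both module categories in the statement to categories of finite-dimensional modules over the relevant \emph{completed} algebras, and then transporting structure across $\phi$.

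First I would handle the elliptic side of the reduction. An object of $\Mod_{t}\calH_n$ is a finite-dimensional $\calH_n$-module whose underlying $\calS$-module is set-theoretically supported on the finite set $\pi(S_t^n)\subseteq E^{(n)}$; such a module splits into its generalized eigenspaces for $\calS$ and the $\calH_n$-action extends uniquely to the completion, so $\Mod_{t}\calH_n$ is canonically equivalent to the category of finite-dimensional modules over $\bigoplus_{\mu\in S_t^n/\fS_n}(\calH_n)^\wedge_\mu$ on which the maximal ideals of the completed coordinate rings act topologically nilpotently. On the quiver–Hecke side the analogous reduction is Rouquier's argument in \cite[\S~3]{Rouq08}, based on the polynomial representation recalled in Proposition~\ref{prop:KLR_poly_rep}: the category $\Mod_0H_n(\Gamma_{d,l})$ is equivalent to the category of finite-dimensional $\calO^\wedge H_n(\Gamma_{d,l})$-modules on which the $x_i$ act nilpotently. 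Since $\phi$ sends $x_i1_\nu$ to $\fl(z_i-\nu_i)1_\nu$, which generates the corresponding maximal ideal, $\phi$ matches the two nilpotence conditions block by block, and the equivalence of abelian categories in part (2) follows (the relevant subcategories are Serre subcategories, so the induced equivalence is exact). For part (1) I would simply transport the $\Gm$-action along $\phi$ — which is precisely how $\Mod^{\gr}_{t}\calH_n$ was defined — and note that the reduction step has an evident graded counterpart, since a finite-dimensional graded module over a positively graded algebra automatically has its augmentation ideal acting nilpotently; as $\phi$ is a graded isomorphism by construction, this yields $\Mod_0^{\gr}H_n(\Gamma_{d,l})\cong\Mod^{\gr}_{t}\calH_n$. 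Statement (3) is then immediate because forgetting the grading commutes with all of the identifications used.

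It then remains to check compatibility with $\Ind$ and $\Res$. On the quiver–Hecke side these functors come from the parabolic embeddings $H_k(\Gamma_{d,l})\otimes H_l(\Gamma_{d,l})\inj H_{k+l}(\Gamma_{d,l})$, and on the elliptic side from the embeddings $\pi_{k,l*}(\calH_k\boxtimes\calH_l)\inj\calH_{k+l}$ discussed above; after completion both embeddings are determined by their effect on the generators $1_\nu$, the $x_i$, and the $\tau_i$ with $i\neq k$ lying in the parabolic subalgebra, and on each of these the defining formula for $\phi_{k+l}$ restricts to the formula for $\phi_k\otimes\phi_l$. Hence $\phi$ intertwines the two families of embeddings, and the induced functors correspond under the equivalence; one still has to observe, routinely, that the reduction to completed algebras is itself compatible with $\Ind$ and $\Res$ (induction being given by tensoring over the parabolic subalgebra, which is preserved under the completions in play).

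I expect the genuine work to be already behind us, packaged in the local structure theorem (Lemma~\ref{lem:loc_complete_hecke}) and in the construction of $\phi$. Within the present argument, the step demanding the most care is making the support condition "supported on $\pi(S_t^n)$" on the elliptic side match, block by block, with the category $\Mod_0 H_n(\Gamma_{d,l})$ under the chosen identification of the vertex set $I$ with $S_t\subset E$ and its relabelling $I\cong\{(i,j)\mid 0\le i\le d-1,\ 1\le j\le l\}$ — so that the vertices, the arrows of $\Gamma_{d,l}$, and the completed local rings all correspond correctly and no off‑by‑one slips into the labelling. Everything else is formal category‑theoretic bookkeeping around the isomorphism $\phi$.
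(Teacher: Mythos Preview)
Your proposal is correct and follows essentially the same approach as the paper: the paper presents Theorem~\ref{thm:ellHecke_quiverHecke} with the preamble ``Summarizing the discussion above, we have the following'' and gives no further proof, treating it as an immediate consequence of the algebra isomorphism $\phi$ of the preceding lemma together with the fact that the grading on the elliptic side is \emph{defined} by transport along $\phi$. You have simply spelled out the bookkeeping (reduction to completions, matching nilpotence conditions, compatibility of parabolic embeddings on generators) that the paper leaves implicit.
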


For each $(i,j)\in I$, we define functors $\Res_{(i,j)}:\Mod^{\gr}_{t}\calH_n\to\Mod^{\gr}_{t}\calH_{n-1}$ as follows. For any module $M\in \Mod^{\gr}_{t}\calH_n$, by definition, $M$ decomposes into $\bigoplus_{(u,v)\in S_t}M_{(u,v)}$ as a coherent sheaf on $E^n$. Here for each $(u,v)\in S_t$,  the submodule $M_{(u,v)}$  is the direct summand of $M$ as coherent sheaf on $E^n$, whose support has the $n$-th coordinate equal to $(u,v)\in S_t\subset E$. Consider $M$ as a module over $\calH_{n-1}$ via the map $\calH_{n-1}\boxtimes1\subseteq\calH_{n-1}\boxtimes\calH_1\to \calH_n$.
Obviously, each  $M_{(u,v)}$ is a $\calH_{n-1}$-submodule of $M$.
We define $\Res_{(i,j)}(M)$ to be the direct summand of $M$ as $\calH_{n-1}$-module whose support has the $n$-th coordinate equal to $(i,j)\in I\subseteq E$.
Similarly to the graded situation, the functors $\Res_{(i,j)}:\Mod_{t}\calH_n\to\Mod_{t}\calH_{n-1}$ are also well-defined.

Let $U_q(\widehat{\fs\fl_d})$ be the quantized enveloping algebra of $\widehat{\fs\fl_d}$, and $U^-_q(\widehat{\fs\fl_d})$ its negative nilpotent subalgebra. Let \[f_{ij}= 1\otimes\cdots\otimes1\otimes f_j\otimes1\otimes\cdots\otimes1\in U^-_q(\widehat{\fs\fl_d})^{\otimes l}\] where $f_j$ is in the $i$-th tensor factor.

\begin{corollary}\label{cor:ellHeck_Quant}
\begin{enumerate}
\item There is a $\bbC$-linear isomorphism $U^-_q(\widehat{\fs\fl_d})^{\otimes l}\to \bigoplus_nK(\Mod^{\gr}_{t}\calH_n)^*$.
\item This isomorphism intertwines multiplication of $f_{ij}$ on the left and $[\Res_{i,j}]^*$ on the right.\item Under this isomorphism, the  basis on  $K(\Mod^{\gr}_{t}\calH_n)^*$ dual to the classes of simple objects corresponds to the Lusztig canonical basis on $U^-_q(\widehat{\fs\fl_d})^{\otimes l}$.
\item This isomorphism induces an isomorphism $U^-(\widehat{\fs\fl_d})^{\otimes l}\cong \bigoplus_nK(\Mod_{t}\calH_n)^*$, which also intertwines $[\Res_{i,j}]^*$ and $f_{ij}$.
\end{enumerate}  
\end{corollary}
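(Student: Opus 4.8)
The plan is to assemble the corollary from Theorem~\ref{thm:quiverHecke} on quiver Hecke algebras together with Theorem~\ref{thm:ellHecke_quiverHecke}, which identifies $\Mod_t^{\gr}\calH_n$ with $\Mod_0^{\gr}H_n(\Gamma_{d,l})$ compatibly with $\Ind$ and $\Res$. The one piece of combinatorial input needed at the outset is the identification of the Kac--Moody datum of $\Gamma_{d,l}$: by construction this quiver is a disjoint union of $l$ copies of the cyclic quiver on $d$ vertices, so its generalized Cartan matrix is block diagonal with each block of affine type $A^{(1)}_{d-1}$, the associated Kac--Moody Lie algebra is $\big(\widehat{\fs\fl_d}\big)^{\oplus l}$, its lower nilpotent part is $\big(\fn^-_{\widehat{\fs\fl_d}}\big)^{\oplus l}$, and hence $U_{q^\pm}(\fn^-)\cong U^-_q(\widehat{\fs\fl_d})^{\otimes l}$ and $U(\fn^-)\cong U^-(\widehat{\fs\fl_d})^{\otimes l}$, with the Chevalley generator attached to a vertex $(i,j)\in I$ corresponding to $f_{ij}$ under the labelling of $I$ by $\{(i,j)\}$.

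For parts (1) and (2) I would compose three isomorphisms: Theorem~\ref{thm:quiverHecke}(1) for $\Gamma=\Gamma_{d,l}$, giving $U^-_q(\widehat{\fs\fl_d})^{\otimes l}\cong\bigoplus_n K^0(\Proj^{\gr}H_n(\Gamma_{d,l}))$; the Euler-pairing duality $K^0(\Proj^{\gr}H_n(\Gamma_{d,l}))_{\bbC}\cong K^0(\Mod_0^{\gr}H_n(\Gamma_{d,l}))^*_{\bbC}$ recalled in \S~\ref{subsec:quiverHecke}; and the isomorphism on dual $K$-groups induced by the category equivalence of Theorem~\ref{thm:ellHecke_quiverHecke}(1). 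This yields the $\bbC$-linear isomorphism of (1). For (2), under the Euler-pairing identification right multiplication by $[P(i)]$ (induction by the projective $P(i)$) corresponds to $[\Res_i]^*$, so Theorem~\ref{thm:quiverHecke}(2) shows $[\Res_{(i,j)}]^*$ corresponds to multiplication by $f_{ij}$ on $\bigoplus_n K^0(\Mod_0^{\gr}H_n(\Gamma_{d,l}))^*$; it then remains to observe that the equivalence of Theorem~\ref{thm:ellHecke_quiverHecke}, being induced by the algebra isomorphism $\phi$ of the preceding lemma, intertwines the $I^n$-indexed decompositions on the two sides and hence carries $\Res_{(i,j)}$ on $\Mod_t^{\gr}\calH_n$ (defined by the value of the last coordinate in $S_t$) to $\Res_{(i,j)}$ on $\Mod_0^{\gr}H_n(\Gamma_{d,l})$ (defined by right multiplication by $1_{(i,j)}$).

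For (3), the Euler pairing carries the basis of $K^0(\Proj^{\gr}H_n(\Gamma_{d,l}))$ given by the indecomposable graded projectives to the basis of $K^0(\Mod_0^{\gr}H_n(\Gamma_{d,l}))^*$ dual to the classes of simples; combining this with Theorem~\ref{thm:quiverHecke}(3) (indecomposable projectives $\mapsto$ Lusztig canonical basis) and with the fact that a category equivalence sends simples to simples gives (3). Part (4) is obtained by running the same chain of arguments with the ungraded statements: the ungraded equivalence of Theorem~\ref{thm:ellHecke_quiverHecke}(2) and the ungraded isomorphism $\bigoplus_n K^0(\Mod_0H_n(\Gamma))^*_{\bbC}\cong U(\fn^-)$ from \S~\ref{subsec:quiverHecke}, which already intertwines $[\Res_i]^*$ with $f_i$; compatibility with the forgetful functors (Theorem~\ref{thm:ellHecke_quiverHecke}(3)) shows that (4) is the $q=1$ shadow of (1)--(2).

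I do not expect any single deep step here; the main work is bookkeeping, and the place where I would be most careful is checking that the quiver Hecke formalism of \S~\ref{subsec:quiverHecke}, phrased for a connected symmetric quiver, applies verbatim to the disconnected $\Gamma_{d,l}$ --- so that ``the Kac--Moody algebra associated to $\Gamma_{d,l}$'' genuinely is the direct sum and $U_{q^\pm}(\fn^-)$ genuinely is the tensor power with the indicated matching of generators --- and in verifying that the several compatibilities ($\Ind/\Res$, graded versus ungraded, the Euler-pairing duality, and the category equivalence) all line up so that the four parts fit together. A secondary point to pin down precisely is that $\Res_{(i,j)}$ on $\Mod_t^{\gr}\calH_n$, defined via the last-coordinate decomposition over $S_t$, agrees through $\phi$ with the quiver-Hecke $\Res_{(i,j)}$ defined via the idempotent $1_{(i,j)}$.
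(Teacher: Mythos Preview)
Your proposal is correct and follows exactly the paper's approach: the paper simply states that this is a direct corollary of Theorem~\ref{thm:quiverHecke} and Theorem~\ref{thm:ellHecke_quiverHecke}, and you have faithfully unpacked the bookkeeping behind that assertion. The identifications you spell out (the Kac--Moody datum of $\Gamma_{d,l}$, the Euler-pairing duality, the matching of $\Res_{(i,j)}$ through $\phi$, and the graded/ungraded compatibility) are precisely what is implicit in the paper's one-line justification.
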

This is a direct corollary of Theorem~\ref{thm:quiverHecke} and Theorem~\ref{thm:ellHecke_quiverHecke}.

\newcommand{\arxiv}[1]
{\texttt{\href{http://arxiv.org/abs/#1}{arXiv:#1}}}
\newcommand{\doi}[1]
{\texttt{\href{http://dx.doi.org/#1}{doi:#1}}}
\renewcommand{\MR}[1]
{\href{http://www.ams.org/mathscinet-getitem?mr=#1}{MR#1}}

\end{document}